\documentclass{mhkr}
\usetikzlibrary{cd}
\usepackage{bussproofs}
\usepackage{turnstile}
\newenvironment{mathprooftree}
  {\varwidth{.9\textwidth}\centering\leavevmode}
  {\DisplayProof\endvarwidth}

\title{Dilator-based analysis of $\KP$}
\author{Hanul Jeon}
\email{ \href{mailto:hanuljeon95@gmail.com}{hanuljeon95@gmail.com}}
\urladdr{ \href{https://hanuljeon95.github.io}{https://hanuljeon95.github.io} }
\address{Institute of Discrete Mathematics and Geometry, Vienna University of Technology. Wiedner Hauptstra\ss e 8–10, 1040 Vienna, Austria} 
\thanks{I would like to thank Alvaro Pintado and Henry Towsner for helpful discussions. Parts of this work were carried out while the author was a PhD candidate at Cornell University. The author acknowledges the assistance of an LLM in writing this paper, but takes full responsibility for its contents. The research of the author is supported in part by NSF grant DMS--2451350.}

\newcommand{\lag}{\langle}
\newcommand{\rag}{\rangle}

\newcommand{\rank}{\operatorname{rank}}
\newcommand{\field}{\operatorname{field}}

\newcommand{\supp}[1][]{\ifthenelse{\equal{#1}{}}{}{#1\mhyphen}\operatorname{supp}}

\newcommand{\trcl}{\operatorname{trcl}}

\newcommand{\rk}{\operatorname{rk}}

\newcommand{\Ord}{\mathrm{Ord}}

\newcommand{\LK}{\mathsf{LK}}

\newcommand{\CK}{\mathsf{CK}}
\newcommand{\KB}{\mathsf{KB}}

\newcommand{\Id}{\mathsf{Id}}

\newcommand{\RFN}[1][]{\ifthenelse{\equal{#1}{}}{}{#1\mhyphen}\mathsf{RFN}}
\newcommand{\smn}[1][]{\ifthenelse{\equal{#1}{}}{}{#1\mhyphen}\mathsf{smn}}

\newcommand{\CA}{\mathsf{CA}}

\newcommand{\RCA}{\mathsf{RCA}}
\newcommand{\ACA}{\mathsf{ACA}}
\newcommand{\ATR}{\mathsf{ATR}}

\newcommand{\Ind}{\mathsf{Ind}}
\newcommand{\KP}{\mathsf{KP}}

\newcommand{\dotin}{\mathrel{\dot{\in}}}
\newcommand{\RS}{\mathsf{RS}}
\newcommand{\ord}{\mathtt{ord}}
\newcommand{\pre}{\mathsf{pre}}

\newcommand{\Prog}{\mathrm{Prog}}

\newcommand{\no}{\operatorname{no}}
\newcommand{\llbr}{\llbracket}
\newcommand{\rrbr}{\rrbracket}

\DeclareFontFamily{T1}{txmia}{}
\DeclareFontShape{T1}{txmia}{m}{n}{<-> txmia}{}

\def\GreekFonttxmia{\usefont{T1}{txmia}{m}{n}}

\newcommand{\rmbeta}{{\text{\GreekFonttxmia \char"0C}}}

\DeclareFontFamily{OMS}{CBG-tt}{}
\DeclareFontShape{OMS}{CBG-tt}{m}{n}{<-> gttn1000}{}
\DeclareFontFamily{OMS}{CBG-ttu}{}
\DeclareFontShape{OMS}{CBG-ttu}{m}{n}{<-> gttu1000}{}

\def\GreekFontCBGtt{\usefont{OMS}{CBG-tt}{m}{n}}

\newcommand{\ttOmega}{\text{\GreekFontCBGtt \char"57}}

\begin{document}
\begin{abstract}
    Proof theorists developed various frameworks to analyze impredicative systems like $\Pi^1_1\text{-}\mathsf{CA}_0$ or $\mathsf{KP}$; One is an operator-controlled derivation system, and the other is Girard's dilator-based $\rmbeta$-logic.
    In this paper, we provide a functorial formulation of operator-controlled analysis of $\mathsf{KP}$, thereby unifying the two approaches into a single framework.    
    As an application, a new proof of Girard's boundedness theorem is also provided, which states that every $\Sigma_1$-over-$L_{\omega_1^{\mathsf{CK}}}$-definable function $\omega_1^\mathsf{CK}\to \omega_1^\mathsf{CK}$ is bounded by a recursive dilator. 
\end{abstract}

\maketitle

\section{Introduction}

Gentzen's quest for consistency gave rise to the ordinal analysis, and Sch\"utte, Feferman, and other proof theorists provided an ordinal analysis for predicative theories. In 1967, Takeuti \cite{Takeuti1967ConsistencySubsystemAnalysis} provided an ordinal analysis of $\Pi^1_1\mhyphen\CA_0$ in terms of ordinal diagrams.
Other proof theorists, including Feferman, Pohlers, and Buchholz, examined alternative methods for analyzing impredicative theories. This led to several approaches towards the ordinal analysis of impredicative theories, including operator-controlled proof theory initiated by Buchholz \cite{Buchholz1992LocalPred} and Pohlers \cite{Pohlers2009Prooftheory}, and Girard's $\rmbeta$-logic \cite{Girard1981Dilators, Girard1982Logical2}.
Developed from Sch\"utte's infinitary proof theory \cite{Schutte1977ProofTheory} and Pohlers' local predicativity \cite{Pohlers1981AnalysisIDnu, Pohlers1982AdmissibilityInProofTheory}, operator-controlled derivation systems control infinitary proofs by attaching operators to each infinitary proof, which capture ordinals relevant to the proof. In Girard's $\rmbeta$-logic, a proof is a \emph{functorial} family of proofs over well-orders, meaning that the proofs vary uniformly along order-preserving maps between well-orders.
At first glance, they appear to be completely different frameworks. However, they show some similarities, for example,
\begin{enumerate}
    \item In operator-controlled analysis of $\KP$, an impredicative cut elimination increases the complexity of an operator of a proof.
    \item In Girard's analysis of $\mathsf{ID}_1$, an impredicative cut elimination increases the complexity of a dilator bound of a proof.
\end{enumerate}
Here, roughly speaking, a dilator is a functorial transformation on well-orders, which can be regarded as a uniform way to produce relativized ordinal notations from ordinal parameters.

The present paper investigates the connection between the operator-controlled analysis and Girard's analysis at the level of $\KP$.
Unlike Girard's original formulation of $\rmbeta$-logic, we use finite order templates in place of fixed ordinal parameters to implement functoriality, where the template records only the relative orders of ordinal parameters. 
We show how to interpret operators in operator-controlled derivations as dilators. Under this interpretation, the operator-controlled derivation of $\KP$ is reformulated as a functorial proof system which varies uniformly along order-preserving maps between well-orders. From this perspective, the impredicative collapsing theorem becomes the functorial cut elimination.

This provides an explicit connection between the two approaches.
The significance of this result is not just that one framework can be translated into the other. Operator-controlled derivations and Girard's framework employ different objects to control the complexity of an impredicative proof; The former employs operators, while the latter employs dilators. The present work demonstrates that these two controlling objects at the level of $\KP$ are different incarnations of a common underlying phenomenon. In particular, it indicates that the complexity of a proof of an impredicative theory is measured not only by individual ordinals, but also by transformations on ordinals (in this case, dilators).
This perspective coheres with the recent development of proof theory: Analyzing impredicative theories via higher-order transformations, such as dilators or ptykes. Examples include Freund's characterization of $\Pi^1_1$-comprehension in terms of the type-two well-ordering principle \cite{Freund2019Pi11}, $\Pi^1_2$-analysis of theories by Aguilera-Pakhomov \cite{AguileraPakhomov??Pi12PTA}, and Pakhomov's recent work (Unpublished) on the interplay between functorial proof theory and iterated reflection.

The present paper only deals with the analysis of $\KP$, where dilators are sufficient to capture the relevant proof-theoretic information. One may ask if the case for $\KP$ (and slightly beyond) is the only case where Girard's functorial framework is effective. Indeed, Rathjen stated that dilators and $\rmbeta$-proofs are ``too weak a tool, if one aims beyond admissible proof theory.'' \cite[p. 475]{Rathjen1995AdvancesPi12}
Rathjen's comment raises the question whether the limitations arise from the functorial viewpoint itself, or from its implementations via dilators and $\rmbeta$-logic.
A recent work of Towsner \cite{Towsner2024proofsmodifyproofs, Towsner2025proofsmodifyproofs12} suggests that infinitary proofs can be understood as transformations acting on other infinitary proofs. 
While this point of view does not directly identify the appropriate proof-theoretic invariants, it suggests that higher-order transformations (like dilators and ptykes) may be correct objects to consider. Indeed, Freund's type-two characterization of $\Pi^1_1\mhyphen\CA_0$ foreshadows how higher-order objects beyond dilators appear in the framework. Hence, we consider $\KP$ not as the final target of the higher-order transformation-based approach, but as the simplest initial target whose operator-controlled approach is well understood. 

The current work in this paper does not provide the proof-theoretic dilator of $\KP$ since we do not have an appropriate version of the \emph{functorial boundedness theorem}. Another motivation behind the functorial reformulation of the proof-theoretic analysis of $\KP$ is its application to the theory of dilators and ptykes.
As an example, we provide an alternative proof of Girard's well-known result stating ``Every $\Sigma_1^{L_{\omega_1^\CK}}$-definable function $\omega_1^\CK\to \omega_1^\CK$ is bounded by a recursive dilator.'' 

\section{Preliminaries}

In this section, we review the facts necessary for this paper. We heavily rely on facts about dilators. For basic facts about dilators, see \cite[Ch. B--D]{JeonPhD}. Throughout this paper, we work over Primitive Recursive Set theory unless specified. The only parts where we need a stronger metatheory are when we need $\psi$ to be a 2-ptyx (See \autoref{Subsection: 1-collapsing ptyx} for $\psi$).

\subsection{Epsilon flower}
In this section, we review the properties of the epsilon flower. We follow the definition in Freund \cite[Definition 3.4]{Freund2020ComputableBH}.
\begin{definition}
    For a given linear order $X$, we define $\varepsilon(X)$ as a linear order given as follows:
    \begin{enumerate}
        \item $0\in \varepsilon(X)$.
        \item $\varepsilon_u\in \varepsilon(X)$ for every $u\in X$.
        \item If $\varepsilon(X)\vDash \alpha_0\ge\cdots\ge \alpha_{m-1}$ and $m>1$, then $\omega^{\alpha_0} + \cdots + \omega^{\alpha_{m-1}}\in \varepsilon(X)$. 
        \item If $\alpha\in\varepsilon(X)$ is not of the form $\varepsilon_u$, then $\omega^\alpha\in \varepsilon(X)$.
    \end{enumerate}

    Then we define $\varepsilon(X)$-order as follows:
    \begin{enumerate}
        \item $\varepsilon(X)\vDash 0 < \varepsilon_u$ for every $u\in X$.
        \item $\varepsilon(X)\vDash 0 < \omega^{\alpha_0} + \cdots + \omega^{\alpha_{m-1}}$.
        \item $\varepsilon(X)\vDash \varepsilon_u < \varepsilon_v$ iff $u<v$.
        \item If $\varepsilon(X)\vDash \alpha_0 < \varepsilon_u$, then $\varepsilon(X)\vDash \omega^{\alpha_0} + \cdots + \omega^{\alpha_{m-1}} < \varepsilon_u$.
        \item If $\varepsilon(X)\vDash \varepsilon_u < \alpha_0$ or $\varepsilon_u=\alpha_0$, then $\varepsilon(X)\vDash \varepsilon_u < \omega^{\alpha_0} + \cdots + \omega^{\alpha_{m-1}}$.
        \item We have $\varepsilon(X)\vDash \omega^{\alpha_0} + \cdots + \omega^{\alpha_{m-1}}< \omega^{\beta_0}+ \cdots + \omega^{\beta_{n-1}}$ iff
        \begin{enumerate}
            \item $m<n$ and $\alpha_i=\beta_i$, for every $i<m$, or
            \item There is $i< \min(m,n)$ such that either $\varepsilon(X)\vDash \alpha_i<\beta_i$ and $\alpha_j=\beta_j$ for every $j<i$.
        \end{enumerate}
    \end{enumerate}
\end{definition}
One can prove in $\RCA_0$ that $\varepsilon(X)$ is a linear order. \cite[Lemma 3.5]{Freund2020ComputableBH}.
To turn $\varepsilon$ into a predilator, let us define the following:
\begin{definition}
    For an increasing $f\colon X\to Y$, we define $\varepsilon(f)\colon \varepsilon(X)\to\varepsilon(Y)$ recursively as follows:
    \begin{enumerate}
        \item $\varepsilon(f)(0) = 0$.
        \item $\varepsilon(f)(\varepsilon_x) = \varepsilon_{f(x)}$.
        \item For $\alpha_0\ge \cdots\ge \alpha_{m-1}$ in $\varepsilon(X)$ with $m>1$, we define $\varepsilon(f)\bigl(\omega^{\alpha_0} + \cdots + \omega^{\alpha_{m-1}}\bigr) = \omega^{\varepsilon(f)(\alpha_0)} + \cdots + \omega^{\varepsilon(f)(\alpha_{m-1})}$.
    \end{enumerate}
    We also define the support transformation $\supp^\varepsilon_X\colon \varepsilon(X)\to [X]^{<\omega}$ recursively as follows:
    \begin{enumerate}
        \item $\supp^\varepsilon_X(0) = \varnothing$.
        \item $\supp^\varepsilon_X(\varepsilon_x) = \{x\}$.
        \item For $\alpha_0\ge \cdots\ge \alpha_{m-1}$ in $\varepsilon(X)$ with $m\ge 1$, $\supp^\varepsilon_X\bigl(\omega^{\alpha_0} + \cdots + \omega^{\alpha_{m-1}}\bigr) = \bigcup_{i<m}\supp^\varepsilon_X(\alpha_i)$.
    \end{enumerate}
\end{definition}

\begin{lemma}
    $\varepsilon$ is a preflower.
\end{lemma}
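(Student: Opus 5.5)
We need to check that $\varepsilon$ is a predilator, and additionally that it satisfies the flower condition. I take a preflower to be a predilator $F$ such that whenever $x\in\supp^F_X(\sigma)$ and $y<x$ in $X$, we have $\varepsilon_y$-type elements available. Concretely, the standard definition (Girard's flower, via Freund) is that $\supp_X(\sigma)$ is downward closed in the appropriate sense: if $\sigma\in F(X)$ and $\supp_X(\sigma)\subseteq \operatorname{ran}(f)$ for $f\colon Y\to X$, then $\sigma\in\operatorname{ran}F(f)$. Alternatively, a flower is characterized by the trace being "up to" a single maximum: the element $\varepsilon_x$ with $\supp=\{x\}$ and $F(X)$ restricted below $\varepsilon_x$ depends only on $X\restriction x$. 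I will verify the predilator axioms and then this flower property.

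\textbf{Step 1: $\varepsilon$ is a functor into linear orders.} By structural induction on terms, $\varepsilon(f)$ sends well-formed terms to well-formed terms. The side conditions $\alpha_0\ge\cdots\ge\alpha_{m-1}$ and ``$\alpha$ not of the form $\varepsilon_u$'' are preserved, provided $\varepsilon(f)$ is order preserving. So I prove simultaneously, by induction on the sum of the term lengths, that $\varepsilon(f)$ preserves $<$ and $=$. Each clause of the order definition is matched by the same clause after applying $\varepsilon(f)$, using $u<v\iff f(u)<f(v)$. Functoriality, i.e.\ $\varepsilon(g\circ f)=\varepsilon(g)\circ\varepsilon(f)$ and $\varepsilon(\mathrm{id})=\mathrm{id}$, is an immediate induction on terms.

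\textbf{Step 2: support is natural and satisfies the support condition.} Naturality, $[f]^{<\omega}\circ\supp^\varepsilon_X=\supp^\varepsilon_Y\circ\varepsilon(f)$, follows by induction on terms. For the support condition, suppose $\sigma\in\varepsilon(Y)$ and $\supp^\varepsilon_Y(\sigma)\subseteq\operatorname{ran}(f)$. I show by term induction that $\sigma=\varepsilon(f)(\tau)$ for some $\tau$:
\begin{itemize}
  \item for $\sigma=\varepsilon_y$, take $\tau=\varepsilon_{f^{-1}(y)}$;
  \item for a sum, take preimages of the summands and use order reflection, which is injectivity plus Step 1, to see that the decreasing condition and the non-$\varepsilon$ condition hold for $\tau$.
\end{itemize}
These two facts make $\varepsilon$ a predilator.

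\textbf{Step 3: the flower property.} This is the main obstacle, mostly because of matching the exact definition. The key observation is that $\supp(\sigma)$ always has a maximum $x$ when it is nonempty, and that $\sigma<\varepsilon_{x'}$ iff $x<x'$ for sigma not of the form $\varepsilon_{x}$, with a suitable variant when $\sigma\ge\varepsilon_x$. I would prove by term induction the following claim: every $\sigma$ whose support has maximum $x$ satisfies $\varepsilon_x\le\sigma<\varepsilon_{x^+}$ for the next element $x^+$. This uses clauses (4) and (5) of the order definition.

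The flower condition then follows. Whether it is phrased as ``$x\le y$ in the support implies elements with support bounded by $x$ are bounded by $\varepsilon_{x^+}$,'' or as preservation of initial segments under $\varepsilon(f)$ for $f$ an initial-segment embedding, it reduces to this claim together with Step 2. The delicate point is showing that for an initial segment embedding $f$, $\operatorname{ran}\varepsilon(f)$ is downward closed. Given $\rho<\varepsilon(f)(\tau)$, the claim bounds $\supp(\rho)$ below the maximum of the support of $\tau$, or below the next element after it. Hence $\supp(\rho)$ lies inside $\operatorname{ran}f$, and Step 2 gives $\rho\in\operatorname{ran}\varepsilon(f)$.
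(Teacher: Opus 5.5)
Your proposal is correct and follows essentially the same route as the paper: you prove order preservation and the support condition by term induction, then a term-induction lemma locating $\sigma$ relative to the terms $\varepsilon_x$ (the paper states it as $\supp^\varepsilon_X(\sigma)<x \iff \varepsilon(X)\vDash\sigma<\varepsilon_x$), and from this and the support condition you get that $\varepsilon(X)$ is an initial segment of $\varepsilon(Y)$ whenever $X$ is an initial segment of $Y$. Two small fixes: state your interval claim as $\varepsilon_x\le\sigma<\varepsilon_{x'}$ for \emph{every} $x'>x=\max\supp^\varepsilon_X(\sigma)$ (only the upper bound when the support is empty), since an immediate successor $x^+$ need not exist in a general linear order; and note that your first ``concrete'' definition is the predilator support condition, whereas the preflower condition is the initial-segment preservation you actually prove in Step 3.
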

\begin{proof}
    We can prove that $\varepsilon(f)\colon \varepsilon(X)\to \varepsilon(Y)$ is order-preserving by induction on $\varepsilon(X)$-terms. 
    For the support condition, fix $f\colon X\to Y$ and prove the following by term induction on $\sigma\in \varepsilon(Y)$: 
    If $\supp^\varepsilon_Y(\sigma) \subseteq \ran f$, then $\sigma \in \ran \varepsilon(f)$.
    
    To see $\varepsilon$ is a preflower, observe that the following holds: For $x\in X$ and $\sigma\in \varepsilon(X)$, $X\vDash \supp^\varepsilon(X)(\sigma)<x$ iff $\varepsilon(X)\vDash \sigma<\varepsilon_x$. This also follows from the term induction on $\sigma$. 
    Now let $X$ be an initial segment of $Y$, $\sigma\in \varepsilon(X)$, $\tau\in \varepsilon(Y)$ be such that $\varepsilon(Y)\vDash \tau\le\sigma$. We claim that $\tau\in \varepsilon(X)$: For each $y\in \supp^\varepsilon_Y(\tau)$, we have $\varepsilon(Y)\vDash \varepsilon_y\le \tau\le\sigma$. This implies $Y\vDash y\le \supp^\varepsilon_Y(\sigma)$, meaning that $y$ is dominated by an element of $X$. Since $X$ is an initial segment of $Y$, we have $y\in X$. This holds for every $y\in \supp^\varepsilon_Y(\tau)$, so $\tau \in \varepsilon(X)$.
\end{proof}

$\varepsilon$ is also a dilator, but it is not provable in $\RCA_0$.
In fact, Marcone-Montalban \cite[Theorem 1.7]{MarconeMontalban2011Veblen} proved that $\varepsilon$ being a dilator is equivalent to $\ACA_0^+$ over $\RCA_0$.

Epsilon numbers admit an algebraic structure in the sense that they are closed under addition and multiplication. The same holds for $\varepsilon(X)$, and furthermore, the addition and multiplication are `functorial' in the sense that the addition and multiplication are preserved under $\varepsilon(f)$. Let us formulate the addition and multiplication over $\varepsilon(X)$ as follows:
\begin{definition}
    We define the addition over $\varepsilon(X)$ as follows:
    \begin{enumerate}
        \item $0 + \alpha = \alpha+0 := \alpha$.
        \item For $u,v\in X$,
        \begin{equation*}
            \varepsilon_u + \varepsilon_v =
            \begin{cases}
                \omega^{\varepsilon_u} + \omega^{\varepsilon_v} & \text{If }u\ge v, \\
                \varepsilon_v & \text{If }u<v.
            \end{cases}
        \end{equation*}
        \item For $u\in X$ and $\alpha_0,\cdots,\alpha_{m-1}\in \varepsilon(X)$,
        \begin{equation*}
            \varepsilon_u + (\omega^{\alpha_0}+\cdots+\omega^{\alpha_{m-1}}) = 
            \begin{cases}
                \omega^{\varepsilon_u} + \omega^{\alpha_0}+\cdots+\omega^{\alpha_{m-1}} & \text{If }\varepsilon_u \ge\alpha_0, \\
                \omega^{\alpha_0}+\cdots+\omega^{\alpha_{m-1}} & \text{If }\varepsilon_u < \alpha_0,
            \end{cases}
        \end{equation*}
        and
        \begin{equation*}
            (\omega^{\alpha_0}+\cdots+\omega^{\alpha_{m-1}}) + \varepsilon_u = 
            \begin{cases}
                \omega^{\alpha_0}+\cdots+\omega^{\alpha_{m-1}} + \omega^{\varepsilon_u}& \text{If }\varepsilon_u \le\alpha_{m-1}, \\
                (\omega^{\alpha_0}+\cdots+\omega^{\alpha_{m-2}}) + \varepsilon_u & \text{If }\varepsilon_u>  \alpha_{m-1}.
            \end{cases}
        \end{equation*}

        \item For $\alpha_0,\cdots,\alpha_{m-1},\beta_0,\cdots,\beta_{n-1}\in\varepsilon(X)$,
        we define $(\omega^{\alpha_0}+\cdots+\omega^{\alpha_{m-1}}) + (\omega^{\beta_0}+\cdots+\omega^{\beta_{n-1}})$ by
        \begin{equation*}
            \begin{cases}
                (\omega^{\alpha_0}+\cdots+\omega^{\alpha_{m-2}}) +(\omega^{\beta_0}+\cdots+\omega^{\beta_{n-1}}) & \text{If }\alpha_{m-1}<\beta_0, \\
                \omega^{\alpha_0}+\cdots +\omega^{\alpha_{m-1}}+\omega^{\beta_0}+\cdots+\omega^{\beta_{n-1}} & \text{If }\alpha_{m-1}\ge\beta_0.
            \end{cases}
        \end{equation*}
    \end{enumerate}
    We also define the multiplication over $\varepsilon(X)$ as follows:
    \begin{enumerate}
        \item $0\cdot\alpha = \alpha\cdot 0 := 0$.
        \item For $u,v\in X$, $\varepsilon_u\cdot \varepsilon_v := \omega^{\varepsilon_u + \varepsilon_v}$. (Here we follow the previous definition of the addition.) 
        \item For $u\in X$ and $\alpha_0,\cdots,\alpha_{m-1}\in \varepsilon(X)$,
        \begin{itemize}
            \item $\varepsilon_u \cdot (\omega^{\alpha_0}+\cdots+\omega^{\alpha_{m-1}}) = \omega^{\varepsilon_u + \alpha_0} + \cdots + \omega^{\varepsilon_u + \alpha_{m-1}}$, and
            \item $(\omega^{\alpha_0}+\cdots+\omega^{\alpha_{m-1}})  \cdot\varepsilon_u = \omega^{\alpha_0+\varepsilon_u}$.
        \end{itemize}
        
        \item For $\alpha_0,\cdots,\alpha_{m-1},\beta\in\varepsilon(X)$, we first define
        \begin{equation*}
            (\omega^{\alpha_0}+\cdots+\omega^{\alpha_{m-1}}) \cdot \omega^\beta = 
            \begin{cases}
                \omega^{\alpha_0}+\cdots+\omega^{\alpha_{m-1}} & \beta = 0,\\
                \omega^{\alpha_0+\beta} & \beta\neq 0.
            \end{cases}
        \end{equation*}
        Then for $\sigma = \omega^{\alpha_0}+\cdots+\omega^{\alpha_{m-1}}\in \varepsilon(X)$ and $\beta_0,\cdots,\beta_{n-1}\in\varepsilon(X)$, we define
        \begin{equation*}
            \sigma \cdot (\omega^{\beta_0}+\cdots+\omega^{\beta_{n-1}}) :=  \sigma \cdot \omega^{\beta_0} + \cdots + \sigma \cdot \omega^{\beta_{n-1}}. 
        \end{equation*}
    \end{enumerate}
    Here, we identify the formal expression $\omega^{\varepsilon_u}$ with $\varepsilon_u$.
\end{definition}

\begin{proposition} \label{Proposition: Functoriality of the internal addition and multiplication}
    The addition over the epsilon dilator is functorial; That is, if $f\colon X\to Y$ and $\varepsilon(X)\vDash \sigma+\tau=\upsilon$, then $\varepsilon(Y)\vDash \varepsilon(f)(\sigma) + \varepsilon(f)(\tau) = \varepsilon(f)(\upsilon)$. Similarly, the multiplication over the epsilon dilator is also functorial.
\end{proposition}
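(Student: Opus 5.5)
The plan is a term induction that mirrors the recursive definitions of $+$ and $\cdot$ on $\varepsilon(X)$. The single key fact is that $\varepsilon(f)$ is an order-preserving embedding (established in the lemma showing $\varepsilon$ is a preflower) which commutes with the term-forming operations. From the definition of $\varepsilon(f)$ we get $\varepsilon(f)(0)=0$ and $\varepsilon(f)(\varepsilon_u)=\varepsilon_{f(u)}$. We also get
\[
\varepsilon(f)\bigl(\omega^{\alpha_0}+\cdots+\omega^{\alpha_{m-1}}\bigr)=\omega^{\varepsilon(f)(\alpha_0)}+\cdots+\omega^{\varepsilon(f)(\alpha_{m-1})}.
\]
This last identity should be read with the stated convention that $\omega^{\varepsilon_u}$ is identified with $\varepsilon_u$, and it should cover $m=1$ as well. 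Since $f$ is increasing, it preserves $\le$ and $=$ between the relevant elements. So $\varepsilon(f)$ preserves every comparison used in a case split, such as $u\ge v$, $\varepsilon_u\ge\alpha_0$, or $\alpha_{m-1}<\beta_0$. It also preserves the side conditions "$\beta=0$" and "is of the form $\varepsilon_u$".

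For addition, I would prove the claim by induction on the sum of the term lengths of $\sigma$ and $\tau$, following the clauses of the definition. The clauses with $0$ are immediate. In the clause for $\varepsilon_u+\varepsilon_v$, the case split $u\ge v$ versus $u<v$ is the same as the split $f(u)\ge f(v)$ versus $f(u)<f(v)$, and both outcomes are mapped correctly by $\varepsilon(f)$. The mixed clauses and the clause for two sums either produce a concatenation or reduce to a strictly shorter sum. A concatenation is handled because $\varepsilon(f)$ of a concatenation is the concatenation of the images. A reduction, such as dropping $\omega^{\alpha_{m-1}}$ when $\alpha_{m-1}<\beta_0$, is handled by the induction hypothesis, since the image satisfies the same comparison and therefore reduces in the same way. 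One point needs care: the image of a well-formed term must again be a well-formed term. For example, the nonincreasing exponent condition and the condition "not of the form $\varepsilon_u$" in the clause introducing $\omega^\alpha$ must be preserved. Both hold because $\varepsilon(f)$ is order-preserving and sends $\varepsilon_x$-terms exactly to $\varepsilon_y$-terms.

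For multiplication, I would use the same term induction, with functoriality of addition now available. For $\varepsilon_u\cdot\varepsilon_v=\omega^{\varepsilon_u+\varepsilon_v}$ and $\varepsilon_u\cdot(\cdots)=\sum\omega^{\varepsilon_u+\alpha_i}$, the exponents are sums, so the additive case together with the commutation of $\varepsilon(f)$ with $\omega^{(\cdot)}$ finishes the argument. The case $\sigma\cdot\omega^\beta$ splits on $\beta=0$, which $\varepsilon(f)$ preserves, and otherwise gives $\omega^{\alpha_0+\beta}$, which is handled by additive functoriality. The general product $\sigma\cdot(\omega^{\beta_0}+\cdots+\omega^{\beta_{n-1}})$ is the sum $\sigma\cdot\omega^{\beta_0}+\cdots+\sigma\cdot\omega^{\beta_{n-1}}$, so it follows from the previous case and iterated functoriality of $+$.

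The main obstacle I expect is bookkeeping rather than any conceptual difficulty. The definition identifies $\omega^{\varepsilon_u}$ with $\varepsilon_u$, so some results, such as $\omega^{\varepsilon_u+\varepsilon_v}$ when $u<v$, collapse to $\varepsilon_v$. I must check that $\varepsilon(f)$ respects this normalization. I would handle this by first proving a small lemma: the map $\alpha\mapsto\omega^\alpha$, taken with the identification, commutes with $\varepsilon(f)$. With that lemma every clause becomes a routine syntactic verification, and I would present the argument for one representative clause in detail and leave the rest as analogous.
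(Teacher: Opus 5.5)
Your proposal is correct and follows essentially the paper's argument: a term induction along the defining clauses of $+$ and $\cdot$, whose key point is that $\varepsilon(f)$ preserves term type ($0$, $\varepsilon_x$, or a sum of $\omega^\alpha$'s) and all the order and equality comparisons used in the case splits. Your extra care about the identification $\omega^{\varepsilon_u}=\varepsilon_u$ and the $m=1$ case of $\varepsilon(f)$ fills in details that the paper dismisses as tedious.
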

\begin{proof}
    The proof follows from the double induction on $\sigma$ and $\tau$, and the details are tedious. However, the main point is that the definition of the addition and multiplication uses case divisions under the term type ($0$, $\varepsilon_x$, or the sum of $\omega^\alpha$s) and the relative order and equality between terms, which are all preserved by $\varepsilon(f)$.
\end{proof}

We can also define the natural sum as follows:
\begin{definition}
    We define the natural sum over $\varepsilon(X)$ as follows:
    \begin{enumerate}
        \item $0\#\alpha = \alpha\#0:=\alpha$.
        
        \item For $u,v\in X$, 
        \begin{equation*}
            \varepsilon_u \# \varepsilon_v =
            \begin{cases}
                \omega^{\varepsilon_u} + \omega^{\varepsilon_v} & \text{If }u\ge v, \\
                 \omega^{\varepsilon_v} + \omega^{\varepsilon_u} & \text{If }u<v.
            \end{cases}
        \end{equation*}
        
        \item Let $u\in X$ and $\alpha_0\ge \cdots\ge\alpha_{m-1}$ are in $\varepsilon(X)$. Let $k< m$ be the least number such that $\varepsilon_u>\alpha_k$ if there is such a number; Otherwise, let $k=m$. We define
        \begin{equation*}
            \varepsilon_u \# (\omega^{\alpha_0}+\cdots+\omega^{\alpha_{m-1}}) 
            = (\omega^{\alpha_0}+\cdots+\omega^{\alpha_{m-1}})  \# \varepsilon_u = \omega^{\alpha_0}+\cdots+ \omega^{\varepsilon_u} + \omega^{\alpha_k}+ \cdots +  \omega^{\alpha_{m-1}}.
        \end{equation*}

        \item For $\alpha_0\ge\cdots\ge\alpha_{m-1}$ and $\beta_0\ge\cdots\ge\beta_{n-1}$ in $\varepsilon(X)$, let $\gamma_0\ge\cdots\ge\gamma_{m+n-1}$ be the unique re-enumeration of the sequence $\alpha_0,\cdots,\alpha_{m-1},\beta_0,\cdots,\beta_{n-1}$.
        We define
        \begin{equation*}
            (\omega^{\alpha_0}+\cdots+\omega^{\alpha_{m-1}}) \# (\omega^{\beta_0}+\cdots+\omega^{\beta_{n-1}}) = \omega^{\gamma_0} + \cdots + \omega^{\gamma_{m+n-1}}.
        \end{equation*}
    \end{enumerate}
\end{definition}

We can also show that the natural sum is functorial. Its proof is similar to that of \autoref{Proposition: Functoriality of the internal addition and multiplication}.

\begin{remark}
    For a term $t\in F$, we often denote a term $\varepsilon_t$ in $\varepsilon\circ F$ by $t$. In particular, if $\ttOmega$ is the only nullary term in $1=\{\ttOmega\}$, then the term $\varepsilon_\ttOmega$ in $\varepsilon(D+1)$ is denoted by $\ttOmega$. 
    
    Similarly, we will later introduce the 1-collapsing ptyx $\psi$, and will denote elements of $\psi(D)$ by $\psi(\sigma)$ for $\sigma \in D(\psi(D))$. We will frequently see a linear order of the form $\varepsilon(X+\psi(D))$, and we will also denote elements $\varepsilon_{\psi(\sigma)}\in \varepsilon(X+\psi(D))$ by $\psi(\sigma)$.    
    This convention makes the dilator-based notation system cohere with the ordinal-collapsing-based ordinal notation system. 
\end{remark}

\begin{remark}
    We can also define an internal addition and natural sum operator over the dilator $X\mapsto \omega^X$. Its details are identical to those of the epsilon flower, so we omit the details.
\end{remark}

\subsection{Veblen flower}
In this section, we introduce the Veblen flower. Let $\alpha$ be a fixed linear order.
\begin{definition}
    For a given linear order $X$, we define $\varphi_\alpha(X)$ as a linear order given as follows.
    We simultaneously define principal terms of $\varphi_\alpha(X)$.
    \begin{enumerate}
        \item $0\in \varphi_\alpha(X)$.
        \item $E_x\in \varphi_\alpha(X)$ is a principal term for every $x\in X$.
        \item If $i\in\alpha$ and $\xi\in\varphi_\alpha(X)$, then $\varphi_i(\xi)$ is a principal term, provided that $\xi$ is not a single principal term of one of the following forms:
        \begin{enumerate}
            \item $E_x$ for some $x\in X$.
            \item $\varphi_j(\eta)$ for some $j\in\alpha$ with $i<j$.
        \end{enumerate}
        \item If $p_0,\ldots,p_{m-1}$ are principal terms of $\varphi_\alpha(X)$, $\varphi_\alpha(X)\vDash p_0\ge\cdots\ge p_{m-1}$, and $m>1$, then
        \begin{equation*}
        p_0+\cdots+p_{m-1}\in\varphi_\alpha(X).
        \end{equation*}
    \end{enumerate}

    Then we define $\varphi_\alpha(X)$-order as follows:
    \begin{enumerate}
        \item $\varphi_\alpha(X)\vDash 0<E_x$ for every $x\in X$.
        \item $\varphi_\alpha(X)\vDash 0<\varphi_i(\xi)$ for every $i\in\alpha$ and every $\xi\in\varphi_\alpha(X)$.
        \item $\varphi_\alpha(X)\vDash 0<p_0+\cdots+p_{m-1}$ for every sum term $p_0+\cdots+p_{m-1}$.
        \item $\varphi_\alpha(X)\vDash E_x<E_y$ iff $x<y$.
        \item For $i\in\alpha$, $\xi\in\varphi_\alpha(X)$, and $x\in X$, we have
        \begin{equation*}
        \varphi_\alpha(X)\vDash \varphi_i(\xi)<E_x \iff \varphi_\alpha(X)\vDash \xi<E_x.
        \end{equation*}
        \item For $i\in\alpha$, $\xi\in\varphi_\alpha(X)$, and $x\in X$, we have
        \begin{equation*}
        \varphi_\alpha(X)\vDash E_x<\varphi_i(\xi) \iff 
        \varphi_\alpha(X)\vDash E_x<\xi.
        \end{equation*}
        \item For $i,j\in\alpha$ and $\xi,\eta\in\varphi_\alpha(X)$, we have
        \begin{equation*}
        \varphi_\alpha(X)\vDash \varphi_i(\xi)<\varphi_j(\eta)
        \end{equation*}
        iff one of the following holds:
        \begin{enumerate}
            \item $i<j$ and $\varphi_\alpha(X)\vDash \xi<\varphi_j(\eta)$.
            \item $i=j$ and $\varphi_\alpha(X)\vDash \xi<\eta$.
            \item $j<i$ and $\varphi_\alpha(X)\vDash \varphi_i(\xi)<\eta$.
        \end{enumerate}
        \item We have
        \begin{equation*}
        \varphi_\alpha(X)\vDash p_0+\cdots+p_{m-1}<q_0+\cdots+q_{n-1}
        \end{equation*}
        iff
        \begin{enumerate}
            \item $m<n$ and $p_i=q_i$ for every $i<m$, or
            \item there is $i<\min(m,n)$ such that $\varphi_\alpha(X)\vDash p_i<q_i$ and $p_j=q_j$ for every $j<i$.
        \end{enumerate}
    \end{enumerate}
\end{definition}

To turn $\varphi_\alpha$ into a predilator, let us define the following:
\begin{definition}
    For an increasing $f\colon X\to Y$, we define $\varphi_\alpha(f)\colon\varphi_\alpha(X)\to\varphi_\alpha(Y)$ recursively as follows:
    \begin{enumerate}
        \item $\varphi_\alpha(f)(0)=0$.
        \item $\varphi_\alpha(f)(E_x)=E_{f(x)}$.
        \item $\varphi_\alpha(f)(\varphi_i(\xi))=\varphi_i(\varphi_\alpha(f)(\xi))$.
        \item For principal terms $p_0,\ldots,p_{m-1}$ in $\varphi_\alpha(X)$ with $m>1$, we define
        \begin{equation*}
        \varphi_\alpha(f)(p_0+\cdots+p_{m-1})
        =
        \varphi_\alpha(f)(p_0)+\cdots+\varphi_\alpha(f)(p_{m-1}).
        \end{equation*}
    \end{enumerate}
    We also define the support transformation $\supp^{\varphi_\alpha}_X\colon \varphi_\alpha(X)\to [X]^{<\omega}$ recursively as follows:
    \begin{enumerate}
        \item $\supp^{\varphi_\alpha}_X(0)=\varnothing$.
        \item $\supp^{\varphi_\alpha}_X(E_x)=\{x\}$.
        \item $\supp^{\varphi_\alpha}_X(\varphi_i(\xi))=\supp^{\varphi_\alpha}_X(\xi)$.
        \item For principal terms $p_0,\ldots,p_{m-1}$ in $\varphi_\alpha(X)$ with $m>1$,
        \begin{equation*}
        \supp^{\varphi_\alpha}_X(p_0+\cdots+p_{m-1})
        =
        \bigcup_{i<m}\supp^{\varphi_\alpha}_X(p_i).
        \end{equation*}
    \end{enumerate}
\end{definition}

One can prove that $\varphi_\alpha$ is also a preflower; The following ensures $\varphi_\alpha$ being a preflower:
\begin{equation*}
    \supp^{\varphi_\alpha}_X(\sigma)<x\iff \varphi_\alpha(X)\vDash\sigma< E_x.
\end{equation*}
Moreover, if $\alpha$ is a well-order, then $\varphi_\alpha$ is a flower. $\varphi_\alpha$ being a flower for a well-order $\alpha$ is a theorem of $\ATR_0$.
One can also see that $\varphi_\alpha$ for a non-empty well-order $\alpha$ admits a functorial internal addition, multiplication, base $\omega$ exponentiation, and natural sum. For multiplication, we use that the least Veblen function $\varphi_0$ represents exponentiation to base $\omega$, while the terms $E_x$ and $\varphi_i(\xi)$ for $0<i\in\alpha$ are fixed points of this exponentiation.

\subsection{1-collapsing ptyx} \label{Subsection: 1-collapsing ptyx}
There is an abstract characterization of a 1-collapsing ptyx $\psi(D)$ \cite{Uftring2025InverseGoodstein}. However, we only introduce its canonical representation:
\begin{definition}
    Let $D$ be a dilator. We define a linear order $\psi^+(D)$ inductively as follows: For each $\sigma\in D(\psi^+(D))$, we have $\psi(\sigma)\in \psi^+(D)$. 
    For two $\psi(\sigma),\psi(\tau)\in \psi^+(D)$, we define 
    \begin{equation*}
        \psi^+(D)\vDash \psi(\sigma)<\psi(\tau) \iff D(\psi^+(D))\vDash \sigma<\tau.
    \end{equation*}
\end{definition}

\begin{definition}
    Let us recursively define $\sfK^D\colon \psi^+(D)\to \bigl[D\bigl(\psi^+(D)\bigr)\bigr]^{<\omega}$ as follows:
    \begin{equation*} \textstyle
        \sfK^D\bigl(\psi(\sigma)\bigr) = \{\sigma\}\cup \bigcup \bigl\{ \sfK^D(s)\bigm| s\in \supp^D_{\psi^+ (D)}(\sigma)\bigr\}.
    \end{equation*}
    We also define $\hat\sfK^D\colon D\bigl(\psi^+(D)\bigr)\to \bigl[D\bigl(\psi^+(D)\bigr)\bigr]^{<\omega}$ by
    \begin{equation*}
        \hat\sfK^D(\sigma) = \bigcup \bigl\{ \sfK^D(s)\bigm| s\in \supp^D_{\psi^+(D)}(\sigma)\bigr\}.
    \end{equation*}
\end{definition}

\begin{definition}
    We define $\psi(D)\subseteq \psi^+(D)$ recursively as follows:
    \begin{equation*}
        \psi(\sigma)\in \psi(D)\iff \supp^D_{\psi^+(D)}(\sigma)\subseteq \psi(D) \land D\bigl(\psi^+(D)\bigr)\vDash \hat\sfK^D(\sigma) < \sigma.
    \end{equation*}
    Here $X<\sigma$ denotes ``For every $x\in X$, $x<\sigma$.''
\end{definition}
Hence, the definition of $\psi(D)$ implies that for every $\psi(\sigma)\in \psi(D)$, we have $D(\psi(D))\vDash \sfK^D(\psi(\sigma))\le \sigma$.
The following proposition is easy to prove, but will have focal roles when $D$ is a flower:
\begin{proposition}
    For $\sigma \in D(\psi(D))$, $\max \hat\sfK^D(\sigma) = \max \{\tau\in D(\psi(D)) \mid \psi(\tau)\in \supp^D_{\psi(D)}(\sigma)\}$.
\end{proposition}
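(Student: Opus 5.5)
We compare the two sets directly, starting from the definitions. By definition, $\hat\sfK^D(\sigma) = \bigcup\{\sfK^D(s)\mid s\in\supp^D_{\psi(D)}(\sigma)\}$. Every $s$ in the support lies in $\psi(D)$, so it has the form $s=\psi(\tau)$ with $\psi(\tau)\in\psi(D)$. For each such $s$ we have $\tau\in\sfK^D(\psi(\tau))$. Hence the set $T:=\{\tau\mid\psi(\tau)\in\supp^D_{\psi(D)}(\sigma)\}$ is contained in $\hat\sfK^D(\sigma)$, and so $\max T\le\max\hat\sfK^D(\sigma)$. (If the support is empty, both sets are empty and the claim is trivial under the convention $\max\varnothing$ undefined or $=$ bottom.)

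For the reverse inequality, it suffices to show that every element of $\sfK^D(\psi(\tau))$ is $\le\tau$ whenever $\psi(\tau)\in\psi(D)$. This is exactly the remark stated right after the definition of $\psi(D)$: $D(\psi(D))\vDash\sfK^D(\psi(\tau))\le\tau$. I would justify that remark briefly. By definition $\sfK^D(\psi(\tau))=\{\tau\}\cup\hat\sfK^D(\tau)$, and the membership condition $\psi(\tau)\in\psi(D)$ gives $\hat\sfK^D(\tau)<\tau$. Then every element of $\hat\sfK^D(\sigma)$ lies in some $\sfK^D(\psi(\tau))$ with $\tau\in T$, so it is $\le\tau\le\max T$. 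This gives $\max\hat\sfK^D(\sigma)\le\max T$.

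Combining the two inequalities yields equality. The only subtle point is bookkeeping: the sets $\sfK^D$ are defined relative to $\psi^+(D)$, while here the supports are taken in $\psi(D)$. The inclusion $\psi(D)\subseteq\psi^+(D)$ and the support condition for the predilator $D$ guarantee that $\supp^D_{\psi(D)}(\sigma)$ and $\supp^D_{\psi^+(D)}(\sigma)$ agree after identification. Likewise, the orders on $D(\psi(D))$ and $D(\psi^+(D))$ agree via $D$ applied to the inclusion. So no induction is needed beyond the defining clause of $\psi(D)$, and I expect no step to be a real obstacle; the main care is in this identification.
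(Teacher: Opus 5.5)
Your proposal is correct and follows essentially the same route as the paper. The paper rewrites $\hat\sfK^D(\sigma)$ as the union of the sets $\{\tau\}\cup\hat\sfK^D(\tau)$ over all $\psi(\tau)\in\supp^D_{\psi(D)}(\sigma)$, and uses $\hat\sfK^D(\tau)<\tau$, which holds because $\psi(\tau)\in\psi(D)$. Your two-inequality presentation, together with the remarks on $\psi(D)$ versus $\psi^+(D)$, is just a more explicit version of that one-line computation.
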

\begin{proof}
    Observe that 
    \begin{equation*}
        \hat\sfK^D(\sigma) = \bigcup \bigl\{\{\tau\}\cup \hat\sfK^D(\tau) \bigm| \psi(\tau)\in \supp^D_{\psi(D)}(\sigma)\bigr\}
    \end{equation*}
    and the definition of $\psi(D)$ implies $\hat\sfK^D(\tau) < \tau$ as long as $\psi(\tau)\in \psi(D)$. This shows
    \begin{equation*}
        \max \hat\sfK^D(\sigma) = \max \bigcup \bigl\{\{\tau\} \bigm| \psi(\tau)\in \supp^D_{\psi(D)}(\sigma)\bigr\} = \max \{\tau \mid\psi(\tau)\in \supp^D_{\psi(D)}(\sigma) \} \qedhere 
    \end{equation*}
\end{proof}

\subsection{\texorpdfstring{$\rmbeta$}{Beta}-logic} 
In this subsection, we introduce $\rmbeta$-logic, which will be generalized into $F$-logic for a dilator $F$ in a later section.
An infinitary proof system for $\rmbeta$-logic can be more complicated. Still, we introduce only the simplest form of $\rmbeta$-logic, obtained from first-order logic by adding an ordinal sort and rules for ordinals.

Throughout this paper, we use one-sided Tait-style sequent calculi.
Sequents are identified up to exchange and contraction, and are closed under weakening. Thus, structural rules are not part of the official deduction system. When a proof diagram displays weakening (W), contraction, or exchange, this is only a bookkeeping indication of the corresponding admissible structural operation.
We also understand sequents as sets of formulas. Hence, as an example, $\Gamma, A$ and $\Gamma, A, A$ are identical sequents.

\begin{definition}
    A \emph{$\rmbeta$-language} is a many-sorted first-order language $\calL$ with a distinguished type $\ord$ together with a distinguished binary predicate $\le_\ord$, and the only terms in $\calL$ of type $\ord$ are variables. In particular, we do not allow constant symbols of type $\ord$ or function symbols of value $\ord$. A \emph{$\rmbeta$-theory} is a theory in some $\rmbeta$-language.
    
    A structure $M$ in a $\rmbeta$-language $\calL$ is a \emph{pre-$\rmbeta$-model} if the \emph{ordinal part} $(\ord^M,\le_\ord^M)$ of $M$ is a linear order. If the ordinal part of $M$ is a well-order, we call $M$ a \emph{$\rmbeta$-model}.
    We also define the following semantic implication notions: For a $\rmbeta$-language $\calL$ and a $\rmbeta$-theory $T$ in $\calL$, and a formula $\phi$ in $\calL$,
    \begin{enumerate}
        \item $T\vDash_{\pre\rmbeta} \phi$ if every pre-$\rmbeta$-model of $T$ satisfies the universal closure of $\phi$. We call $\phi$ is \emph{pre-$\rmbeta$-valid in $T$}.
        \item $T\vDash_{\rmbeta} \phi$ if every $\rmbeta$-model of $T$ satisfies the universal closure of $\phi$. We call $\phi$ is \emph{$\rmbeta$-valid in $T$}.
    \end{enumerate}
    We omit $T$ in the previous notations when $T=\varnothing$. 
\end{definition}

The syntactical part of $\rmbeta$-logic comprises two parts: One is each instance $\alpha$-logic for an ordinal $\alpha$, and the other is the functoriality condition unifying each instance by one. We first introduce the $\alpha$-logic:
\begin{definition} \label{Definition: LKalpha}
    Let $\alpha$ be a fixed ordinal and $\calL$ be a $\rmbeta$-language. Now let $\calL_\alpha$ be the language obtained from $\calL$ by adding constant symbols $\overline{\xi}$ for $\xi\in\alpha$. The rules of Tait-style $\LK_\alpha$ are obtained from the rules of first-order logic with the following additional rules:
    \begin{center}
        \vspace{1em}
        \begin{mathprooftree}
            \AxiomC{}
            \RightLabel{$(<_\ord)$ ($\eta<\xi$)}
            \UnaryInfC{$\overline{\eta}<_\ord\overline{\xi}$}
        \end{mathprooftree} %
        \hspace{3em}
        \begin{mathprooftree}
            \AxiomC{}
            \RightLabel{$(\le_\ord)$ ($\eta\le\xi$)}
            \UnaryInfC{$\overline{\eta}\le_\ord\overline{\xi}$}
        \end{mathprooftree}
        
        \vspace{1em}
        \begin{mathprooftree}
            \AxiomC{$\cdots \quad \Gamma, A(\overline{\xi}) \quad\cdots \quad (\text{for all }\xi<\alpha)$}
            \RightLabel{$\forall^\ord$}
            \UnaryInfC{$\Gamma,\forall^\ord x A(x)$}
        \end{mathprooftree} %
        \hspace{3em}
        \begin{mathprooftree}
            \AxiomC{$\Gamma,A(\overline{\xi})$}
            \RightLabel{$\exists^\ord$}
            \UnaryInfC{$\Gamma,\exists^\ord x A(x)$}
        \end{mathprooftree} 
    \end{center}
    We also assume that we can apply structural rules (Weakening, Contraction, Exchange) without explicit application of these rules; These rules can be simultaneously applied to other rules. 
\end{definition}

\begin{example}
    The following proof gives an $\LK_\omega$-proof of $\forall^\ord x \exists^\ord y (x<_\ord y)$:
    \vspace{1em}
    \begin{center}
        \begin{mathprooftree}
            \AxiomC{}
            \RightLabel{$(<_\ord)$}
            \UnaryInfC{$\overline{0} <_\ord \overline{1}$}
            \RightLabel{$(\exists^\ord)$}
            \UnaryInfC{$\exists^\ord y (\overline{0} <_\ord y)$}

            \AxiomC{}
            \RightLabel{$(<_\ord)$}
            \UnaryInfC{$\overline{1} <_\ord \overline{2}$}
            \RightLabel{$(\exists^\ord)$}
            \UnaryInfC{$\exists^\ord y (\overline{1} <_\ord y)$}

            \AxiomC{}
            \RightLabel{$(<_\ord)$}
            \UnaryInfC{$\overline{2} <_\ord \overline{3}$}
            \RightLabel{$(\exists^\ord)$}
            \UnaryInfC{$\exists^\ord y (\overline{2} <_\ord y)$}

            \AxiomC{$\cdots$}
            
            \RightLabel{$(\forall^\ord)$}
            \QuaternaryInfC{$\forall^\ord x \exists^\ord y (x<_\ord y)$}
        \end{mathprooftree}
    \end{center}
\end{example}

\begin{example}
    We can prove transfinite induction over $\LK_\alpha$: For a formula $A(x)$, let us take
    \begin{equation*}
        \Prog_A \equiv \forall^\ord x \bigl[ \exists^\ord y\bigl( y<_\ord x\land \lnot A(y) \bigr)\bigr]\lor A(x).
    \end{equation*}
    Note that 
    \begin{equation*}
        \lnot\Prog_A \iff \exists^\ord x \bigl[ \forall^\ord y\bigl( x\le_\ord y\lor A(y) \bigr)\bigr]\land \lnot A(x).
    \end{equation*}
    Now suppose that we are given an $\LK_\alpha$-proof $\pi_\eta$ for the sequent $\lnot\Prog_A, A(\overline{\eta})$ for every $\eta<\xi$. Now consider the $\LK_\alpha$-proof $\pi_\xi$ given as follows:
    \begin{center} \vspace{1em}
    \begin{mathprooftree}
        \AxiomC{$\pi_\eta$}
        \noLine
        \UnaryInfC{$\vdots$}
        \RightLabel{($\eta<\xi$)}
        \UnaryInfC{$\lnot\Prog_A,A(\overline{\eta})$}
        \RightLabel{($\lor$)}
        \UnaryInfC{$\lnot\Prog_A,\overline{\xi}\le_\ord \overline{\eta}\lor A(\overline{\eta})$}

        \AxiomC{}
        \RightLabel{($\eta\ge\xi$)}
        \UnaryInfC{$\overline{\eta}\ge_\ord\overline{\xi}$}
        \RightLabel{($\lor$)}
        \UnaryInfC{$\lnot\Prog_A,\overline{\xi}\le_\ord \overline{\eta}\lor A(\overline{\eta})$}

        \RightLabel{($\forall^\ord$)}
        \BinaryInfC{$\lnot\Prog_A, \forall^\ord y \bigl(\overline{\xi}\le_\ord y \lor A(y)\bigr)$}

        \RightLabel{(W)}
        \UnaryInfC{$\lnot\Prog_A, \forall^\ord y \bigl(\overline{\xi}\le_\ord y \lor A(y)\bigr), A(\overline{\xi})$}

        \AxiomC{}
        \RightLabel{(Ax)}
        \UnaryInfC{$\lnot A(\overline{\xi}), A(\overline{\xi})$}
        \RightLabel{(W)}
        \UnaryInfC{$\lnot\Prog_A, \lnot A(\overline{\xi}), A(\overline{\xi})$}

        \RightLabel{($\land$)}
        \BinaryInfC{$\lnot\Prog_A, \forall^\ord y \bigl(\overline{\xi}\le_\ord y \lor A(y)\bigr) \land \lnot A(\overline{\xi}), A(\overline{\xi})$}

        \RightLabel{($\exists^\ord$)}
        \UnaryInfC{$\lnot\Prog_A, A(\overline{\xi})$}
    \end{mathprooftree}
    \end{center}

    Hence, we have the following $\alpha$-proof:
    \begin{center}
    \begin{mathprooftree}
        \AxiomC{$\pi_\xi$}
        \noLine
        \UnaryInfC{$\vdots$}
        \noLine
        \UnaryInfC{$\lnot\Prog_A$, $A(\overline{\xi})$ \hspace{1em} $(\xi\in\alpha)$}
        \RightLabel{($\forall^\ord$)}
        \UnaryInfC{$\lnot\Prog_A$, $\forall^\ord x A(x)$}
    \end{mathprooftree}
    \end{center} 
\end{example}

Now, let us define the functoriality part.
\begin{definition}
    Let $\alpha,\beta$ be ordinals and $f\colon \alpha\to\beta$ be an increasing function.
    Suppose that $\Gamma$ is a sequent in $\calL_\beta$ such that every ordinal constant symbol occurring in $\Gamma$ takes the form $\overline{f(\xi)}$ for some $\xi\in\alpha$. 
    We define the new $\calL_\alpha$-sequent $\sfm_f(\Gamma)$ obtained from $\Gamma$ by replacing every constant symbol of the form $\overline{f(\xi)}$ with $\overline{\xi}$.
    
    For an $\LK_\beta$-proof $\pi$ of $\Gamma$, let us define its \emph{mutilation} ${}^f\pi$ obtained from $\pi$ by replacing every sequent $\Delta$ occurring in $\pi$ with $\sfm_f(\Delta)$. If the replacement is impossible, remove the corresponding sequent from the proof accordingly. 
\end{definition}
${}^f\pi$ is not necessarily an $\LK_\alpha$-proof in general, although we will only consider such proofs. Every $\LK_\alpha$-proof we will consider satisfies the following property:
\begin{definition}
    Let $\pi$ be an $\LK_\beta$-proof of a sequent $\Gamma$ with no ordinal constants. We say $\pi$ is \emph{homogeneous} if for every increasing $f,g\colon \alpha\to\beta$, we have ${}^f\pi = {}^g\pi$.
\end{definition}

If $\pi$ is a homogeneous $\LK_\beta$-proof of $\Gamma$ and $\alpha<\beta$, we can uniquely determine the corresponding $\LK_\alpha$-proof. Now let us define the notion of $\rmbeta$-proof:
\begin{definition}
    A \emph{$\rmbeta$-preproof} is a sequence of preproofs $\lag \pi_\alpha\mid \alpha\in\Ord\rag$ such that for every increasing $f\colon\alpha\to\beta$, ${}^f\pi_\beta=\pi_\alpha$. A \emph{$\rmbeta$-proof} is a $\rmbeta$-preproof whose each component is well-founded.
\end{definition}

The following fact is known, although we will provide its proof later in a different fashion:
\begin{proposition}[$\rmbeta$-soundness and completeness, Girard \cite{Girard1982Logical2}]
    Let $\calL$ be a $\rmbeta$-language. An $\calL$-sentence $A$ is $\rmbeta$-provable iff $A$ is $\rmbeta$-valid.
\end{proposition}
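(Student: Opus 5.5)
Soundness goes by induction on proof height, using the compatibility of the components of a $\rmbeta$-proof with mutilation. Completeness goes through a Schütte-style search tree, whose functorial structure yields a $\rmbeta$-preproof and whose ill-foundedness yields a countermodel.

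\emph{Soundness.} Let $\lag\pi_\alpha\mid\alpha\in\Ord\rag$ be a $\rmbeta$-proof of $A$, and let $M$ be a $\rmbeta$-model. Its ordinal part is a well-order, hence isomorphic to some ordinal $\alpha$. I interpret the constants $\overline{\xi}$ ($\xi<\alpha$) by the corresponding elements of $\ord^M$. I then show, by induction along the well-founded proof $\pi_\alpha$, that every sequent in $\pi_\alpha$ is true in $M$ under this interpretation. The rules $(<_\ord)$ and $(\le_\ord)$ are sound because the interpretation is an isomorphism onto $\ord^M$. The rule $\forall^\ord$ is sound precisely because its premises range over all $\xi<\alpha$, that is, over all of $\ord^M$. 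Since the end sequent $A$ has no ordinal constants, $M\vDash A$. For soundness only the component at the order type of $\ord^M$ matters.

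\emph{Completeness.} This is the main work. Assume $A$ is $\rmbeta$-valid. For each ordinal $\alpha$, build the canonical cut-free search tree $T_\alpha$ for $A$ in $\LK_\alpha$ in the standard fair way: reduce formulas in a fixed order, use $\forall^\ord$ with all $\alpha$ premises, and when treating an $\exists^\ord$-formula add the instance $\overline{\xi}$ for ordinal constants already present in the branch. I intend to bound the number of new ordinal constants introduced along a branch at each stage. The point is that $T_\alpha$ is built uniformly from order-theoretic data only, namely which constants occur and how they compare. The construction then commutes with mutilation: for increasing $f\colon\alpha\to\beta$ we get ${}^fT_\beta=T_\alpha$. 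This gives a $\rmbeta$-preproof, and homogeneity follows.

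The main obstacle is to design the $\exists^\ord$ step so that functoriality holds exactly. Only finitely many constants may appear on each branch, and the choice of witnesses must depend only on the finite order type of the constants present. A naive enumeration of all $\xi<\alpha$ would not be preserved by mutilation. To avoid this, I would use finite order templates: at an $\exists^\ord x\,B(x)$ step, I branch only over the $\forall^\ord$-introduced constants already in the branch, together with the $\exists$-instances they force. Every ordinal element that matters in a countermodel then arises from some $\forall^\ord$ premise.

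It remains to show that each $T_\alpha$ is well-founded. Suppose some $T_\alpha$ had an infinite branch. By fairness, the branch determines a pre-$\rmbeta$-model $M$ falsifying $A$ in the usual way. Its ordinal part consists of the constants occurring on the branch, ordered as in $\alpha$, so it is a suborder of a well-order and hence a $\rmbeta$-model. This contradicts $\rmbeta$-validity. Thus every component is well-founded, and the family is a $\rmbeta$-proof of $A$. A variant of the same argument with all linear orders in place of ordinals gives pre-$\rmbeta$-validity as well, but only the well-ordered case is needed here.
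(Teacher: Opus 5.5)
Your argument is correct in outline, but it takes a different route from the paper.

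\textbf{Your route.} You stay in Girard's family-indexed formulation from the preliminaries. Soundness goes by induction along the component $\pi_\alpha$ at the order type of $\ord^M$. Completeness builds a cut-free Sch\"utte search tree $T_\alpha$ in $\LK_\alpha$ for every ordinal $\alpha$ and checks that ${}^fT_\beta=T_\alpha$. You then get well-foundedness of each $T_\alpha$ separately, because an infinite branch would give a countermodel whose ordinal part is a suborder of $\alpha$.

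\textbf{The paper's route.} The paper proves the statement for its dendrogram-styled system. This is a single tree with sequents $[\ttX]\vdash\Gamma$, in which $\forall^\ord$ branches over the existing template variables and over all one-point extensions of $\ttX$. A $\rmbeta$-proof is a preproof in which every infinite branch has an ill-founded union of template orders. Soundness goes through the instantiation $\pi[\alpha,f]$: an infinite branch of the instance would embed the union of templates into $\alpha$. Completeness is the preproof-property theorem, which builds a single recursive search tree. That tree uses cuts on an enumeration of all formulas, so that each branch decides every formula. Any infinite branch with a well-founded union of templates then yields a branch-recursive $\rmbeta$-model of $\lnot\bigvee\Gamma$.

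\textbf{What each buys.} The paper's route makes functoriality part of the syntax, so there is no mutilation bookkeeping. It is also effective, and that is exactly what is reused for $\sfu\RS$ and for extracting recursive dilators later. Your route proves the proposition literally for the family-style notion and yields cut-free proofs. The cost is that the components are large, non-effective trees, and all of the functoriality has to be verified by hand.

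\textbf{What to make precise in that verification.} Do not speak of ``branching'' at $\exists^\ord$; that rule is unary. Instead, add $B(\overline{\xi})$ for exactly the constants occurring in the current sequent. All of these come from $\forall^\ord$ premises, since $A$ has none. Choose $\exists^\tau$ witnesses only among terms whose ordinal constants are already present, and enumerate them by their order pattern rather than by the actual values $\xi$. Finally, note that formulas are retained going upward. Hence mutilation deletes exactly the subtrees above $\forall^\ord$ premises whose new constant lies outside the range of $f$, and what remains is a legitimate $\LK_\alpha$ inference.
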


\section{Dendrogram-styled \texorpdfstring{$\rmbeta$}{Beta}-logic}
However, the previous dendroid-style $\rmbeta$-proof has a technical flaw: It does not allow the mention of ordinal parameters in the conclusion. In the latter application, however, we need to refer to ordinal parameters in the conclusion to facilitate an (co)inductive argument on $\rmbeta$-preproofs. Once a parameter is represented by an actual ordinal constant, the proof may depend on accidental features of that ordinal, rather than only on its order relation to the other parameters. For example, if a parameter is instantiated as $0$, then the corresponding proof component cannot contain branches for
ordinals below that parameter. This is harmless while the parameter is really $0$, but it becomes problematic after reindexing: the same parameter may be sent to a larger ordinal $\xi$, and then the missing
branches below the parameter should correspond to ordinals below $\xi$. Such information cannot be recovered from the proof component in which the parameter was fixed as $0$.

To avoid this information loss, we replace actual ordinal parameters with \emph{finite linear order templates}. A template records only the relative order among parameters and keeps open all possible ways of inserting new ordinal variables around them.
The resulting $\rmbeta$-(pre)proof is no longer a functorial family of $\alpha$-(pre)proofs, but a single proof tree resembling a cell-decomposition of a (pre)dilator. The resulting (pre)proof also resembles proof trees represented in Towsner's proof system in \cite{Towsner2025proofsmodifyproofs12}.

\begin{definition}
    A \emph{template order} is a finite linear order whose field is a natural number. We denote template orders by $\ttX,\ttY,\ttZ,$ etc. To distinguish members of $\ttX$ from the usual natural numbers, we denote members of $\ttX$ by $\tilde{0}, \tilde{1}, \tilde{2}, $ etc.
\end{definition}
We will think of a template order as a finite set of variables (or ordinal indiscernibles) in a $\rmbeta$-proof system.
Now let us define the deduction system for the dendrogram-styled $\rmbeta$-logic. Every sequent of the deduction system takes the form
\begin{equation*}
    [\ttX]\vdash\Gamma
\end{equation*}
for a template order $\ttX$ and a sequent $\Gamma$ in the language $\calL$ augmented with ordinal constant symbols $\tilde{n}$ for $n\in \ttX$.

\begin{definition}
    Let us define the axioms and deduction rules of $\LK_{\rmbeta}$ as follows: We assume the typical rules for first-order logic, and add the following rules for ordinals:
    \begin{center}
        \vspace{1em}
        \begin{mathprooftree}
            \def\fCenter{\vdash}
            \AxiomC{}
            \RightLabel{$(<_\ord)$ ($\ttX \vDash x<y$)}
            \UnaryInfC{$[\ttX]\fCenter \tilde{x}<_\ord\tilde{y}$}
        \end{mathprooftree} %
        \hspace{3em}
        \begin{mathprooftree}
            \def\fCenter{\vdash}
            \AxiomC{}
            \RightLabel{$(\le_\ord)$ ($\ttX \vDash x\le y$)}
            \UnaryInfC{$[\ttX]\fCenter \tilde{x}\le_\ord\tilde{y}$}
        \end{mathprooftree}
        
        \vspace{1em}
        \begin{mathprooftree}
            \def\fCenter{\vdash}
            \AxiomC{$\cdots\ [\ttX']\fCenter \Gamma, A(\tilde{n}) \quad (n\notin \ttX)\ \cdots $}
            \AxiomC{$\cdots\ [\ttX]\fCenter \Gamma, A(\tilde{n}) \quad (n\in \ttX)\ \cdots$}
            \RightLabel{$(\forall^\ord)$}
            \BinaryInfC{$[\ttX]\fCenter \Gamma,\forall^\ord x A(x)$}
        \end{mathprooftree}
        
        \vspace{1em}
        \begin{mathprooftree}
            \def\fCenter{\vdash}
            \AxiomC{$[\ttX]\fCenter \Gamma,A(\tilde{n})$}
            \RightLabel{$(\exists^\ord)$}
            \UnaryInfC{$[\ttX]\fCenter \Gamma,\exists^\ord x A(x)$}
        \end{mathprooftree}
    \end{center}
    In $\forall^\ord$, the first form of the hypotheses is enumerated for every possible $\ttX'$ for an extension of $\ttX$ of the form $\ttX\cup \{n\}$, where $n$ is the least number not in $\field \ttX$. (The order of $n$ in $\ttX'$ can be arbitrary).

    We form a $\rmbeta$-preproof from a given set of rules. We say a $\rmbeta$-preproof $\pi$ is a \emph{$\rmbeta$-proof} if for every infinite branch $B$ of $\pi$, the union of all template orders occurring in $B$ is ill-founded.
\end{definition}
The definition of a $\rmbeta$-proof is inspired by the cell-decomposition characterization of a dilator: A predilator $D$ is a dilator iff its cell-decomposition $\operatorname{Cell}(D)$ is locally well-founded, and for every infinite branch of $\operatorname{Cell}(D)$, the union of its priority orders (also known as a \emph{thread}) is ill-founded.
See \cite{AguileraFreundPakhomov??Shallow} or \cite[Ch. D]{JeonPhD} for the details.

\begin{example}
    Let us provide a $\rmbeta$-proof for the transfinite induction. We define $\ttX_n$ by
    \begin{equation*}
        \ttX_n = \{\tilde{n}<\widetilde{n-1} < \cdots < \tilde{1}< \tilde{0}\}.
    \end{equation*}
    Then, for each $n$, consider the proof tree $\tilde{\pi}_n$ of $[\ttX_n]\vdash \lnot\Prog_A,A(\tilde{n})$ given as follows, where $m=n+1$:
    First, let us consider the following proof tree $\tilde{\pi}'_n$:
    \begin{center} \vspace{1em}
    \begin{mathprooftree}
        \def\fCenter{\vdash}
        \AxiomC{$\tilde{\pi}_{n+1}$}
        \noLine
        \UnaryInfC{$\vdots$}
        \UnaryInfC{$[\ttX_{n+1}]\fCenter \lnot\Prog_A,A(\tilde{m})$}
        \RightLabel{($\lor$)}
        \UnaryInfC{$[\ttX_{n+1}]\fCenter \lnot\Prog_A,\tilde{n}\le_\ord \tilde{m}\lor A(\tilde{m})$}

        \AxiomC{}
        \RightLabel{$(\le_\ord)$}
        \UnaryInfC{$[\ttX']\fCenter \tilde{m}\ge_\ord\tilde{n}$}
        \RightLabel{($\lor$)}
        \UnaryInfC{$[\ttX']\fCenter \lnot\Prog_A,\tilde{n}\le_\ord \tilde{m}\lor A(\tilde{m})$}

        \RightLabel{($\forall^\ord$)}
        \BinaryInfC{$[\ttX_n]\fCenter\lnot\Prog_A, \forall^\ord y \bigl(\tilde{n}\le_\ord y \lor A(y)\bigr)$}
    \end{mathprooftree}
    \end{center}
    Here $(\le_\ord)$ covers all other possible cases $\ttX'$ ($\ttX'=\ttX$ is possible) on which $\ttX'\vDash n\le m$ holds. Then consider the following:
    
    \begin{center} \vspace{1em}
    \begin{mathprooftree}
        \def\fCenter{\vdash}
        \AxiomC{$\tilde{\pi}_n'$}
        \noLine
        \UnaryInfC{$\vdots$}
        \UnaryInfC{$[\ttX_n]\fCenter\lnot\Prog_A, \forall^\ord y \bigl(\tilde{n}\le_\ord y \lor A(y)\bigr)$}
        \RightLabel{(W)}
        \UnaryInfC{$[\ttX_n]\fCenter\lnot\Prog_A, \forall^\ord y \bigl(\tilde{n}\le_\ord y \lor A(y)\bigr), A(\tilde{n})$}

        \AxiomC{}
        \RightLabel{(Ax)}
        \UnaryInfC{$[\ttX_n]\fCenter \lnot A(\tilde{n}), A(\tilde{n})$}
        \RightLabel{(W)}
        \UnaryInfC{$[\ttX_n]\fCenter \lnot\Prog_A, \lnot A(\tilde{n}), A(\tilde{n})$}

        \RightLabel{($\land$)}
        \BinaryInfC{$[\ttX_n]\fCenter \lnot\Prog_A, \forall^\ord y \bigl(\tilde{n}\le_\ord y \lor A(y)\bigr) \land \lnot A(\tilde{n}), A(\tilde{n})$}

        \RightLabel{($\exists^\ord$)}
        \UnaryInfC{$[\ttX_n]\fCenter \lnot\Prog_A, A(\tilde{n})$}
    \end{mathprooftree}
    \end{center}
    Then the following proof tree gives a $\rmbeta$-preproof for the transfinite induction:
    \begin{center} \vspace{1em}
    \begin{mathprooftree}
        \def\fCenter{\vdash}
        \AxiomC{$\tilde{\pi}_0$}
        \noLine
        \UnaryInfC{$\vdots$}
        \UnaryInfC{$[\ttX_0]\fCenter \lnot\Prog_A, A(\tilde{0})$}
        \RightLabel{$\forall^\ord$}
        \UnaryInfC{$[\varnothing]\fCenter \lnot\Prog_A, \forall^\ord x A(x)$}
    \end{mathprooftree}
    \end{center}
    Now let us argue that it is a $\rmbeta$-proof: We can see that there is only one infinite branch of the previous $\rmbeta$-preproof, namely, a branch such that the sequence of the form $[\ttX_n]\vdash \lnot\Prog_A,A(\tilde{n})$ occurs cofinally often. The conclusion now follows since $\bigcup_n \ttX_n$ has ordertype $\omega^*$.
\end{example}

There is also a downside to the dendrogram-styled proof system. One of the major problems is that we can no longer rank proof trees by ordinals. We can still rank them by dilator terms with parameters from the template order, but it also means we cannot use inductive arguments by ranks of the proof. It becomes problematic when we prove, for example, cut elimination for proofs, since the typical proof uses induction on ranks. The solution is that we will not prove the cut elimination by induction on ranks, but by coinduction on preproofs. Hence, in principle, we can cut-eliminate preproofs that are not even proofs. 
It does not mean ranks are useless, since the rank certifies that a preproof is a proof. 
Such an idea appears in Towsner's work \cite{Towsner2025proofsmodifyproofs12}.

\subsection{Soundness and Completeness, again.} In this subsection, we provide a proof of $\rmbeta$-soundness and $\rmbeta$-completeness in terms of dendrogram-styled $\rmbeta$-logic. For soundness, we need to instantiate a given $\rmbeta$-preproof into an $\LK_\alpha$-preproof. (See \autoref{Definition: LKalpha} for the definition of $\LK_\alpha$.)
\begin{proposition}
    Let $\alpha$ be a linear order.
    Suppose that $\pi$ is an $\alpha$-proof of a sequent $\Gamma$ and $M$ is an $\rmbeta$-model with $\ord^M=\alpha$. Then $M\vDash \bigvee\Gamma$.
\end{proposition}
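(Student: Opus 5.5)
The plan is to argue by contradiction and follow a false branch. Here an ``$\alpha$-proof'' is an $\LK_\alpha$-proof, that is, a well-founded $\LK_\alpha$-preproof. Suppose $M\nvDash\bigvee\Gamma$ for a suitable closed instance, and fix interpretations of the free non-ordinal variables in $\Gamma$ so that every formula in $\Gamma$ is false in $M$. Interpret each ordinal constant $\overline{\xi}$ by $\xi\in\alpha=\ord^M$.

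We then descend through $\pi$, building a path of sequents $\Gamma=\Gamma_0,\Gamma_1,\dots$ in which every formula of each $\Gamma_k$ is false in $M$, together with an expanding assignment to the eigenvariables. The step is a case analysis on the last rule applied to $\Gamma_k$.
\begin{enumerate}
\item Axioms cannot occur on the path. For $(<_\ord)$ or $(\le_\ord)$ with $\eta<\xi$ (resp.\ $\eta\le\xi$), the principal formula is true in $M$, because $\le_\ord^M$ is the order of $\alpha$. A logical axiom $\lnot A, A$ cannot consist entirely of false formulas.
\item For $\forall^\ord x A(x)$ false in $M$, choose $\xi\in\alpha$ with $M\vDash\lnot A(\overline{\xi})$. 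Move to the premise $\Gamma',A(\overline{\xi})$; every formula there is false, since the side formulas $\Gamma'$ belong to $\Gamma_k$. This is the crucial use of the $\omega$-rule: it has a premise for every $\xi\in\alpha$, so the witness of falsity is always available.
\item For $\exists^\ord$ with premise $A(\overline{\xi})$, falsity of $\exists^\ord x A(x)$ gives falsity of $A(\overline{\xi})$.
\item The first-order rules ($\land$, $\lor$, $\forall$, $\exists$, and cut) are handled in the usual Tait way. For $\land$, pick the false conjunct. For the non-ordinal $\forall$, choose a witness in $M$ for the eigenvariable, which is the reason for the expanding assignment. For cut $\Delta,C$ and $\Delta,\lnot C$, pick whichever of $C$, $\lnot C$ is false.
\end{enumerate}
The path therefore never terminates and is an infinite branch of $\pi$. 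Because $\pi$ is an $\alpha$-proof, it is well-founded as a tree, so this is a contradiction.

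I expect no serious obstacle. The only delicate point is that the paper leaves ``$\alpha$-proof'' informal for a linear order $\alpha$. I would read it as a well-founded $\LK_\alpha$-preproof, where $\LK_\alpha$ with $\alpha$ a linear order is defined exactly as in \autoref{Definition: LKalpha}.

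The argument does not use well-foundedness of $\alpha$, only well-foundedness of the tree. This is consistent with the statement, whose hypotheses require $M$ to be a $\rmbeta$-model, and it matters for the later use, where instantiations of a dendrogram $\rmbeta$-proof at $\alpha$ are well-founded precisely when $\alpha$ is a well-order.

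If one prefers to avoid dependent choice in building the branch, the same content can be given by induction on the well-founded tree. One proves that every sequent in $\pi$ is valid in $M$ under every assignment, using the rule cases above as the inductive steps; for $\forall^\ord$ this uses that all premises indexed by $\xi\in\alpha$ are valid.
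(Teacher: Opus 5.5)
Your argument is correct and is essentially the paper's proof, which says only that the claim follows by induction on $\pi$. Your false-branch construction is the contrapositive of that induction with the same rule-by-rule case analysis, including the key $\forall^\ord$ case, which uses that $\ord^M=\alpha$ so every element of the ordinal sort is named by some $\overline{\xi}$; your closing paragraph is exactly the paper's route.

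One thing to tidy is the assignment: make it total from the start, covering ordinal-sort variables and variables that first appear in cut formulas or $\exists$-witness terms. Also let it be overwritten rather than merely extended at eigenvariable inferences; this is harmless because the eigenvariable is not free in the conclusion.
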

\begin{proof}
    Easily follows from an induction on $\pi$.
\end{proof}

\begin{definition}
    Let $\pi$ be an $\rmbeta$-preproof of the form
    \begin{equation*}
    \begin{mathprooftree}
        \AxiomC{$\pi'$}
        \noLine
        \UnaryInfC{$\vdots$}
        \RightLabel{($\rmR$)}
        \UnaryInfC{$[\ttX] \vdash \Gamma$}
    \end{mathprooftree}
    \end{equation*}
    $\alpha$ is a linear order, and $f\colon \ttX\to \alpha$ is an increasing function that can be thought of as an assignment of variables. We define the \emph{instantiation} $\pi[\alpha,f]$ corecursively as follows: 
    If the last inference rule $\rmR$ of $\pi$ is other than $\forall^\ord$, we define $\pi[\alpha,f]$ by
    \begin{equation*}
    \begin{mathprooftree}
        \AxiomC{$\pi'[\alpha,f]$}
        \noLine
        \UnaryInfC{$\vdots$}
        \RightLabel{($\rmR$)}
        \UnaryInfC{$\Gamma[f]$}
    \end{mathprooftree}
    \end{equation*}
    Here we define $A[f]$ for an $\LK_\rmbeta$-formula $A$ as a formula obtained from $A$ by replacing an occurrence of $v\in \ttX$ by $f(v)\in \alpha$. Then we define $\Gamma[f]$ as the sequent of $A[f]$ for $A\in\Gamma$.
    If $\rmR$ is $\forall^\ord$, then $\Gamma$ is $\Gamma',\forall^\ord x A(x)$, and $\pi$ would take the form
    \begin{equation} \label{Formula: LKbeta instantiation - forall ord case before instantiation}
    \begin{mathprooftree}
        \AxiomC{$\pi'_{\ttX'}$}
        \noLine
        \UnaryInfC{$\vdots$}
        \noLine
        \UnaryInfC{$\cdots\ [\ttX']\vdash A(w), \Gamma'\ \cdots$}
        
        \AxiomC{$\pi'_{\ttX,v}$}
        \noLine
        \UnaryInfC{$\vdots$}
        \noLine
        \UnaryInfC{$\cdots\ [\ttX]\vdash A(v), \Gamma'\ (v\in \ttX)\ \cdots$}
        
        \RightLabel{($\forall^\ord$)}
        \BinaryInfC{$[\ttX] \vdash \Gamma$}
    \end{mathprooftree}
    \end{equation}
    Here, $\ttX'$ is an one-element extension of $\ttX$, and $w\in \ttX'\setminus \ttX$. Then we define $\pi[\alpha,f]$ as follows:
    \begin{equation*}
    \begin{mathprooftree}
        \AxiomC{$\pi'_{\ttX'}[\alpha,g]$}
        \noLine
        \UnaryInfC{$\vdots$}
        \noLine
        \UnaryInfC{$\cdots\ A[\alpha,f](g(w)), \Gamma'[\alpha,f]\ \cdots$}
        
        \AxiomC{$\pi'_{\ttX,v}[\alpha,f]$}
        \noLine
        \UnaryInfC{$\vdots$}
        \noLine
        \UnaryInfC{$\cdots\ A[\alpha,f](f(v)), \Gamma'[\alpha,f]\ (v\in \ttX)\ \cdots$}
        
        \RightLabel{($\forall^\ord$)}
        \BinaryInfC{$\Gamma[\alpha,f]$}
    \end{mathprooftree}
    \end{equation*}
    Here we enumerate the left-hand side for all possible $g\colon \ttX'\to \alpha$ that extend $f\colon \ttX\to \alpha$.
\end{definition}

The following proposition immediately implies the soundness of $\rmbeta$-logic:
\begin{proposition}
    If $\pi$ is a $\rmbeta$-preproof, then $\pi[\alpha,f]$ is an $\alpha$-preproof. If $\pi$ is a $\rmbeta$-proof and $\alpha$ is a well-order, then $\pi[\alpha,f]$ is an $\alpha$-proof.
\end{proposition}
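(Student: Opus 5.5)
The first claim is a local, rule-by-rule check of the corecursive definition of $\pi[\alpha,f]$. The second claim goes by contraposition: an ill-founded instantiation is turned into an infinite branch of $\pi$ whose thread is well-founded.

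For the first claim, I check that each inference of $\pi[\alpha,f]$ is a correct $\LK_\alpha$-inference.
\begin{itemize}
  \item If the last rule $\rmR$ is propositional or a quantifier rule of a non-ordinal sort, it commutes with substitution $A\mapsto A[f]$, so the instantiated figure is again an instance of $\rmR$.
  \item For $(<_\ord)$ and $(\le_\ord)$, I use that $f$ is increasing. Thus $\ttX\vDash x<y$ gives $f(x)<f(y)$, and the instantiated axiom is an $\LK_\alpha$-axiom.
  \item For $(\exists^\ord)$ with witness $\tilde n\in\ttX$, the premise becomes $\Gamma[f],A[f](\overline{f(n)})$, which is a correct $\exists^\ord$-inference.
  \item For $(\forall^\ord)$, take an arbitrary $\xi\in\alpha$. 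Either $\xi=f(v)$ for some $v\in\ttX$, and the premise $\pi'_{\ttX,v}[\alpha,f]$ supplies $A(\overline{\xi})$. Or $\xi\notin\ran f$, and then $\xi$ sits in a unique gap of $\ran f$. That gap determines a unique one-element extension $\ttX'=\ttX\cup\{n\}$ with an increasing $g\colon\ttX'\to\alpha$ extending $f$ and $g(n)=\xi$. Every $\xi<\alpha$ is therefore covered.
\end{itemize}
In the $\forall^\ord$ case, the premises enumerated in the definition come as pairs $(\ttX',g)$ rather than just elements $\xi$. These pairs correspond bijectively to the $\xi$'s outside $\ran f$, so after re-indexing the figure is literally an $\LK_\alpha$ $\forall^\ord$-rule. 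Since $\pi[\alpha,f]$ is defined corecursively, it is a preproof.

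For the second claim, assume $\pi$ is a $\rmbeta$-proof, $\alpha$ is a well-order, and suppose $\pi[\alpha,f]$ has an infinite branch. I lift this branch along the corecursion to an infinite branch $B$ of $\pi$, together with increasing assignments $f_k\colon\ttX_k\to\alpha$ attached to the nodes $[\ttX_k]\vdash\Gamma_k$ of $B$.
\begin{itemize}
  \item Each step either keeps the assignment ($f_{k+1}=f_k$) or passes to a one-element extension $g\supseteq f_k$.
  \item So the $f_k$ are compatible, and their union $F\colon\bigcup_k\ttX_k\to\alpha$ is well defined.
  \item $F$ is order-preserving: any two elements lie in a common $\ttX_k$, where $f_k$ is increasing.
\end{itemize}
Since $\pi$ is a $\rmbeta$-proof, the thread $\bigcup_k\ttX_k$ is ill-founded. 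An order-preserving map into $\alpha$ would then yield a descending sequence in $\alpha$, contradicting well-foundedness.

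The main obstacle is a bookkeeping point: the template orders along a branch must actually be compatible, in the sense that $\ttX_k\subseteq\ttX_{k+1}$ as orders. This holds because rules only ever extend $\ttX$ by a fresh $n$ at $\forall^\ord$ and otherwise keep $\ttX$, which is what makes the union a linear order. I will verify this inclusion explicitly and note that it also makes the lifted branch's thread well defined, as the definition of $\rmbeta$-proof requires.
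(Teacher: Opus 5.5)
Your proposal is correct and follows essentially the same route as the paper. The first claim is a coinductive rule-by-rule check whose only nontrivial case is $\forall^\ord$, handled by extending $f$ by at most one point to reach any $\xi\in\alpha$. The second claim lifts an infinite branch of $\pi[\alpha,f]$ to a branch of $\pi$ whose compatible assignments $f_k$ glue to an order-embedding of $\bigcup_k\ttX_k$ into $\alpha$, contradicting ill-foundedness of the thread.
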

\begin{proof}
    We prove it by coinduction on $\pi$: The most non-trivial case is when the last inference rule of $\pi$ is $\forall^\ord$. We need to check that in $\pi[\alpha,f]$, the last part $\pi$ takes the form
\begin{equation*}
    \begin{mathprooftree}
        \AxiomC{$\varpi_\xi$}
        \noLine
        \UnaryInfC{$\vdots$}
        \noLine
        \UnaryInfC{$\cdots\ A[\alpha,f](\xi), \Gamma'[\alpha,f]\ \cdots$ $(\xi\in\alpha)$}
        
        \RightLabel{($\forall^\ord$)}
        \UnaryInfC{$\forall^\ord x A(x),\ \Gamma'[\alpha,f]$}
    \end{mathprooftree}
    \end{equation*}
    Suppose that $\xi\in \alpha$. We can extend $f\colon \ttX\to \alpha$ to $g\colon \ttX'\to \alpha$ in a way that $\xi\in \ran g$ and $\lvert\dom g\rvert-\lvert\dom f\rvert\le 1$. 
    If $\xi\notin \ran f$, then let us take $\varpi_\xi = \pi'_{\ttX'}[\alpha,g]$ from \eqref{Formula: LKbeta instantiation - forall ord case before instantiation}.
    If $\xi\in \ran f$, then there is $v\in\ttX$ such that $f(v)=\xi$. Then take $\varpi_\xi$ as $\pi'_{\ttX,v}[\alpha,f]$.

    Now suppose that $\pi$ is a $\rmbeta$-proof and $\alpha$ is a well-order. We claim that $\pi[\alpha,f]$ has no infinite branch.
    Suppose the contrary that $\pi$ has an infinite branch $B=\lag \Gamma_n \mid n\in\bbN\rag$. By the definition of the instantiation, we can find an infinite branch $\lag [\ttX_n]\vdash \Gamma'_n\mid n\in\bbN\rag$ and $f_n\colon \ttX_n\to \alpha$ such that $\Gamma'_n[\alpha,f_n] = \Gamma_n$ and $f_n\subseteq f_{n+1}$ for every $n$.
    Then $f=\bigcup_n f_n$ is an embedding from $\ttX:=\bigcup_n \ttX_n$ to $\alpha$, so $\ttX$ is a well-order. It contradicts that $\pi$ is a $\rmbeta$-proof.
\end{proof}

$\rmbeta$-completeness follows from a stronger property that we will call the $\rmbeta$-preproof property:\footnote{The proof roughly follows the idea and terminologies in \cite[Theorem 5.19]{Jeon2025PTDandIntermediatePointclasses}. Let me also point out that the proof presented in the first version \cite{Jeon2025PTDandIntermediatePointclasses} was incorrect.}
\begin{theorem} \label{Theorem: Preproof property of LKbeta}
    Suppose that $[\ttX]\vdash \Gamma$ is a sequent of formulas in $\LK_\rmbeta$ with language $\calL$. We can construct an $\calL$-recursive $\rmbeta$-preproof $\pi$ of $[\ttX]\vdash \Gamma$ such that either
    \begin{enumerate}
        \item $\pi$ is a $\rmbeta$-proof, or
        \item If $B$ is an infinite branch of $\pi$ such that the union of its associated template orders is well-founded, then we can find a $B$-recursive $\rmbeta$-model of $\lnot\bigvee \Gamma$. 
    \end{enumerate}
    In particular, if $\calL$ is recursive, then $\pi$ is recursive.
\end{theorem}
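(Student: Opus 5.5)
This is a Schütte-style search-tree (canonical reduction tree) argument, adapted to templates. I build $\pi$ by a fair, recursive reduction procedure starting from $[\ttX]\vdash\Gamma$.

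First fix a recursive enumeration of all pairs (formula occurrence, stage) such that every formula that ever appears in a branch is treated infinitely often. Also fix an enumeration of the closed terms of each non-ordinal sort, with countably many fresh constants added for the existential witnesses.

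At each node $[\ttY]\vdash\Delta$ the construction proceeds as follows.
\begin{enumerate}
\item If $\Delta$ contains an axiom, the node becomes a leaf. Axioms here are a complementary pair of atomic formulas, or a true atomic ordinal fact $\tilde x<_\ord\tilde y$ or $\tilde x\le_\ord\tilde y$ in $\ttY$.
\item Otherwise we reduce the formula scheduled at this stage, keeping it in the sequent, which is allowed since sequents are sets closed under weakening and contraction.
\begin{itemize}
\item $\land$ branches.
\item $\lor$ adds both disjuncts.
\item A non-ordinal $\forall$ introduces a fresh eigenvariable.
\item A non-ordinal $\exists x A$ adds $A(t)$ for the next term $t$ not yet used for this formula on the branch.
\item $\exists^\ord xA(x)$ adds $A(\tilde n)$ for every $n\in\ttY$. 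This is finitely many at once, and it stays fair because templates only grow along a branch.
\item $\forall^\ord xA(x)$ uses exactly the $\forall^\ord$ rule: one premise $[\ttY']\vdash\Delta,A(\tilde n)$ for each of the finitely many one-point extensions $\ttY'$ (with $n$ the least number not in the field of $\ttY$), and one premise $[\ttY]\vdash\Delta,A(\tilde n)$ for each $n\in\ttY$.
\end{itemize}
\end{enumerate}
Every step is computable from $\calL$, so $\pi$ is $\calL$-recursive, and recursive when $\calL$ is.

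The dichotomy is set up as follows. If every infinite branch has an ill-founded union of templates, then $\pi$ is a $\rmbeta$-proof by definition. Otherwise, let $B$ be an infinite branch whose template union $\ttX_B=\bigcup_n\ttX_n$ is well-founded. The templates along $B$ form an increasing chain of finite linear orders, because each $\forall^\ord$ step either keeps $\ttY$ or adds one point in a specified position, so $\ttX_B$ is a well-order computable from $B$.

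I define a term model $M_B$:
\begin{itemize}
\item the ordinal part is $\ttX_B$ with its order;
\item the other sorts consist of closed terms;
\item an atomic formula is declared true exactly when its negation occurs somewhere on $B$, i.e. atoms occurring positively on $B$ are false.
\end{itemize}
This is consistent because $B$ has no axiom leaf, so no complementary pair occurs and no true ordinal atom occurs positively. Every such ordinal atom was added with respect to a template that $\ttX_B$ extends, and its truth value does not change. All of this is $B$-recursive.

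Then I prove, by induction on formula complexity, that every formula occurring on $B$ is false in $M_B$; in particular $M_B\vDash\lnot\bigvee\Gamma$.
\begin{itemize}
\item For $\exists^\ord xA$ on $B$: fairness means it is reduced at infinitely many stages. Every $u\in\ttX_B$ lies in some $\ttX_n$, so $A(\tilde u)$ eventually appears on $B$ and is false by induction.
\item For $\forall^\ord xA$: at the stage where it is reduced, $B$ passes through one premise, which supplies some $A(\tilde n)$ with $n\in\ttX_B$, false by induction.
\item Non-ordinal quantifiers are handled in the usual way.
\end{itemize}

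I expect the main obstacle to be the bookkeeping for the ordinal quantifiers relative to growing templates. First, $\exists^\ord$ must be re-reduced infinitely often so that every new template element is eventually covered; an element added at stage $n$ has to be instantiated later. Second, the enumeration of one-point extensions must be arranged so that the chain along $B$ really determines a single linear order on $\ttX_B$, and so that ordinal atoms are evaluated consistently with that final order. I would handle this by making the schedule revisit each formula infinitely often and by noting that relative order in templates is never revised once fixed.
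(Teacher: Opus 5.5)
\textbf{Gap: the countermodel is not $B$-recursive.} Your construction is a cut-free search tree. Your verification that every formula on $B$ is false in $M_B$ is essentially sound, including the $\forall^\ord$/$\exists^\ord$ bookkeeping. What fails is the claim ``All of this is $B$-recursive.'' In $M_B$ an atom is true iff its negation occurs somewhere on $B$, and that set is only $\Sigma^0_1$ in $B$. The atoms occurring positively and negatively on $B$ form two disjoint $\Sigma^0_1(B)$ sets, and these need not be $B$-recursively separable. So no model that falsifies everything on $B$ need be $B$-recursive.

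For example, take $\Gamma=\{\forall y\,\exists x\,[(\lnot P(y)\land R(x,y))\lor(P(y)\land S(x,y))]\}$. It has no ordinal quantifiers, so every infinite branch has a finite, hence well-founded, template union. Your fair tree has a recursive infinite branch built as follows, for each eigenvariable $a_j$:
\begin{itemize}
\item at its $k$-th choice it picks the conjunct $\lnot P(a_j)$ iff the $j$-th machine halts at step $k$ with output $0$;
\item it picks $P(a_j)$ analogously for output $1$;
\item otherwise it picks the $R$ or $S$ conjunct.
\end{itemize}
This branch meets no axiom. Any model falsifying all formulas on it separates two recursively inseparable sets. So your recipe yields in general only a $B'$-recursive model, not the $B$-recursive one the statement asks for.

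\textbf{Missing idea: interleave Cuts.} The paper fixes an enumeration $\epsilon_\calL$ of all formulas in which each formula recurs infinitely often. At atomic stages it applies (Cut) on the $\varsigma$-th formula $A$ whenever all ordinal constants of $A$ lie in the current template; otherwise it merely repeats a formula. The stage number increases in either case. Consequently, along any infinite branch, every formula $A$ with parameters in some $\ttX_n$ eventually has $A$ or $\lnot A$ on the branch, and for atoms not both, since that would be an axiom. Truth in $M$ is then decided by searching $B$.

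Adding such cut stages to your schedule repairs your proof. Cut formulas are just further formulas on $B$ that your fair schedule decomposes, so your induction on complexity still goes through. The cut step also ensures that a node containing only atomic formulas still has an inference applied, which your description leaves unspecified.

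\textbf{A smaller bookkeeping point.} Function symbols may take ordinal arguments, so terms of non-ordinal sort can contain constants $\tilde n$. Your ``next unused term'' for $\exists^\tau$ must therefore skip terms whose ordinal constants are not yet in the current template and return to them later. The domain of $M_B$ must also contain terms built from constants of $\ttX_B$. The paper handles this by adding, at each $\exists^\tau$ step, all eligible terms among the first $\varsigma$ of the enumeration.
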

\begin{proof}
    Before stating the main proof, the reader is warned that sequents in this proof are represented as lists and not finite sets. This list structure is only bookkeeping, and the underlying sequent calculus remains Tait-style.
    
    Let us fix a recursive enumeration $\epsilon_\calL$ enumerating all $\calL_{\{\tilde{n}\mid n\in\bbN\}}$-formulas, and a given $\calL_{\{\tilde{n}\mid n\in\bbN\}}$-formula occurs infinitely often. We also fix an enumeration $\epsilon_\calL^\tau$ of all $\calL_{\{\tilde{n}\mid n\in\bbN\}}$-terms with variables, with type $\tau$. 
    We also demand that the first $n$ terms on the list $\epsilon^\tau_\calL$ use ordinal variables from $\{\tilde{m}\mid m<n\}$. In particular, the first element of $\epsilon^\tau_\calL$ does not use any ordinal variables. This is possible since $\epsilon^\tau_\calL$ also enumerates variables of type $\tau$.
    
    We recursively build a preproof from the root (the last sequent). For each sequent $[\ttX]\vdash \Gamma$ we build, we assign the daggered formula in a sequent.
    In the initial case $[\ttX]\vdash\Gamma$, we mark the dagger to the first formula in $\Gamma$. Hence, we construct a $\rmbeta$-preproof from
    \begin{equation} \label{Formula: Proproof property dagger initial case}
        [\ttX]\vdash A_0^\dagger, A_1,\cdots,A_{m-1}
    \end{equation}
    when $\Gamma = A_0,\cdots,A_{m-1}$.    
    We call the dagger-marked formula the \emph{leading formula}.
    In each portion of the construction, we move the dagger to the right side. For example, if we start from \eqref{Formula: Proproof property dagger initial case}, and applying the one-step construction yields a portion with the foremost sequent $[\ttX']\vdash A_0,\cdots,A_{m-1},\Delta$, then we put the dagger as follows:
    \begin{equation*}
    \begin{mathprooftree}
        \AxiomC{$[\ttX']\vdash A_0, A_1^\dagger,\cdots,A_{m-1}, \Delta$}
        \noLine
        \UnaryInfC{$\vdots$}
        \noLine
        \UnaryInfC{$[\ttX]\vdash A_0^\dagger, A_1,\cdots,A_{m-1}$}
    \end{mathprooftree}
    \end{equation*}
    For each sequent $[\ttX]\vdash\Gamma$, we also assign the \emph{stage number} $\varsigma_{[\ttX]\vdash\Gamma}\in\bbN$. In the initial case, we take $\varsigma=0$.

    For a sequent $[\ttX]\vdash \Gamma$ in a preproof we construct, the \emph{level} of the sequent $[\ttX]\vdash\Gamma$ is the number of sequents strictly below it in the preproof. 
    Now we introduce the one-step construction:
    \begin{enumerate}
        \item Suppose that $[\ttX]\vdash \Gamma$ is a weakening of an axiom $[\ttX]\vdash\Gamma'$. Then we put the portion
        \begin{equation*}
        \begin{mathprooftree}
            \AxiomC{}
            \RightLabel{(Ax)}
            \UnaryInfC{$[\ttX]\vdash \Gamma'$}
            \noLine
            \UnaryInfC{$\vdots$}
            \RightLabel{(W)}
            \UnaryInfC{$[\ttX]\vdash\Gamma$}
        \end{mathprooftree}
        \end{equation*}
        and the construction terminates. Now consider the remaining cases.
    
        \item Suppose that the leading formula $C$ is atomic. Let $A$ be the $\varsigma$-th formula in the enumeration $\epsilon$. If every ordinal variable occurring in $A$ is in $\ttX$, then the following portion is
        \begin{equation} \label{Formula: Beta Completeness of LK - Atomic case 0}
        \begin{mathprooftree}
            \AxiomC{$[\ttX]\vdash \Gamma, A$}
            \AxiomC{$[\ttX]\vdash \Gamma, \lnot A$}
            \RightLabel{(Cut)}
            \BinaryInfC{$[\ttX]\vdash \Gamma$}
        \end{mathprooftree}
        \end{equation}

        Otherwise, we apply the structural rules to repeat $C$ to the last of the sequent:
        \begin{equation*}
        \begin{mathprooftree}
            \AxiomC{$[\ttX]\vdash \Gamma, C$}
            \noLine
            \UnaryInfC{$\vdots$}
            \UnaryInfC{$[\ttX]\vdash \Gamma$}
        \end{mathprooftree}
        \end{equation*}
        In both cases, we also increase the stage number by 1. (Note that all other cases will not increase the stage number.) For example, both $[\ttX]\vdash\Gamma,A$ and $[\ttX]\vdash\Gamma,\lnot A$ in \eqref{Formula: Beta Completeness of LK - Atomic case 0} has stage number $\varsigma_{[\ttX]\vdash\Gamma}+1$.

        \item Suppose that the leading formula of $\Gamma$ is $A\land B$. The following portion is
        \begin{equation*}
        \begin{mathprooftree}
            \AxiomC{$[\ttX]\vdash \Gamma, A$}
            \AxiomC{$[\ttX]\vdash \Gamma, B$}
            \RightLabel{($\land$)}
            \BinaryInfC{$[\ttX]\vdash \Gamma$}
        \end{mathprooftree}
        \end{equation*}

        \item Suppose that the leading formula of $\Gamma$ is  $A\lor B$.  The following portion is
        \begin{equation*}
        \begin{mathprooftree}
            \def\fCenter{\vdash}
            \AxiomC{$[\ttX]\fCenter\Gamma, A,B$}
            \RightLabel{($\lor$)}
            \UnaryInfC{$[\ttX]\fCenter\Gamma, B$}
            \RightLabel{($\lor$)}
            \UnaryInfC{$[\ttX]\fCenter \Gamma$}
        \end{mathprooftree}
        \end{equation*}
        Here, if the dagger of $\Gamma$ in $[\ttX]\vdash \Gamma$ is not on the rightmost side, the dagger in $[\ttX]\vdash \Gamma, A, B$ is still in $\Gamma$. Otherwise, $A$ takes the dagger in $[\ttX]\vdash \Gamma, A, B$.

        \item Suppose that the leading formula of $\Gamma$ is  $\forall^\tau x A(x)$ for some sort $\tau\neq\ord$. If $x$ is the G\"odel-number-least variable of sort $\tau$ not occurring in $\Gamma$, then the following portion is
        \begin{equation*}
        \begin{mathprooftree}
            \def\fCenter{\vdash}
            \AxiomC{$[\ttX]\fCenter \Gamma,A(x)$}
            \RightLabel{($\forall^\tau$)}
            \UnaryInfC{$[\ttX]\fCenter \Gamma$}
        \end{mathprooftree}
        \end{equation*}

        \item Suppose that the leading formula of $\Gamma$ is $\exists^\tau x A(x)$ for some sort $\tau\neq\ord$. Let $u_0,\cdots,u_{m-1}$ be the terms such that $u_i$ is the $\le\varsigma$-th term on the list $\epsilon^\tau_\calL$, and all ordinal variables occurring in $u_i$ are in $\ttX$. Then the following portion is
        \begin{equation*}
        \begin{mathprooftree}
            \def\fCenter{\vdash}
            \AxiomC{$[\ttX]\fCenter \Gamma,A(u_0),\cdots,A(u_{m-1})$}
            \RightLabel{($\exists^\tau$)}
            \UnaryInfC{$\vdots$}
            \noLine
            \UnaryInfC{$[\ttX]\fCenter \Gamma,A(u_{m-1})$}
            \RightLabel{($\exists^\tau$)}
            \UnaryInfC{$[\ttX]\fCenter \Gamma$}
        \end{mathprooftree}
        \end{equation*}
        Note that $m\neq 0$ since the first member of $\epsilon^\tau_\calL$ does not use any ordinal variables.

        \item Suppose that the leading formula of $\Gamma$ is  $\forall^\ord x A(x)$. The following portion is
        \begin{equation*}
        \begin{mathprooftree}
            \def\fCenter{\vdash}
            \AxiomC{$\cdots\ [\ttX']\fCenter \Gamma,A(w) \quad (w\notin \ttX)\ \cdots $}
            \AxiomC{$\cdots\ [\ttX]\fCenter \Gamma,A(w) \quad (w\in \ttX)\ \cdots$}
            \RightLabel{$(\forall^\ord)$}
            \BinaryInfC{$[\ttX]\fCenter \Gamma$}
        \end{mathprooftree}
        \end{equation*}
        Here $\ttX'$ is a one-element extension of $\ttX$, and we enumerate the hypotheses under all possible $\ttX'$ and $w\in \ttX$.

        \item Suppose that the leading formula of $\Gamma$ is  $\exists^\ord x A(x)$. If $\ttX = \{\tilde{k}\mid k<n\}$, then the following portion is
        \begin{equation*}
        \begin{mathprooftree}
            \def\fCenter{\vdash}
            \AxiomC{$[\ttX]\fCenter \Gamma,A(\tilde{0}),\cdots,A(\widetilde{n-1})$}
            \RightLabel{($\exists^\ord$)}
            \UnaryInfC{$\vdots$}
            \noLine
            \UnaryInfC{$[\ttX]\fCenter \Gamma,A(\widetilde{n-1})$}
            \RightLabel{($\exists^\ord$)}
            \UnaryInfC{$[\ttX]\fCenter \Gamma$}
        \end{mathprooftree}
        \end{equation*}
    \end{enumerate}
    \vspace{1em}
    
    This generates a recursive preproof $\pi$ of $[\ttX]\vdash\Gamma$. Now suppose that $\pi$ is not a $\rmbeta$-proof, and we have an infinite branch
    \begin{equation*}
        \bigl\langle [\ttX_n] \vdash \Gamma_n \bigm| n\in\bbN \bigr\rangle
    \end{equation*}
    with $\ttX_0=\ttX$, $\Gamma_0 = \Gamma$, $\ttX_n\subseteq\ttX_{n+1}$, and $\bigcup_n \ttX_n$ is a well-order. We build a $\rmbeta$-model of $\lnot\bigvee\Gamma$ recursive in the infinite branch. 
    First, let us observe that $\Gamma_n$ is an initial segment (as a list) of $\Gamma_m$ if $n\le m$. Let $\Gamma_\omega = \bigcup_n \Gamma_n$. Clearly, every formula in $\Gamma$ is daggered in some $\Gamma_m$. Furthermore,
    \begin{lemma} \label{Lemma: Preproof property pi - infinitely many atomic formulas}
        In $\Gamma_\omega$, an atomic formula occurs infinitely often. As a corollary, the sequence $\lag \varsigma_{[\ttX_n]\vdash\Gamma_n}\mid n<\omega\rag$ of stage numbers is unbounded. 
    \end{lemma}
    \begin{proof}
        We claim that for each $n$, there is $m>n$ such that the last formula of $\Gamma_m$ is atomic.
        Suppose not, assume that the last formula of $\Gamma_m$ is not atomic for every $m\ge n$. We still have that the last formula $C_0$ of $\Gamma_{n+1}$ is a strict subformula of the leading formula $\Gamma_n$.
        We can find $n_1\ge n+1$ such that the last formula $C_0$ of $\Gamma_{n+1}$ is the leading formula of $\Gamma_{n_1}$. 
        The last formula $C_1$ of $\Gamma_{n_1+1}$ is the strict subformula of $C_0$.
        We can inductively find $n_1<n_2<n_3<\cdots$ and formulas $C_k$ such that $C_k$ is the last formula of $\Gamma_{n_k+1}$, which is a strict subformula of $C_{k-1}$. This is impossible.
    \end{proof}

    The following lemma tells us that the infinite branch assigns the truth to formulas:
    \begin{lemma} \label{Lemma: Preproof property pi - A or not A always occur}
        Let $A$ be a formula in $\calL$ with ordinal parameters from $\ttX_n$. Then there is $m\ge n$ such that either $A$ or $\lnot A$ occurs in $\Gamma_m$.
    \end{lemma}
    \begin{proof}
        Take a large $m>n$ such that the leading formula of $[\ttX_m]\vdash\Gamma_m$ is atomic and the $\varsigma_{[\ttX_m]\vdash\Gamma_m}$-th formula of $\epsilon_\calL$ is $A$. Then the construction guarantees that $\Gamma_{m+1}$ has either $A$ or $\lnot A$ as the last formula. 
    \end{proof}

    Note that both $A$ and $\lnot A$ cannot occur in $\Gamma_n$: Otherwise, $[\ttX_n]\vdash \Gamma_n$ is a weakening of an axiom, so the branch cannot be infinite. Similarly, if $u,v\in \ttX_n$ and $\ttX_n\vDash u<v$, then $u<v$ cannot occur in $[\ttX_n]\vdash \Gamma_n$. 
    Now we define the term model $M$ from an infinite branch as follows:
    \begin{enumerate}
        \item For a sort $\tau$, the set of elements of $M$ of sort $\tau$ is the set of all terms of sort $\tau$.
        \item $\ord^M = \ttX= \bigcup_n \ttX_n$ with the order from $\ttX$. $\ord^M$ is well-founded by the assumption on $\ttX$.
        \item If $f$ is a function symbol, we define $f^M(t_0,\cdots,t_{n-1}) = f(t_0,\cdots,t_{n-1})$.
        \item $M\vDash A$ iff $\lnot A$ occurs in $\Gamma_n$ for some $n$.
    \end{enumerate}
    We need the following lemma to guarantee $M$ is really a model:
    \begin{lemma} For each $n\in\bbN$, let $A$ be the leading formula of $[\ttX_n]\vdash \Gamma_n$. 
        \begin{enumerate}
            \item If $A \equiv B\land C$, then either $B$ or $C$ is in $\Gamma_{n+1}$.
            \item If $A \equiv B\lor C$, then both $B$ and $C$ are in $\Gamma_{n+1}$.
            \item If $A \equiv \forall^\tau x B(x)$ for $\tau\neq \ord$, then there is a free variable $x$ of type $\tau$ such that $B(x)$ is in $\Gamma_{n+1}$.
            \item If $A \equiv \exists^\tau x B(x)$ for $\tau\neq \ord$, then for every term $t$ of type $\tau$, there is $m>n$ such that $B(t)$ occurs in $\Gamma_m$.
            \item If $A\equiv \forall^\ord x B(x)$, then there is $v\in \ttX$ such that $B(v)$ is in $\Gamma_{n+1}$.
            \item If $A\equiv \exists^\ord x B(x)$, then for every $v\in \ttX$ there is $m>n$ such that $B(v)$ occurs in $\Gamma_m$.
        \end{enumerate}
    \end{lemma}
    \begin{proof}
        All but the $\exists$ cases are clear. We first consider the case $A\equiv \exists^\tau xB(x)$ with $\tau\neq \ord$. For a given term $t$, take a large $m_0>n$ such that every ordinal variable used in $t$ occurs in $\ttX_{m_0}$. Then take a large $m>m_0$ such that $\varsigma_{[\ttX_m]\vdash\Gamma_m} = \varsigma$ satisfies
        \begin{enumerate}
            \item $A \equiv \exists^\tau x B(x)$ is the $\varsigma$-th formula of $\epsilon$.
            \item $t$ is the $\le\varsigma$-th element on the list $\epsilon^\tau_\calL$.
        \end{enumerate}
        By the construction, $\Gamma_{m'}$ for some $m'>m$ contains $B(t)$. The case $\tau=\ord$ also follows from a similar argument.
    \end{proof}
    The previous lemma guarantees, for example, $M\vDash B\lor C$ implies $M\vDash B$ or $M\vDash C$: If $M\vDash B\lor C$, then $\lnot B \land \lnot C$ occurs in $\Gamma_n$ for some $n$. By the previous lemma, either $\lnot B$ or $\lnot C$ occurs in $\Gamma_{n+1}$, so either $M\vDash B$ or $M\vDash C$.
    By \autoref{Lemma: Preproof property pi - A or not A always occur}, either $M\vDash A$ or $M\vDash\lnot A$ holds for every formula $A$ with parameters from $M$. Hence $M$ is a desired $\rmbeta$-model.
\end{proof}

\section{A functorial proof system for \texorpdfstring{$\KP$}{KP}, Part I}
In this section, we introduce a functorial proof system for $\KP$. The proof system we introduce in this section corresponds to the completeness theorem of $\rmbeta$-logic, and the proof system does not care about the rank of a given preproof.
Before introducing the proof system, we introduce Freund's functorial constructive hierarchy \cite{Freund2019Pi11}, which will form the terms in the infinitary proof system.
\begin{definition}
    For a linear order $\alpha$ and a transitive set $u$, we define a set $\bfL(u)_\alpha$ and the support function $\supp^{\bfL(u)}_\alpha\colon \bfL(u)_\alpha\to [\alpha]^{<\omega}$ as follows:
    \begin{enumerate}
        \item Each $v\in u$ is a term of $\bfL(u)_\alpha$ and $\supp^{\bfL(u)}_\alpha(v) = \varnothing$.
        \item For each $\xi\in\alpha$, we have a term $\ttL(u)_\xi$ with $\supp^{\bfL(u)}_\alpha(\ttL(u)_\xi) = \{\xi\}$.
        \item Let $\phi(x,y_0,\cdots,y_{n-1})$ be a $\Delta_0$-formula with its all free variables displayed.
        Now for $\xi\in\alpha$ and $a_0,\cdots,a_{n-1}\in \bfL(u)_\alpha$ with $\supp^{\bfL(u)}_\alpha(a_i) < \xi$, we form a term $\{x\in \ttL(u)_\xi \mid\phi(x,a_0,\cdots,a_{n-1})\}$ with
        \begin{equation*} \textstyle 
            \supp^{\bfL(u)}_\alpha\bigl(\{x\in \ttL(u)_\xi \mid\phi(x,a_0,\cdots,a_{n-1})\}\bigr) = \{\xi\}\cup \bigcup_{i<n} \supp^{\bfL(u)}_\alpha(a_i).
        \end{equation*}
    \end{enumerate}
\end{definition}

We can functorially transform terms in $\bfL(u)_\alpha$ as follows: 
\begin{definition} \label{Definition: Functorial boldface L}
    For an increasing $f\colon\alpha\to\beta$, we recursively define
    \begin{itemize}
        \item $\bfL(u)_f(v) = v$ for every $v\in u$.
        \item $\bfL(u)_f(\ttL(u)_\xi) = \ttL(u)_{f(\xi)}$, and
        \item $\bfL(u)_f\bigl(\{x\in \ttL(u)_\xi \mid\phi(x,a_0,\cdots,a_{n-1})\}\bigr) = \{x\in \ttL(u)_{f(\xi)} \mid\phi(x,\bfL(u)_f(a_0),\cdots,\bfL(u)_f(a_{n-1}))\}$.
    \end{itemize}
\end{definition}

In the following definition, $\ttOmega$ is merely a distinguished ordinal constant symbol. We do not interpret $\ttOmega$ as the largest element of the ambient order. In the later definition of $\sfu\RS$ and $\RS$, however, $\ttOmega$ will be interpreted as the ordinal boundary representing the domain of ordinal quantification.

\begin{definition}
    Let $\kappa$ be a linear order with a designated member $\ttOmega\in \kappa$. We define the terms in $\calL^u_{\kappa,\ttOmega}$ as members of $\bfL(u)_{\{\alpha\in \kappa\mid\alpha<\ttOmega\}} \cup \{\ttL(u)_\ttOmega\}$.
    We define $\calL^u_{\kappa,\ttOmega}$-formulas as follows:
    \begin{enumerate}
        \item For two terms $s$ and $t$, $s\in t$ and $s\notin t$ are formulas.
        \item We can form new formulas by $\land$, $\lor$, and bounded quantifications $\forall x\in s A(x)$ and $\exists x\in s A(x)$.
    \end{enumerate}
    We also define $|s| = \max (\supp^{\bfL(u)}_\kappa(s))$ computed in $\kappa$. Note that
    \begin{equation*}
        \lvert \ttL(u)_\xi\rvert = \lvert \{x\in \ttL(u)_\xi \mid \phi(x,a_0,\cdots,a_{n-1})\}\rvert = \xi.
    \end{equation*}
\end{definition}

The language we consider does not admit unbounded quantifiers; We only have bounded quantifiers, and we will handle unbounded quantifiers indirectly by fixing a `large ordinal' $\ttOmega$ and viewing $\ttL(u)_\ttOmega$ as a term for the domain of discourse. Now, let us define the height of formulas (see \cite[Theorem 3.14]{Freund2019Pi11}). 
Before defining the infinitary derivation system, let us define
\begin{equation*}
    s\dotin t \equiv
    \begin{cases}
        \top & \text{If $t = \ttL(u)_\xi$}, \\
        \phi(s,a_0,\cdots,a_{n-1}) & \text{If }t = \{x\in \ttL(u)_\xi \mid \phi(x,a_0,\cdots,a_{n-1})\}.
    \end{cases}
\end{equation*}
We also use the following notations:
\begin{itemize}
    \item $s\subseteq t \equiv \forall x\in s(x\in t)$.
    \item $s=t \equiv s\subseteq t \land t\subseteq s$.
\end{itemize}

\subsection{System \texorpdfstring{$\sfu\RS$}{uRS} and its soundness}
Now we introduce an infinitary derivation system $\sfu\RS$.\footnote{An acronym of \emph{uncontrolled $\RS$.}} Like in the previous section, every sequent will take the form $[\ttX]\vdash \Gamma$.
\begin{definition}
    Let us define the deduction rules of $\sfu\RS$ as follows, where \emph{in the sequent $[\ttX]\vdash \Gamma$, every set term is a term in $\calL^u_{\ttX+\{\ttOmega\},\ttOmega}$.}
    \begin{center}
    \vspace{1em}
    \begin{mathprooftree}
        \def\fCenter{\vdash}
        \AxiomC{}
        \RightLabel{$(\mathrm{Ax}) \quad (x\in y\in u)$}
        \UnaryInfC{$[\ttX]\fCenter \Gamma,\ x\in y$}
    \end{mathprooftree}
    \hspace{3em}%
    \begin{mathprooftree}
        \def\fCenter{\vdash}
        \AxiomC{}
        \RightLabel{$(\mathrm{Ax}) \quad (x\notin y\in u,\ x\in u)$}
        \UnaryInfC{$[\ttX]\fCenter \Gamma,\ x\notin y$}
    \end{mathprooftree}
    
    \vspace{1em}
    \begin{mathprooftree}
        \def\fCenter{\vdash}
        \AxiomC{$[\ttX]\fCenter \Gamma,\ A$}
        \AxiomC{$[\ttX]\fCenter  \Gamma,\ B$}
        \RightLabel{$(\land)$}
        \BinaryInfC{$[\ttX]\fCenter \Gamma,\ A\land B$}
    \end{mathprooftree}
    \hspace{3em}%
    \begin{mathprooftree}
        \def\fCenter{\vdash}
        \AxiomC{$[\ttX]\fCenter \Gamma,\ A$}
        \RightLabel{$(\lor)$}
        \UnaryInfC{$[\ttX]\fCenter \Gamma,\ A\lor B$}
    \end{mathprooftree}
    \hspace{3em}%
    \begin{mathprooftree}
        \def\fCenter{\vdash}
        \AxiomC{$[\ttX]\fCenter \Gamma,\ B$}
        \RightLabel{$(\lor)$}
        \UnaryInfC{$[\ttX]\fCenter \Gamma,\ A\lor B$}
    \end{mathprooftree}

    \vspace{1em}
    \begin{mathprooftree}
        \def\fCenter{\vdash}
        \AxiomC{$[\ttX]\fCenter \Gamma,\ s\dotin t\land r=s$}
        \RightLabel{$(\in)$ \hspace{2em} ($|s|<|t|$, $r\in t$ not basic.)}
        \UnaryInfC{$[\ttX]\fCenter \Gamma,\ r\in t$}
    \end{mathprooftree}
    
    \vspace{1em}
    \begin{mathprooftree}
        \def\fCenter{\vdash}
        \AxiomC{$\cdots \quad [\ttX_s] \fCenter \Gamma,\ s\dotin t\to r\neq s \quad \cdots$ ($\heartsuit$)}
        \RightLabel{$(\notin)$}
        \UnaryInfC{$[\ttX]\fCenter \Gamma,\ r\notin t$}
    \end{mathprooftree}
    \hspace{3em}%
    \begin{mathprooftree}
        \def\fCenter{\vdash}
        \AxiomC{$[\ttX]\fCenter \Gamma,\ s\dotin t \land B(s)$}
        \RightLabel{$(\rmb\exists)$ \hspace{2em} ($|s|<|t|$)}
        \UnaryInfC{$[\ttX]\fCenter \Gamma,\ \exists x\in t B(x)$}
    \end{mathprooftree}
    
    \vspace{1em}
    \begin{mathprooftree}
        \def\fCenter{\vdash}
        \AxiomC{$ \cdots \quad [\ttX_s]\fCenter \Gamma,\ s\dotin t\to B(s) \quad \cdots$ ($\heartsuit$)}
        \RightLabel{$(\rmb\forall)$}
        \UnaryInfC{$[\ttX]\fCenter \Gamma,\ \forall x\in t B(x)$}
    \end{mathprooftree}
    
    \vspace{1em}
    \begin{mathprooftree}
        \def\fCenter{\vdash}
        \AxiomC{$[\ttX]\fCenter \Gamma,\ A^{\ttL(u)_\ttOmega}$}
        \RightLabel{(Refl) \hspace{2em} ($A$ is a $\Sigma$-formula)}
        \UnaryInfC{$[\ttX]\fCenter \Gamma,\ \exists z\in \ttL(u)_\ttOmega A^z$}
    \end{mathprooftree}
    \hspace{3em}%
    \begin{mathprooftree}
        \def\fCenter{\vdash}
        \AxiomC{$[\ttX]\fCenter \Gamma,\ A$}
        \AxiomC{$[\ttX]\fCenter \Gamma,\ \lnot A$}
        \RightLabel{(Cut)}
        \BinaryInfC{$[\ttX]\fCenter \Gamma$}
    \end{mathprooftree}
    \vspace{1em}
    \end{center}
    Here $(\heartsuit)$ is the following conditions over $s$ and $\ttX_s$:
    \begin{itemize}
        \item The restriction of the $\ttX_s$-order to $\ttX_s\setminus\ttX$ coincides with the inherited order from $\bbN$.\footnote{Recall that template orders are finite linear orders whose underlying set is a finite initial segment of $\bbN$. This condition ensures uniqueness.  Once indiscernibles or variables are added, they must be added in increasing order of their $\bbN$-indices.} It is possible that $\ttX_s = \ttX$.
        \item $\ttX_s\vDash |s|<|t|$.
    \end{itemize}
    $(\heartsuit)$-labeled rules have hypotheses for each $s$ and $\ttX_s$ ($\alpha_s$ is from $s$) under the constraint $(\heartsuit)$. 
    Every tree formed by the given rule is a $\rmbeta$-preproof. We call a $\rmbeta$-preproof $\pi$ from $\sfu\RS$ a \emph{$\rmbeta$-proof} if for every infinite branch $B$ of $\pi$, the union of all template orders occurring in $B$ is ill-founded.  
\end{definition}
Unlike $\LK_\rmbeta$, we allow arbitrary finite extensions of $\ttX$ in $(\rmb\forall)$ and $(\notin)$ rules since a set term admits more than one ordinal variable. 

Similar to $\LK_\rmbeta$, we can instantiate the $\rmbeta$-preproof in $\sfu\RS$. We first need to define an instantiated proof system for a given ordinal $\alpha$:
\begin{definition}
    Fix a linear order $\alpha$.
    Let us define the deduction rules of $\sfu\RS_\alpha$ as follows, where every sequent is in the language $\calL^u_{\alpha+1,\alpha}$, so we identify $\ttOmega$ with $\alpha$ in $\alpha+1 = \alpha\cup\{\ttOmega\}$.
    \begin{center}
    \vspace{1em}
    \begin{mathprooftree}
        \AxiomC{}
        \RightLabel{$(\mathrm{Ax}) \quad (x\in y\in u)$}
        \UnaryInfC{$\Gamma,\ x\in y$}
    \end{mathprooftree}
    \hspace{3em}%
    \begin{mathprooftree}
        \AxiomC{}
        \RightLabel{$(\mathrm{Ax}) \quad (x\notin y\in u,\ y\in u)$}
        \UnaryInfC{$\Gamma,\ x\notin y$}
    \end{mathprooftree}
    
    \vspace{1em}
    \begin{mathprooftree}
        \AxiomC{$\Gamma,\ A$}
        \AxiomC{$\Gamma,\ B$}
        \RightLabel{$(\land)$}
        \BinaryInfC{$\Gamma,\ A\land B$}
    \end{mathprooftree}
    \hspace{3em}%
    \begin{mathprooftree}
        \AxiomC{$\Gamma,\ A$}
        \RightLabel{$(\lor)$}
        \UnaryInfC{$\Gamma,\ A\lor B$}
    \end{mathprooftree}
    \hspace{3em}%
    \begin{mathprooftree}
        \AxiomC{$\Gamma,\ B$}
        \RightLabel{$(\lor)$}
        \UnaryInfC{$\Gamma,\ A\lor B$}
    \end{mathprooftree}

    \vspace{1em}
    \begin{mathprooftree}
        \AxiomC{$\Gamma,\ s\dotin t\land r=s$}
        \RightLabel{$(\in)$ \hspace{2em} ($|s|<|t|$, $r\in t$ not basic.)}
        \UnaryInfC{$\Gamma,\ r\in t$}
    \end{mathprooftree}
    
    \vspace{1em}
    \begin{mathprooftree}
        \def\fCenter{\vdash}
        \AxiomC{$\cdots \quad \Gamma,\ s\dotin t\to r\neq s \quad \cdots$ ($|s|<|t|$)}
        \RightLabel{$(\notin)$}
        \UnaryInfC{$\Gamma,\ r\notin t$}
    \end{mathprooftree}
    \hspace{3em}%
    \begin{mathprooftree}
        \AxiomC{$\Gamma,\ s\dotin t \land B(s)$}
        \RightLabel{$(\rmb\exists)$ \hspace{2em} ($|s|<|t|$)}
        \UnaryInfC{$\Gamma,\ \exists x\in t B(x)$}
    \end{mathprooftree}
    
    \vspace{1em}
    \begin{mathprooftree}
        \AxiomC{$ \cdots \quad \Gamma,\ s\dotin t\to B(s) \quad \cdots$ ($|s|<|t|$)}
        \RightLabel{$(\rmb\forall)$}
        \UnaryInfC{$\Gamma,\ \forall x\in t B(x)$}
    \end{mathprooftree}
    
    \vspace{1em}
    \begin{mathprooftree}
        \AxiomC{$\Gamma,\ A^{\ttL(u)_\ttOmega}$}
        \RightLabel{(Refl) \hspace{2em} ($A$ is a $\Sigma$-formula)}
        \UnaryInfC{$\Gamma,\ \exists z\in \ttL(u)_\ttOmega A^z$}
    \end{mathprooftree}
    \hspace{3em}%
    \begin{mathprooftree}
        \def\fCenter{\vdash}
        \AxiomC{$\Gamma,\ A$}
        \AxiomC{$\Gamma,\ \lnot A$}
        \RightLabel{(Cut)}
        \BinaryInfC{$\Gamma$}
    \end{mathprooftree}
    \vspace{1em}
    \end{center}
    Every tree formed by the given rule is an $\sfu\RS_\alpha$-preproof. We call a $\sfu\RS_\alpha$-preproof $\pi$ a \emph{proof} if $\pi$ has no infinite branch.
\end{definition}
We can interpret a given sequent in $\sfu\RS_\alpha$ as follows, which is due to Freund \cite[Definition 3.2]{Freund2019Pi11}.
\begin{definition}
    For a transitive set $u$ and an ordinal $\alpha$, we define $\llbr s\rrbr$ for an $\calL^u_{\alpha+1,\alpha}$-term and an $\calL^u_{\alpha+1,\alpha}$-formula recursively as follows:
    \begin{enumerate}
        \item $\llbr v\rrbr = v$ for $v\in u$.
        \item $\llbr \ttL(u)_\xi \rrbr = L(u)_\xi$.
        \item $\llbr \{x\in \ttL(u)_\xi \mid A(x,s_0,\cdots,s_{n-1})\}\rrbr = \{x\in L(u)_\xi \mid A(x,\llbr s_0\rrbr,\cdots,\llbr s_{n-1}\rrbr)\}$, where $A(x,y_0,\cdots,y_{n-1})$ is $\Delta_0$ with parameters from $u\cup\{u\}$ and all free variables displayed.
        \item $\llbr s\in t\rrbr \equiv (\llbr s\rrbr \in \llbr t\rrbr)$, and similar to $\notin$.
        \item $\llbr A \circ B\rrbr \equiv \llbr A\rrbr\circ \llbr B\rrbr$, for $\circ\in \{\land,\lor\}$.
        \item $\llbr\sfQ x\in s A(x) \rrbr\equiv \sfQ x\in \llbr s\rrbr \llbr A\rrbr(x)$, for $\sfQ\in \{\forall,\exists\}$ and $s\neq \ttL(u)_\ttOmega$.
        \item $\llbr\sfQ x\in \ttL(u)_\ttOmega A(x) \rrbr\equiv \sfQ x \llbr A\rrbr(x)$, for $\sfQ\in \{\forall,\exists\}$.
    \end{enumerate}
    For a sequent $\Gamma$, we define $\llbr\Gamma\rrbr$ by $\bigvee\{\llbr A\rrbr \mid A\in \Gamma\}$.
\end{definition}
Note that $\llbr A\rrbr$ is always relativized to $L(u)_\alpha$. Here we warn that we are using a different definition of $L(u)_\alpha$: $L(u)_\alpha$ is the union of 
\begin{enumerate}
    \item $\{u\}\cup \trcl(u)$,
    \item $L(u)_\beta$ for all $\beta<\alpha$, and
    \item The set of all subsets of the form $\{x\in L(u)_\beta\mid A(x,\vec{a})\}$ for some $\beta<\alpha$, a $\Delta_0$-formula $A(x,\vec{y})$, and $\vec{a}\in L(u)_\beta$.
\end{enumerate}

The following easily follows by induction on a proof:
\begin{proposition} \pushQED{\qed} \label{Proposition: Soundness of uRS proof}
    Suppose that $L(u)_\alpha$ is admissible. If there is an $\sfu\RS_\alpha$-proof of a sequent $\Gamma$, then $L(u)_\alpha\vDash \llbr \Gamma\rrbr$.
    If $\Gamma$ has a $\sfu\RS_\alpha$-proof without using $(\mathrm{Refl})$ rule, then $L(u)_\alpha\vDash \Gamma$ regardless of the admissibility of $\alpha$. \qedhere 
\end{proposition}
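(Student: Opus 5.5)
We argue by induction on the well-founded $\sfu\RS_\alpha$-proof, proving for every sequent $\Gamma$ in the proof that $L(u)_\alpha\vDash\llbr\Gamma\rrbr$. We take the whole sequent as the induction object, so side formulas $\Gamma$ are carried along automatically. Since the proof has no infinite branch, induction on the well-founded tree is available, for example by transfinite recursion on its rank.

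First we dispose of the local cases.
\begin{itemize}
    \item Axioms. $x\in y\in u$ gives $\llbr x\rrbr=x\in y=\llbr y\rrbr$ directly, and the $\notin$-axiom is treated in the same way.
    \item $(\land)$, $(\lor)$ and (Cut). These are immediate because $\llbr\cdot\rrbr$ commutes with the connectives, and $\llbr\lnot A\rrbr$ is the negation of $\llbr A\rrbr$.
    \item $(\rmb\exists)$ and $(\in)$. The key fact is a lemma: if $|s|<|t|$, then $\llbr s\dotin t\rrbr$ holds iff $\llbr s\rrbr\in\llbr t\rrbr$. This follows because $\llbr s\rrbr\in L(u)_{|s|+1}\subseteq L(u)_{|t|}$ when $|s|<|t|$. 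For $t=\ttL(u)_\ttOmega$ we use that $s$ is then a term of $L(u)_\alpha$. Given the lemma, the premise $s\dotin t\land r=s$ yields $\llbr r\rrbr=\llbr s\rrbr\in\llbr t\rrbr$, using extensionality, i.e. that $\llbr r=s\rrbr$ is genuine equality.
    \item $(\rmb\forall)$ and $(\notin)$. Here we need the converse direction: every element of $\llbr t\rrbr$ is $\llbr s\rrbr$ for some term $s$ with $|s|<|t|$. This holds by the definition of $L(u)_\xi$ as the union of earlier stages together with $\Delta_0$-definable subsets, each of which is named by a term. So if all premises hold, the universal statement holds.
\end{itemize}
None of these cases uses admissibility, which gives the second claim of the proposition, namely the case without (Refl).

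The remaining case is (Refl), and it is the only place where admissibility is needed. The premise gives $L(u)_\alpha\vDash A$ for a $\Sigma$-formula $A$, read with unbounded quantifiers over $L(u)_\alpha$. We must find $z\in L(u)_\alpha$ with $A^z$. We apply $\Sigma$-reflection, which holds in admissible sets: this produces a transitive set $z\in L(u)_\alpha$ with $A^z$. An alternative is to take $z=L(u)_\beta$ for some $\beta<\alpha$, obtained by $\Sigma$-collection applied to the witnesses. Upward persistence of $\Sigma$-formulas is not needed in this direction.

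The main technical obstacle is the term-representation lemma behind the quantifier cases. It requires checking, by induction on $\xi$, that the modified hierarchy $L(u)_\xi$ coincides exactly with $\{\llbr s\rrbr : |s|<\xi\}\cup(\{u\}\cup\trcl(u))$. It also requires matching $\llbr s\dotin t\rrbr$ with membership for the separation terms, which unwinds to $\llbr s\rrbr\in L(u)_\xi\land\phi(\llbr s\rrbr,\vec a)$.
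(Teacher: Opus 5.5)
Your proposal is correct and matches the paper, which only remarks that the proposition follows by induction on the well-founded proof; your case analysis, with admissibility entering solely at (Refl) via $\Sigma$-reflection, is the natural way to fill that in. One detail to tighten: the $(\rmb\forall)$/$(\notin)$ case needs \emph{every} element of $\llbr t\rrbr$ to be named by some $s$ with $|s|<|t|$, including elements coming from the base $\{u\}\cup\trcl(u)$. Your final form of the representation lemma keeps the base as a separate summand, so it does not give this, and at the lowest levels it depends on conventions about $u$ (empty supports, how $u$ itself is named) that the paper leaves implicit.
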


Also, we can instantiate a $\rmbeta$-proof in $\sfu\RS$ into a $\sfu\RS_\alpha$-proof for $\alpha$ as follows:
\begin{definition}
    Suppose that $\pi$ is a $\rmbeta$-preproof of the form
    \begin{equation*}
    \begin{mathprooftree}
        \AxiomC{$\pi'$}
        \noLine
        \UnaryInfC{$\vdots$}
        \RightLabel{($\rmR$)}
        \UnaryInfC{$[\ttX] \vdash \Gamma$}
    \end{mathprooftree}
    \end{equation*}    
    For a linear order $\alpha$ and an increasing map $f\colon \ttX\to\alpha$, we define $\pi[\alpha,f]$ corecursively as follows:
    If the rule $(\rmR)$ is other than $(\notin)$ or $(\rmb\forall)$, $\pi[\alpha,f]$ is the $\alpha$-preproof
    \begin{equation*}
    \begin{mathprooftree}
        \AxiomC{$\pi'[\alpha,f]$}
        \noLine
        \UnaryInfC{$\vdots$}
        \RightLabel{($\rmR$)}
        \UnaryInfC{$\Gamma[f]$}
    \end{mathprooftree}
    \end{equation*}
    Here, for an $\calL^u_{\ttX+\{\ttOmega\},\ttOmega}$-formula $A$, $A[f]$ is an $\calL^u_{\alpha+1,\alpha}$-formula obtained from $A$ by replacing the occurrence of $v\in \ttX$ with $f(v)\in \alpha$, and $\Gamma[f] = \{A[f]\mid A\in \Gamma\}$. Formally speaking, $A[f] = \bfL(u)_{f+1}(A)$, where $f+1\colon \ttX\cup\{\ttOmega\}\to \alpha\cup \{\alpha\}$ is the extension of $f$ with $(f+1)(\ttOmega)=\alpha$.
    
    Now suppose that $(\rmR)$ is either $(\notin)$ or $(\rmb\forall)$: We only consider the case when $(\rmR) = (\rmb\forall)$ since the other case is analogous. In this case, we have
    \begin{equation} \label{Formula: uRS last is bounded forall}
    \begin{mathprooftree}
        \def\fCenter{\vdash}
        \noLine
        \AxiomC{$\pi'_s$}
        \UnaryInfC{$\vdots$}
        \noLine
        \UnaryInfC{$ \cdots \quad [\ttX_s]\fCenter \Gamma,\ s\dotin t\to B(s) \quad \cdots$ ($\heartsuit$)}
        \RightLabel{$(\rmb\forall)$}
        \UnaryInfC{$[\ttX]\fCenter \Gamma,\ \forall x\in t B(x)$}
    \end{mathprooftree}
    \end{equation}
    Then we define $\pi[\alpha,f]$ by
    \begin{equation} \label{Formula: uRS alpha last is bounded forall}
    \begin{mathprooftree}
        \def\fCenter{\vdash}
        \noLine
        \AxiomC{$\pi'_s[\alpha,g]$}
        \UnaryInfC{$\vdots$}
        \noLine
        \UnaryInfC{$ \cdots \quad \Gamma[\alpha,f],\ s[\alpha,g]\dotin t[\alpha,f]\to \bigl(B(s)\bigr)[\alpha,g] \quad \cdots$ ($\diamondsuit$)}
        \RightLabel{$(\rmb\forall)$}
        \UnaryInfC{$\Gamma[\alpha,f],\ \forall x\in \bigl(t[\alpha,f]\bigr) B[\alpha,f](x)$}
    \end{mathprooftree}
    \end{equation}
    Here $(\diamondsuit)$ indicates that we enumerate $g$ for every possible $g\colon \ttX_s\to \alpha$ such that $g\supseteq f$, and $\ttX_s$ and $s$ respect to $(\heartsuit)$.
\end{definition}

\begin{proposition} \label{Proposition: Instantiating uRS proof into a uRS alpha proof}
    Suppose that $\pi$ is a $\rmbeta$-proof in $\sfu\RS$ for the sequent $[\ttX]\vdash \Gamma$.
    For a well-order $\alpha$ and a function $f\colon \ttX\to\alpha$,  $\pi[\alpha,f]$ is a $\sfu\RS_\alpha$-proof.
\end{proposition}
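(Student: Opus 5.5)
The proof follows the pattern of the analogous soundness proposition for $\LK_\rmbeta$. It has two parts: local correctness of each inference of $\pi[\alpha,f]$, proved by coinduction, and absence of infinite branches, proved by contradiction using that $\alpha$ is a well-order.

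\emph{Local correctness.} We check by coinduction on $\pi$ that every inference in $\pi[\alpha,f]$ is a legitimate $\sfu\RS_\alpha$-inference. For rules other than $(\notin)$ and $(\rmb\forall)$ this comes down to functoriality of $\bfL(u)_{f+1}$ (\autoref{Definition: Functorial boldface L}).
\begin{itemize}
    \item The side conditions $|s|<|t|$ in $(\in)$ and $(\rmb\exists)$ are preserved, since $f+1$ is increasing and $|s[f]| = (f+1)(|s|)$; this uses that supports commute with $\bfL(u)_{f+1}$.
    \item Axioms only involve elements of $u$, on which $\bfL(u)_{f+1}$ is the identity.
    \item Instantiation commutes with $\dotin$, $\land$, $\lor$ and relativization to $\ttL(u)_\ttOmega$, because $(f+1)(\ttOmega)=\alpha$. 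Hence (Refl) and (Cut) instantiate to the same rules.
\end{itemize}

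\emph{The cases $(\rmb\forall)$ and $(\notin)$.} Here we must show that the premises in \eqref{Formula: uRS alpha last is bounded forall} cover every term $s'\in\calL^u_{\alpha+1,\alpha}$ with $|s'|<|t[\alpha,f]|$. Given such an $s'$, let $S=\supp^{\bfL(u)}_\alpha(s')$, a finite subset of $\alpha$. Set $\ttX_s = \ttX$ together with fresh elements for $S\setminus\ran f$, numbered in increasing $\bbN$-order as $(\heartsuit)$ requires and ordered so that the extension $g\colon\ttX_s\to\alpha$ of $f$ is an order-isomorphism onto $\ran f\cup S$. Since $\bfL(u)_g$ has image exactly the terms supported in $\ran g$ (by induction on terms), there is a term $s$ over $\ttX_s$ with $s[\alpha,g]=s'$. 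Because $g$ reflects order, $\ttX_s\vDash |s|<|t|$, so $(s,\ttX_s)$ satisfies $(\heartsuit)$. The premise $\pi'_s[\alpha,g]$ is therefore present and has the right sequent. Showing that $\bfL(u)_g$ is surjective onto the $\ran g$-supported terms, and that $g$ reflects the side condition, is the one genuinely new verification compared with $\LK_\rmbeta$, since a term may carry several ordinal parameters at once. I expect it to be the main (if routine) obstacle.

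\emph{Well-foundedness.} Suppose $\pi[\alpha,f]$ had an infinite branch. Unwinding the corecursive definition, we get an infinite branch $\lag [\ttX_n]\vdash\Gamma_n\mid n\in\bbN\rag$ of $\pi$ together with increasing maps $f_n\colon\ttX_n\to\alpha$ such that $f_0=f$, $f_n\subseteq f_{n+1}$, and $\ttX_n\subseteq\ttX_{n+1}$ as orders. Then $\bigcup_n f_n$ embeds $\bigcup_n\ttX_n$ into the well-order $\alpha$, so $\bigcup_n\ttX_n$ is well-founded. This contradicts $\pi$ being a $\rmbeta$-proof.
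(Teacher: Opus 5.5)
Your proposal is correct and follows the paper's proof. The paper gets the absence of infinite branches from the same embedding-into-$\alpha$ argument used for $\LK_\rmbeta$, and it treats $(\rmb\forall)$ and $(\notin)$ as you do: it takes the unique $\ttY\supseteq\ttX$ (new elements in $\bbN$-order) and extension $g$ of $f$ with $\ran g=\ran f\cup\supp^{\bfL(u)}_\alpha(r)$, pulls $r$ back to a term $s$ over $\ttY$, and transfers the side condition $|s|<|t|$ along $g$, with your version being more explicit about the order-reflection needed for $(\heartsuit)$ and why the pull-back term exists.
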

\begin{proof}
    We can easily see that if $\pi$ is a $\rmbeta$-proof for the sequent $[\ttX]\vdash\Gamma$ in $\sfu\RS$, then $\pi[\alpha,f]$ has no infinite branch. Now we verify by induction on the tree structure of $\pi[\alpha,f]$ that $\pi[\alpha,f]$ is a valid $\sfu\RS_\alpha$-preproof. The most non-trivial case is when the inference rule is either $(\rmb\forall)$ or $(\notin)$, and we only consider the case $(\rmb\forall)$.

    Suppose that $\pi$ takes the form \eqref{Formula: uRS last is bounded forall}, so $\pi[\alpha,f]$ takes the form \eqref{Formula: uRS alpha last is bounded forall}. We need to check that sequents of $(\diamondsuit)$ are enumerated by $\calL^u_{\alpha+1,\alpha}$-term $r$ for $|r|<|t[\alpha,f]|$. Put differently, we need to check that if $r$ is an $\calL^u_{\alpha+1,\alpha}$-term, then there are template order $\ttY \supseteq \ttX$, an $\calL^u_{\ttY + \{\ttOmega\},\ttOmega}$-term $s$ (so $\ttY = \ttX_s$) and an extension $g\colon \ttY\to \alpha$ of $f$ such that $r = s[\alpha,f]$.
    Suppose that we are given an $\calL^u_{\alpha+1,\alpha}$-term $r$, We have $r\neq \ttL(u)_\ttOmega$. We can find a unique $\ttY \supseteq\ttX$ and $g\colon \ttY\to \alpha$ such that
    \begin{enumerate}
        \item Over $\ttY\setminus \ttX$, the $\ttY$-order and the natural number order agree.
        \item The range of $g$ is $\ran f \cup \supp^{\bfL(u)}_\alpha(r)$.
    \end{enumerate}
    Then we can find an $\calL^u_{\ttY+\{\ttOmega\},\ttOmega}$-term $s$ such that $\bfL(u)_{\alpha+1}(s) = r$, as desired.
    The other part --- $\ttX_s \vDash |s|<|t|$ implies $\alpha\vDash |s[\alpha,g]| < |t[\alpha,f]|$ --- is easy to see.
\end{proof}

\subsection{Embedding \texorpdfstring{$\KP$}{KP} into \texorpdfstring{$\sfu\RS$}{uRS}} \label{Subsection: Embedding KP into uRS}

In this section, we prove that $\sfu\RS$ proves every theorem of $\KP$.
We will not examine the proof of every fact in full detail since the arguments we need are more or less similar to those of \autoref{Section: Embedding}, which are also similar to those in \cite{RathjenCook2016ClassifyingProvTotalsetftnKPKPP, Fernandez2024SetTotalRecKPell}. We only sketch what we need in this subsection.

\begin{proposition}
    If $[\ttX]\vdash \Gamma$ has a $\rmbeta$-proof in $\sfu\RS$, $\ttX'\supseteq \ttX$, and $\Gamma'\supseteq\Gamma$, then $[\ttX']\vdash \Gamma'$ also has a $\rmbeta$-proof in $\sfu\RS$.
\end{proposition}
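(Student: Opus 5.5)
The plan is to prove a stronger, uniform statement by corecursion on the given preproof. Because of the freshness condition $(\heartsuit)$, template variables sometimes have to be renamed, so a plain induction on $\Gamma'$ and $\ttX'$ will not suffice. Concretely, I will define a transformation $\pi\mapsto \pi^{e,\Delta}$, where
\begin{itemize}
    \item $\pi$ is a $\rmbeta$-preproof of $[\ttY]\vdash\Theta$,
    \item $e\colon\ttY\to\ttZ$ is an increasing embedding of template orders,
    \item $\Delta$ is an $\calL^u_{\ttZ+\{\ttOmega\},\ttOmega}$-sequent,
\end{itemize}
and $\pi^{e,\Delta}$ is a $\rmbeta$-preproof of $[\ttZ]\vdash \Theta[e],\Delta$. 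Here $\Theta[e]$ renames each template constant $v$ to $e(v)$, extended by $\ttOmega\mapsto\ttOmega$ via $\bfL(u)_{e+1}$ as in \autoref{Definition: Functorial boldface L}. The proposition is the case where $e$ is the inclusion $\ttX\hookrightarrow\ttX'$ and $\Delta=\Gamma'\setminus\Gamma$.

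The construction goes rule by rule. For axioms, $(\land)$, $(\lor)$, $(\in)$, $(\rmb\exists)$, (Refl) and (Cut), I keep the same rule with the same $e$ and push $\Delta$ up to all premises. The side conditions survive: $|s|<|t|$ is preserved by an increasing $e$, the basic/non-basic status and the $\Sigma$-formula condition are syntactic, and the axiom conditions refer only to $u$, which $e$ fixes. The real work is the $(\rmb\forall)$ and $(\notin)$ rules. There the new proof must supply a premise for every pair $(\ttZ_r,r)$ satisfying $(\heartsuit)$ relative to $\ttZ$ and $t[e]$. Given such a pair, I let
\begin{equation*}
    S=\supp(r)\setminus \ran e.
\end{equation*}
I then form $\ttY_s=\ttY\cup\{$fresh $\bbN$-indices for the elements of $S\}$. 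These fresh indices are chosen increasing in the $\bbN$-order of the elements of $\ttZ_r\setminus\ttZ$, and the order on $\ttY_s$ is the one pulled back from $\ttZ_r$. This choice satisfies the first clause of $(\heartsuit)$, and it yields an increasing $e'\colon\ttY_s\to\ttZ_r$ extending $e$ together with a term $s$ satisfying $s[e']=r$. Since $\ttZ_r\vDash |r|<|t[e]|$, reflecting through $e'$ gives $\ttY_s\vDash |s|<|t|$. So $(\ttY_s,s)$ indexes a premise $\pi'_s$ of $\pi$, and I take $(\pi'_s)^{e',\Delta}$ as the premise for $(\ttZ_r,r)$. Its endsequent is $[\ttZ_r]\vdash\Theta[e],\Delta,\ s[e']\dotin t[e]\to B(s)[e']$, which is exactly what is required. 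The argument is the same proof as in \autoref{Proposition: Instantiating uRS proof into a uRS alpha proof}, now carried out with template orders as targets instead of $\alpha$.

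It remains to check the $\rmbeta$-proof condition. The transformation is level-preserving, so every infinite branch $\lag[\ttZ_n]\vdash\cdot\rag$ of $\pi^{e,\Delta}$ comes from an infinite branch $\lag[\ttY_n]\vdash\cdot\rag$ of $\pi$. This comes with increasing maps $e_n\colon\ttY_n\to\ttZ_n$ satisfying $e_n\subseteq e_{n+1}$. Their union embeds $\bigcup_n\ttY_n$ into $\bigcup_n\ttZ_n$. Since $\pi$ is a $\rmbeta$-proof, $\bigcup_n\ttY_n$ is ill-founded, and hence so is $\bigcup_n\ttZ_n$.

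I expect the main obstacle to be the $(\heartsuit)$ bookkeeping: showing that every $(\ttZ_r,r)$ really is hit. The delicate point is that $\ttZ_r\setminus\ttZ$ may contain variables that do not occur in $r$. My first approach is to still include them by pulling back all of $\ttZ_r\setminus\ttZ$, not just $S$. This keeps $e'$ total and increasing, and the orders on the new parts agree with $\bbN$ on both sides.
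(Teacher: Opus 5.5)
Your corecursive renaming-plus-weakening construction is the right argument. It is essentially the premise-matching the paper uses in the proof of \autoref{Lemma: Substitution lemma} and in \autoref{Proposition: Instantiating uRS proof into a uRS alpha proof}; the paper itself states this proposition without proof. Your treatment of the non-$(\heartsuit)$ rules and your branch argument via $\bigcup_n e_n$ are fine.

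\textbf{One step fails as written: the assignment of fresh indices.} You order them by the $\bbN$-order of the target elements, and you only mention $\ttZ_r\setminus\ttZ$. But $S=\supp(r)\setminus\ran e$ can also contain elements of $\ttZ\setminus\ran e$. In the main application these are exactly the variables of $\ttX'\setminus\ttX$, and their $\bbN$-order need not match their $\ttZ_r$-order. For example, take $\ttY=\varnothing$, take $\ttZ_r=\ttZ$ of field $2$ ordered $\tilde1<\tilde0$, and take $r=\{x\in\ttL(u)_{\tilde0}\mid x\in\ttL(u)_{\tilde1}\}$. Then $S=\{\tilde0,\tilde1\}$. Your recipe sends $0\mapsto\tilde0$ and $1\mapsto\tilde1$, and the pulled-back order gives $\ttY_s\vDash 1<0$. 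This violates the first clause of $(\heartsuit)$, so $(\ttY_s,s)$ indexes no premise of $\pi$.

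The fix is to enumerate $S$ in increasing $\ttZ_r$-order $z_0<\cdots<z_{k-1}$ and set $e'(n_\ttY+i)=z_i$. This is the paper's convention that $f_r\restriction(\ttZ\setminus\ttX)$ be increasing with respect to the natural-number order of the domain. For the same reason, your remark that the new parts agree with $\bbN$ ``on both sides'' is false in general. Only the $\ttY_s$ side needs to agree, and the corrected enumeration guarantees it.

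\textbf{The ``delicate point'' you flag is not an obstacle.} The premise $(\pi'_s)^{e',\Delta}$ lives over the codomain $\ttZ_r$ of $e'$, regardless of which elements $e'$ hits. Also, $e'$ may be non-surjective, as $e$ already is at the root. So pulling back just $S$ suffices. And since $(\heartsuit)$, unlike $(\spadesuit)$, imposes no minimality condition on $\ttX_s$, pulling back all of $\ttZ_r\setminus\ttZ$ as well is equally legitimate.
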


\begin{definition}
    A sentence in the language of $\RS$ is \emph{axiomatic} if it is one of the following: 
    \begin{enumerate}
        \item Extensionality:
        \begin{equation*}
        \forall x\in \ttL(u)_\ttOmega\,
        \forall y\in \ttL(u)_\ttOmega
        \left[
          \neg\left(
            \forall z\in x\, z\in y
            \wedge
            \forall z\in y\, z\in x
          \right)
          \vee x=y
        \right].
        \end{equation*}
        
        \item Pairing:
        \begin{equation*}
        \forall x\in \ttL(u)_\ttOmega\,
        \forall y\in \ttL(u)_\ttOmega\,
        \exists z\in \ttL(u)_\ttOmega
        \left(
          x\in z\wedge y\in z
        \right).
        \end{equation*}
        
        \item Union:
        \begin{equation*}
        \forall x\in \ttL(u)_\ttOmega\,
        \exists y\in \ttL(u)_\ttOmega\,
        \forall z\in x\,
        \forall w\in z\,
        w\in y.
        \end{equation*}
        
        \item Set Induction:
        \begin{equation*}
        \neg
        \forall x\in \ttL(u)_\ttOmega
        \left(
          \neg\forall y\in x\,A(y)\vee A(x)
        \right)
        \vee
        \forall x\in \ttL(u)_\ttOmega A(x).
        \end{equation*}
        
        \item Infinity:
        \begin{equation*}
        \exists x\in \ttL(u)_\ttOmega
        \left[
          \exists y\in x\, y\in x
          \wedge
          \forall y\in x\,\exists z\in x\,y\in z
        \right].
        \end{equation*}
        
        \item $\Delta_0$-Separation:
        For every $\Delta_0$-formula $A(x,\vec a)$,
        \begin{equation*}
        \forall b\in \ttL(u)_\ttOmega\,
        \forall \vec a\in \ttL(u)_\ttOmega\,
        \exists c\in \ttL(u)_\ttOmega\,
        \forall x\in \ttL(u)_\ttOmega
        \left[
          \left(x\notin c\vee
            \left(x\in b\wedge A(x,\vec a)\right)
          \right)
          \wedge
          \left(
            \neg\left(x\in b\wedge A(x,\vec a)\right)
            \vee x\in c
          \right)
        \right].
        \end{equation*}
        
        \item $\Delta_0$-Collection:
        For every $\Delta_0$-formula $A(x,y,\vec a)$,
        \begin{equation*}
        \forall b\in \ttL(u)_\ttOmega\,
        \forall \vec a\in \ttL(u)_\ttOmega
        \left[
          \neg\forall x\in b\,\exists y\in \ttL(u)_\ttOmega A(x,y,\vec a)
          \vee
          \exists c\in \ttL(u)_\ttOmega
          \forall x\in b\,\exists y\in c\,A(x,y,\vec a)
        \right].
        \end{equation*}
    \end{enumerate}

    A sequent $\Gamma$ in the language of $\RS$ is \emph{axiomatic} if $\Gamma$ contains one of the following subsequent:
    \begin{enumerate}
        \item An axiomatic sentence.
        \item $A,\lnot A$ for some formula $A$.
        \item $s\notin s$ for some set term $s$.
        \item $s\subseteq s$ for some set term $s$.
        \item $s\neq t, \lnot A(s),A(t)$ for some set term $s$, $t$, and some formula $A$.
    \end{enumerate}
\end{definition}

\begin{theorem}
    Let $\ttX$ be a template order and $\Gamma$ be a sequent of $\sfu\RS$. If $\Gamma$ is axiomatic, then there is a $\rmbeta$-proof of $[\ttX]\vdash\Gamma$.
\end{theorem}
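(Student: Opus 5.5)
Since every rule of $\sfu\RS$ passes weakening and enlargement of the template order through (the weakening proposition just proved), it suffices to produce, for each kind of axiomatic subsequent $\Delta\subseteq\Gamma$, a $\rmbeta$-proof of $[\ttX]\vdash\Delta$. The construction is uniform in $\ttX$, and each proof is built corecursively. Well-foundedness of branches will be checked separately, because branches can pass through $(\rmb\forall)$ or $(\notin)$ infinitely often, adding new ordinal variables each time.

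\emph{Identity and the term axioms.} For $A,\lnot A$ I will use induction on the height (rank) of $A$ as in \cite[Theorem 3.14]{Freund2019Pi11}: the height is measured by supports of terms together with formula complexity and is preserved under instantiation. For $\land/\lor$, apply $(\land)$ and then $(\lor)$. For $\forall x\in t\,B$ against $\exists x\in t\,\lnot B$, apply $(\rmb\forall)$ with premises indexed by $\ttX_s$ and $s$; in each premise use $(\rmb\exists)$ with the same $s$, which is legitimate because $\ttX_s\vDash|s|<|t|$, and close with the instance for $s\dotin t\land B(s)$. The pair $r\in t$, $r\notin t$ is handled in the same way. When $t$ is an element of $u$ the atomic cases reduce to $(\mathrm{Ax})$. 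Simultaneously I will prove $s\notin s$, $s\subseteq s$, and $s=s$ by induction on $|s|$. For $s\subseteq s$, apply $(\rmb\forall)$ to $\forall x\in s(x\in s)$; in each premise use $(\in)$ with witness $r$ itself, together with the identity $r\dotin s\to r\dotin s$ and $r=r$ at smaller rank. The case $s\notin s$ follows by $(\notin)$ from $r\neq s$ for $|r|<|s|$, since $r=s$ would need $s\in r$, which has lower rank. The equality axiom $s\neq t,\lnot A(s),A(t)$ is proved by induction on $A$, with the atomic cases $r\in s\to r\in t$ obtained by unfolding $\subseteq$.

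\emph{Axiomatic sentences.} Pairing, Union, Infinity, and $\Delta_0$-Separation are proved by exhibiting witness terms inside $\ttL(u)_\ttOmega$. Given $x,y$ with supports in some $\ttX_s$, the witness is $\ttL(u)_\xi$ for a fresh variable $\xi$ placed above both, or, for Separation, the separation term $\{x\in\ttL(u)_\xi\mid x\in b\land A\}$. This uses the fact that $(\rmb\forall)$ over $\ttL(u)_\ttOmega$ allows us to pass to extensions $\ttX_s$ containing such an extra variable. The remaining steps are identity and $\dotin$ unfolding. Infinity uses $\ttL(u)_\omega$-like terms; if $u$ lacks $\omega$, take a fresh variable $\xi$ with the sequence of levels below it, or restrict to $u$ containing $\omega$ as in the intended application. $\Delta_0$-Collection follows from (Refl): from $\forall x\in b\,\exists y\,A$, obtain $(\forall x\in b\,\exists y\in\ttL(u)_\ttOmega A)^{\ttL(u)_\ttOmega}$ by the identity proof, and apply (Refl) with a cut.

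\emph{Set Induction and well-foundedness.} This is the main obstacle. I will imitate the $\LK_\rmbeta$ transfinite induction example. For each $s$, build corecursively a preproof $\tilde\pi_s$ of $[\ttX_s]\vdash\lnot\Prog_A, A(s)$. It unfolds $\lnot\Prog_A$ at $s$ via $(\rmb\exists)$ and needs $\forall y\in s\,A(y)$, which by $(\rmb\forall)$ demands $\tilde\pi_r$ for $|r|<|s|$ under a template $\ttX_r$ satisfying $\ttX_r\vDash|r|<|s|$. Along any infinite branch that follows these recursive calls, the ranks $|s_0|>|s_1|>\cdots$ give a descending sequence in the union of the templates, so that union is ill-founded, as in the $\omega^*$ argument. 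The identity proofs are handled likewise. A branch through infinitely many $(\rmb\forall)$ steps strictly decreases term supports each time, because every premise satisfies $|s|<|t|$, and otherwise the formula complexity decreases. Hence every infinite branch yields a strictly descending sequence of template elements, and every preproof constructed above is a $\rmbeta$-proof. The delicate bookkeeping is tracking $|\cdot|$ through $\dotin$ unfoldings when $t$ is a separation term.
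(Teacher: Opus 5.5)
Your overall route (corecursive versions of the Rathjen--Cook/Fern\'andez constructions, identity via $(\rmb\forall)$ followed by $(\rmb\exists)$ with the same $s$, Collection via (Refl), Set Induction along a spine with $|s_0|>|s_1|>\cdots$) is what the paper's pointer to \autoref{Section: Embedding} amounts to. However, two steps fail as written.

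\textbf{Fresh-variable witnesses.} These do not exist in $\sfu\RS$, and this breaks your treatment of Pairing (and of Infinity, and of Separation whenever a parameter's support is the top of the template).

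The rule $(\rmb\exists)$ keeps the template, so its witness must be a term over the current $\ttX$. Such terms are indexed by elements of $\ttX$ only. There is no successor $|s|+1$ as in $H(\ttX)$, so the witness $\ttL(u)_{\alpha+1}$ used in the embedding section does not transfer either.

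Nor does $(\rmb\forall)$ over $\ttL(u)_\ttOmega$ let you pick a convenient extension. You must prove \emph{every} premise allowed by $(\heartsuit)$. This includes $s=\ttL(u)_v$ with $v=\max\ttX_s$, and then $y=s$. For that premise no term $z$ over $\ttX_s$ can satisfy $s\in z$.

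This cannot be fixed by bookkeeping. A (Refl)-free $\rmbeta$-proof of Pairing would instantiate at a successor $\beta+1$ to a (Refl)-free $\sfu\RS_{\beta+1}$-proof (\autoref{Proposition: Instantiating uRS proof into a uRS alpha proof}). By \autoref{Proposition: Soundness of uRS proof}, $L(u)_{\beta+1}$ would then satisfy Pairing. But $L(u)_\beta\in L(u)_{\beta+1}$ and no element of $L(u)_{\beta+1}$ contains $L(u)_\beta$.

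So (Refl) is indispensable here, just as for Collection. For Pairing:
\begin{enumerate}
\item Prove $\exists w\in\ttL(u)_\ttOmega(w=s)\land\exists w\in\ttL(u)_\ttOmega(w=t)$, using $s,t$ themselves as witnesses.
\item Apply (Refl) to get $\exists z\in\ttL(u)_\ttOmega\bigl(\exists w\in z\,(w=s)\land\exists w\in z\,(w=t)\bigr)$.
\item Cut this against $\forall z\in\ttL(u)_\ttOmega\bigl(\forall w\in z\,(w\neq s)\lor\forall w\in z\,(w\neq t)\bigr)$, which you prove premise by premise. For each $r$, take $r$ itself as the $(\rmb\exists)$-witness for $\exists z\in\ttL(u)_\ttOmega(s\in z\land t\in z)$. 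Then derive $s\in r$ by $(\in)$, using the $w$ supplied by $(\rmb\forall)$ on $\forall w\in r\,(w\neq s)$.
\end{enumerate}
Separation with a top-support parameter meets the same obstacle and needs the same device. For Infinity the fresh-variable option is excluded for the same reason, so your fallback assumption on $u$ is really needed. For $u=\varnothing$ the Infinity sentence already fails in $L_\omega$, where (Refl) is sound, so no $\rmbeta$-proof of it exists.

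\textbf{The well-foundedness measure.} Your check uses the wrong measure, and ``induction on the height of $A$'' is not available at all. Ranks live in templates that keep growing along the construction, which is exactly why the paper insists on corecursion.

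It is also not true that successive $(\rmb\forall)$/$(\notin)$ steps on a branch have strictly decreasing bounds with formula complexity falling in between. In the identity proof for $r\in t,\ r\notin t$, the $(\in)$ step replaces an atom by the more complex formula $s\dotin t\land r=s$. The branch then continues with $r=s,\ r\neq s$, whose quantifiers are bounded by $r$ and $s$, and $|r|$ may exceed $|t|$.

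What does decrease is $\rk$. Each round of the identity, equality and basic-fact constructions passes to instances of strictly smaller rank. For example, $\rk(r=s)<\rk(r\in t)$ because $\rk(s)+\omega\le\rk(t)$. This is what the bound $\no(\Gamma)\#\xi$ certifies in \autoref{Section: Embedding}.

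The ranks occurring on a branch lie in $\omega\cdot(1+X+\{\ttOmega\})$, where $X$ is the union of the branch's templates. Hence an infinite branch inside one of these subproofs forces $X$ to be ill-founded. With this measure, together with your $|s_0|>|s_1|>\cdots$ argument on the Set Induction spine, the constructed preproofs are $\rmbeta$-proofs.
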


\subsection{Completeness of \texorpdfstring{$\sfu\RS$}{uRS}}
We are going to prove the completeness theorem for $\sfu\RS$. In this subsection, we assume that $u$ is countable, and identify $u$ with its code $(u,\cup \{u\},\in)$ into the set of natural numbers.
\begin{theorem} \label{Theorem: Preproof property for uRS}
    Suppose that $[\ttX]\vdash\Gamma$ is a sequent of $\sfu\RS$, and let $\epsilon_u$ be a countable enumeration of elements of $u$. Then we can construct an $\epsilon_u$-recursive $\rmbeta$-preproof $\pi$ of $[\ttX]\vdash\Gamma$ such that either
    \begin{enumerate}
        \item $\pi$ is a $\rmbeta$-proof, or
        \item If $B$ is an infinite branch of $\pi$ such that the union of its associated template orders is well-founded, then we can find a $B$-recursive well-founded model of $\KP + \lnot\bigvee\Gamma$.
    \end{enumerate}
\end{theorem}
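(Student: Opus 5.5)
We follow the proof of \autoref{Theorem: Preproof property of LKbeta}, with the $\rmbeta$-preproof built by a systematic search in $\sfu\RS$ instead of $\LK_\rmbeta$. Sequents are treated as lists with a daggered leading formula and carry a stage number $\varsigma$.

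\textbf{Construction of $\pi$.} Fix an $\epsilon_u$-recursive enumeration $\epsilon$ in which every closed $\calL^u_{\bbN+\{\ttOmega\},\ttOmega}$-formula occurs infinitely often. The enumeration also lists, cofinally often, every axiomatic sentence (extensionality, pairing, union, the set induction instances, infinity, $\Delta_0$-separation, $\Delta_0$-collection). Fix an enumeration of set terms in which the first $n$ terms use only ordinal variables below $n$. The one-step construction then runs as follows.
\begin{enumerate}
    \item If the current sequent is a weakening of an axiom of $\sfu\RS$, or is axiomatic, we stop and attach the corresponding $\rmbeta$-proof. This proof comes from the theorem on axiomatic sequents together with weakening.
    \item If the leading formula is basic, we cut on the $\varsigma$-th formula of $\epsilon$, provided its ordinal variables lie in $\ttX$. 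We also add the $\varsigma$-th axiomatic sentence $A$ by a cut whose left premise $\lnot A$ is discharged by the axiomatic-sequent theorem. In either case we increase $\varsigma$.
    \item For $\land$ and $\lor$ we proceed as before.
    \item For $r\notin t$ and $\forall x\in t\,B$ we apply $(\notin)$ or $(\rmb\forall)$ with all premises allowed by $(\heartsuit)$. This is the analogue of $\forall^\ord$, and it is where template orders grow.
    \item For $r\in t$ and $\exists x\in t\,B$ we add all instances $s\dotin t\land r=s$, respectively $s\dotin t\land B(s)$, for the $\le\varsigma$-th terms $s$ with variables in $\ttX$ and $\ttX\vDash|s|<|t|$.
    \item For a leading formula $\exists z\in\ttL(u)_\ttOmega A^z$ with $A$ a $\Sigma$-formula, we additionally keep a copy $A^{\ttL(u)_\ttOmega}$ via (Refl).
\end{enumerate}
The construction is clearly $\epsilon_u$-recursive.

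\textbf{Branch analysis.} Suppose $B=\lag[\ttX_n]\vdash\Gamma_n\rag$ is an infinite branch and $\ttX_\omega=\bigcup_n\ttX_n$ is well-founded. The lemmas from the earlier proof transfer with the same arguments:
\begin{itemize}
    \item atomic leading formulas recur, so $\varsigma\to\infty$;
    \item every formula with parameters from some $\ttX_n$ is decided along the branch;
    \item the branch is consistent, i.e.\ never contains both $A$ and $\lnot A$;
    \item the analogues of the $\land,\lor,\forall,\exists$ saturation clauses hold for the bounded quantifiers and $\in,\notin$.
\end{itemize}
Let $\alpha$ be the well-order $\ttX_\omega$. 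Let $M$ consist of the set terms in $\calL^u_{\alpha+1,\alpha}$ other than $\ttL(u)_\ttOmega$, taken modulo the relation $s\sim t$ iff $s\neq t$ occurs on the branch. Membership is interpreted by $M\vDash s\in t$ iff $s\notin t$ occurs on the branch. Unbounded quantifiers are read as quantifiers over $M$, via $\ttL(u)_\ttOmega$.

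Saturation gives $M\vDash\lnot\llbr C\rrbr$ for every $C$ on the branch. Because axiomatic sentences were cut in with $\lnot A$ discharged, each axiom $A$ itself appears on the branch side as $\lnot\lnot A$ being refuted. Hence $M\vDash\KP$. In particular, the $\Delta_0$-collection instances hold in $M$, and their presence is exactly why $M$ satisfies collection and not only a fragment of it. The relation $\sim$ is a congruence, because of the axiomatic sequents $s\neq t,\lnot A(s),A(t)$ and $s\subseteq s$. All of $M$ is computable from $B$.

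\textbf{Main obstacle: well-foundedness of $M$.} This does not come for free from term models. The plan is to assign to each term $s$ the rank $|s|\in\alpha+1$ together with its syntactic complexity. We then show that $M\vDash s\in t$ implies there is $s'\sim s$ with $|s'|<|t|$. This follows from saturation under the $(\in)$ rule: the branch contains $\lnot(s'\dotin t\land s=s')$, equivalently $s'\dotin t\to s\neq s'$, for every admissible $s'$. Combined with the consistency of the branch, this forces $s\in t$ on the branch to be witnessed below $|t|$.

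Given this, an infinite $\in$-descending sequence in $M$ yields a descending sequence in $\alpha$, which is impossible because $\alpha$ is well-founded. I also expect some care with the basic (Ax) case, where terms from $u$ have empty support. These are handled by the well-foundedness of $u$ itself, which is a transitive set, so $\in$ restricted to $u$ is well-founded.
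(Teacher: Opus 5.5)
Your overall plan is the paper's: a systematic search with a daggered leading formula and a stage number, a term model modulo ``$s\neq t$ lies on the branch'', and well-foundedness via $|\cdot|$ in the well-order $\bigcup_n\ttX_n$. However, the construction has a genuine gap.

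You cut on the $\varsigma$-th formula and increase $\varsigma$ only when the leading formula is basic. You then get $\varsigma\to\infty$ by saying the recurrence lemma of the $\LK_\rmbeta$ proof transfers. It does not. In $\sfu\RS$ atomic formulas are not terminal: $(\in)$ and $(\notin)$ decompose $r\in t$ and $r\notin t$ into $s\dotin t\land r=s$ and $s\dotin t\to r\neq s$. These are not subformulas, and $s\dotin t$ may be an arbitrary $\Delta_0$ formula coming from a separation term, so the subformula-descent argument gives nothing. Worse, basic formulas (both sides in $u$) need not occur at all. For example, for $u=\varnothing$ there are none. Then $\varsigma$ stays $0$, no cut is ever made, and each $(\in)$ or $(\rmb\exists)$ step only ever tries the $0$-th term (and does nothing when no term qualifies). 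Consequently neither the decidedness lemma nor the saturation clauses for $r\in t$ and $\exists x\in t\,B$ hold, so there is no truth lemma for the term model. The paper says explicitly that the construction must deviate from the $\LK_\rmbeta$ one for this reason. It cuts on the $\varsigma$-th formula (when its ordinal variables lie in the current template) and increments $\varsigma$ at \emph{every} non-axiomatic step, which makes $\varsigma\to\infty$ automatic. Adopting that repairs your construction.

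Two other steps are misstated. First, in the well-foundedness argument, $M\vDash s\in t$ means that $s\notin t$ lies on the branch. So the witness comes from the $(\notin)$ inference, not from $(\in)$-saturation. The branch passes through exactly one premise of that inference, which supplies a single $s'$ with $|s'|<|t|$ in the template order and $s'\dotin t\to s\neq s'$ on the branch. $\lor$-saturation then puts $s\neq s'$ on the branch, i.e.\ $s\sim s'$. The $(\in)$-saturation clause concerns the opposite case $M\vDash s\notin t$. A family of such formulas for \emph{every} admissible $s'$ would make $M$ identify $s$ with every term below $t$, and consistency of the branch cannot manufacture a witness. To iterate this along a descending sequence you must pass to representatives using that $\sim$ is a congruence. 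The paper packages this as $\|r\|=\min\{|s| : r\sim s\}$ together with the normalization $|t|=\|t\|$.

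Second, your axiom cut is oriented backwards. The premise $\Gamma,A$ is the axiomatic one and gets closed; the branch continues through $\Gamma,\lnot A$, so $\lnot A$ lies on the branch and therefore $M\vDash A$. Since $\epsilon$ already lists every sentence infinitely often and axiomatic sequents are closed on the spot, the paper needs no separate enumeration of the axioms. Your extra (Refl) step is harmless but unnecessary.
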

\begin{proof}
    Its proof is more or less similar to \autoref{Theorem: Preproof property of LKbeta}. Similar to the proof of \autoref{Theorem: Preproof property of LKbeta}, we represent sequents as finite lists of formulas. Again, the underlying sequent calculus is still Tait-style.
    
    Let us fix a recursive enumeration $\epsilon$ of all $\calL^u_{\{\tilde{n}\mid n\in\bbN\}\cup \{\ttOmega\},\ttOmega}$-formulas such that every formula occurs infinitely many times.
    We also fix an $\epsilon_u$-recursive enumeration $\epsilon_\mathsf{term}$ of $\calL^u_{\{\tilde{n}\mid n\in\bbN\}\cup \{\ttOmega\},\ttOmega}$-terms such that the $n$-th term in the enumeration only uses ordinal variables from $\{\tilde{n}\mid n\le m\}$.
    We also define the leading formula as in the proof of \autoref{Theorem: Preproof property of LKbeta}. However, the one-step construction must be different from that in the proof of \autoref{Theorem: Preproof property of LKbeta} since we have the corresponding rules for atomic formulas.
    Instead, we introduce new formulas by Cut in every stage of the construction, except for the Axiom cases. We also define the stage number $\varsigma$, but we will increase the stage number in every one-step construction since we will always perform Cut in each one-step construction.
    Hereby, we divide the construction into two parts: 
    \begin{enumerate}
        \item If the sequent is axiomatic. In this case, we paste the proof of the leading formula we constructed in \autoref{Subsection: Embedding KP into uRS}.

        Now we consider the remaining cases. We divide the cases by the leading formula.

        \item Suppose that the leading formula of $\Gamma$ is $A\land B$. The following portion is
        \begin{equation*}
        \begin{mathprooftree}
            \AxiomC{$[\ttX]\vdash \Gamma, A$}
            \AxiomC{$[\ttX]\vdash \Gamma, B$}
            \RightLabel{($\land$)}
            \BinaryInfC{$[\ttX]\vdash \Gamma$}
        \end{mathprooftree}
        \end{equation*}

        \item Suppose that the leading formula of $\Gamma$ is  $A\lor B$.  The following portion is
        \begin{equation*}
        \begin{mathprooftree}
            \def\fCenter{\vdash}
            \AxiomC{$[\ttX]\fCenter\Gamma, A,B$}
            \RightLabel{($\lor$)}
            \UnaryInfC{$[\ttX]\fCenter\Gamma, B$}
            \RightLabel{($\lor$)}
            \UnaryInfC{$[\ttX]\fCenter \Gamma$}
        \end{mathprooftree}
        \end{equation*}
        
        \item Suppose that the leading formula takes the form $r\notin t$. Then the following portion is
        \begin{equation*}
        \begin{mathprooftree}
            \def\fCenter{\vdash}
            \AxiomC{$\cdots \quad [\ttX_s] \fCenter \Gamma,\ s\dotin t\to r\neq s \quad \cdots$ ($\heartsuit$)}
            \RightLabel{$(\notin)$}
            \UnaryInfC{$[\ttX]\fCenter \Gamma$}
        \end{mathprooftree}
        \end{equation*}
        
        \item Suppose that the leading formula takes the form $r\in t$. Let $s_0,\cdots,s_{m-1}$ be the terms such that $s_i$ is the $\le\varsigma$-th term on the list $\epsilon_\mathsf{term}$, all ordinal variables occurring in $s_i$ are in $\ttX$, and $\ttX\vDash |s_i|<|t|$. Then the following portion is
        \begin{equation*}
            \begin{mathprooftree}
            \def\fCenter{\vdash}
            \AxiomC{$[\ttX]\fCenter \Gamma,\ s_0\dotin t\land r=s_0,\cdots,\ s_{m-1}\dotin t\land r=s_{m-1}$}
            \RightLabel{$(\in)$}
            \UnaryInfC{$\vdots$}
            \noLine
            \UnaryInfC{$[\ttX]\fCenter \Gamma,\ s_{m-1}\dotin t\land r=s_{m-1}$}
            \RightLabel{$(\in)$}
            \UnaryInfC{$[\ttX]\fCenter \Gamma$}
        \end{mathprooftree}
        \end{equation*}
        Note that $m=0$ is possible. In this case, we do nothing other than the Cut we will explain later.

        \item Suppose that the leading formula of $\Gamma$ is $\forall x\in t B(x)$. The following portion is
        \begin{equation*}
            \begin{mathprooftree}
            \def\fCenter{\vdash}
            \AxiomC{$ \cdots \quad [\ttX_s]\fCenter \Gamma,\ s\dotin t\to B(s) \quad \cdots$ ($\heartsuit$)}
            \RightLabel{$(\rmb\forall)$}
            \UnaryInfC{$[\ttX]\fCenter \Gamma$}
        \end{mathprooftree}
        \end{equation*}
        
        \item Suppose that the leading formula of $\Gamma$ is $\exists x\in t B(x)$.
        Let $s_0,\cdots,s_{m-1}$ be the terms such that $s_i$ is the $\le\varsigma$-th term on the list $\epsilon_\mathsf{term}$, all ordinal variables occurring in $s_i$ are in $\ttX$, and $\ttX\vDash |s_i|<|t|$. Then the following portion is
        \begin{equation*}
        \begin{mathprooftree}
            \def\fCenter{\vdash}
            \AxiomC{$[\ttX]\fCenter \Gamma,s_0\dotin t\land B(s_0),\cdots,s_{m-1}\dotin t\land  B(s_{m-1})$}
            \RightLabel{($\rmb\exists$)}
            \UnaryInfC{$\vdots$}
            \noLine
            \UnaryInfC{$[\ttX]\fCenter \Gamma, s_{m-1}\dotin t\land B(s_{m-1})$}
            \RightLabel{($\rmb\exists$)}
            \UnaryInfC{$[\ttX]\fCenter \Gamma$}
        \end{mathprooftree}
        \end{equation*}
    \end{enumerate}
    In every stage of the construction except for the Axiom case, we perform Cut to the topmost hypotheses as long as every ordinal variable occurring in the $\varsigma$th formula $A$ of $\epsilon$ occurs in $\ttX$. For example, when the leading formula is $r\notin t$ and every ordinal variable of $A$ occurs in $\ttX$, then the actual one-step construction is
    \begin{equation*}
    \begin{mathprooftree}
        \def\fCenter{\vdash}
        \AxiomC{$[\ttX_s] \fCenter \Gamma,\ s\dotin t\to r\neq s, A$}
        \AxiomC{$[\ttX_s] \fCenter \Gamma,\ s\dotin t\to r\neq s, \lnot A$}
        \RightLabel{(Cut)}
        \BinaryInfC{$\cdots \quad [\ttX_s] \fCenter \Gamma,\ s\dotin t\to r\neq s \quad \cdots$ ($\heartsuit$)}
        \RightLabel{$(\notin)$}
        \UnaryInfC{$[\ttX]\fCenter \Gamma$}
    \end{mathprooftree}
    \end{equation*}

    Now suppose that $\pi$ is not a $\rmbeta$-proof, and we have an infinite branch
    \begin{equation*}
        \bigl\langle [\ttX_n] \vdash \Gamma_n \bigm| n\in\bbN \bigr\rangle
    \end{equation*}
    with $\ttX_0=\ttX$, $\Gamma_0 = \Gamma$, $\ttX_n\subseteq\ttX_{n+1}$, and $\bigcup_n \ttX_n$ is a well-order. If $\Gamma_\omega = \bigcup_n \Gamma_n$, then we can see the following:
    \begin{lemma}
    \begin{enumerate}
        \item For each sentence $A$, precisely one of $A$ or $\lnot A$ occurs in $\Gamma_\omega$
        \item If $A$ is axiomatic, then $\lnot A$ occurs in $\Gamma_\omega$.
        \item If $B\land C$ occurs in $\Gamma_\omega$, then one of $B$ or $C$ occurs in $\Gamma_\omega$.
        \item If $B\lor C$ occurs in $\Gamma_\omega$, then both $B$ and $C$ occurs in $\Gamma_\omega$.
        \item If $r\in t$ occurs in $\Gamma_\omega$, then for every set term $s$ with $\ttX\vDash |s|<|t|$, $s\dotin t$ and $r=s$ occurs in $\Gamma_\omega$. 
        \item If $r\notin t$ occurs in $\Gamma_\omega$, then there is a set term $s$ such that $\ttX\vDash |s|<|t|$ and $s\dotin t\to r\neq s$ occurs in $\Gamma_\omega$.
        \item If $\exists x\in t B(x)$ occurs in $\Gamma_\omega$, then for every set term $s$ with $\ttX\vDash |s|<|t|$, $s\dotin t\land B(s)$ occurs in $\Gamma_\omega$.
        \item  If $\forall x\in t B(x)$ occurs in $\Gamma_\omega$, then for some set term $s$ with $\ttX\vDash |s|<|t|$, $s\dotin t\to B(s)$ occurs in $\Gamma_\omega$.
    \end{enumerate}
    \end{lemma}
    \begin{proof}
        The only non-trivial cases are the cases for $r\in t$ and $\exists x\in t B(x)$. We only consider the case for $\exists x\in tB(x)$ since the other case is analogous.

        Suppose that $\exists x\in t B(x)$ occurs in $\Gamma_n$ for some $n$.
        For a set term $s$ with $\ttX\vDash |s|<|t|$ let us choose a large $m\ge n$ such that
        \begin{itemize}
            \item Every ordinal parameter occurs in $\ttX_m$,
            \item $s$ is the $<\varsigma$th term on the list $\epsilon_\mathsf{term}$, where $\varsigma$ is the stage number of $\Gamma_m$, and
            \item The $\varsigma$th formula of $\epsilon$ is $\exists x\in t B(x)$.
        \end{itemize}
        By the construction, $\Gamma_{m'}$ for some $m'>m$ contains $s\dotin t\land B(s)$.
    \end{proof}
    Now, let us define $M$ to be the term model with the quotient given by
    \begin{equation*}
        s \sim t \iff \text{$s\neq t$ occurs in $\Gamma_\omega$},
    \end{equation*}
    and $M \vDash A$ iff $\lnot A$ occurs in $\Gamma_\omega$.
    Then $M$ is well-founded: For each term $r$, let us define
    \begin{equation*}
        \|r\| := \min \{|s| : r\sim s\}.
    \end{equation*}
    Now suppose that $M\vDash r\in t$, so $r\notin t$ occurs in $\Gamma_\omega$. By replacing $t$ if necessary, we may assume that $t$ satisfies $|t|=\|t\|$. 
    Hence, there is a set term $s$ such that $M\vDash r=s$ and $\ttX\vDash |s|<|t|$. This shows $\|r\|\le |s| < \|t\|$. Since $\ttX$ is well-ordered, $M$ cannot have an infinite $\in$-descending chain.
\end{proof}

\section{A functorial proof system for \texorpdfstring{$\KP$}{KP}, Part II}

In this section, we introduce a dilator-controlled infinitary proof system $\RS$ and establish its properties. 
First, we introduce the rank of formulas in the language $\calL^u_{\kappa,\ttOmega}$. We only consider the very specific choice of $\kappa$, namely, 
\begin{equation} \label{Formula: Which kappa is chosen to define rk}
    \kappa = H(\alpha):= E \bigl(F(\alpha)+ \{\ttOmega\} + G(\alpha)\bigr)
\end{equation}
where $\alpha$ is a linear order, $F,G$ are dilators, and $E$ is either the epsilon flower or the Veblen flower $\varphi_\lambda$ for a non-empty well-order $\lambda$. The specific choice of $E$ is important since we need functorial internal addition, multiplication, and internal constants $1$ and $\omega$ to define the rank properly.

\begin{definition}
    Let $\kappa$ be as given in \eqref{Formula: Which kappa is chosen to define rk}.
    We define $\rk^{\bfL(u)}_\kappa$ as a function from the set of $\calL^u_{\kappa,\ttOmega}$-formulas to $\kappa = H(\alpha)$ as follows: 
    \begin{enumerate}
        \item $\rk^{\bfL(u)}_\kappa(v)=0$ for $v\in u$.
        \item $\rk^{\bfL(u)}_\kappa(\ttL(u)_\xi) = \rk^{\bfL(u)}_\kappa\bigl(\{x\in\ttL(u)_\xi\mid \phi(x,a_0,\cdots,a_{n-1})\}\bigr) = \omega\cdot(1+\xi)$.
        \item $\rk^{\bfL(u)}_\kappa(a\in b) = \rk^{\bfL(u)}_\kappa(a\notin b) = \max(\rk^{\bfL(u)}_\kappa(a)+6, \rk^{\bfL(u)}_\kappa(b)+1)$.
        \item $\rk^{\bfL(u)}_\kappa(a=b) = \rk^{\bfL(u)}_\kappa(a\neq b) = \max(\rk^{\bfL(u)}_\kappa(a), \rk^{\bfL(u)}_\kappa(b),5)+4$.
        \item $\rk^{\bfL(u)}_\kappa(A\land B) = \rk^{\bfL(u)}_\kappa(A\lor B) = \max(\rk^{\bfL(u)}_\kappa(A), \rk^{\bfL(u)}_\kappa(B))+1$.
        \item $\rk^{\bfL(u)}_\kappa(\exists x\in a A(x))=\rk^{\bfL(u)}_\kappa(\forall x\in a A(x)) = \max(\rk^{\bfL(u)}_\kappa(a),\rk^{\bfL(u)}_\kappa(A(0))+2)$ if $a\neq \ttL(u)_\ttOmega$.
        \item $\rk^{\bfL(u)}_\kappa(\exists x\in \ttL(u)_\ttOmega A(x))=\rk^{\bfL(u)}_\kappa(\forall x\in \ttL(u)_\ttOmega A(x)) = \max(\ttOmega,\rk^{\bfL(u)}_\kappa(A(0))+1)$.
    \end{enumerate}
\end{definition}

\begin{proposition}
    $\rk^{\bfL(u)}$ is functorial in the following sense: If $f\colon \alpha\to\beta$ is increasing, then the following diagram commutes: (See \autoref{Definition: Functorial boldface L} for the definition of $\bfL(u)_{H(f)}$.)
    \begin{equation*}
    \begin{tikzcd}[column sep=large]
        \calL^u_{H(\alpha),\ttOmega} &  \calL^u_{H(\beta),\ttOmega} \\
        H(\alpha) & H(\beta) %
        \arrow[from=1-1, to=1-2, "\bfL(u)_{H(f)}"]
        \arrow[from=1-1, to=2-1, "\rk^{\bfL(u)}_{H(\alpha)}"']
        \arrow[from=1-2, to=2-2, "\rk^{\bfL(u)}_{H(\beta)}"]
        \arrow[from=2-1, to=2-2, "H(f)"']
    \end{tikzcd}
    \end{equation*}
\end{proposition}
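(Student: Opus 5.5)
The plan is a structural induction on terms and then on formulas of $\calL^u_{H(\alpha),\ttOmega}$, checking at each clause of the definition of $\rk^{\bfL(u)}$ that
\begin{equation*}
    \rk^{\bfL(u)}_{H(\beta)}\bigl(\bfL(u)_{H(f)}(A)\bigr) = H(f)\bigl(\rk^{\bfL(u)}_{H(\alpha)}(A)\bigr).
\end{equation*}
Here $H(f) = E\bigl(F(f)+\mathrm{id}_{\{\ttOmega\}}+G(f)\bigr)$ is the morphism of the predilator $H$, and $\bfL(u)_{H(f)}$ is extended from terms to formulas in the obvious way: it commutes with the connectives and bounded quantifiers, and fixes $\ttL(u)_\ttOmega$, because $H(f)$ fixes $\ttOmega$.

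Before the induction, I will record three facts about $H(f)$.
\begin{enumerate}
    \item $H(f)$ preserves the internal constants $0$, $1$, $\omega$, and every finite numeral $n$, since these are terms of $E$ with empty support. It also fixes $\ttOmega$, i.e.\ $\varepsilon_\ttOmega$, respectively $E_\ttOmega$.
    \item $H(f)$ commutes with internal addition and multiplication. This is \autoref{Proposition: Functoriality of the internal addition and multiplication} for $E=\varepsilon$, and the analogous remark for $\varphi_\lambda$. Together with the first fact, it gives $H(f)(\sigma+n)=H(f)(\sigma)+n$ and $H(f)(\omega\cdot(1+\xi)) = \omega\cdot(1+H(f)(\xi))$.
    \item $H(f)$ is order-preserving, hence commutes with binary $\max$.
\end{enumerate}

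For the term cases, the case $v\in u$ is trivial: both sides are $0$. For $\ttL(u)_\xi$, and likewise for a separation term $\{x\in\ttL(u)_\xi\mid\phi\}$, \autoref{Definition: Functorial boldface L} sends the index $\xi$ to $H(f)(\xi)$, so both sides equal $\omega\cdot(1+H(f)(\xi))$ by the second fact. The parameters $a_i$ do not affect the rank. For $\ttL(u)_\ttOmega$ we use $H(f)(\ttOmega)=\ttOmega$.

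For formulas, each clause is a $\max$ of earlier ranks, finite numerals and possibly $\ttOmega$, followed by adding a numeral. So each case follows from the induction hypothesis together with the three facts. In the quantifier clauses the rank is computed on $A(0)$, which is again a formula of the same language, and $\bfL(u)_{H(f)}(A(0)) = (\bfL(u)_{H(f)}A)(0)$ because $0\in u$ is fixed. The distinction between $a=\ttL(u)_\ttOmega$ and $a\neq\ttL(u)_\ttOmega$ is preserved by $\bfL(u)_{H(f)}$, since the index $\ttOmega$ is fixed and no other index is sent to $\ttOmega$. The defined formulas $a=b$ and $a\neq b$ are handled by their own clause in the same way.

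The only step needing care is the second fact, that $H(f)$ commutes with $+n$ and with $\omega\cdot(1+\cdot)$ on $E$-terms. This reduces to \autoref{Proposition: Functoriality of the internal addition and multiplication}, applied to the linear order $F(\alpha)+\{\ttOmega\}+G(\alpha)$ and the map $F(f)+\mathrm{id}+G(f)$. This map is increasing because $F$ and $G$ are predilators. Since the proposition is stated for arbitrary increasing maps, no further work is needed beyond checking that the constants $1$ and $\omega$ are support-free terms.
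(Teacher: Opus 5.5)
Your proposal is correct and follows the same route as the paper, which proves the commutation first for terms and then for formulas by induction. The paper compresses this into two sentences. Your proof spells out what that sketch implicitly relies on: the functoriality of the internal arithmetic of $E$, the fact that $H(f)$ fixes $\ttOmega$ and the support-free constants, and the fact that $H(f)$ commutes with $\max$.
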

\begin{proof}
    We can easily prove that $\rk^{\bfL(u)}_{H(\beta)}\circ \bfL(u)_{H(f)} (s) = H(f)\circ \rk^{\bfL(u)}_{H(\alpha)}(s)$ for every $\calL^u_{H(\alpha),\ttOmega}$-term $s$.
    Then we can prove the equality  $\rk^{\bfL(u)}_{H(\beta)}\circ \bfL(u)_{H(f)} (A) = H(f)\circ \rk^{\bfL(u)}_{H(\alpha)}(A)$ for every $A\in \calL^u_{H(\alpha),\ttOmega}$ by induction on $A$. 
\end{proof}

Sequents in the proof system we consider in this section take the form
\begin{equation*}
    H[\ttX]\sststile{\rho}{\alpha}\Gamma.
\end{equation*}
Here $\alpha,\rho\in H(\ttX)$; We interpret its proof as an $H$-proof bounded by elements of $H$. In every case we consider, $H$ will take the form $E(F+\{\ttOmega\}+G)$ for some dilators $F,G$, and $E$ is either the epsilon flower or the Veblen flower $\varphi_\lambda$ for some non-empty well-order $\lambda$. In later parts of the paper, we write $\{\ttOmega\}$ in the definition of $H$ by $1$, so we denote $H = E\circ (\Id+F+1+G)$.

\begin{definition}
    Let us define the deduction rules of $\RS_H$ as follows:
    \begin{center}
    \vspace{1em}
    \begin{mathprooftree}
        \def\fCenter#1#2{\sststile{#1}{#2}}
        \AxiomC{}
        \RightLabel{$(\mathrm{Ax}) \quad (x\in y\in u)$}
        \UnaryInfC{$H[\ttX]\fCenter{\rho}{\alpha} \Gamma,\ x\in y$}
    \end{mathprooftree}
    \hspace{3em}%
    \begin{mathprooftree}
        \def\fCenter#1#2{\sststile{#1}{#2}}
        \AxiomC{}
        \RightLabel{$(\mathrm{Ax}) \quad (x\notin y\in u,\ y\in u)$}
        \UnaryInfC{$H[\ttX]\fCenter{\rho}{\alpha} \Gamma,\ x\notin y$}
    \end{mathprooftree}

    \vspace{1em}
    \begin{mathprooftree}
        \def\fCenter#1#2{\sststile{#1}{#2}}
        \AxiomC{$H[\ttX]\fCenter{\rho}{\alpha_0} \Gamma,\ A$}
        \AxiomC{$H[\ttX]\fCenter{\rho}{\alpha_1}  \Gamma,\ B$}
        \RightLabel{$(\land)$ \hspace{2em} ($\alpha_0,\alpha_1<\alpha$)\footnotemark}
        \BinaryInfC{$H[\ttX]\fCenter{\rho}{\alpha} \Gamma,\ A\land B$}
    \end{mathprooftree}
    \footnotetext{We do the comparison over $H(\ttX)$.}
    
    \vspace{1em}
    \begin{mathprooftree}
        \def\fCenter#1#2{\sststile{#1}{#2}}
        \AxiomC{$H[\ttX]\fCenter{\rho}{\alpha_0} \Gamma,\ A$}
        \RightLabel{$(\lor)$}
        \UnaryInfC{$H[\ttX]\fCenter{\rho}{\alpha} \Gamma,\ A\lor B$}
    \end{mathprooftree}
    \hspace{3em}%
    \begin{mathprooftree}
        \def\fCenter#1#2{\sststile{#1}{#2}}
        \AxiomC{$H[\ttX]\fCenter{\rho}{\alpha_0} \Gamma,\ B$}
        \RightLabel{$(\lor)$ \hspace{2em} ($\alpha_0<\alpha$)}
        \UnaryInfC{$H[\ttX]\fCenter{\rho}{\alpha} \Gamma,\ A\lor B$}
    \end{mathprooftree}

    \vspace{1em}
    \begin{mathprooftree}
        \def\fCenter#1#2{\sststile{#1}{#2}}
        \AxiomC{$H[\ttX]\fCenter{\rho}{\alpha_0} \Gamma,\ s\dotin t\land r=s$}
        \RightLabel{$(\in)$ \hspace{2em} ($\alpha_0<\alpha$, $|s|<|t|$, $\rk^{\bfL(u)}_{H(\ttX)}(s)<\alpha$, $r\in t$ not basic.)}
        \UnaryInfC{$H[\ttX]\fCenter{\rho}{\alpha} \Gamma,\ r\in t$}
    \end{mathprooftree}
    
    \vspace{1em}
    \begin{mathprooftree}
        \def\fCenter#1#2{\sststile{#1}{#2}}
        \AxiomC{$\cdots \quad H[\ttX_s] \fCenter{\rho}{\alpha_s} \Gamma,\ s\dotin t\to r\neq s \quad \cdots$ ($\spadesuit$)}
        \RightLabel{$(\notin)$ \hspace{2em}($\alpha_s<\alpha$)}
        \UnaryInfC{$H[\ttX]\fCenter{\rho}{\alpha} \Gamma,\ r\notin t$}
    \end{mathprooftree}

    \vspace{1em}
    \begin{mathprooftree}
        \def\fCenter#1#2{\sststile{#1}{#2}}
        \AxiomC{$H[\ttX]\fCenter{\rho}{\alpha_0} \Gamma,\ s\dotin t \land B(s)$}
        \RightLabel{$(\rmb\exists)$ \hspace{2em} ($\alpha_0<\alpha$, $|s|<|t|$, $\rk^{\bfL(u)}_{H(\ttX)}(s)<\alpha$.)}
        \UnaryInfC{$H[\ttX]\fCenter{\rho}{\alpha} \Gamma,\ \exists x\in t B(x)$}
    \end{mathprooftree}
    
    \vspace{1em}
    \begin{mathprooftree}
        \def\fCenter#1#2{\sststile{#1}{#2}}
        \AxiomC{$ \cdots \quad H[\ttX_s]\fCenter{\rho}{\alpha_s} \Gamma,\ s\dotin t\to B(s) \quad \cdots$ ($\spadesuit$)}
        \RightLabel{$(\rmb\forall)$ \hspace{2em} ($\alpha_s<\alpha$)}
        \UnaryInfC{$H[\ttX]\fCenter{\rho}{\alpha} \Gamma,\ \forall x\in t B(x)$}
    \end{mathprooftree}
    
    \vspace{1em}
    \begin{mathprooftree}
        \def\fCenter#1#2{\sststile{#1}{#2}}
        \AxiomC{$H[\ttX]\fCenter{\rho}{\alpha_0} \Gamma,\ A^{\ttL(u)_\ttOmega}$}
        \RightLabel{(Refl) \hspace{2em} ($\alpha_0,\ttOmega<\alpha$, $A$ is a $\Sigma$-formula)}
        \UnaryInfC{$H[\ttX]\fCenter{\rho}{\alpha} \Gamma,\ \exists z\in \ttL(u)_\ttOmega A^z$}
    \end{mathprooftree}
    
    \vspace{1em}
    \begin{mathprooftree}
        \def\fCenter#1#2{\sststile{#1}{#2}}
        \AxiomC{$H[\ttX]\fCenter{\rho}{\alpha_0} \Gamma,\ A$}
        \AxiomC{$H[\ttX]\fCenter{\rho}{\alpha_1}  \Gamma,\ \lnot A$}
        \RightLabel{(Cut) \hspace{2em} ($\alpha_0,\alpha_1<\alpha$, $\rk^{\bfL(u)}_{H(\ttX)}(A)<\rho$)}
        \BinaryInfC{$H[\ttX]\fCenter{\rho}{\alpha} \Gamma$,}
    \end{mathprooftree}
    \end{center}
    Here $(\spadesuit)$ is the following conditions over $s$ and $\ttX_s$:
    \begin{itemize}
        \item $\ttX_s = \ttX \cup \bigcup \bigl\{ \supp^H_{\ttX_s}(a)\bigm|a\in \supp^{\bfL(u)}_{H(\ttX_s)}(s)\bigr\}$.
        \item Over $\ttX_s\setminus \ttX$, the $\ttX_s$-order and the natural number order agree.\footnote{This condition is for `uniqueness'; Once we add new indiscernibles or variables, we are forced to add them increasingly.}
        \item $H(\ttX_s)\vDash |s|<|t|$.
    \end{itemize}
    $(\spadesuit)$-labeled rules have hypotheses for each $s$ and $\ttX_s$ ($\alpha_s$ is from $s$) under the constraint $(\spadesuit)$. 
\end{definition}

$H$ in $\RS_H$ is called the \emph{domain dilator}; In Girard's terminology, $H$ indicates we only consider $H$-proofs. In semantic terms, $H \vdash \Gamma$ would imply $L(u)_{H(\alpha)}\vDash \bigvee \Gamma$ for every $\alpha$. The template order $\ttX$ allows us to use non-nullary terms of dilators in the proof tree and rank. The presence of the template order in the proof tree makes dilator-based analysis closer to relativized ordinal analysis (e.g., \cite{RathjenCook2016ClassifyingProvTotalsetftnKPKPP, Fernandez2024SetTotalRecKPell}). $H$ will also take the role of operators in operator-controlled ordinal analysis.

Regarding $(\rmb\exists)$ and $(\in)$, they do not `reduce' the size of the template order. That is, the premise template order is no larger than the conclusion template order. It looks too strong, but it is more like a feature. No preproof we will see requires `shrinking a template order from premise to conclusion.'

The proof of the following lemma illustrates how the functoriality of the proof system $\RS_H$ works and will have a focal role in the impredicative collapsing. The following proof uses coinduction on proof trees in place of induction on the rank of the last sequent since the `rank' is no longer a member of a well-order. A similar idea also appears in \cite{Towsner2025proofsmodifyproofs12}, where a new proof tree is constructed from an old proof tree corecursively. We may think of the proof of the following lemma as constructing an algorithm mapping a proof tree to another proof tree. 
\begin{lemma}[Substitution lemma] \label{Lemma: Substitution lemma}
    Suppose that $H[\ttX]\sststile{\rho}{\alpha}\Gamma$ and $D$ is a predilator. For every increasing $f\colon \ttX\to D(\ttY)$, we have $H\circ D[\ttY]\sststile{H(f)(\rho)}{H(f)(\alpha)} \bfL(u)_{H(f)}(\Gamma)$.
\end{lemma}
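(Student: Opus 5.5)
We construct the new preproof corecursively from the given $\RS_H$-preproof $\pi$ of $H[\ttX]\sststile{\rho}{\alpha}\Gamma$. We define a transformation $\pi\mapsto \pi^{f}$ that sends the sequent $H[\ttX]\sststile{\rho}{\alpha}\Gamma$ to
\[
H\circ D[\ttY]\sststile{H(f)(\rho)}{H(f)(\alpha)}\bfL(u)_{H(f)}(\Gamma).
\]
The last inference of $\pi^f$ is the same rule as that of $\pi$, applied to the images of the immediate subproofs. The transformation is defined simultaneously for all $\ttX,\ttY$ and increasing $f\colon\ttX\to D(\ttY)$, so that corecursion can change $f$ along the tree. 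Preservation of the side conditions comes from functoriality. $H(f)\colon H(\ttX)\to H(D(\ttY))=(H\circ D)(\ttY)$ is order preserving, so it preserves $\alpha_0<\alpha$, $\rho$-bounds, and $\ttOmega<\alpha$; we need $H(f)(\ttOmega)=\ttOmega$, which holds because $H=E\circ(\Id+F+1+G)$ and the middle summand $1$ is fixed. By the functoriality proposition for $\rk^{\bfL(u)}$, $\rk(\bfL(u)_{H(f)}(A))=H(f)(\rk(A))$. Hence the conditions $\rk(A)<\rho$ for (Cut) and $\rk(s)<\alpha$ for $(\in)$ and $(\rmb\exists)$ transfer. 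The conditions $|s|<|t|$ transfer because $|\cdot|$ is a maximum of supports and $\bfL(u)_{H(f)}$ commutes with supports. Finally, $\bfL(u)_{H(f)}$ commutes with the logical connectives and with $\dotin$, so principal formulas map to principal formulas. Axioms are untouched, since elements of $u$ are fixed.

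For the rules $(\land),(\lor),(\in),(\rmb\exists),(\mathrm{Refl}),(\mathrm{Cut})$, the premises have the same template $\ttX$, and we simply apply the corecursion with the same $f$.

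The main obstacle is the $(\spadesuit)$-rules $(\notin)$ and $(\rmb\forall)$. There the conclusion in $H\circ D[\ttY]$ demands premises indexed by all terms $s'$ and templates $\ttY_{s'}\supseteq\ttY$ satisfying $(\spadesuit)$ relative to $H\circ D$. The original proof only supplies premises indexed by pairs $(s,\ttX_s)$ relative to $H$. I would proceed as in the proof of Proposition \ref{Proposition: Instantiating uRS proof into a uRS alpha proof}. Given $s'$ with template $\ttY_{s'}$, consider the finite set
\[
S=\ran\bigl(D(\iota)\circ f\bigr)\cup\bigcup\bigl\{\supp^H_{D(\ttY_{s'})}(a) \bigm| a\in\supp^{\bfL(u)}(s')\bigr\}\subseteq D(\ttY_{s'}),
\]
where $\iota\colon\ttY\to\ttY_{s'}$ is the inclusion. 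Let $\ttX_s$ be the unique template extending $\ttX$, with new elements ordered according to the $\bbN$-convention and with order type matching $S$. Let $g\colon\ttX_s\to D(\ttY_{s'})$ be the resulting increasing enumeration, which extends $D(\iota)\circ f$. By the support property of the dilator $H$, every element of $\supp^{\bfL(u)}(s')$ lies in the range of $H(g)$. Hence $s'=\bfL(u)_{H(g)}(s)$ for a unique $s$. We must then check that $(s,\ttX_s)$ satisfies $(\spadesuit)$. The first clause holds by minimality of $S$, and the third clause holds by reflection of order along $H(g)$.

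We then set the premise for $s'$ to be the corecursive image of $\pi'_s$ under $g$, viewed as a map into $D(\ttY_{s'})$. Its conclusion is $\bfL(u)_{H(g)}$ applied to the premise $\Gamma,\ s\dotin t\to B(s)$. Since $g$ extends $D(\iota)\circ f$, the side formulas become $\bfL(u)_{H(f)}(\Gamma)$, transported along $\iota$ into $\ttY_{s'}$. I expect the delicate points to be this bookkeeping, namely the compatibility of $H(g)$ with $H(f)$ via $\iota$, and the uniqueness requirement of the $\bbN$-order on the new template elements. The fact that $D$ is only a predilator is harmless, because we only use supports and the functoriality of $H$.

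Since the construction is a corecursion producing a preproof, and the statement concerns derivability in the preproof sense, we are then done. If well-foundedness of branches is required, it follows because every branch of $\pi^f$ projects to a branch of $\pi$.
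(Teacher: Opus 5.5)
Your proposal is correct and follows the paper's proof essentially step for step. The paper also argues by corecursion on the preproof, and the only real work is in $(\notin)$ and $(\rmb\forall)$. There, each target index $(r,\ttY_r)$ is pulled back along the unique $\bbN$-normalized enumeration $f_r\supseteq f$ of $\ran f\cup\bigcup\{\supp^H(a)\mid a\in\supp^{\bfL(u)}(r)\}$ to $s(r)=\bfL(u)_{H(f_r^{-1})}(r)$, and $\pi_{s(r)}[f_r]$ is used as the premise; writing $D(\iota)\circ f$ explicitly, as you do, is slightly more careful than the paper.

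Your optional closing remark is not quite right, though it is not needed since the lemma concerns preproofs. Under the template-union criterion for $\rmbeta$-proofs, an ill-founded $\bigcup_n\ttX_n$ only forces $D(\bigcup_n\ttY_n)$ to be ill-founded. That says nothing about $\bigcup_n\ttY_n$ when $D$ is merely a predilator, so projecting branches alone does not transfer $\rmbeta$-proofhood.
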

\begin{proof}
    We prove it by coinduction on preproofs and divide the cases under the last rule of inference. The goal of the proof is to construct an algorithm taking a preproof $\pi$ of $H[\ttX]\sststile{\rho}{\alpha}\Gamma$, and returning a preproof $\pi[f]$ of $H\circ D[\ttY]\sststile{H(f)(\rho)}{H(f)(\alpha)} \bfL(u)_{H(f)}(\Gamma)$.
    
    The most problematic cases are $(\notin)$ and $(\rmb\forall)$, and treating both cases uses the same idea. Hence, we only provide the proof for the case $(\rmb\forall)$.
    For notational convenience, we write $n_\ttX = \field(\ttX)$ for a template order $\ttX$. $n_\ttX$ is a natural number by definition of a template order.
    $(\rmb\forall)$ is the following rule:
    \begin{center}
    \begin{mathprooftree}
        \def\fCenter#1#2{\sststile{#1}{#2}} \noLine
        \AxiomC{$\pi_s$} \noLine
        \UnaryInfC{$\vdots$} \noLine
        \UnaryInfC{$\cdots \quad H[\ttX_s] \fCenter{\rho}{\alpha_s} \Gamma,\ s\dotin t\to B(s) \quad \cdots$ ($\spadesuit$)}
        \RightLabel{$(\rmb\forall)$ \hspace{2em} ($\alpha_s<\alpha$)}
        \UnaryInfC{$H[\ttX]\fCenter{\rho}{\alpha} \Gamma,\ \forall x\in t B(x)$}
    \end{mathprooftree}
    \end{center}
    Recalling again, the premise $(\spadesuit)$ is the following condition: 
    \begin{itemize}
        \item $\ttX_s = \ttX \cup \bigcup \bigl\{ \supp^H_{\ttX_s}(a)\bigm|a\in \supp^{\bfL(u)}_{H(\ttX_s)}(s)\bigr\}$.
        \item Over $\ttX_s\setminus \ttX$, the $\ttX_s$-order and the natural number order agree.
        \item $H(\ttX_s)\vDash |s|<|t|$.
    \end{itemize}
    To simplify the notation, let us omit $\bfL(u)$ in the function application and write, for example, $H(f)(t)$ and $H(f)(A)$ instead of $\bfL(u)_{H(f)}(t)$ and $\bfL(u)_{H(f)}(A)$.
    We want to construct the preproof of the form
    \begin{center}
    \begin{mathprooftree}
        \def\fCenter#1#2{\sststile{#1}{#2}} \noLine
        \AxiomC{$\varpi_r$} \noLine
        \UnaryInfC{$\vdots$} \noLine
        \UnaryInfC{$\cdots \quad H\circ D[\ttY_r] \fCenter{H(f)(\rho)}{\beta_r} {H(f)}[\Gamma],\ r\dotin {H(f)}(t)\to {H(f)}(B)(r) \quad \cdots$ ($\clubsuit$)}
        \RightLabel{$(\rmb\forall)$\hspace{2em}($\beta_r < f(\alpha)$)}
        \UnaryInfC{$H\circ D[\ttY]\fCenter{H(f)(\rho)}{H(f)(\alpha)} {H(f)}[\Gamma],\ \forall x\in {H(f)}(t)\ {H(f)}(B)(x)$}
    \end{mathprooftree}
    \end{center}
    Here $(\clubsuit)$ is the following condition on $\ttY_r$ and $r\in \bfL(u)_{H\circ D(\ttY_r)}$:
    \begin{itemize}
        \item $\ttY_r = \ttY \cup \bigcup \bigl\{ \supp^{H\circ D}_{\ttY_r}(a)\bigm|a\in \supp^{\bfL(u)}_{H\circ D(\ttY_r)}(r)\bigr\}$.
        \item Over $\ttY_r\setminus \ttY$, the $\ttY_r$-order and the natural number order agree.
        \item $H\circ D(\ttY_r)\vDash |r|<|f(t)|$.
    \end{itemize}
    
    We first match $r$ satisfying $(\clubsuit)$ with an appropriate $s(r)$ satisfying $(\spadesuit)$ as follows: We first find a template order $\ttZ\supseteq \ttX$ with an isomorphism
    \begin{equation*} \textstyle
        f_r \colon \ttZ \to \ran f \cup \bigcup\bigl\{\supp^H_{D(\ttY_r)}(a)\bigm| a\in\supp^{\bfL(u)}_{H\circ D(\ttY_r)}(r)\bigr\}
    \end{equation*}
    such that $f_r\supseteq f$, and $f_r\restriction (\ttZ\setminus \ttX)$ is order-preserving under the natural number order. We can uniquely determine such $\ttZ$.
    Now let us take $s(r) = \bfL(u)_{H(f_r^{-1})}(r) \in \bfL(u)_{H(\ttZ)}$. Then $\ttX_{s(r)} = \ttZ$.

    We can see that $s(r)$ and $\ttX_{s(r)}$ satisfy $(\spadesuit)$: For the first condition, let us apply the inverse image under $f_r\colon \ttX_{s(r)}\to D(\ttY_r)$ to the equality
    \begin{equation*} \textstyle
        \ran f_r = \ran f \cup \bigcup\bigl\{\supp^H_{D(\ttY_r)}(a)\bigm| a\in\supp^{\bfL(u)}_{H\circ D(\ttY_r)}(r)\bigr\},
    \end{equation*}
    which yields the following: (Here, note that $D(\ttY_r) = f_r[\ttX_{s(r)}]$.)
    \begin{align*}
        \ttX_{s(r)} = f_r^{-1}[\ttY_r] 
        & \textstyle = \ttX \cup  f_r^{-1} \bigl[\bigcup\{\supp^H_{D(\ttY_r)}(a)\bigm| a\in\supp^{\bfL(u)}_{H\circ D(\ttY_r)}(r)\bigr\}\bigr]\\ 
        & \textstyle = \ttX \cup \bigcup \bigl\{ f_r^{-1}\bigl[ \supp^H_{D(\ttY_r)}(a) \bigr] \bigm| a\in\supp^{\bfL(u)}_{H(f_r[\ttX_{s(r)}])}\bigl(\bfL(u)_{H(f_r)}(s(r))\bigr) \bigr\} \\
        & \textstyle = \ttX \cup \bigcup \bigl\{ f_r^{-1}\bigl[ \supp^H_{f_r[\ttX_{s(r)}]}(a) \bigr] \bigm| a\in\supp^{\bfL(u)}_{H(f_r)[H(\ttX_{s(r)})]}\bigl(\bfL(u)_{H(f_r)}(s(r))\bigr) \bigr\} \\
        & \textstyle = \ttX \cup \bigcup \bigl\{ f_r^{-1}\bigl[ \supp^H_{f_r[\ttX_{s(r)}]}(a) \bigr] \bigm| a\in H(f_r) \bigl[\supp^{\bfL(u)}_{H(\ttX_{s(r)})}(s(r))\bigr] \bigr\} \\
        & \textstyle = \ttX \cup \bigcup \bigl\{ f_r^{-1}\bigl[ \supp^H_{f_r[\ttX_{s(r)}]}\bigl(H(f_r)(b)\bigr) \bigr] \bigm| b\in \supp^{\bfL(u)}_{H(\ttX_{s(r)})}(s(r)) \bigr\} \\
        & \textstyle = \ttX \cup \bigcup \bigl\{ \supp^H_{\ttX_{s(r)}}(b) \bigm| b\in \supp^{\bfL(u)}_{H(\ttX_{s(r)})}(s(r)) \bigr\}.
    \end{align*}
    For the second condition, we chose $\ttX_{s(r)}$ in a way that $f\restriction (\ttX_{s(r)}\setminus \ttX)$ is increasing also in the natural number order. Hence for $i,j\in \ttX_{s(r)}\setminus\ttX$,
    \begin{equation*}
        \ttX_{s(r)}\vDash i<j \iff D(\ttY_r)\vDash f(i)<f(j) \iff \bbN \vDash i<j.
    \end{equation*}
    The third condition follows immediately.
    
    Hence we form the following preproof:
    \begin{center}
    \begin{mathprooftree}
        \def\fCenter#1#2{\sststile{#1}{#2}} \noLine
        \AxiomC{$\pi_{s(r)}[f_r]$} \noLine
        \UnaryInfC{$\vdots$} \noLine
        \UnaryInfC{$\cdots \quad H\circ D[\ttY_r] \fCenter{H(f)(\rho)}{H(f_r)(\alpha_{s(r)})} {H(f)}[\Gamma],\ r\dotin {H(f)}(t)\to{H(f)}(B)(r) \quad \cdots$ ($\clubsuit$)}
        \RightLabel{$(\rmb\forall)$}
        \UnaryInfC{$H\circ D[\ttY]\fCenter{H(f)(\rho)}{H(f)(\alpha)} {H(f)}[\Gamma],\ \forall x\in {H(f)}(t)\ {H(f)}(B)(x)$}
    \end{mathprooftree}
    \end{center}
    Note that $f(\rho) = f_r(\rho)$ and $\bfL(u)_{H(f)}[\Gamma] = \bfL(u)_{H(f_r)}[\Gamma]$.
\end{proof}

\begin{lemma}[Rank extension lemma]
    Suppose that $H=E(F+1+G)$, $H'=E(F+1+G')$, and $G\subseteq G'$. If $H[\ttX]\sststile{\rho}{\alpha} \Gamma$, then $H'[\ttX]\sststile{\rho}{\alpha} \Gamma$.
\end{lemma}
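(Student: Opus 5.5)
We argue by coinduction on preproofs, in the same style as the proof of the Substitution lemma: from a preproof $\pi$ of $H[\ttX]\sststile{\rho}{\alpha}\Gamma$ we corecursively build a preproof $\pi'$ of $H'[\ttX]\sststile{\rho}{\alpha}\Gamma$ with the same tree shape and the same rule at each node. The basic tool is the inclusion of predilators $G\subseteq G'$. It induces $\iota\colon F+1+G\hookrightarrow F+1+G'$, hence a natural embedding $E(\iota)\colon H\Rightarrow H'$. On each template order this is an order-embedding $H(\ttX)\hookrightarrow H'(\ttX)$, and we identify $H(\ttX)$ with its image. Every ordinal label $\rho,\alpha,\alpha_0,\ldots$ and every set term of $\calL^u_{H(\ttX),\ttOmega}$ then becomes a label or term over $H'(\ttX)$. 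Since $E(\iota)$ preserves the functorial addition and multiplication (\autoref{Proposition: Functoriality of the internal addition and multiplication}, and naturality in the inclusion) and sends $\ttOmega$ to $\ttOmega$, we get $\rk^{\bfL(u)}_{H'(\ttX)}(A)=\rk^{\bfL(u)}_{H(\ttX)}(A)$ and $|s|$ is unchanged. Hence every side condition of the form $\alpha_0<\alpha$, $|s|<|t|$, $\rk(s)<\alpha$, $\rk(A)<\rho$, or $\ttOmega<\alpha$ carries over verbatim.

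For the rules without $(\spadesuit)$ — the axioms, $(\land)$, $(\lor)$, $(\in)$, $(\rmb\exists)$, (Refl), and (Cut) — we copy the rule and recurse on the premises. This is the easy part.

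The main obstacle is $(\rmb\forall)$ and $(\notin)$. In $H'$ the rule demands a premise for every $s$ over $H'(\ttX_s)$ satisfying $(\spadesuit)$ for $H'$, while $\pi$ only supplies premises for terms over $H(\ttX_s)$. I would handle this in the following order.

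First, the supports agree: $\supp^{H'}$ restricted to $H$ equals $\supp^H$, because $G\subseteq G'$ is an inclusion of predilators and the support is computed termwise. So for the old terms the condition $(\spadesuit)$ defines the same $\ttX_s$, and we use $\pi_s$ coinductively.

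Second, new terms $s$ mention some element of $H'(\ttX_s)\setminus H(\ttX_s)$, that is, a $G'$-term not in $G$. For these we need a premise $H'[\ttX_s]\sststile{\rho}{\alpha_s}\Gamma,\ s\dotin t\to B(s)$ with some $\alpha_s<\alpha$. The key check I would make is that these premises are trivial: $|t|$ lies in $H(\ttX)$, and in the intended shape the new $G'$-part sits above $\ttOmega$ and above all of $F+1+G$. Granting that, $|s|<|t|$ can only hold for $s$ whose support is already in $H$. Here I would use the preflower property of $E$ from the epsilon flower lemma: if $\tau\le\sigma$ and $\sigma$ lies over an initial segment, then so does $\tau$.

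If $G$ is only a subdilator and not an initial segment of $G'$, this argument fails. I would then instead show that $s\dotin t$ is refutable: the term $t$ over $H(\ttX)$ only ranges over terms with $|s|<|t|$, and functoriality along the inclusion forces any such $s$ back into $H(\ttX_s)$. The side condition $H(\ttX_s)\vDash |s|<|t|$, read through the embedding, does this. So the new $s$ simply do not satisfy $(\spadesuit)$, and no new premises are required. Verifying this precise claim is where I expect the real work to lie.
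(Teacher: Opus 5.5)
\emph{Gap in the $(\rmb\forall)$ and $(\notin)$ cases.} Your overall plan is the paper's (omitted) coinductive proof: copy the preproof node by node along the inclusion $H(\ttX)\hookrightarrow H'(\ttX)$, noting that ranks, $|\cdot|$ and all order side conditions are preserved. The gap is in $(\rmb\forall)$ and $(\notin)$, the only nontrivial cases.

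Your first argument assumes that $G'\setminus G$ lies above all of $F+1+G$. That does not follow from $G\subseteq G'$. Your fallback is that functoriality along the inclusion forces any $s$ with $|s|<|t|$ back into $H(\ttX_s)$. This is false as a general principle. Take the constant predilators $\{b\}\subseteq\{a<b\}$ as $G\subseteq G'$. Then $\varepsilon_a$ lies in $H'(\ttX)\setminus H(\ttX)$ but below $\varepsilon_b\in H(\ttX)$. So comparing $|s|$ with an arbitrary element of $H(\ttX)$ cannot rule out new terms. Since you leave this step as ``where the real work lies'', the case is not actually established.

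\emph{The missing observation.} Only the position of $\ttOmega$ matters, not the position of $G$. By definition, every set term of $\calL^u_{\kappa,\ttOmega}$ other than $\ttL(u)_\ttOmega$ is built from levels strictly below $\ttOmega$. The whole $G'$-block sits above the summand $1=\{\ttOmega\}$ of $F+1+G'$. The preflower property of $E$ says $\sigma<\varepsilon_x$ iff $\supp(\sigma)<x$, and likewise with $E_x$ for $\varphi_\lambda$. It follows that the elements of $H'(\ttY)$ below $\ttOmega$ are exactly the $E$-terms supported in $F(\ttY)$. These are also exactly the elements of $H(\ttY)$ below $\ttOmega$, and their $H$- and $H'$-supports agree.

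Hence, for every template order $\ttY$, the following are literally the same for $H$ and $H'$: the set terms, their supports, $|\cdot|$, and therefore the pairs $(s,\ttX_s)$ satisfying $(\spadesuit)$. No new premises arise at all. The preproof transfers verbatim; only the labels $\rho,\alpha,\alpha_s$, which may live in the $G$-part, are pushed through the order embedding. This is why the paper calls the lemma easier than the Substitution lemma: no re-indexing of premises is needed.
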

\begin{proof}
    Again, we prove it by coinduction on preproofs. Its proof is easier than that of the Substitution lemma, so we omit it.
\end{proof}

\section{Predicative results}
In this section, we analyze properties of $\RS_H$.

\begin{lemma} \label{Lemma: Rathjen-Cook Lemma 3.15} Let $H=E\circ (F+1+G)$ and $\ttX$ be a template order.
    \begin{enumerate}
        \item If $H(\ttX)\vDash \alpha\le\alpha'\land \rho\le\rho'$, $H[\ttX]\sststile{\rho}{\alpha} \Gamma$, then $H[\ttX]\sststile{\rho'}{\alpha'} \Gamma,\Delta$.
        \item If $C$ is a true basic formula and $H[\ttX]\sststile{\rho}{\alpha} \Gamma,\lnot C$, then $H[\ttX]\sststile{\rho}{\alpha} \Gamma$.
        \item If $H\sststile{\rho}{\alpha} \Gamma,A\lor B$, then $H\sststile{\rho}{\alpha} \Gamma,A, B$.
        \item If $H[\ttX]\sststile{\rho}{\alpha}\Gamma, \forall x\in t B(x)$, then for every $\ttY\supseteq\ttX$ and $s \in \bfL(u)_\ttY$ satisfying $(\spadesuit)$, then we have
        \begin{equation*}
            H[\ttY]\sststile{\rho}{\alpha} \Gamma,s\dotin t\to B(s).
        \end{equation*}
        \item If $H[\ttX]\sststile{\rho}{\alpha}\Gamma, r\notin t$, then for every $\ttY\supseteq\ttX$ and $s \in \bfL(u)_\ttY$ satisfying $(\spadesuit)$, then we have
        \begin{equation*}
            H[\ttY]\sststile{\rho}{\alpha} \Gamma,s\dotin t\to r\neq s.
        \end{equation*}
    \end{enumerate}
\end{lemma}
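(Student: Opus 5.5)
All five items are proved in the style of the Substitution lemma: by coinduction on preproofs rather than by induction on the height $\alpha$. Concretely, for each item I will describe an algorithm $\pi\mapsto\pi^*$ that rewrites a given preproof of the hypothesis into a preproof of the conclusion. The rewriting walks down the last inference of $\pi$ and calls itself on the immediate subproofs. Each sequent in $\pi^*$ then carries the same ordinal labels (or the enlarged ones in item 1) as the corresponding sequent in $\pi$, so all side conditions ($\alpha_0<\alpha$, $\rk<\rho$, $|s|<|t|$, $\rk(s)<\alpha$) are inherited verbatim. Since $\RS_H$ only asks for local correctness of preproofs, it suffices to check that each local inference of $\pi^*$ is an instance of a rule of $\RS_H$.

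For item 1 I map $\pi$ to the same tree with $\Delta$ added to every sequent and the root labels replaced by $\alpha',\rho'$. Only the root changes: its premises satisfy $\alpha_0<\alpha\le\alpha'$, and every cut rank below $\rho$ is also below $\rho'$. The side conditions of the $(\spadesuit)$-rules mention only $\ttX$, $s$ and $t$, so adding $\Delta$ does not disturb them.

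For item 2, I first note that a true basic formula $C$ has the form $x\in y$ or $x\notin y$ with $x,y\in u$, so $\lnot C$ is a false basic formula. I then remove $\lnot C$ from every sequent. Suppose $\lnot C$ is principal in some inference. It cannot be principal in an $(\in)$ or $(\notin)$ inference, because those rules exclude basic formulas. It also cannot be principal in an axiom, since the axioms for basic formulas require the formula to be true, and $\lnot C$ is false. So $\lnot C$ is only ever a side formula, and every inference survives its deletion unchanged.

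For item 3 I replace each occurrence of $A\lor B$ by the pair $A,B$. If $A\lor B$ is principal, say inferred from $\Gamma,A$ (or $\Gamma,B$) with height $\alpha_0<\alpha$, I take the transformed subproof of the premise, whose context already contains $A,B$. I then lift its height from $\alpha_0$ to $\alpha$ using item 1; equivalently, I just relabel the root of that subproof with $\alpha$.

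Items 4 and 5 are the inversion cases, handled the same way. I trace the occurrence of $\forall x\in t\,B(x)$ (resp.\ $r\notin t$) upward through $\pi$, adding the formula $s\dotin t\to B(s)$ (resp.\ $s\dotin t\to r\neq s$) to every sequent. At each inference where the traced formula is principal, I discard that inference and continue with the premise indexed by the given $s$ and $\ttY=\ttX_s$, again raising its height to the current $\alpha$ by item 1.

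The main obstacle is template orders. The given $\ttY$ and $s$ live over an extension of $\ttX$. Inferences above the root may themselves extend their template to some $\ttX'_{s'}$, and $\ttY$ and $\ttX'_{s'}$ may overlap in their new $\bbN$-indices. My fix is to transport each subproof along the order-embedding that merges $\ttY$ with the local template into a combined template. This uses the Substitution lemma with $D=\Id$, so that $H\circ D=H$, together with the fact that $s$, $t$ and the rank labels are invariant under $H(f)$ for maps fixing $\ttX$. The delicate check is that after such a merge the $(\spadesuit)$ clause still holds. I expect this to follow from the support computation in the proof of the Substitution lemma, since the new indices are reinserted in increasing $\bbN$-order.
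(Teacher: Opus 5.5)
Your proposal is correct and follows the paper's route: coinduction on preproofs rather than induction on the height, with item 1 used to raise the height of the relevant premise at principal inferences. The paper writes out only item 1, and does it exactly as you do. For items 4--5, your appeal to the Substitution lemma with $D=\Id$ supplies template bookkeeping that the paper leaves implicit; it is simplest to apply it once along the inclusion $\ttX\hookrightarrow\ttY$ at the root, since afterwards every template in the tree contains $\ttY$ and $s$ is available at every node without further merging.

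One small repair concerns items 2 and 3. An inference need not survive the deletion or replacement verbatim when the formula being removed is also its minor formula, for example a cut on $C$, or a $(\lor)$ or $(\land)$ inference with component $\lnot C$. In those cases take the transformed premise instead and apply item 1 to weaken it and raise its height.
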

\begin{proof}
    Its proof is similar to that of \cite[Lemma 3.15]{RathjenCook2016ClassifyingProvTotalsetftnKPKPP}. We only provide details for the first item.
    Unlike \cite[Lemma 3.15]{RathjenCook2016ClassifyingProvTotalsetftnKPKPP} where the proof relies on induction on $\alpha$, we use coinduction on a proof $\pi$ of $H[\ttX]\sststile{\rho}{\alpha}\Gamma$.
    
    If $H[\ttX]\sststile{\rho}{\alpha}\Gamma$ is an axiom, then $H[\ttX]\sststile{\rho'}{\alpha'}\Gamma,\Delta$ is also an axiom, so we are done. Now, let us consider the case when $H[\ttX]\sststile{\rho}{\alpha}\Gamma$ is the result of a deduction. We divide the case for the last rule of inference. The argument is very straightforward, so let us examine the following case only:
    Suppose that the last rule of inference is $(\land)$, so we have
    \begin{equation*}
    \begin{mathprooftree}
        \def\fCenter#1#2{\sststile{#1}{#2}}
        \AxiomC{$\pi_0$}
        \noLine
        \UnaryInfC{$\vdots$}
        \noLine
        \UnaryInfC{$H[\ttX]\fCenter{\rho}{\alpha_0} \Gamma,\ A$}
        \AxiomC{$\pi_1$}
        \noLine
        \UnaryInfC{$\vdots$}
        \noLine
        \UnaryInfC{$H[\ttX]\fCenter{\rho}{\alpha_1}  \Gamma,\ B$}
        \RightLabel{$(\land)$}
        \BinaryInfC{$H[\ttX]\fCenter{\rho}{\alpha} \Gamma,\ A\land B$}
    \end{mathprooftree}
    \end{equation*}
    By the coinductive hypothesis, we have new proofs
    \begin{equation*}
    \begin{mathprooftree}
        \def\fCenter#1#2{\sststile{#1}{#2}}
        \AxiomC{$\pi'_0$}
        \noLine
        \UnaryInfC{$\vdots$}
        \noLine
        \UnaryInfC{$H[\ttX]\fCenter{\rho'}{\alpha_0} \Gamma,\ A,\ \Delta$}
        \AxiomC{$\pi'_1$}
        \noLine
        \UnaryInfC{$\vdots$}
        \noLine
        \UnaryInfC{$H[\ttX]\fCenter{\rho'}{\alpha_1}  \Gamma,\ B,\ \Delta$}
        \noLine
        \BinaryInfC{ }
    \end{mathprooftree}
    \end{equation*}
    Then apply $(\land)$ to form a new proof of $H[\ttX]\sststile{\rho'}{\alpha'}\Gamma,\ A\land B,\ \Delta$.
\end{proof}

\begin{lemma} \label{Lemma: Reverting conjunctive formulas}
    Let $H=E\circ (F+1+G)$ and $\ttX$ be a template order.
    \begin{enumerate}
        \item If $H[\ttX]\sststile{\rho}{\alpha} \Gamma,A\land B$, then $H[\ttX]\sststile{\rho}{\alpha} \Gamma,A$ and $H[\ttX]\sststile{\rho}{\alpha} \Gamma, B$.
        \item Suppose that $H[\ttX]\sststile{\rho}{\alpha} \Gamma,\forall x\in t\ B(x)$. For every template order $\ttY\supseteq\ttX$ and a term $s\in \bfL(u)_\ttY$ satisfying $(\spadesuit)$, we have $H[\ttY] \sststile{\rho}{\alpha} \Gamma, s\dotin t\to B(s)$.
    \end{enumerate}
\end{lemma}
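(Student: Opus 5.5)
This is an inversion lemma. We prove it, like \autoref{Lemma: Rathjen-Cook Lemma 3.15}, by coinduction on a given preproof $\pi$ rather than by induction on $\alpha$. The aim is an algorithm that turns $\pi$ into a preproof of the inverted sequent carrying the same bounds $\rho$ and $\alpha$. For item (1) we fix $\pi$ ending in $H[\ttX]\sststile{\rho}{\alpha}\Gamma,A\land B$ and build $\pi^A$, a preproof of $H[\ttX]\sststile{\rho}{\alpha}\Gamma,A$; the case of $B$ is symmetric. For item (2) we construct, corecursively and simultaneously for every admissible pair $(\ttY,s)$, a preproof of $H[\ttY]\sststile{\rho}{\alpha}\Gamma,s\dotin t\to B(s)$. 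Throughout, we regard a sequent as containing the distinguished occurrence of the formula being inverted, and we track that occurrence through the inferences of $\pi$.

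First suppose the distinguished formula is \emph{not} principal in the last inference of $\pi$. Then we apply the same rule to the coinductively transformed premises; every premise still contains the distinguished formula as a side formula. The ordinal side conditions ($\alpha_i<\alpha$, $\rk$-bounds in (Cut), $\ttOmega<\alpha$ in (Refl)) are untouched, so the same rule applies. In item (2), if the last rule is $(\rmb\forall)$ or $(\notin)$ on another formula, its premises live over larger templates $\ttX_{s'}\supseteq\ttX$. We then transfer the pair $(\ttY,s)$ to those templates. If $\ttY$ and $\ttX_{s'}$ both extend $\ttX$, we amalgamate them into a common extension. This amalgamation is exactly where a Substitution-lemma-style reindexing (\autoref{Lemma: Substitution lemma} with $D=\Id$ and an embedding $\ttX\to\ttY$) is needed: the premise sequents must be renamed along the embedding of the amalgam. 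Axioms stay axioms after replacing the distinguished formula, because an axiom's active formula is basic and hence never the distinguished one.

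Now suppose the distinguished formula \emph{is} principal. For item (1) the last rule is $(\land)$ with premise $H[\ttX]\sststile{\rho}{\alpha_0}\Gamma,A$ and $\alpha_0<\alpha$. We take its transformed preproof (in which any further copies of $A\land B$ are also inverted) and raise the bound $\alpha_0$ to $\alpha$ by \autoref{Lemma: Rathjen-Cook Lemma 3.15}(1). For item (2) the last rule is $(\rmb\forall)$, and the premise indexed by the pair $(\ttX_s,s)=(\ttY,s)$ is already present, since $(\spadesuit)$ makes the hypotheses range over exactly these pairs. We take its transformed preproof and raise $\alpha_s$ to $\alpha$ in the same way.

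The main obstacle is the well-definedness of the coinduction when the distinguished formula sits as a side formula above a branching $(\rmb\forall)$ or $(\notin)$ that introduces fresh indiscernibles. The template $\ttY$ chosen for $s$ and the template $\ttX_{s'}$ of the premise must be merged, and there are several possible relative orders of the new elements. To handle this, I would first try to show that it suffices to invert at the original $\ttX$ together with the specific $\ttY$. This reduction would proceed in two steps: apply the Substitution lemma to the embedding $\ttX\hookrightarrow\ttY$ to move all of $\pi$ to template $\ttY$, and then carry out the coinduction with the template fixed at $\ttY$. The fixed-template coinduction avoids amalgamation entirely. I would also check that this reindexing preserves $\rho$ and $\alpha$, since $H(\iota)$ for the inclusion $\iota$ fixes these terms.
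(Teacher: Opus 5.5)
Your proposal is correct and follows the paper's proof. It uses coinduction on the preproof and passes non-principal inferences through to the coinductively transformed premises. In the principal $(\land)$ or $(\rmb\forall)$ case it takes the premise $\Gamma,A$ (resp.\ the premise indexed by $(\ttY,s)$) and raises its bound to $\alpha$ with the first item of \autoref{Lemma: Rathjen-Cook Lemma 3.15}.

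Your additional step for item (2) is a sound way to make precise the template bookkeeping at a non-principal $(\rmb\forall)$ or $(\notin)$, which the paper's sketch leaves implicit. That step first reindexes the whole preproof along the inclusion $\ttX\hookrightarrow\ttY$ via \autoref{Lemma: Substitution lemma} with $D=\Id$, which fixes $\rho$, $\alpha$ and $\Gamma$; then $s$ lives in every template met above, and each node can be inverted at its own template. Note only that no genuine choice among amalgams arises there even without this reduction, since the new premises over $\ttY$ are indexed by pairs $(\ttY_r,r)$ that already fix the relative order of all indiscernibles.
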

\begin{proof}
    We prove it by coinduction on a proof tree. We only prove the second item.
    
    Suppose that we are given $H[\ttX]\sststile{\rho}{\alpha} \Gamma,\forall x\in t\ B(x)$ and $s$ and $\ttY$ with the given hypotheses. If $\forall x\in t\ B(x)$ is not the principal formula, we apply the coinductive hypothesis to the premise. 
    If $H[\ttX]\sststile{\rho}{\alpha} \Gamma,\forall x\in t\ B(x)$ is the principal formula of $(\rmb\forall)$, then we have $\alpha_s<\alpha$ such that $H[\ttX]\sststile{\rho}{\alpha_s} \Gamma,s\dotin t\to B(s)$. Then apply the first item of \autoref{Lemma: Rathjen-Cook Lemma 3.15}.
\end{proof}

\begin{lemma}[Reduction]
    Suppose that $\rk^{\bfL(u)}_{H(\ttX)}(C) =\rho \neq \ttOmega$.
    Then
    \begin{equation*}
        H[\ttX]\sststile{\rho}{\alpha} \Gamma,C \text{ and } H[\ttX]\sststile{\rho}{\beta} \Delta,\lnot C \implies H[\ttX]\sststile{\rho}{\alpha+\beta} \Gamma,\Delta.
    \end{equation*}
\end{lemma}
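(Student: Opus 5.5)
The plan is to follow the standard predicative reduction lemma of Rathjen--Cook \cite[Lemma 3.16]{RathjenCook2016ClassifyingProvTotalsetftnKPKPP}, replacing the induction on $\beta$ by coinduction on a given preproof $\pi$ of $H[\ttX]\sststile{\rho}{\beta}\Delta,\lnot C$. The output will be a preproof of $H[\ttX]\sststile{\rho}{\alpha+\beta}\Gamma,\Delta$, where $+$ is the functorial internal addition of $E$; by \autoref{Proposition: Functoriality of the internal addition and multiplication} it is compatible with the template-order maps used below.

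First I would assume without loss of generality that $C$ is of ``conjunctive'' type: $A\land B$, $\forall x\in t\,B(x)$, or $r\notin t$. Otherwise I would swap the roles of $C$ and $\lnot C$, which is harmless since the bound $\alpha+\beta$ only gets larger in the other order up to the weakening of \autoref{Lemma: Rathjen-Cook Lemma 3.15}(1); alternatively I would run the symmetric argument. If $C$ is basic, I would use \autoref{Lemma: Rathjen-Cook Lemma 3.15}(2) together with weakening.

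Then I would proceed by coinduction on $\pi$. If $\lnot C$ is not the principal formula of the last rule of $\pi$, I would apply the coinductive hypothesis to each premise $H[\ttX']\sststile{\rho}{\beta_i}\Delta_i,\lnot C$. For $(\rmb\forall)$ and $(\notin)$ premises with a larger $\ttX'=\ttX_s$, I would first transport the left derivation $\Gamma,C$ to $\ttX_s$ via the Substitution lemma along the inclusion $\ttX\hookrightarrow\ttX_s$ (with $D=\Id$), noting that inclusions fix $\alpha$ and $\rho$. Reapplying the same rule then gives the bound $\alpha+\beta_i<\alpha+\beta$. Cut premises are handled the same way; the side condition on the cut rank is unchanged.

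If $\lnot C$ is principal, it is existential. For example, if $\lnot C\equiv\exists x\in t\,\lnot B(x)$, the premise is $H[\ttX]\sststile{\rho}{\beta_0}\Delta,\lnot C, s\dotin t\land\lnot B(s)$, with $|s|<|t|$ and $\rk(s)<\beta$. I would apply the coinductive hypothesis to remove $\lnot C$, giving $\Gamma,\Delta,s\dotin t\land\lnot B(s)$ at bound $\alpha+\beta_0$. Independently, I would use inversion (\autoref{Lemma: Reverting conjunctive formulas}(2), with $\ttY=\ttX$, which satisfies $(\spadesuit)$ since the ordinal parameters of $s$ are already in $\ttX$) to get $\Gamma,s\dotin t\to B(s)$ at bound $\alpha$. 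A single cut on $s\dotin t\to B(s)$ then yields the claim at bound $\alpha+\beta_0+1\le\alpha+\beta$. The $\land$ and $\in$ cases are analogous, using \autoref{Lemma: Reverting conjunctive formulas}(1) and \autoref{Lemma: Rathjen-Cook Lemma 3.15}(5).

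\textbf{Main obstacle.} The key verification is that this cut formula has rank $<\rho=\rk(C)$. This is where the constants in the definition of $\rk^{\bfL(u)}$ are needed: the $+6$, $+4$, $\max(\cdot,5)$, and the $+2$ in bounded quantifiers. The hypothesis $\rho\ne\ttOmega$ excludes the $\ttL(u)_\ttOmega$-quantifier case, where the instance could have rank of the form $\omega\cdot(1+\xi)$ not below $\ttOmega$ in a controlled way. I would check that $\rk(s\dotin t\land r=s)<\rk(r\in t)$ using $\rk(s)<\rk(t)$, which holds because $|s|<|t|$ gives $\omega(1+|s|)<\omega(1+|t|)$ in $E$. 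All of these inequalities are computed in $H(\ttX)$, and they are functorial, so they survive substitution.
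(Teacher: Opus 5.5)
Your mechanism is the paper's: you coinduct on the derivation containing the existential-type member of $\{C,\lnot C\}$, invert the universal-type member, cut on the lower-rank instance, and use the coinductive hypothesis to strip the side formula from the premise. The one real difference is orientation. The paper assumes $C$ is $s\in t$ or $\exists x\in t\,B(x)$ and coinducts on the derivation with bound $\alpha$; you coinduct on the one with bound $\beta$. Yours is the orientation that fits the bound, because internal addition is strictly increasing only in its right argument. Your steps $\alpha+\beta_0<\alpha+\beta$ and $\alpha+\beta_0+1\le\alpha+\beta$ are sound. The paper's non-principal step silently needs $\alpha_0+\beta<\alpha+\beta$, which fails in general (e.g.\ $0+\omega=1+\omega$). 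Your explicit transport of $\Gamma,C$ to $\ttX_s$ via the Substitution lemma along the inclusion is also a detail the paper leaves implicit.

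There are, however, two gaps.

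First, the reduction to conjunctive $C$ is not justified. Swapping roles gives the bound $\beta+\alpha$, and $\beta+\alpha\le\alpha+\beta$ is false (take $\alpha=1$, $\beta=\omega$). ``Running the symmetric argument'' means coinducting on the derivation that carries the left summand. That is exactly what the paper's proof does, and it hits the monotonicity failure just described. So the case where $C$ itself is of existential type is not established with bound $\alpha+\beta$. The clean repair is to state the lemma with the existential-type formula carried by the $\beta$-derivation, or to use $\#$. That is all First Cut Elimination needs, since both $\omega^{\alpha_0}+\omega^{\alpha_1}$ and $\omega^{\alpha_1}+\omega^{\alpha_0}$ lie below $\omega^{\alpha}$.

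Second, your principal case omits $\mathrm{(Refl)}$, and that is precisely where $\rho\neq\ttOmega$ is used. If $\lnot C\equiv\exists z\in\ttL(u)_\ttOmega A^z$ is the conclusion of $\mathrm{(Refl)}$, the premise $\Delta,A^{\ttL(u)_\ttOmega}$ offers no instance $s$ to cut against the inverted $C$. The hypothesis rules this out because such a formula has rank exactly $\ttOmega$. Your stated role for $\rho\neq\ttOmega$, excluding $\ttL(u)_\ttOmega$-quantifiers altogether, is wrong. Formulas $\exists x\in\ttL(u)_\ttOmega B(x)$ of rank above $\ttOmega$ must be handled, since they are the cut formulas of rank $\ttOmega+m$ in First Cut Elimination. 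Your $(\rmb\exists)$ argument does handle them: the witness satisfies $|s|<\ttOmega$, so $\rk(B(s))=\rk(B(0))<\rk(C)$, reading $s\dotin\ttL(u)_\ttOmega\to B(s)$ as $B(s)$.
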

\begin{proof}
    By symmetry, we may assume either $C \equiv s\in t$ or $C\equiv \exists x\in t B(x)$. We prove it by coinduction on the preproof of $H[\ttX]\sststile{\rho}{\alpha} \Gamma, C$. We only consider the latter case $C\equiv \exists x\in t B(x)$ since the remainder is simpler. 
    
    If $C$ is not the principal formula in $H[\ttX]\sststile{\rho}{\alpha} \Gamma, C$, then we apply the coinductive hypothesis to the premise.
    Now suppose that $C$ is a principal formula of the last inference of $H[\ttX]\sststile{\rho}{\alpha} \Gamma, C$.
    If $C$ is the principal formula of $\mathrm{(Refl)}$, then $C$ takes the form $\exists x\in \ttL(u)_\ttOmega B(x)$ for some $\Delta_0$-formula $B$, so $\rk^{\bfL(u)}_{H(\ttX)}(C) = \ttOmega$, contrary with the assumption $\rk^{\bfL(u)}_{H(\ttX)}(C)\neq\ttOmega$.
    Hence, the last inference must be $(\rmb\exists)$.
    This implies that there are $s$ with $\rk^{\bfL(u)}_{H(\ttX)}(s) < \alpha$ and $\alpha_0<\alpha$ such that
    \begin{equation} \label{Formula: Reduction formula 00}
        H[\ttX]\sststile{\rho}{\alpha_0} \Gamma, s\mathrel{\dotin} t\land B(s).
    \end{equation}
    By \autoref{Lemma: Reverting conjunctive formulas} applied to $H[\ttX]\sststile{\rho}{\beta} \Delta,\lnot C$, we have
    \begin{equation} \label{Formula: Reduction formula 01}
        H[\ttX]\sststile{\rho}{\beta} \Delta,s\dotin t\to \lnot B(s).
    \end{equation}
    By applying the coinductive hypothesis on \eqref{Formula: Reduction formula 00} applied to \eqref{Formula: Reduction formula 01}, we get
    \begin{equation*}
        H[\ttX] \sststile{\rho}{\alpha_0+\beta} \Gamma,\Delta.
    \end{equation*}
    Then we apply \autoref{Lemma: Rathjen-Cook Lemma 3.15} to get the desired result.
\end{proof}

\begin{theorem}[First Cut elimination] \label{Theorem: First Cut Elimination Case 0}
    Let $m\ge 1$. If 
    $H[\ttX]\sststile{\ttOmega+m+1}{\alpha}\Gamma$, then $H[\ttX]\sststile{\ttOmega+m}{\omega^\alpha}\Gamma$.
\end{theorem}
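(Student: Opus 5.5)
We proceed by coinduction on the given preproof $\pi$ of $H[\ttX]\sststile{\ttOmega+m+1}{\alpha}\Gamma$, constructing a preproof $\pi^\ast$ of $H[\ttX]\sststile{\ttOmega+m}{\omega^\alpha}\Gamma$. This follows the pattern of the Reduction lemma and of \autoref{Lemma: Rathjen-Cook Lemma 3.15}: instead of inducting on the height $\alpha$, we describe an algorithm that rebuilds $\pi$ from the root upward. All ordinal bookkeeping takes place inside $H(\ttX)$ (or $H(\ttX_s)$ for premises of $(\rmb\forall)$ and $(\notin)$), using the functorial internal addition and base-$\omega$ exponentiation of the epsilon or Veblen flower $E$. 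We split into cases according to the last inference of $\pi$.

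\emph{Axioms and rules other than (Cut).} If the last inference is an axiom, the conclusion is again an axiom with the new bounds. Otherwise the premises have heights $\alpha_i<\alpha$ (or $\alpha_s<\alpha$). The coinductive hypothesis gives premises with bounds $\omega^{\alpha_i}$ and cut rank $\ttOmega+m$, and we reapply the same rule.
\begin{itemize}
\item We need $\omega^{\alpha_i}<\omega^\alpha$ in $H(\ttX)$, which holds because internal exponentiation is strictly monotone.
\item The side conditions $\rk(s)<\alpha$ in $(\in)$ and $(\rmb\exists)$ transfer since $\alpha\le\omega^\alpha$.
\item The side condition $\ttOmega<\alpha$ in (Refl) transfers in the same way.
\end{itemize}
For $(\rmb\forall)$ and $(\notin)$ the premises live over the extended template orders $\ttX_s$. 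The comparison $\omega^{\alpha_s}<\omega^\alpha$ is made in $H(\ttX_s)$, viewing $\alpha$ there via the inclusion $\ttX\subseteq\ttX_s$. This is legitimate by functoriality of the internal exponentiation (\autoref{Proposition: Functoriality of the internal addition and multiplication} and its analogue for $\omega^{(\cdot)}$).

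\emph{Cuts of rank below $\ttOmega+m$.} If the cut formula $A$ has $\rk(A)<\ttOmega+m$, we simply recurse on both premises and reapply (Cut).

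\emph{Cuts of rank exactly $\ttOmega+m$.} Here $\rk(A)=\ttOmega+m$, with premises $H[\ttX]\sststile{\ttOmega+m+1}{\alpha_0}\Gamma,A$ and $H[\ttX]\sststile{\ttOmega+m+1}{\alpha_1}\Gamma,\lnot A$. The coinductive hypothesis yields proofs with bounds $\omega^{\alpha_0}$ and $\omega^{\alpha_1}$ and cut rank $\ttOmega+m$. We then apply the Reduction lemma with $\rho=\ttOmega+m$; this requires $m\ge1$, which is the hypothesis, so that $\rho\neq\ttOmega$. The result has bound $\omega^{\alpha_0}+\omega^{\alpha_1}$. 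Assume without loss of generality that $\alpha_0\ge\alpha_1$. Then
\begin{equation*}
\omega^{\alpha_0}+\omega^{\alpha_1}\le\omega^{\alpha_0}\cdot 2<\omega^{\alpha_0+1}\le\omega^\alpha ,
\end{equation*}
and \autoref{Lemma: Rathjen-Cook Lemma 3.15}(1) raises the bound to $\omega^\alpha$.

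\emph{Main obstacle.} The Reduction lemma is itself proved coinductively and is applied here to proofs that are still being produced coinductively. We have to check that this nesting is productive: every node of $\pi^\ast$ must be emitted after finitely many steps. In the Reduction step, each unfolding either passes a non-principal inference of the left proof upward, emitting a node, or resolves a principal $(\rmb\exists)$ or $(\in)$ inference via \autoref{Lemma: Reverting conjunctive formulas}. The latter also descends one level in the left proof, so progress is guaranteed.

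A second subtle point is the ordinal arithmetic $\omega^{\alpha_0}+\omega^{\alpha_1}<\omega^\alpha$ in the formal flower terms, where $\alpha$ may involve $\ttOmega$ and $\psi$-style terms. We would verify it by a case analysis on the term forms of $\alpha_0,\alpha_1$, mirroring the definition of the internal addition and exponentiation.

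Finally, note what this argument does not give. Because we never induct on $\alpha$, the result holds at the level of preproofs. Whether $\pi^\ast$ is a genuine $\rmbeta$-proof, with ill-founded threads along infinite branches, is inherited from $\pi$: the template orders along branches of $\pi^\ast$ are unions of those along branches of $\pi$.
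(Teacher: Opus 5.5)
Your proposal is correct and is essentially the paper's proof. The paper also proceeds by coinduction on the preproof, with the only substantive case being a cut of rank exactly $\ttOmega+m$: it feeds the two transformed premises into the Reduction lemma (applicable because $m\ge 1$ makes the rank differ from $\ttOmega$) and raises the bound via $\omega^{\alpha_0}+\omega^{\alpha_1}\le\omega^\alpha$ in $H(\ttX)$ and \autoref{Lemma: Rathjen-Cook Lemma 3.15}(1). Your side-condition and productivity bookkeeping goes beyond what the paper writes out; there, descending one level in the left proof is not itself output, so what keeps the principal case productive is the lower-rank cut that combines the premise with the inverted right-hand proof.
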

\begin{proof}
    We proceed by coinduction on the proof tree. The only interesting case is when the last inference is the cut:
    \begin{equation*}
    \begin{mathprooftree}
        \def\fCenter#1#2{\sststile{#1}{#2}}
        \AxiomC{$H[\ttX]\fCenter{\ttOmega+m+1}{\alpha_0} \Gamma,\ A$}
        \AxiomC{$H[\ttX]\fCenter{\ttOmega+m+1}{\alpha_1}  \Gamma,\ \lnot A$}
        \RightLabel{(Cut)}
        \BinaryInfC{$H[\ttX]\fCenter{\ttOmega+m+1}{\alpha} \Gamma$}
    \end{mathprooftree}
    \end{equation*}
    with $\alpha_0,\alpha_1<\alpha$ and $\rk^{\bfL(u)}_{H(\ttX)}(A)<\ttOmega+m+1$. By applying the coinductive hypothesis to the premises, we get
    $H[\ttX]\sststile{\ttOmega+m}{\omega^{\alpha_0}} \Gamma,\ A$ and $H[\ttX]\sststile{\ttOmega+m}{\omega^{\alpha_1}} \Gamma,\ \lnot A$.
    We are done when $\rk^{\bfL(u)}_{H(\ttX)}(A)<\ttOmega+m$, so let us consider the case $\rk^{\bfL(u)}_{H(\ttX)}(A)=\ttOmega+m$. In this case, Reduction yields
    \begin{equation*}
        H[\ttX]\sststile{\ttOmega+m}{\omega^{\alpha_0} + \omega^{\alpha_1}} \Gamma, 
    \end{equation*}
    and we get the desired result since $H(\ttX)\vDash \omega^{\alpha_0} + \omega^{\alpha_1} \le \omega^\alpha$.
\end{proof}

The following cut elimination theorem will be necessary for later applications. Yet, we do not need it in the proof-theoretic analysis of $\KP$.
\begin{proposition} \label{Proposition: First Cut Elimination Case 1}
    Let $E=\varphi_2$ and $H=E\circ (F+1+G)$. If $H[\ttX]\sststile{\ttOmega+\omega}{\alpha}\Gamma$, then $H[\ttX]\sststile{\ttOmega+1}{\varphi_1(\alpha)}\Gamma$.
\end{proposition}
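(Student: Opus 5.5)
Iterating \autoref{Theorem: First Cut Elimination Case 0} $\omega$-many times is impossible, so the plan is the usual predicative cut elimination with the Veblen function $\varphi_1$, organized as coinduction on the preproof. Write $E=\varphi_2$, so $H(\ttX)$ carries a functorial internal $\omega$-exponentiation $\varphi_0$ and $\varphi_1$, whose values are fixed points of $\xi\mapsto\omega^\xi$. Cut formulas of rank below $\ttOmega+\omega$ have rank $<\ttOmega$ or rank $\ttOmega+m$ for some finite $m$. Rank $\ttOmega$ itself is not reducible: Reduction excludes it because (Refl) introduces formulas of that rank. Hence the target cut bound is $\ttOmega+1$.

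First I would prove a strengthened form of the First Cut Elimination by the same coinduction. For every $m\ge 1$: if $H[\ttX]\sststile{\ttOmega+m+1}{\alpha}\Gamma$, then $H[\ttX]\sststile{\ttOmega+1}{\varphi_1(\alpha)}\Gamma$ after $m$ iterations. Iterating \autoref{Theorem: First Cut Elimination Case 0} gives the bound $\omega_m(\alpha)$, the $m$-fold tower. Since $\varphi_1(\alpha)$ is an $\omega$-fixed point above $\alpha$, we have $\omega_m(\alpha)\le\varphi_1(\alpha)$ in $H(\ttX)$. With \autoref{Lemma: Rathjen-Cook Lemma 3.15}(1) this gives the $m$-uniform statement. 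This alone does not handle a proof whose cut ranks are unboundedly close to $\ttOmega+\omega$, because the cut bound $\ttOmega+\omega$ is fixed but different nodes may use cuts of different finite levels.

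The main step is therefore a direct coinduction producing a preproof of $H[\ttX]\sststile{\ttOmega+1}{\varphi_1(\alpha)}\Gamma$. At each node with height $\alpha$, recurse on the premises (heights $\alpha_i<\alpha$) to get heights $\varphi_1(\alpha_i)$. For a non-cut rule, reapply it, using $\varphi_1(\alpha_i)<\varphi_1(\alpha)$ and, for $(\in)$ and $(\rmb\exists)$, $\rk(s)<\alpha\le\varphi_1(\alpha)$. For a cut of rank $\ttOmega+m$ with $m\ge1$, take premises at heights $\varphi_1(\alpha_0)$ and $\varphi_1(\alpha_1)$ with cut bound $\ttOmega+1$. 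Raise the cut bound by weakening to $\ttOmega+m+1$ and apply the $m$-fold reduction from the previous paragraph. That reduction needs a version starting from height $\gamma=\varphi_1(\alpha_0)\#\varphi_1(\alpha_1)+1$ and yielding height $\omega_m(\gamma)$.

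The key inequality is $\omega_m\bigl(\max(\varphi_1(\alpha_0),\varphi_1(\alpha_1))+1\bigr)<\varphi_1(\alpha)$. This holds because $\varphi_1(\alpha)$ is an $\omega$-fixed point strictly above both $\varphi_1(\alpha_i)$, so it is closed under $\xi\mapsto\omega^\xi$ and under successor. I expect this to be the main obstacle. The inequality has to hold functorially in $H(\ttX)=\varphi_2(\ttX+F(\ttX)+1+G(\ttX))$ as a term computation: comparing $\omega$-towers over terms $\varphi_1(\xi)$ with $\varphi_1(\alpha)$ requires the comparison clauses for $\varphi_i$ terms and the internal natural sum. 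I would prove by term induction that $\xi<\varphi_1(\eta)$ implies $\omega^\xi<\varphi_1(\eta)$ in $\varphi_\lambda(X)$ for $\lambda\ge2$.

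A second subtlety is that cuts of rank exactly $\ttOmega$ are kept, since they are allowed under the bound $\ttOmega+1$. Cuts of rank below $\ttOmega$ are also simply kept.
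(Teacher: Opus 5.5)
Your proposal is correct and follows essentially the same route as the paper. The paper also argues by coinduction on the preproof. At a cut of rank $\ttOmega+m$ it takes the coinductively obtained premises of height $\varphi_1(\alpha_i)$ and re-forms the cut under cut bound $\ttOmega+m+1$. It then applies the $m$-fold iterate of the First Cut Elimination, with $\varphi_0$ as internal $\omega$-exponentiation, to reach cut bound $\ttOmega+1$ at height $\varphi_0^m(\varphi_1(\alpha_0)+1)$, and concludes since this is below $\varphi_1(\alpha)$.

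The differences are cosmetic. The paper assumes $\alpha_0=\alpha_1$ instead of using a maximum or natural sum for the re-formed cut. It also simply asserts the final inequality, which you propose to verify in the term system, and it does not need your separate uniform-in-$m$ statement.
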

\begin{proof}
    The proof of \autoref{Theorem: First Cut Elimination Case 0} yields the following:
    For $m\ge 1$, $H[\ttX]\sststile{\ttOmega+m+1}{\alpha}\Gamma$ implies $H[\ttX]\sststile{\ttOmega+m}{\varphi_0(\alpha)}\Gamma$.
    Now we prove the main proposition by coinduction on a proof of $H[\ttX]\sststile{\ttOmega+\omega}{\alpha}\Gamma$.
    The only non-trivial case is when the last inference is Cut, so the proof takes the form
    \begin{equation*}
    \begin{mathprooftree}
        \def\fCenter#1#2{\sststile{#1}{#2}}
        \AxiomC{$H[\ttX]\fCenter{\ttOmega+\omega}{\alpha_0} \Gamma,\ A$}
        \AxiomC{$H[\ttX]\fCenter{\ttOmega+\omega}{\alpha_1}  \Gamma,\ \lnot A$}
        \RightLabel{(Cut)}
        \BinaryInfC{$H[\ttX]\fCenter{\ttOmega+\omega}{\alpha} \Gamma$}
    \end{mathprooftree}
    \end{equation*}
    Suppose that $\rk^{\bfL(u)}_{H(\ttX)}(A) = \ttOmega + m$ for some $m<\omega$. By the coinductive hypothesis, we have $H[\ttX]\sststile{\ttOmega+1}{\varphi_1(\alpha_0)} \Gamma,\ A$ and $H[\ttX]\sststile{\ttOmega+1}{\varphi_1(\alpha_1)} \Gamma,\ \lnot A$.
    Without loss of generality, assume that $\alpha_0=\alpha_1<\alpha$. Then we can form the following proof:
    \begin{equation*}
    \begin{mathprooftree}
        \def\fCenter#1#2{\sststile{#1}{#2}}
        \AxiomC{$H[\ttX]\fCenter{\ttOmega+m+1}{\varphi_1(\alpha_0)} \Gamma,\ A$}
        \AxiomC{$H[\ttX]\fCenter{\ttOmega+m+1}{\varphi_1(\alpha_0)}  \Gamma,\ \lnot A$}
        \RightLabel{(Cut)}
        \BinaryInfC{$H[\ttX]\fCenter{\ttOmega+m+1}{\varphi_1(\alpha_0)+1} \Gamma$}
    \end{mathprooftree}
    \end{equation*}
    This implies $H[\ttX]\sststile{\ttOmega+1}{\beta} \Gamma$, where $\beta = \underbrace{\varphi_0\circ \cdots \circ \varphi_0}_{m \text{ times}}(\varphi_1(\alpha_0)+1)$. From $\beta<\varphi_1(\alpha)$, we have $H[\ttX]\sststile{\ttOmega+1}{\varphi_1(\alpha)} \Gamma$.
\end{proof}

\begin{lemma}[Bounding lemma]
    Let $A$ be a $\Sigma^{\ttL(u)_\ttOmega}$-formula and $H(\ttX)\vDash \alpha\le\beta<\ttOmega$.
    Then
    \begin{equation*}
        H[\ttX] \sststile{\rho}{\alpha} \Gamma, A \implies H[\ttX]\sststile{\rho}{\alpha} \Gamma, A^{\ttL(u)_\beta}.
    \end{equation*}
\end{lemma}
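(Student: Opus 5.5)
We argue by coinduction on a preproof $\pi$ of $H[\ttX]\sststile{\rho}{\alpha}\Gamma, A$, following the other predicative lemmas of this section. The output is a preproof of $H[\ttX]\sststile{\rho}{\alpha}\Gamma, A^{\ttL(u)_\beta}$ with the same template orders along corresponding branches. Here $A^{\ttL(u)_\beta}$ means every unbounded quantifier $\exists z\in\ttL(u)_\ttOmega$ occurring in $A$ is replaced by $\exists z\in\ttL(u)_\beta$. Since $A$ is $\Sigma$, only existential quantifiers over $\ttL(u)_\ttOmega$ occur, so these are the only occurrences affected. We split cases according to whether $A$ is the principal formula of the last inference of $\pi$.

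If $A$ is not principal, the last rule is applied with premises of the form $H[\ttX']\sststile{\rho}{\alpha'}\Gamma', A$ with $\alpha'<\alpha$. For $(\rmb\forall)$ and $(\notin)$ the premise template orders satisfy $\ttX'\supseteq\ttX$. Because $H(f)$ for the inclusion $f\colon\ttX\to\ttX'$ preserves order, we still have $H(\ttX')\vDash\alpha'<\beta<\ttOmega$, where $\beta$ is read in $H(\ttX')$ via this inclusion. So we apply the coinductive hypothesis to each premise, using the same $\beta$ and the premise bound $\alpha'$ rather than $\alpha$; the hypothesis $\alpha\le\beta$ is only needed as $\alpha'\le\beta$, which follows from $\alpha'<\alpha\le\beta$. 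We then reapply the same rule. The side conditions of $(\in)$, $(\rmb\exists)$ and (Cut) mention only $\alpha$ and $\rho$ and the formulas in $\Gamma$, so they are unchanged. Axioms are preserved since $A$ is not the active basic formula there, or else $A$ is basic and unaffected.

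If $A$ is principal, we go through the forms of $A$. When $A$ is $\Delta_0$ in $\ttL(u)_\ttOmega$, that is, it has no unbounded quantifier, we have $A^{\ttL(u)_\beta}=A$ and nothing is to be done. For $A\equiv B\land C$ or $B\lor C$ we use the coinductive hypothesis on the premises, whose components are again $\Sigma$, then reapply the rule. The same works for the bounded rules $(\rmb\forall)$ and $(\rmb\exists)$ with $t\neq\ttL(u)_\ttOmega$.

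The critical case is $A\equiv\exists z\in\ttL(u)_\ttOmega\, B(z)$, inferred by $(\rmb\exists)$ from $H[\ttX]\sststile{\rho}{\alpha_0}\Gamma, s\dotin\ttL(u)_\ttOmega\land B(s)$. Here $\alpha_0<\alpha$, $\rk^{\bfL(u)}_{H(\ttX)}(s)<\alpha$ and $|s|<\ttOmega$. Note that $\rk(s)=\omega\cdot(1+|s|)\ge |s|$, so $|s|<\alpha\le\beta$. Hence $s$ is a legitimate witness for $\exists z\in\ttL(u)_\beta$, and $s\dotin\ttL(u)_\beta\equiv\top$ as before. We apply the coinductive hypothesis to the premise to obtain $\Gamma, s\dotin\ttL(u)_\beta\land B^{\ttL(u)_\beta}(s)$ with bound $\alpha_0$. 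Then $(\rmb\exists)$ with $|s|<|\ttL(u)_\beta|=\beta$ gives the conclusion with bound $\alpha$.

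The (Refl) case cannot have $A$ principal in this form: (Refl) introduces $\exists z\in\ttL(u)_\ttOmega\, C^z$, which is itself a $\Sigma$-formula of the above shape. Its premise is $\Gamma, C^{\ttL(u)_\ttOmega}$, but the side condition of (Refl) demands $\ttOmega<\alpha$, contradicting $\alpha\le\beta<\ttOmega$. So (Refl) with principal formula $A$ never occurs.

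The main obstacle I expect is bookkeeping in the template-order-extending rules. A premise of $(\rmb\forall)$ lives over $\ttX_s\supseteq\ttX$, and we must check that the hypothesis $\alpha_s\le\beta<\ttOmega$ transfers to $H(\ttX_s)$, and that the substitution $\ttL(u)_\ttOmega\mapsto\ttL(u)_\beta$ commutes with $(\spadesuit)$. It does, since $(\spadesuit)$ concerns only the quantified term $s$ and the bound $t$ of the principal formula, not $A$. One also needs the rank comparison $|s|\le\rk(s)$ in $H(\ttX)$, which is immediate from the functorial internal arithmetic of $E$.
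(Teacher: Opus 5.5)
Your proposal is correct and follows essentially the same route as the paper. Both argue by coinduction on the preproof, exclude (Refl) because its side condition $\ttOmega<\alpha$ contradicts $\alpha\le\beta<\ttOmega$, and handle the principal $(\rmb\exists)$ case by reusing the same witness and reapplying the rule; your explicit check that $|s|\le\rk(s)<\alpha\le\beta$ makes that witness admissible for $\ttL(u)_\beta$ is a step the paper leaves implicit.
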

\begin{proof}
    The proof uses coinduction on a preproof, which will largely follow \cite[Lemma 3.18]{RathjenCook2016ClassifyingProvTotalsetftnKPKPP}.
    The most interesting case is when $A$ is a principal formula of the last inference. Note that $\alpha<\ttOmega$ forces that the last inference cannot be $\mathrm{(Refl)}$.
    We only consider the cases when the last inference is either $(\rmb \exists)$ or $(\rmb \forall)$.
    \begin{enumerate}
        \item Suppose that the last inference is $(\rmb\forall)$, so $A$ takes the form $\forall x\in s\ B(x)$ for some $\Sigma$-formula $B(x)$ and a term $s\neq \ttL(u)_\ttOmega$ in $\bfL(u)_{H(\ttX)}$. The hypotheses of the last inference take the form
        \begin{equation*}
            H[\ttY_t] \sststile{\rho}{\alpha_t} \Gamma,\ t\mathrel{\dot\notin}s\lor B(t) 
        \end{equation*}
        for each $t$, $\ttY_t$, and $\alpha_t$ satisfying $(\spadesuit)$ and $\alpha_t<\alpha$. In particular, we have $H(\ttY)\vDash \alpha_t \le \alpha<\ttOmega$. By the coinductive hypothesis, we have 
        \begin{equation*}
            H[\ttY_t] \sststile{\rho}{\alpha_t} \Gamma,\ (t\mathrel{\dot\notin}s)^{\ttL(u)_\beta}\lor B(t)^{\ttL(u)_\beta}. 
        \end{equation*}
        The formula $t\mathrel{\dot\notin}s$ is $\Delta_0$, so $(t\mathrel{\dot\notin}s)^{\ttL(u)_\beta} \equiv (t\mathrel{\dot\notin}s)$. From the given new set of hypotheses, we can derive
        \begin{equation*}
            H[\ttX]\sststile{\rho}{\alpha} \Gamma,\ \forall t\in s B(t)^{\ttL(u)_\beta}.
        \end{equation*}
        
        \item Now suppose that the last inference is $(\rmb\exists)$, so $A$ takes the form $\exists x\in s B(x)$. ($s=\ttL(u)_\ttOmega$ is possible.) The hypothesis takes the form
        \begin{equation*}
            H[\ttX]\sststile{\rho}{\alpha_0} \Gamma,\ t\dotin s\land B(t)
        \end{equation*}
        for some $\alpha\in H(\ttX)$ and a term $t\in \bfL(u)_{H(\ttX)}$ such that $\alpha_0<\alpha$, $|t|<|s|$, and $\rk^{\bfL(u)}_{H(\ttX)}(t)<\alpha$. Now, let us apply the coinductive hypothesis to get
        \begin{equation*}
            H[\ttX]\sststile{\rho}{\alpha_0} \Gamma,\ t\dotin s\land B(t)^{\ttL(u)_\beta}.
        \end{equation*}
        We can apply $(\rmb\exists)$ since $\rk^{\bfL(u)}_{H(\ttX)}(t) < \alpha$ and $|t|<|s|$. This implies 
        \begin{equation*}
            H[\ttX]\sststile{\rho}{\alpha} \Gamma,\ \exists x\in s B(x)^{\ttL(u)_\beta}. \qedhere 
        \end{equation*}
    \end{enumerate}
\end{proof}

\begin{lemma}[Persistence Lemma]
    If $\gamma,\delta\in H(\ttX)$ and $\gamma<\delta\le\ttOmega$, then 
    \begin{equation*}
        H[\ttX]\sststile{\rho}{\alpha} \Gamma,\ \forall x\in \ttL(u)_\delta  B(x)
        \implies
        H[\ttX]\sststile{\rho}{\alpha} \Gamma,\ \forall x\in \ttL(u)_\gamma B(x).
    \end{equation*}
\end{lemma}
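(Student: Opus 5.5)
We prove the Persistence Lemma by coinduction on a preproof $\pi$ of $H[\ttX]\sststile{\rho}{\alpha} \Gamma,\ \forall x\in \ttL(u)_\delta B(x)$, in the style of the proof of \autoref{Lemma: Reverting conjunctive formulas}. That is, we give an algorithm that rewrites $\pi$ into a preproof $\pi^\gamma$ of $H[\ttX]\sststile{\rho}{\alpha} \Gamma,\ \forall x\in \ttL(u)_\gamma B(x)$. It replaces the distinguished occurrence of $\forall x\in \ttL(u)_\delta B(x)$ by $\forall x\in \ttL(u)_\gamma B(x)$ throughout its history in $\pi$, and keeps every other inference as it is.

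We split into cases according to the last inference of $\pi$. If $\pi$ is an axiom, the distinguished formula is not active, because axioms only involve basic formulas $x\in y$, $x\notin y$ with $y\in u$. The same sequent with the new formula is then again an axiom. Suppose instead the distinguished formula is not principal in the last inference, for example in $(\land)$, $(\lor)$, $(\in)$, $(\rmb\exists)$, (Refl), (Cut), or a $(\rmb\forall)$ or $(\notin)$ acting on another formula. Then we apply the coinductive hypothesis to each premise and reapply the same rule with the same ordinal labels. The side conditions refer only to $\rho$, the $\alpha_i$, ranks of witnesses, and $(\spadesuit)$ for other formulas, so they are unaffected. For $(\spadesuit)$-premises over an enlarged $\ttY\supseteq \ttX$, the coinductive hypothesis is applied over $H[\ttY]$. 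This requires $\gamma<\delta\le\ttOmega$ to still hold in $H(\ttY)$, which it does because $H(\iota)$ for the inclusion $\iota\colon\ttX\to\ttY$ is an embedding; we regard $\gamma,\delta$ as elements of $H(\ttY)$ via $H(\iota)$.

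The main case is when the distinguished formula is principal in $(\rmb\forall)$. Its premises are $H[\ttX_s]\sststile{\rho}{\alpha_s} \Gamma,\ s\dotin \ttL(u)_\delta\to B(s)$ (possibly still containing the distinguished formula, which we treat by the coinductive hypothesis), one for each pair $(s,\ttX_s)$ satisfying $(\spadesuit)$ with $H(\ttX_s)\vDash |s|<\delta$. Since $s\dotin \ttL(u)_\xi\equiv\top$ for any $\xi$, the side formula is literally the same for $\delta$ and $\gamma$. The new $(\rmb\forall)$ inference for $\forall x\in \ttL(u)_\gamma B(x)$ needs premises for all $(s,\ttX_s)$ satisfying $(\spadesuit)$ with $H(\ttX_s)\vDash |s|<\gamma$. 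By transitivity, $|s|<\gamma<\delta$, so each such pair already indexes a premise of the old inference. We take that premise, run the coinductive hypothesis on it to remove remaining occurrences of the distinguished formula, and keep the bound $\alpha_s<\alpha$. The first two clauses of $(\spadesuit)$ do not mention $t$, so they transfer verbatim. The remaining premises, with $\gamma\le|s|<\delta$, are simply discarded.

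The step needing the most care is bookkeeping, not ordinals. We must check that discarding premises and re-indexing yields a legitimate $(\rmb\forall)$ instance in $\RS_H$. Concretely, the premise family for $\gamma$ must be exactly the subfamily of the one for $\delta$ cut out by $|s|<\gamma$. We must also check that the case $\delta=\ttOmega$ with $\gamma<\ttOmega$ causes no trouble. Since $\rk$ plays no role in $(\rmb\forall)$, the ordinal labels $\alpha$ and $\rho$ remain unchanged, which gives the stated bounds.
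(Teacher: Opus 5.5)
Your proposal is correct and follows the paper's proof: coinduction on the preproof, passing unchanged through every inference in which the displayed bounded universal is not principal, and, in the principal bounded-$\forall$ case, discarding the premises indexed by $s$ with $\gamma\le|s|$ before reapplying the rule. You are somewhat more explicit than the paper on two points, namely residual occurrences of the displayed formula inside the premises and reading $\gamma,\delta$ in $H$ of the enlarged template order, but the argument is the same.
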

\begin{proof}
    We proceed by coinduction on the proof tree. We consider the case $\delta<\ttOmega$ since the case $\delta=\ttOmega$ is analogous.
    The most interesting case is when the last inference is $(\rmb\forall)$ with the principal formula $\forall x\in \ttL(u)_\delta B(x)$. We have hypotheses of the form
    \begin{equation*}
        H[\ttX_t] \sststile{\rho}{\alpha_s} \Gamma,\  s\dotin \ttL(u)_\delta\to B(t)
    \end{equation*}
    with $s$, $\ttX_s$, and $\alpha_s$ satisfying $(\spadesuit)$ and $\alpha_s<\alpha$. Now we chop off every hypothesis not satisfying $|s|<\gamma$, then apply $(\rmb\forall)$ to get the desired conclusion.
\end{proof}

\section{Embedding} \label{Section: Embedding}
Let us observe that we can define an internal natural sum over the epsilon flower. For a formula $A$, we define $\no(A) = \omega^{\rk(A)}$. For a sequent $\Gamma = \{A_0,\cdots,A_{m-1}\}$, we define $\no(\Gamma) = \no(A_0)\#\cdots\# \no(A_{m-1})$.
We write
\begin{equation*}
    H[\ttX]\dststile{\rho}{\xi}\Gamma \iff H[\ttX] \sststile{\rho}{\no(\Gamma)\# \xi}\Gamma.
\end{equation*}
We omit $\xi$ or $\rho$ if they are zero.

The purpose of using the natural sum is twofold: First, a sequent is a multiset of formulas, and we do not want to care about the order of formulas in a sequent. Second, most existing embedding proofs use the natural sum, and we want to `reuse' the known proofs in the current context.
\begin{lemma}
    $H[\ttX]\Vdash \Gamma, A,\lnot A$.
\end{lemma}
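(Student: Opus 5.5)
We prove the statement by building the derivation directly, by recursion on the build-up of $A$; equivalently, by corecursion producing a well-founded tree whose depth is bounded by the logical complexity of $A$. The target is $H[\ttX]\sststile{0}{\no(\Gamma,A,\lnot A)}\Gamma,A,\lnot A$. Note that $\rk(A)=\rk(\lnot A)$, so the bound is $\no(\Gamma)\#\omega^{\rk(A)}\#\omega^{\rk(A)}$. By the weakening part of \autoref{Lemma: Rathjen-Cook Lemma 3.15}, it suffices to produce a derivation of $A,\lnot A$ with height at most $\omega^{\rk(A)}\#\omega^{\rk(A)}$ and cut rank $0$, since the natural sum is monotone.

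By symmetry we may assume $A$ is one of $r\in t$, $B\land C$, or $\forall x\in t\,B(x)$.

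\textbf{Basic case.} If $A$ is basic, then $A,\lnot A$ is an axiom whenever $A$ is true. If $A$ is false, its negation is a true basic formula, so the sequent again contains an axiom.

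\textbf{Conjunction.} For $A\equiv B\land C$, recursion gives derivations of $\Gamma,B,\lnot B$ and $\Gamma,C,\lnot C$. Apply $(\lor)$ to obtain $B,\lnot B\lor\lnot C$ and $C,\lnot B\lor\lnot C$, then apply $(\land)$. The height bounds work because $\rk(B),\rk(C)<\rk(A)$, so $\omega^{\rk(B)}\#\omega^{\rk(B)}+2<\omega^{\rk(A)}\#\omega^{\rk(A)}$ by functorial arithmetic in $E$.

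\textbf{Bounded universal (main case).} For $A\equiv\forall x\in t\,B(x)$ with $\lnot A\equiv\exists x\in t\,\lnot B(x)$, apply $(\rmb\forall)$ with one premise for each pair $(s,\ttX_s)$ satisfying $(\spadesuit)$. The premise $[\ttX_s]$ is derived as follows.
\begin{enumerate}
    \item Recursion gives a derivation of $B(s),\lnot B(s)$, and an analogous derivation handles $s\dotin t$, whose rank is below $\rk(A)$.
    \item Combining these by $(\land)$ and $(\lor)$ yields $s\dotin t\to B(s),\ s\dotin t\land\lnot B(s)$.
    \item Apply $(\rmb\exists)$ with witness $s$ to get $\exists x\in t\,\lnot B(x)$.
\end{enumerate}
Step 3 has two side conditions. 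The first, $|s|<|t|$, holds by $(\spadesuit)$. The second is $\rk(s)<$ the current height. This follows from $\rk(s)=\omega\cdot(1+|s|)<\rk(t)\le\rk(A)$, which in turn uses the functorial comparison in $H(\ttX_s)$.

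Two subtleties remain in this case. First, $(\rmb\exists)$ is applied in the extended template $\ttX_s$, where $s$ lives; this is exactly why the derivation passes to $[\ttX_s]$ before the existential step. Second, the heights must be read in $H(\ttX_s)$. We view $\rk(A)\in H(\ttX)$ through the inclusion $H(\ttX)\subseteq H(\ttX_s)$, which is what the Substitution lemma guarantees.

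\textbf{Remaining atomic cases.} The cases $r\in t$ and $r\notin t$ with $t$ not in $u$ are treated in the same way. One applies $(\notin)$ over all $s$, then $(\in)$ with the same $s$, reducing to the sequent $s\dotin t\land r=s,\ s\dotin t\to r\ne s$. That sequent is handled by recursion, since $\rk(r=s)<\rk(r\in t)$; the $+6$ and $+4$ offsets in the definition of $\rk$ are built in precisely for this. The case $t=\ttL(u)_\ttOmega$ is the same, with $\rk(A)\ge\ttOmega$, and $(\mathrm{Refl})$ is never needed.

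The expected main obstacle is the height bookkeeping in the quantifier and membership cases. The argument needs every premise height to lie strictly below the conclusion height, computed in varying template orders $\ttX_s$. This requires the monotonicity of $\#$ and of $\xi\mapsto\omega^\xi$ under the functorial embeddings, as in \autoref{Proposition: Functoriality of the internal addition and multiplication}, together with the rank inequality $\rk(s)<\rk(t)$ for $|s|<|t|$.
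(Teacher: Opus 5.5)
You have the right construction: each case unfolds exactly as in the paper ($(\notin)$ then $(\in)$ with the same witness; $(\lor)$ then $(\land)$; $(\rmb\forall)$ then $(\rmb\exists)$), and your local side conditions are fine. The gap is in how the construction is justified.

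\textbf{Recursion is not available.} Recursion on the build-up of $A$ fails because $r\in t$ unfolds to $s\dotin t\land r=s$, and the quantifier case also produces $s\dotin t$; neither is a subformula. You then fall back on recursion on $\rk$, as in Rathjen--Cook and Fern\'andez. Here, however, $\rk$ lives in $H(\ttX_s)$ for templates that grow along the branch, and descent in that sense is not well-founded.

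\textbf{Counterexample.} Take $r=\ttL(u)_{\varepsilon_{\tilde 0}}$ and $t=\ttL(u)_{\varepsilon_{\tilde 0}+1}$. In your tree for $r\in t,r\notin t$, follow the $(\notin)$-premise for $s=\ttL(u)_{\varepsilon_{\tilde 1}}$ with $\tilde 1<\tilde 0$. After $(\in)$, $(\land)$, $(\lor)$ and unfolding $r=s$, follow the $(\rmb\forall)$-premise for $x=\ttL(u)_{\varepsilon_{\tilde 2}}$ with $\tilde 2<\tilde 1$. You arrive at $x\in s,\ x\notin s$, which has exactly the same shape. This repeats forever, with template union $\tilde 0>\tilde 1>\tilde 2>\cdots$ of order type $\omega^*$.

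So the tree you describe has an infinite branch. It is not well-founded, and its depth is not bounded by the logical complexity of $A$, which is zero for atomic $A$. This is forced, because $(\notin)$ and $(\rmb\forall)$ must have a premise for every admissible placement of fresh variables. The paper stresses exactly this point right after its proof: the cited proofs use induction on $\rk(A)$, and that "cannot" be done here.

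\textbf{The fix.} Do what the paper does. Define the tree corecursively as a preproof: each node is determined by the one below it through your case analysis, and no termination is needed. Then check only the local conditions at each inference, which you have essentially done.

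Well-foundedness is then not a property of the tree itself. It is certified by the height labels: along any infinite branch the heights descend strictly in $H(\bigcup_n\ttX_n)$. Since $H$ is a dilator, the template union along that branch must be ill-founded, which is exactly what instantiation at well-orders requires.

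\textbf{Minor point.} Comparing $\rk(A)\in H(\ttX)$ with heights in $H(\ttX_s)$ is just functoriality of $H$ along the inclusion $\ttX\subseteq\ttX_s$. It is not the Substitution lemma.
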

\begin{proof}
    We coinductively construct a preproof for $\Gamma, A,\lnot A$.
    It has several subcases.
    Suppose that $A$ is $r\in t$. We consider the following preproof:
    \begin{center}
    \begin{mathprooftree}
        \def\fCenter#1#2{\sststile{#1}{#2}}
        \AxiomC{$H[\ttY] \fCenter{}{\no(\Gamma)\#\omega^{\gamma_0}\#\omega^{\gamma_1}} \Gamma,\ (s\mathrel{\dot\in} t\land r=s),\ (s\mathrel{\dot\notin} t\lor r\neq s)$}
        \RightLabel{$(\in)$}
        \UnaryInfC{$\cdots\ H[\ttY]\fCenter{}{\no(\Gamma)\# \no(r\in t)\# \omega^{\gamma_1}} \Gamma,\ r\in t,\ (s\mathrel{\dot\notin} t\lor r\neq s)\ \cdots$}
        \RightLabel{$(\notin)$}
        \UnaryInfC{$H[\ttX]\fCenter{}{\no(\Gamma)\# \no(r\in t)\cdot 2} \Gamma,\ r\in t,\ r\notin t$}
    \end{mathprooftree}
    \end{center}
    Here,
    \begin{itemize}
        \item $\gamma_0 = \rk(s\mathrel{\dot\in} t\land r=s)$, and
        \item $\gamma_1 = \rk(s\mathrel{\dot\notin} t\lor r\neq s)$.
    \end{itemize}
    We can see that $\rk(s\mathrel{\dot\in} t), \rk(s) < \rk(t)$, so
    \begin{equation*}
        \gamma_0 = \max\bigl(\rk (s\mathrel{\dot\in}t), \rk(r=s)\bigr)+1 < \max\bigl(\rk(t), \rk(r)+4,9\bigr)+1 \le \rk(r\in t)
    \end{equation*}
    and similarly we have $\gamma_1 < \rk(r\notin t)$.

    Now suppose that $A$ is $B\land C$. In this case, consider the following preproof:
    \begin{center}
    \begin{mathprooftree}
        \def\fCenter#1#2{\sststile{#1}{#2}}
        \AxiomC{$H[\ttX] \fCenter{}{\no(\Gamma)\#\no(B)\cdot 2} \Gamma,\ B,\ \lnot B$}
        \RightLabel{$(\lor)$}
        \UnaryInfC{$H[\ttX]\fCenter{}{\no(\Gamma)\# \no(B)\#\no(B\land C)} \Gamma,\ B\ \lnot B\lor\lnot C$}

        \AxiomC{$H[\ttX] \fCenter{}{\no(\Gamma)\#\no(C)\cdot 2} \Gamma,\ C,\ \lnot C$}
        \RightLabel{$(\lor)$}
        \UnaryInfC{$H[\ttX]\fCenter{}{\no(\Gamma)\# \no(B\land C)\#\no(C)} \Gamma,\ C,\ \lnot B\lor\lnot C$}
        \RightLabel{$(\land)$}
        \BinaryInfC{$H[\ttX]\fCenter{}{\no(\Gamma)\# \no(B\land C)} \Gamma,\ B\land C,\ \lnot B\lor\lnot C$}
    \end{mathprooftree}
    \end{center}

    Lastly, suppose that $A$ is $\exists x\in t B(x)$. Similar to Case $\in$, we get the following with an appropriate choice of $\gamma_0$ and $\gamma_1$:
    \begin{center}
    \begin{mathprooftree}
        \def\fCenter#1#2{\sststile{#1}{#2}}
        \AxiomC{$H[\ttY] \fCenter{}{\no(\Gamma)\#\omega^{\gamma_0}\#\omega^{\gamma_1}} \Gamma,\ (s\mathrel{\dot\in} t\land B(s)),\ (s\mathrel{\dot\notin} t\lor \lnot B(s))$}
        \RightLabel{$(\in)$}
        \UnaryInfC{$\cdots\ H[\ttY]\fCenter{}{\no(\Gamma)\# \no(\exists x\in t B(x))\# \omega^{\gamma_1}} \Gamma,\ \exists x\in t B(x),\ (s\mathrel{\dot\notin} t\lor \lnot B(s))\ \cdots$}
        \RightLabel{$(\forall)$}
        \UnaryInfC{$H[\ttX]\fCenter{}{\no(\Gamma)\# \no(\exists x\in t B(x))\cdot 2} \Gamma,\ \exists x\in t B(x),\ \forall x\in t \lnot B(x)$}
    \end{mathprooftree}
    \end{center}
    This finishes the proof.
\end{proof}
The previous proof has the exact details of \cite[Lemma 5.4]{Fernandez2024SetTotalRecKPell} or \cite[Lemma 4.3(i)]{RathjenCook2016ClassifyingProvTotalsetftnKPKPP} except one point: The proof in the cited theorems constructs a proof by induction on the rank of the formula $A$. In the previous proof, we do not rely on induction; Instead, we construct a preproof coinductively. We will \emph{not} (and cannot) rely on recursive construction for the proof tree, which is replaced by corecursive construction. Again, the dilator bound confirms the constructed preproof is `valid' and `well-founded.'

\begin{lemma} \label{Lemma: Preproof exists for basic facts}
    \begin{enumerate}
        \item $H[\ttX]\Vdash s\notin s$.
        \item $H[\ttX]\Vdash s\mathrel{\dot\notin} t,\  s\in t$, if $H(\ttX)\vDash |s|<|t|$.
        \item $H[\ttX]\Vdash s\subseteq s$.
        \item $H[\ttX]\Vdash s=s$.
        \item $H[\ttX]\Vdash s\in \ttL(u)_\beta$, if $H(\ttX) \vDash |s| < \beta$.
    \end{enumerate}
\end{lemma}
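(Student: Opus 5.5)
I will mimic the standard proofs of \cite[Lemma 4.3]{RathjenCook2016ClassifyingProvTotalsetftnKPKPP} and \cite[Lemma 5.4]{Fernandez2024SetTotalRecKPell}. As in the preceding lemma, each preproof is built corecursively rather than by induction on $\rk(s)$. The dilator bounds $\no(\cdot)$ are verified locally at each inference, using functoriality of $\rk$ and of the natural sum in $H(\ttX)$. The only external tools are the preceding lemma ($H[\ttX]\Vdash\Gamma,A,\lnot A$) and the weakening/monotonicity item of \autoref{Lemma: Rathjen-Cook Lemma 3.15}.

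I will prove items (1) and (3) simultaneously, corecursively, and item (2) along the way.

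For (3), $s\subseteq s$ is $\forall x\in s\,(x\in s)$. Apply $(\rmb\forall)$. For each $r$ and $\ttX_r$ satisfying $(\spadesuit)$, we need $H[\ttX_r]\Vdash r\mathrel{\dot\notin}s,\ r\in s$. This is exactly item (2) with $t=s$, because $(\spadesuit)$ gives $|r|<|s|$.

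For (2), apply $(\in)$ with witness $r$ itself. This reduces the goal to $r\mathrel{\dot\notin}t,\ r\dotin t\land r=r$. Then $(\land)$ splits it into:
\begin{itemize}
\item $r\mathrel{\dot\notin}t,\ r\dotin t$, which is an instance of the $A,\lnot A$ lemma;
\item $r\mathrel{\dot\notin}t,\ r=r$, which follows from item (4) for $r$ by weakening.
\end{itemize}
The side conditions of $(\in)$ are $|r|<|t|$ and $\rk(r)<\no(\text{conclusion})$. The second holds because $\rk(r)<\rk(r\in t)<\omega^{\rk(r\in t)}$.

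Item (4) is $s\subseteq s\land s\subseteq s$, so it follows from (3) by $(\land)$.

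For (1), apply $(\notin)$. For each $r,\ttX_r$ satisfying $(\spadesuit)$ we need $r\mathrel{\dot\notin}s\lor s\neq r$. Use $(\lor)$ twice to get $r\mathrel{\dot\notin}s,\ s\neq r$. Here $s\neq r$ is $\lnot(s\subseteq r\land r\subseteq s)$, so after one more $(\lor)$ it suffices to prove $r\mathrel{\dot\notin}s,\ \lnot(r\subseteq s)$. This is an $\exists x\in r\,(x\notin s)$. Use $(\rmb\exists)$ with witness $x:=r$; this is legitimate provided some disjunct still yields it. Alternatively follow Rathjen–Cook: use $(\rmb\exists)$ with witness $s$ on $\lnot(s\subseteq r)$, which reduces the goal to $s\notin s$. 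That is a corecursive call to (1), and the bound is smaller because $\no(s\notin s)$ appears only above inferences that strictly increase the bound. Either way the tree is well-defined as a coinductive object, and every inference is legal.

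Item (5) goes through $(\in)$ with witness $s$: we get $s\dotin\ttL(u)_\beta\land s=s$. Here $s\dotin\ttL(u)_\beta$ is $\top$, which I treat as the trivially provable basic sentence. Combine this with (4) via $(\land)$. The side conditions $|s|<\beta=|\ttL(u)_\beta|$ and $\rk(s)<\omega^{\rk(s\in\ttL(u)_\beta)}$ hold.

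The main obstacle is the bookkeeping of the bounds. At each step I must check that premise bounds are strictly below conclusion bounds in $H(\ttX_r)$, and that conclusion bounds transported from $H(\ttX)$ to $H(\ttX_r)$ along the inclusion are preserved, which holds by functoriality. The key rank inequalities are $\rk(r\mathrel{\dot\notin}s)<\rk(s)$ and $\rk(r)<\rk(s)$ when $|r|<|s|$, together with $\rk(r=s)+1<\rk(r\in s)$. These come from the constants $6,4,5$ in the rank definition, exactly as in the computation of $\gamma_0$ in the previous lemma. I expect only this arithmetic to need care, especially in (1) where the $\neq$ unfolds into three connective layers.
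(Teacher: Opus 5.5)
Your items (2)--(5) are the standard argument, which is what the paper means by mimicking Fern\'andez's Lemma 5.5 corecursively, and they are fine. One small slip: the inequality you need in (2) is $\rk(r=r)+1<\rk(r\in t)$, which holds because $\rk(r=r)+1=\rk(r)+5$. The version you wrote, $\rk(r=s)+1<\rk(r\in s)$, fails whenever $\rk(r)<\rk(s)$.

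Item (1), however, has a genuine gap: neither of your two options is a legal step.
\begin{itemize}
\item In the first, you reduce to $r\mathrel{\dot\notin}s,\ \lnot(r\subseteq s)$, that is, $r\mathrel{\dot\notin}s,\ \exists x\in r\,(x\notin s)$, with witness $r$. But $(\rmb\exists)$ would require $|r|<|r|$. Worse, the sequent is false in general. Take $s=\ttL(u)_\xi$: then $r\dotin s$ is $\top$ and $\llbr r\rrbr\subseteq L(u)_\xi$, so both formulas fail.
\item In the second, witness $s$ for $\exists x\in s\,(x\notin r)$ again violates the side condition, since $|s|\not<|s|$. Its premise would be $s\dotin s\land s\notin r$, not $s\notin s$.
\item A corecursive call to $s\notin s$ itself can never be bounded. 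Any premise containing $s\notin s$ carries a bound at least $\no(s\notin s)$. But every inference above the conclusion $H[\ttX]\Vdash s\notin s$ must carry a bound strictly below $\no(s\notin s)$. Coinductive well-definedness of the tree does not replace this local strict decrease.
\end{itemize}

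The missing idea is the foundation step with the roles reversed. Take the disjunct $\lnot(s\subseteq r)=\exists x\in s\,(x\notin r)$ with witness $r$; this is legal because $(\spadesuit)$ gives $|r|<|s|$. Its premise $r\mathrel{\dot\notin}s,\ r\dotin s\land r\notin r$ splits by $(\land)$ into two parts:
\begin{itemize}
\item an instance of the $A,\lnot A$ lemma, and
\item the corecursive call to (1) for $r$ over $H[\ttX_r]$.
\end{itemize}
The bounds close as follows. Since $\rk(s)=\omega\cdot(1+|s|)$ is a limit above both $\rk(r)$ and $\rk(r\dotin s)$, we get $\rk(r\dotin s\land r\notin r)<\rk(s)=\rk(\lnot(s\subseteq r))$. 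Also $\rk(r\mathrel{\dot\notin}s\lor s\neq r)=\rk(s)+5<\rk(s)+6=\rk(s\notin s)$.
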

\begin{proof}
    Its proof follows from mimicking the proof of \cite[Lemma 5.5]{Fernandez2024SetTotalRecKPell}, so we omit most of its details. Again, we build preproofs corecursively and not recursively.
\end{proof}

\begin{lemma}
    Let $\ttX$ be a template order and $s$, $t$ be terms. For every formula $A(x)$ in the language of set theory, we have
    \begin{equation*}
        H[\ttX]\Vdash s\neq t, \lnot A(s)^{\ttL(u)_\ttOmega},  A(t)^{\ttL(u)_\ttOmega}.
    \end{equation*}
\end{lemma}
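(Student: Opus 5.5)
This is the equality axiom in $\RS_H$. As in the previous lemma, the proof is the usual induction on the complexity of $A$ (cf.\ \cite[Lemma 5.6]{Fernandez2024SetTotalRecKPell} or \cite[Lemma 4.3]{RathjenCook2016ClassifyingProvTotalsetftnKPKPP}). We run that argument by induction on the build-up of $A(x)$, and inside the atomic case we construct preproofs corecursively. At every step we check the bound $\no(\Gamma)$ using the rank computation from the definition of $\rk^{\bfL(u)}$ and the functorial natural sum. The point of the bound is that each premise's $\no$ is strictly below the conclusion's.

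\emph{Compound case.} Suppose $A$ is $B\land C$, $B\lor C$, or a quantifier. If the quantifier is $\forall x\in y$ or $\exists x\in y$ for a variable $y$, it is relativized to a term, possibly $\ttL(u)_\ttOmega$.

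For $\land$ we apply $(\land)$ to the $A(t)$ side and then $(\lor)$ to the $\lnot A(s)$ side, using the induction hypothesis for $B$ and $C$ together with weakening (\autoref{Lemma: Rathjen-Cook Lemma 3.15}). This is exactly the $B\land C$ case of the previous lemma.

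For $\forall z\in r\,B(z,x)$ we apply $(\rmb\forall)$ to $\forall z\in r\,B(z,t)$. For each $r'$ and $\ttX_{r'}$ satisfying $(\spadesuit)$, we then apply $(\rmb\exists)$ to $\exists z\in r\,\lnot B(z,s)$ with the same witness $r'$. This needs $\rk(r')$ to be below the bound, which holds because $|r'|<|r|$. Finally we close with $H[\ttX_{r'}]\Vdash r'\dot\notin r, r'\dot\in r$ from \autoref{Lemma: Preproof exists for basic facts} and the induction hypothesis for $B$. If the variable $x$ itself occurs as the bound $r$ (e.g.\ $\forall z\in x$), this reduces to $s\subseteq t$-type reasoning, which follows from the atomic case.

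\emph{Atomic case.} Here $A(x)$ is $x\in r$, $r\in x$, or $x\in x$. This is the real content, and we handle it corecursively on term complexity. Note that $s\neq t$ unfolds to $\exists z\in s(z\notin t)\lor\exists z\in t(z\notin s)$. For $A(x)\equiv r\in x$, we must derive $s\neq t,\ r\notin s,\ r\in t$. We apply $(\notin)$ to $r\notin s$: for each $s'$ with $|s'|<|s|$, we need $s'\dot\notin s\lor r\neq s'$. We pair this with $(\in)$ on $r\in t$ choosing witness $s'$, and with the $\exists z\in s(z\notin t)$ disjunct of $s\neq t$ instantiated at $s'$. This reduces the goal to sequents of the shape $s'\neq s'',\ r\neq s',\ r=s''$ on strictly smaller terms, which we close by the corecursion (transitivity and symmetry of $=$ also come out of it). The case $x\in r$ is symmetric.

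\emph{Main obstacle.} The delicate part is not constructing the preproof but verifying the ordinal bounds. We must check that every premise gets $\no(\text{premise})\#\xi$ strictly below $\no(\text{conclusion})$ in $H(\ttX_{s'})$, with template orders growing along $(\spadesuit)$. The concern is the corecursive atomic step: the ranks of $s'\neq s''$ must really drop. I would verify this using the clause $\rk(a=b)=\max(\rk a,\rk b,5)+4$ together with $\rk(s')<\rk(s)$, exactly as in \cite{Fernandez2024SetTotalRecKPell}.
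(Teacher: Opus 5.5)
Your plan follows the standard argument the paper cites (the paper's own proof is only a reference), transplanted to corecursion. However, two steps do not go through as written.

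\emph{The atomic case.} The $(\in)$-witness for $r\in t$ cannot be $s'$. Since $s'$ comes from unfolding $r\notin s$, neither $|s'|<|t|$ nor $s'\dotin t$ is available. You must instead:
\begin{enumerate}
    \item instantiate the disjunct $\exists z\in s\,(z\notin t)$ of $s\neq t$ at $s'$;
    \item unfold $s'\notin t$ by $(\notin)$ to obtain $s''$ with $|s''|<|t|$;
    \item only then apply $(\in)$ to $r\in t$ with witness $s''$.
\end{enumerate}
The resulting sequent $r\neq s',\ s'\neq s'',\ r=s''$ is not of atomic shape, and $r$ has not become smaller. So a corecursion on the atomic case does not close it. Transitivity is not a by-product here; it is the family the corecursion must run on. Concretely, $a\neq b,\ b\neq c,\ a=c$ unfolds to $a'\neq b',\ b'\neq c',\ a'=c'$ by these steps:
\begin{itemize}
    \item bounded $\forall$ on $a\subseteq c$ at $a'$;
    \item the first disjunct of $a\neq b$ at $a'$, then $(\notin)$ on $a'\notin b$ at $b'$;
    \item the first disjunct of $b\neq c$ at $b'$, then $(\notin)$ on $b'\notin c$ at $c'$;
    \item $(\in)$ on $a'\in c$ at $c'$;
    \item the same, symmetrically, for $c\subseteq a$.
\end{itemize}
The bound $\no$ drops because $a',b',c'$ have smaller rank than $a,b,c$.

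\emph{The induction hypothesis.} A single-variable hypothesis is too weak once $x$ occurs both as a quantifier bound and inside its scope, e.g.\ $A(x)\equiv\forall z\in x\,B(z,x)$. After unfolding you must derive $s\neq t,\ t'\neq s',\ \lnot B(s',s),\ B(t',t)$, where both arguments of $B$ change. The single-variable instances only give $\lnot B(s',s),B(t',s)$ and $\lnot B(t',s),B(t',t)$. Combining them needs a cut on $B(t',s)$, which $\Vdash$ (cut rank $0$) forbids. So ``reduces to $s\subseteq t$-type reasoning'' is not enough; you need the simultaneous form $s_1\neq t_1,\dots,s_n\neq t_n,\ \lnot A(\vec s),\ A(\vec t)$. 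Its atomic case $x_1\in x_2$ then produces the three-link chain $t_1=s_1=s_2'=t_2'$, which again cannot be composed from transitivity instances without cut.

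The clean repair is one corecursion over the whole family $\vec s\neq\vec t,\ \lnot A(\vec s),\ A(\vec t)$ (all $A$, either orientation of each $\neq$), in the style of the paper's proof of $H[\ttX]\Vdash\Gamma,A,\lnot A$:
\begin{itemize}
    \item The atomic case reduces in finitely many steps to the instance of $x_1=x_2$ at $(s_1,s_2')\mapsto(t_1,t_2')$. This instance has smaller $\no$ because $\rk(a=b)<\rk(a\in c)$ whenever $\rk(b)<\rk(c)$.
    \item The quantifier cases reduce to instances with the extended tuple.
\end{itemize}
No induction on the complexity of $A$ is then needed. (Minor: $r'\mathrel{\dot\notin}r,\ r'\dotin r$ is an instance of $H[\ttX]\Vdash\Gamma,A,\lnot A$, not of the basic-facts lemma.)
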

\begin{proof}
    See \cite[Lemma 5.7]{Fernandez2024SetTotalRecKPell} or \cite[Lemma 4.4]{RathjenCook2016ClassifyingProvTotalsetftnKPKPP}.
\end{proof}

\begin{lemma}[Set Induction]
    Let $A(x)$ be a formula and let 
    \begin{equation*}
        \Ind_A :\equiv \forall x\in \ttL(u)_\ttOmega \bigl[\bigl(\forall y\in x A(y)\bigr)\to A(x)\bigr].
    \end{equation*}
    Then we have $H[\ttX]\dststile{}{\omega^{\rk(\Ind_A)}}\Ind_A \to \forall x\in \ttL(u)_\ttOmega A(x)$.
\end{lemma}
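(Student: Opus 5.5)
The plan follows the standard set-induction embedding (Rathjen--Cook Lemma 4.5, Fern\'andez Lemma 5.8). The one change is that the transfinite induction on $|s|$ is replaced by a corecursive construction. The ill-foundedness of threads is then certified by the dilator bounds, not by an ordinal rank. We first use $(\lor)$ twice to reduce the goal to
\begin{equation*}
    H[\ttX]\Vdash \lnot\Ind_A,\ \forall x\in \ttL(u)_\ttOmega A(x),
\end{equation*}
with the additional $\omega^{\rk(\Ind_A)}$ slack kept in the ordinal annotation.

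The core claim is proved by coinduction on the term $s$ together with its template $\ttY\supseteq\ttX$:
\begin{equation*}
    H[\ttY]\dststile{}{\omega^{\rk(s)}\#\omega^{\rk(s)}}\lnot\Ind_A,\ A(s).
\end{equation*}
Its preproof is built as follows. Take the identity-type sequent $\lnot A(s),A(s)$ from the lemma $H[\ttX]\Vdash\Gamma,A,\lnot A$. For the antecedent $\forall y\in s A(y)$, apply $(\rmb\forall)$. Its premises range over pairs $(r,\ttY_r)$ satisfying $(\spadesuit)$, i.e.\ $H(\ttY_r)\vDash|r|<|s|$. Each premise $r\dot\notin s\lor A(r)$ is obtained by weakening the corecursively supplied derivation of $\lnot\Ind_A, A(r)$ over $[\ttY_r]$, then applying $(\lor)$. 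The two parts are combined by $(\land)$ into $(\forall y\in s A(y))\land\lnot A(s)$. An $(\rmb\exists)$ step with witness $s$ then yields $\lnot\Ind_A=\exists x\in\ttL(u)_\ttOmega[\ldots]$; this step is allowed because $|s|<\ttOmega$ and $\rk(s)$ is small. The final step is a single $(\rmb\forall)$ over all $s$ with $|s|<\ttOmega$, which gives $\forall x\in\ttL(u)_\ttOmega A(x)$.

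The ordinal bookkeeping copies the cited lemmas. Since $\rk(r)=\omega\cdot(1+|r|)<\rk(s)$, the premise for $r$ is bounded by $\omega^{\rk(r)}\#\omega^{\rk(r)}$ plus a finite correction, which stays below the bound for $s$. The rank of the displayed formulas is at most $\rk(\Ind_A)$. By functoriality of $\#$ and $\omega^{(\cdot)}$ (\autoref{Proposition: Functoriality of the internal addition and multiplication}) these inequalities can be checked inside each $H(\ttY)$. Every bound is an element of $H(\ttY)$, and every inequality is computed in $H(\ttY)$.

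The main obstacle is that the construction is no longer a legitimate induction on $|s|$. The bounds must therefore be verified locally: each inference must satisfy the required strict decrease $\alpha_r<\alpha$ in its own template order. This is subtle because the premise lives in the extended order $\ttY_r$ while the conclusion lives in $\ttY$, so we must check that the comparison $\rk(r)<\rk(s)$, transported along the inclusion $H(\ttY)\to H(\ttY_r)$, holds inside $H(\ttY_r)$. I expect this to follow from $H(\ttY_r)\vDash|r|<|s|$ and the monotonicity of $\xi\mapsto\omega\cdot(1+\xi)$ in $E$. After that, the final bound $\no(\Gamma)\#\omega^{\rk(\Ind_A)}$ is reached by the first item of \autoref{Lemma: Rathjen-Cook Lemma 3.15}.
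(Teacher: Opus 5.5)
Your proposal is correct and is essentially the paper's proof. The paper just cites Rathjen--Cook Lemma 4.5 and Fern\'andez Lemma 5.8, with the induction on $\rk(s)$ replaced by a corecursive construction that is certified by the bounds in $H(\ttY)$, and that is exactly your plan.

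One bookkeeping point: take the identity sequent $\lnot A(s),A(s)$ with empty side context and then weaken. This keeps its bound $\omega^{\rk(A(s))}\cdot 2$ below $\omega^{\rk(\Ind_A)}$, since $\rk(A(s))<\rk(\Ind_A)$. If you used $\lnot\Ind_A$ as the side context there, the bound would exceed your claimed $\no(\lnot\Ind_A,A(s))\#\omega^{\rk(s)}\#\omega^{\rk(s)}$.
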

\begin{proof}
    See \cite[Lemma 5.8]{Fernandez2024SetTotalRecKPell} or \cite[Lemma 4.5]{RathjenCook2016ClassifyingProvTotalsetftnKPKPP}.
\end{proof}

\begin{lemma}[Infinity]
    $H[\varnothing]\Vdash \exists x\in \ttL(u)_\ttOmega \bigl[\exists z\in x(z\in x)\land \forall y\in x \exists z\in x(y\in z)\bigr]$.
\end{lemma}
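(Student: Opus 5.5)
The plan is to exhibit a witness term for $x$ directly, namely $\ttL(u)_\omega$, and to verify the $\Delta_0$ matrix for it. The ordinal $\omega$ is an internal constant of $H(\varnothing)=E(F(\varnothing)+1+G(\varnothing))$ strictly below $\ttOmega$, so $\ttL(u)_\omega$ is a legitimate term with $|\ttL(u)_\omega|=\omega<\ttOmega$ and $\rk(\ttL(u)_\omega)=\omega\cdot(1+\omega)=\omega^2$, which lies below $\ttOmega$.

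Writing $C(x)$ for the matrix $\exists z\in x(z\in x)\land\forall y\in x\,\exists z\in x(y\in z)$, the conclusion follows by one application of $(\rmb\exists)$ with $t=\ttL(u)_\ttOmega$ and $s=\ttL(u)_\omega$, since $\ttL(u)_\omega\dotin\ttL(u)_\ttOmega$ is $\top$. It therefore suffices to derive $H[\varnothing]\Vdash C(\ttL(u)_\omega)$, and then check that the bound increases correctly.

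For the first conjunct, I take the witness $z=\ttL(u)_0$. The $(\rmb\exists)$ rule requires $|\ttL(u)_0|=0<\omega$ and $\rk(\ttL(u)_0)<\alpha$, and this reduces the task to $\ttL(u)_0\in\ttL(u)_\omega$. That membership is provided by \autoref{Lemma: Preproof exists for basic facts}(5), using $H(\varnothing)\vDash 0<\omega$.

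For the second conjunct, I apply $(\rmb\forall)$ over $\ttL(u)_\omega$. Each premise is indexed by a term $y$ and a template order $\ttX_y$ satisfying $(\spadesuit)$, so that $H(\ttX_y)\vDash|y|<\omega$. For each such premise I must prove $\exists z\in\ttL(u)_\omega(y\in z)$ in the template $[\ttX_y]$. The witness will be $z=\ttL(u)_{|y|+1}$. It satisfies $|y|+1<\omega$ because $\omega$ is additively principal in the epsilon or Veblen flower, using the internal addition and the fact that $\omega$ is the internal constant. The remaining membership $y\in\ttL(u)_{|y|+1}$ again comes from \autoref{Lemma: Preproof exists for basic facts}(5). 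Between these steps I insert $(\lor)$ and $(\land)$ inferences, together with the trivial $\dotin$-conjuncts, which are $\top$.

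The main obstacle is the ordinal bookkeeping in the $\Vdash$ notation. I need every height to stay below $\no(\text{conclusion})=\omega^{\rk(\cdot)}$. I also need the side condition $\rk(s)<\alpha$ in $(\rmb\exists)$ for the witnesses $\ttL(u)_{|y|+1}$ and $\ttL(u)_\omega$, and these conditions must hold uniformly in the new template orders $\ttX_y$. I will handle this by comparing ranks. The formula $y\in\ttL(u)_{|y|+1}$ has rank about $\omega\cdot(|y|+2)+1$, which is below the rank of $\exists z\in\ttL(u)_\omega(y\in z)$ because the latter is at least $\omega^2$. The conclusion has rank $\max(\ttOmega,\ldots)\ge\ttOmega$, so its $\no$ is $\omega^{\ttOmega}$ or larger, and every intermediate $\no$ and natural sum stays strictly below it. These inequalities hold internally in $H(\ttX_y)$ by the functoriality of $+$, $\cdot$, and $\#$ (\autoref{Proposition: Functoriality of the internal addition and multiplication}). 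Finally, the weakening part of \autoref{Lemma: Rathjen-Cook Lemma 3.15}(1) raises the heights to exactly $\no(\cdot)$ where that is needed.
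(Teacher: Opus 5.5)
Your proposal is correct and is essentially the argument the paper invokes by citing Rathjen--Cook's Lemma~4.6. The witness is $\ttL(u)_\omega$, which is legitimate because $\omega$ is a support-free constant of $H(\varnothing)$ below $\ttOmega$. The first conjunct is witnessed by $\ttL(u)_0$, each premise of the $(\rmb\forall)$ step is witnessed by $\ttL(u)_{|y|+1}$, and the memberships come from \autoref{Lemma: Preproof exists for basic facts}(5).

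One simplification: every ordinal term below $\omega$ is a finite numeral with empty support, so all templates $\ttX_y$ in $(\spadesuit)$ are just $\varnothing$. The uniformity in $\ttX_y$ that you set out to check is therefore automatic.
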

\begin{proof}
    We follow the proof of \cite[Lemma 4.6]{RathjenCook2016ClassifyingProvTotalsetftnKPKPP}.
\end{proof}

\begin{lemma}[Pairing]
    $H[\ttX]\Vdash \exists z\in \ttL(u)_\ttOmega (s\in z\land t\in z)$.
\end{lemma}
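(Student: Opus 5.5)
We follow the standard argument of \cite[Lemma 4.7]{RathjenCook2016ClassifyingProvTotalsetftnKPKPP}. We witness the existential quantifier by an explicit pair term. Let $\beta := \max(|s|,|t|)+1$, computed in $H(\ttX)$; since $|s|,|t|<\ttOmega$ and $\ttOmega$ is a limit term of $H(\ttX)=E(F(\ttX)+1+G(\ttX))$ (it is an $\varepsilon$-number or a Veblen fixed point), we have $\beta<\ttOmega$. Put
\begin{equation*}
    p := \{x\in \ttL(u)_\beta \mid x=s\lor x=t\},
\end{equation*}
which is a legitimate $\bfL(u)$-term because $\supp(s),\supp(t)<\beta$ and $x=s\lor x=t$ is $\Delta_0$. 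We then have $|p|=\beta<\ttOmega$, and $\rk(p)=\omega\cdot(1+\beta)$, which lies below $\ttOmega$ and below $\rk(\exists z\in\ttL(u)_\ttOmega(s\in z\land t\in z))$.

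The derivation is built bottom-up as follows. The last inference is $(\rmb\exists)$ with witness $p$; it needs $|p|<|\ttL(u)_\ttOmega|=\ttOmega$ and $\rk(p)$ below the ordinal bound, and both hold because the bound contains $\no$ of the conclusion, which is $\omega^{\max(\ttOmega,\cdot)+\cdots}$. Its premise is $p\dotin \ttL(u)_\ttOmega\land (s\in p\land t\in p)$. Since $p\dotin\ttL(u)_\ttOmega\equiv\top$, and $\top$ is provable as a true basic formula (or as $\varnothing\subseteq\varnothing$-style via \autoref{Lemma: Preproof exists for basic facts}), it remains to derive $s\in p$ and $t\in p$ and combine them by $(\land)$ twice. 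For $s\in p$ we apply $(\in)$ with witness $s$ itself; this is allowed because $|s|<\beta=|p|$ and $\rk(s)<\rk(p)$. The premise is
\begin{equation*}
    s\dotin p\land s=s \equiv (s=s\lor s=t)\land s=s,
\end{equation*}
and it follows from $H[\ttX]\Vdash s=s$ (\autoref{Lemma: Preproof exists for basic facts}(4)) by $(\lor)$ and $(\land)$. The case $t\in p$ is symmetric.

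All inferences used are finitary and none enlarges the template order, since every term involved has support inside $\ttX$. The main work is therefore the bookkeeping of the bounds. We must check that each premise bound $\no(\Gamma')\#\xi'$ is strictly below the conclusion bound. This reduces to showing that $\rk(s\in p)$, $\rk(t\in p)$ and $\rk(s=s)$ are all below $\ttOmega$, while $\no$ of the conclusion is at least $\omega^{\ttOmega}=\ttOmega$. Each of these ranks is a finite addition to $\max(\rk s,\rk t,\omega\cdot(1+\beta))<\ttOmega$, so after padding with the first item of \autoref{Lemma: Rathjen-Cook Lemma 3.15} everything fits below $\ttOmega$. The one point I expect to require care is confirming, functorially, that $\beta<\ttOmega$ and $\omega\cdot(1+\beta)<\ttOmega$ hold inside $H(\ttX)$. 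This should follow from the closure of $E$ under internal $+$ and $\cdot$ together with the fact that $\ttOmega$ is an $E$-principal term.
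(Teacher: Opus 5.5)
Your argument is correct, but it uses a different witness from the paper. The paper sets $\alpha=\max(|s|,|t|)$ and witnesses the existential by the whole level $\ttL(u)_{\alpha+1}$. Both $s\in\ttL(u)_{\alpha+1}$ and $t\in\ttL(u)_{\alpha+1}$ then come directly from \autoref{Lemma: Preproof exists for basic facts}(5), and one $(\land)$ followed by one $(\rmb\exists)$ finishes the proof. You instead use the separation term $p=\{x\in\ttL(u)_\beta\mid x=s\lor x=t\}$ at the same level $\beta=\alpha+1$. Item (5) then no longer applies, so you unfold $s\dotin p\equiv s=s\lor s=t$ and carry out an explicit $(\in)$ step fed by item (4). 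In effect you redo the proof of item (5) with a nontrivial defining formula in place of $\top$.

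This yields the exact pair. That is more than the $\KP$ pairing axiom asks for, since it only requires some $z$ containing $s$ and $t$, and it costs extra rank bookkeeping. Your bookkeeping does go through. Since $\rk(p)=\omega\cdot(1+\beta)$ is a limit above $\rk(s)$ and $\rk(t)$, you get $\rk(s\in p)=\rk(p)+1$. The premise rank $\max(\rk(s),\rk(t),5)+6$ lies strictly below this.

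Two small precisions. First, the side conditions of $(\in)$ and $(\rmb\exists)$ require the rank of the witness to lie below the derivation bound, not below $\rk(p)$. This still holds, because that bound is at least $\omega^{\rk(s\in p)}$. Second, the trivial conjunct $p\dotin\ttL(u)_\ttOmega\equiv\top$ is glossed over in exactly the same way in the paper's own derivation, so your treatment of it matches the paper's level of detail.
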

\begin{proof}
    We follow the proof of \cite[Lemma 4.8]{RathjenCook2016ClassifyingProvTotalsetftnKPKPP}.
    Let $\alpha = \max(|s|,|t|)$ computes in $H(\ttX)$. Then consider
    \begin{equation*}
        \begin{mathprooftree}
            \AxiomC{ \text{\autoref{Lemma: Preproof exists for basic facts} (5)} }
            \noLine
            \UnaryInfC{$\vdots$}
            \noLine
            \UnaryInfC{$H[\ttX] \Vdash s\in \ttL(u)_{\alpha+1}$}

            \AxiomC{ \text{\autoref{Lemma: Preproof exists for basic facts} (5)} }
            \noLine
            \UnaryInfC{$\vdots$}
            \noLine
            \UnaryInfC{$H[\ttX] \Vdash t\in \ttL(u)_{\alpha+1}$}

            \RightLabel{$(\land)$}
            \BinaryInfC{$H[\ttX] \Vdash s\in \ttL(u)_{\alpha+1} \land t\in \ttL(u)_{\alpha+1}$}
            \RightLabel{$(\rmb\exists)$}
            \UnaryInfC{$H[\ttX] \Vdash \exists z\in \ttL(u)_\ttOmega [s\in z\land t\in z]$}
        \end{mathprooftree}
        \qedhere 
    \end{equation*}
\end{proof}

\begin{lemma}[Union]
    $H[\ttX] \Vdash \exists z\in \ttL(u)_\ttOmega \forall y\in s \forall x\in y (x\in z)$.
\end{lemma}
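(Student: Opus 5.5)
I will follow the proof of the Union axiom in \cite[Lemma 4.7]{RathjenCook2016ClassifyingProvTotalsetftnKPKPP}, adapting it as in the Pairing lemma, by choosing an explicit witness. Let $\alpha = |s|$, computed in $H(\ttX)$. Since every element of $s$ and every element of an element of $s$ is represented by a term of level below $|s|$, the witness will be $z := \ttL(u)_\alpha$. We have $|\ttL(u)_\alpha| = \alpha < \ttOmega$ in $H(\ttX)$, so $(\rmb\exists)$ may introduce $\exists z\in\ttL(u)_\ttOmega$ with witness $\ttL(u)_\alpha$. Its side condition $\rk(\ttL(u)_\alpha)=\omega\cdot(1+\alpha)$ lies below the bound $\no(\Gamma)$ of the conclusion, because $\no$ of the conclusion is $\omega^{\rk}$ of a formula whose rank is at least $\ttOmega$. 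The task then reduces to deriving
\begin{equation*}
    H[\ttX]\Vdash \forall y\in s\, \forall x\in y\, (x\in \ttL(u)_\alpha),
\end{equation*}
with ordinal bound $\no$ of this formula, which is strictly below $\no$ of the target formula.

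For this, I would apply $(\rmb\forall)$ twice. First, for each $r$ and $\ttX_r$ satisfying $(\spadesuit)$ relative to $s$, so that $H(\ttX_r)\vDash |r|<|s|=\alpha$, I need $H[\ttX_r]\Vdash r\mathrel{\dot\notin}s \lor \forall x\in r\,(x\in\ttL(u)_\alpha)$. Next, for each $q$ and $\ttX_{r,q}\supseteq\ttX_r$ satisfying $(\spadesuit)$ relative to $r$, I need $q\mathrel{\dot\notin} r\lor q\in\ttL(u)_\alpha$. In that situation $H(\ttX_{r,q})\vDash |q|<|r|<\alpha$. Hence \autoref{Lemma: Preproof exists for basic facts}(5) gives $H[\ttX_{r,q}]\Vdash q\in\ttL(u)_\alpha$. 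I then weaken and apply $(\lor)$, monotonicity from \autoref{Lemma: Rathjen-Cook Lemma 3.15}(1), and $(\rmb\forall)$, then $(\lor)$ and $(\rmb\forall)$ again.

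The main obstacle is the ordinal bookkeeping across enlarged template orders. The bounds at the premises live in $H(\ttX_{r,q})$, while the conclusion's bound lives in $H(\ttX)$, and the rule $(\rmb\forall)$ compares $\alpha_s<\alpha$ in the larger order. I expect this to be harmless, since $\no$ of the premise formula is strictly smaller than $\no$ of the universally quantified formula by the rank clauses, uniformly and functorially in the template. I also need to check that the natural-sum padding by $\no(\Gamma)$ works as in the Pairing lemma. There is one degenerate case: if $s\in u$ or $|s|$ is minimal, then $\ttL(u)_\alpha$ may need replacing by $\ttL(u)_{\alpha+1}$, exactly as in the Pairing lemma. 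I would adopt $\alpha+1$ throughout to avoid case splits.
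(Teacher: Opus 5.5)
Your proposal is correct and is essentially the argument the paper defers to; the paper's proof just cites Rathjen--Cook's Lemma~4.8, which handles Pairing and Union together, not 4.7. That argument takes $\ttL(u)_{|s|}$ (or $\ttL(u)_{|s|+1}$) as the witness, derives $\forall y\in s\,\forall x\in y\,(x\in\ttL(u)_{|s|})$ by two $(\rmb\forall)$ inferences closed off by \autoref{Lemma: Preproof exists for basic facts}(5), and finishes with $(\rmb\exists)$, with every bound dominated because the conclusion has rank at least $\ttOmega$; the template bookkeeping you flag is indeed harmless, since $\rk$ and $\no$ commute with the embeddings $H(\ttX)\hookrightarrow H(\ttX_r)\hookrightarrow H(\ttX_{r,q})$ by functoriality.
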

\begin{proof}
    See \cite[Lemma 4.8]{RathjenCook2016ClassifyingProvTotalsetftnKPKPP}.
\end{proof}

\begin{lemma}[$\Delta_0$-Separation]
    If $A(x,y_0,\cdots,y_{n-1})$ is a $\Delta_0$-formula with all free variables expressed, and $s,t_0,\cdots,t_{n-1}\in \bfL(u)_\ttX$, then 
    \begin{equation*}
        H[\ttX]\Vdash \exists y\in \ttL(u)_\ttOmega \bigl[ \forall x\in y \bigl(x\in s\land A(x,t_0,\cdots,t_{n-1})\bigr) \land \forall x\in s \bigl(\lnot A(x,t_0,\cdots,t_{n-1})\lor x\in y\bigr)\bigr].
    \end{equation*}
\end{lemma}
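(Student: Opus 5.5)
Following \cite[Lemma 4.9]{RathjenCook2016ClassifyingProvTotalsetftnKPKPP}, the witness is the separation term itself. Let $\beta = |s|$ if $s$ is not an element of $u$, and otherwise take $\beta$ to be any element with $\max(|s|,|t_i|)\le\beta$. We form
\begin{equation*}
    c :\equiv \{x\in \ttL(u)_{\beta'} \mid x\in s\land A(x,t_0,\cdots,t_{n-1})\},
\end{equation*}
where $\beta' = \max(|s|,|t_0|,\cdots,|t_{n-1}|)+1$ is computed in $H(\ttX)$. Then $x\in s\land A(x,\vec t)$ is $\Delta_0$ and all parameters have support below $\beta'$, so $c$ is a legitimate term. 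Moreover $|c| = \beta'<\ttOmega$. The top inference will be $(\rmb\exists)$ with witness $c$ into $\ttL(u)_\ttOmega$. Its side conditions $|c|<\ttOmega$ and $\rk(c)=\omega\cdot(1+\beta')<$ the bound hold because the bound $\no(\cdot)$ of the conclusion is $\ge\omega^{\rk(\exists y\in\ttL(u)_\ttOmega\cdots)}\ge\omega^{\ttOmega}$. It therefore remains to derive, with $\Vdash$ bounds, the two conjuncts
\begin{equation*}
    \forall x\in c\bigl(x\in s\land A(x,\vec t)\bigr)\quad\text{and}\quad \forall x\in s\bigl(\lnot A(x,\vec t)\lor x\in c\bigr),
\end{equation*}
and join them by $(\land)$.

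For the first conjunct we apply $(\rmb\forall)$. For each $r$ and $\ttX_r$ satisfying $(\spadesuit)$, the premise is $r\mathrel{\dot\notin} c\lor (r\in s\land A(r,\vec t))$. By definition, $r\dotin c$ is literally $r\in s\land A(r,\vec t)$. Hence the premise has the shape $\lnot B,B$, and it is obtained from the lemma $H[\ttX_r]\Vdash \Gamma,B,\lnot B$ followed by $(\lor)$.

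For the second conjunct we again apply $(\rmb\forall)$, now over $r$ with $|r|<|s|$. We need $r\mathrel{\dot\notin}s,\lnot A(r,\vec t), r\in c$. To get this we use $(\in)$ on $r\in c$ with witness $r$ itself, which is allowed since $|r|<|s|\le\beta'-1<|c|$. The required premise is $r\dotin c\land r=r$, that is $(r\in s\land A(r,\vec t))\land r=r$. By $(\land)$ this splits into three pieces:
\begin{itemize}
    \item $r\in s$ together with $r\mathrel{\dot\notin} s$, which is \autoref{Lemma: Preproof exists for basic facts}(2);
    \item $A(r,\vec t)$ together with $\lnot A(r,\vec t)$, which is the tautology lemma;
    \item $r=r$, which is \autoref{Lemma: Preproof exists for basic facts}(4).
\end{itemize}
Each piece is weakened by \autoref{Lemma: Rathjen-Cook Lemma 3.15}(1). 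If $s\in u$ is basic, the rule $(\in)$ is unavailable for $r\in s$, but $(\notin)$ and $(\in)$ on $c$ still apply. In that case the atomic piece $r\in s$ versus $r\notin s$ is an axiom or a tautology instance.

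All constructions are corecursive, in the sense of the earlier embedding lemmas. Passing to extended template orders $\ttX_r$ only requires that the terms $s,\vec t,c$ be transported along the inclusion $\ttX\subseteq\ttX_r$. This is exactly the situation of the Substitution lemma, or simply reading the terms in $H(\ttX_r)$.

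The main obstacle is the ordinal bookkeeping. We must check that every premise bound is strictly below the bound of the corresponding conclusion in $H(\ttX)$ under the $\no$/natural-sum convention. The key inequality is
\begin{equation*}
    \rk(r\in c)=\max(\rk(r)+6,\rk(c)+1),
\end{equation*}
which has to dominate the ranks of $r\dotin c\land r=r$, because $\rk(r)<\rk(s)\le\omega\cdot(1+\beta'-1)$. I would verify these inequalities as in the $r\in t$ case of the tautology lemma, using the functorial internal arithmetic of $E$. The only delicate point is that these inequalities must hold uniformly in $H(\ttX_r)$, not only in $H(\ttX)$; this follows from functoriality of $\rk$.
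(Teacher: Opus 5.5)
Your proposal is correct and is essentially the argument the paper defers to, namely \cite[Lemma 4.7]{RathjenCook2016ClassifyingProvTotalsetftnKPKPP} and \cite[Lemma 5.9]{Fernandez2024SetTotalRecKPell}. You take the separation term $\{x\in \ttL(u)_{\beta'}\mid x\in s\land A(x,\vec t)\}$ as the $(\rmb\exists)$-witness, derive the first conjunct from tautologies, and derive the second via $(\in)$ with $r$ as its own witness, building everything corecursively and checking the ranks as you indicate. Two cosmetic slips: Separation is Lemma 4.7 in Rathjen--Cook, not 4.9 (that one is Collection), and the auxiliary $\beta$ you introduce first is never used.
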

\begin{proof}
    See \cite[Lemma 5.9]{Fernandez2024SetTotalRecKPell} or \cite[Lemma 4.7]{RathjenCook2016ClassifyingProvTotalsetftnKPKPP}.
\end{proof}

\begin{lemma}[$\Delta_0$-Collection]
    Suppose that $A(x,y)$ is a $\Delta_0$-formula in the language of set theory. Then
    \begin{equation*}
        H[\ttX]\Vdash [\forall x\in s \exists y A(x,y)] \to [\exists z \forall x\in s\exists y\in z A(x,y)].
    \end{equation*}
\end{lemma}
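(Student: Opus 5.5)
The plan is to follow the standard proof of $\Delta_0$-Collection in the Rathjen--Cook setting (\cite[Lemma 4.9]{RathjenCook2016ClassifyingProvTotalsetftnKPKPP}, \cite[Lemma 5.10]{Fernandez2024SetTotalRecKPell}). The key device is the (Refl) rule, together with the lemma $H[\ttX]\Vdash \Gamma,A,\lnot A$ for identity. Unbounded quantifiers are read as bounded by $\ttL(u)_\ttOmega$. Write $C :\equiv \forall x\in s\,\exists y\in\ttL(u)_\ttOmega A(x,y)$. Then $C$ is a $\Sigma$-formula, and $C^z \equiv \forall x\in s\,\exists y\in z\,A(x,y)$. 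The goal is therefore
\begin{equation*}
    H[\ttX]\Vdash \lnot C,\ \exists z\in\ttL(u)_\ttOmega\, C^z.
\end{equation*}

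First, by the identity lemma, $H[\ttX]\dststile{}{}\lnot C, C$. Since $C^{\ttL(u)_\ttOmega}$ is literally $C$, one application of (Refl) gives $\lnot C, \exists z\in\ttL(u)_\ttOmega C^z$. Two side conditions must be checked. The premise ordinal must lie below the conclusion ordinal, and $\ttOmega$ must lie below the conclusion ordinal. The premise ordinal is $\no(\lnot C)\#\no(C)$. Since $\rk(C)\ge\ttOmega$ and the rank of $\exists z\in\ttL(u)_\ttOmega C^z$ is $\max(\ttOmega,\rk(C^0)+1)$, I expect $\no(\lnot C)\#\no(\exists z\in\ttL(u)_\ttOmega C^z)$ to exceed both quantities. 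If the raw $\no$-value is not large enough, I will add a small slack $\xi$ (e.g. $\xi=1$ or $\omega^{\ttOmega}$) through the $\dststile{}{\xi}$ notation, as the paper does for Set Induction. Finally, the implication $C\to\exists z\,C^z$ unfolds to $\lnot C\lor\exists z\,C^z$. Two $(\lor)$ steps, with the internal natural-sum ordinal arithmetic of the epsilon flower, produce the displayed formula. Here \autoref{Lemma: Rathjen-Cook Lemma 3.15}(1) handles monotonicity of bounds.

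The main obstacle is the rank bookkeeping. I must make sure that $\rk(C^0)+1$ and the comparisons with $\ttOmega$ come out correctly in $H(\ttX)=E(F(\ttX)+1+G(\ttX))$. One subtlety is that $s$ is a term whose rank is below $\ttOmega$, while $\exists y\in\ttL(u)_\ttOmega$ has rank at least $\ttOmega$. So $\rk(C)=\max(\rk(s),\rk(\exists y\,A)+2)$, which is of the form $\ttOmega+k$ for some finite $k$. All comparisons then reduce to functorial facts about $\omega^{\ttOmega+k}$ in the flower, and these hold uniformly in $\ttX$. No coinduction is needed beyond the one already inside the identity lemma, and the construction uses no new template variables, so the preproof is functorial automatically.
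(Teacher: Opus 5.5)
Your route is the same as the standard argument the paper cites (Rathjen--Cook, Lemma 4.9). You apply the identity lemma to $\lnot C, C$, then one (Refl), then two $(\lor)$ inferences. However, your ordinal bookkeeping at the (Refl) step is wrong, and the remedy you propose does not repair it.

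Since $\rk(C^0)<\ttOmega$, the rank clause for $\exists z\in\ttL(u)_\ttOmega$ gives $\rk(\exists z\in\ttL(u)_\ttOmega C^z)=\ttOmega$. By contrast, $\rk(C)=\rk(\lnot C)=\ttOmega+2$, because the inner $\exists y\in\ttL(u)_\ttOmega$ already has rank $\ttOmega$. Hence $\no(\lnot C)\#\no(\exists z\in\ttL(u)_\ttOmega C^z)=\omega^{\ttOmega+2}+\omega^{\ttOmega}$ is \emph{smaller} than the premise bound $\no(\lnot C)\#\no(C)=\omega^{\ttOmega+2}\cdot 2$. Adding a slack of $1$ or $\omega^{\ttOmega}$ does not change this. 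The reflected formula simply has lower rank than the reflecting one, so the (Refl) conclusion cannot be put in $\Vdash$-form with a small slack.

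The fix is not to force the intermediate sequent into $\Vdash$-form. Give the (Refl) conclusion the explicit bound $\omega^{\ttOmega+2}\cdot 2+1$, which exceeds both the premise bound and $\ttOmega$. The two $(\lor)$ steps then give the bound $\omega^{\ttOmega+2}\cdot 2+3$ for $\lnot C\lor\exists z\in\ttL(u)_\ttOmega C^z$. This disjunction has rank $\ttOmega+3$, so its $\no$-value is $\omega^{\ttOmega+3}$, which is greater than $\omega^{\ttOmega+2}\cdot 2+3$. \autoref{Lemma: Rathjen-Cook Lemma 3.15}(1) then yields the lemma exactly as stated, with no slack. You should not weaken the statement by adding slack as in Set Induction; it is not needed.
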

\begin{proof}
    See \cite[Lemma 5.16]{Fernandez2024SetTotalRecKPell} or \cite[Lemma 4.9]{RathjenCook2016ClassifyingProvTotalsetftnKPKPP}.
\end{proof}

Hence, we get the following:
\begin{theorem}
    Suppose that $\Gamma(x_0,\cdots,x_{n-1})$ is a finite set of formulas with all free variables displayed such that $\KP\vdash \bigvee \Gamma(x_0,\cdots,x_{n-1})$. 
    Then there is $m<\omega$ such that for every template order $\ttX$ and terms $s_0,\cdots,s_{n-1}\in \bfL(u)_{H(\ttX)}$, we have $H[\ttX]\sststile{\ttOmega+m}{\ttOmega\cdot \omega^m} \Gamma(s_0,\cdots,s_{n-1})$.
\end{theorem}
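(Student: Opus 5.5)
We argue by induction on a fixed Tait-style derivation of $\bigvee\Gamma(\vec x)$ in $\KP$. We formulate $\KP$ so that every axiom instance becomes an axiomatic sentence after relativizing unbounded quantifiers to $\ttL(u)_\ttOmega$, and so that the only non-logical rules are cuts. The induction hypothesis is the following uniform statement. For each sequent $\Delta(\vec x)$ in the derivation of height $k$, there is $m_\Delta<\omega$ such that for every template order $\ttX$ and terms $\vec s\in\bfL(u)_{H(\ttX)}$,
\[
H[\ttX]\dststile{\ttOmega+m_\Delta}{\ttOmega\cdot\omega^{k}}\Delta(\vec s).
\]
Here $m_\Delta$ exceeds the relativized ranks of all cut formulas above $\Delta$. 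Those ranks have the form $\ttOmega+j$ or lie below $\ttOmega$, since the rank clause for $\ttL(u)_\ttOmega$-quantifiers gives $\max(\ttOmega,\cdot)+1$.

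The theorem then follows. We have $\no(\Gamma)\#\ttOmega\cdot\omega^k\le\ttOmega\cdot\omega^{m}$ for $m$ larger than $k$ and the finite exponents occurring in $\no(\Gamma)$. We take this $m$ also at least $m_\Gamma$ and weaken using \autoref{Lemma: Rathjen-Cook Lemma 3.15}(1). All these comparisons in $E(F+1+G)(\ttX)$ are made through the functorial internal arithmetic of \autoref{Proposition: Functoriality of the internal addition and multiplication}, so they hold uniformly in $\ttX$.

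\textbf{Leaves.} Axiomatic leaves are handled by the lemmas of \autoref{Section: Embedding}. Logical axioms $A,\lnot A$ and the equality axioms $s\neq t,\lnot A(s),A(t)$ have their own lemmas. Extensionality follows from \autoref{Lemma: Preproof exists for basic facts}. For Pairing, Union, Infinity, $\Delta_0$-Separation, Set Induction and $\Delta_0$-Collection, the corresponding lemmas give a derivation of the instance with parameters $\vec s$. The outer $\forall\vec a\in\ttL(u)_\ttOmega$ is then introduced by $(\rmb\forall)$. Its premises range over pairs $(\ttX_s,s)$ satisfying $(\spadesuit)$, and all of them receive a bound below $\ttOmega\cdot\omega$ that is uniform in $\ttX_s$. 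This uniformity is exactly why the statement has to quantify over every template order.

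\textbf{Inference steps.} The propositional rules and the bounded-quantifier rules translate directly to $(\land)$, $(\lor)$, $(\rmb\exists)$ and $(\rmb\forall)$. For $(\rmb\exists)$ we need the side condition $\rk(s)<$ bound; it holds because $\rk(s)=\omega\cdot(1+|s|)<\ttOmega\le\ttOmega\cdot\omega^k$. An unbounded $\forall$ becomes $(\rmb\forall)$ over $\ttL(u)_\ttOmega$. Its premises are instances at extended templates $\ttX_s$, which the uniform hypothesis supplies. An unbounded $\exists$ with witness term $s$ becomes $(\rmb\exists)$ with $|s|<\ttOmega$. A cut on $A$ becomes a cut on $A^{\ttL(u)_\ttOmega}$, which is legitimate once $m_\Gamma$ is chosen above its rank. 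Substituting terms $\vec s$ for free variables commutes with all of these rules. Raising the bound from $\ttOmega\cdot\omega^k$ to $\ttOmega\cdot\omega^{k+1}$ absorbs the natural-sum bookkeeping with $\no(\cdot)$.

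\textbf{Main obstacle.} The hardest step is $\Delta_0$-Collection, which is where $\mathrm{(Refl)}$ enters. Following \cite[Lemma 4.9]{RathjenCook2016ClassifyingProvTotalsetftnKPKPP}, we derive $\lnot\forall x\in s\exists y\in\ttL(u)_\ttOmega A,\ \forall x\in s\exists y\in\ttL(u)_\ttOmega A$ from the $A,\lnot A$ lemma. We then apply $\mathrm{(Refl)}$ with the $\Sigma$-formula $\forall x\in s\exists y A$, which yields $\exists z\in\ttL(u)_\ttOmega\forall x\in s\exists y\in z\,A$. This needs the bound to exceed $\ttOmega$, and that is where the factor $\ttOmega$ in $\ttOmega\cdot\omega^m$ comes from. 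The delicate point is that the derivations are constructed coinductively rather than by rank induction. So we must check that each cited lemma's coinductively built preproof satisfies the bound at every node, including premises at extended template orders. We also need the comparisons $\no(\cdot)\#\cdots<\ttOmega\cdot\omega^m$ to hold for all $\ttX$ simultaneously, which follows from functoriality.
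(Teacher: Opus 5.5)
Your overall route is the standard Rathjen--Cook embedding that the paper invokes when it writes ``Hence, we get the following''. It runs by induction on a fixed finite $\KP$-derivation, uniformly in the template order and the substituted terms, so that the premises of $(\rmb\forall)$ at extended templates $\ttX_s$ are available. The lemmas of \autoref{Section: Embedding} handle the leaves, and $(\mathrm{Refl})$ enters only through $\Delta_0$-Collection. Two steps, however, fail as you state them.

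\textbf{The ordinal invariant is not preserved by (Cut).} Your invariant is that a sequent at height $k$ gets bound $\no(\Delta)\#\ttOmega\cdot\omega^{k}$. Suppose the cut formula $A$ has rank $\ttOmega+j$. Then both premises carry $\no(A)=\omega^{\ttOmega+j}=\ttOmega\cdot\omega^{j}$. But $\ttOmega\cdot\omega^{j}\#\ttOmega\cdot\omega^{k}<\ttOmega\cdot\omega^{k+1}$ holds only when $j\le k$. Cutting a long axiom sentence, say a Collection instance, against a short derivation of its negation already breaks this. Relatedly, your claim that the premises of the outer $(\rmb\forall)$ are bounded below $\ttOmega\cdot\omega$ is false: for Collection they already have $\no$-value $\omega^{\ttOmega+j}$ with $j\ge 1$.

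The repair is routine. Fix $c<\omega$ above every $j$ such that some formula of the derivation has rank $\ttOmega+j$ after substitution. There are finitely many such $j$, and they do not depend on $\ttX$ or $\vec s$, because terms of level $<\ttOmega$ only affect ranks below $\ttOmega$. Then use $\ttOmega\cdot\omega^{c+k}$ as the bound.

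\textbf{The bounded-quantifier rules of $\KP$ do not ``translate directly''.}

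\emph{The rule for $\exists x\in b$.} Its premise is $\Gamma, r\in b\land A(r)$ with an arbitrary witness. So the induction hypothesis gives $\Gamma(\vec s), r\in t\land A(r)$ for an arbitrary term $r$. But $(\rmb\exists)$ requires $|r|<|t|$, and it uses the conjunct $r\dotin t$ rather than $r\in t$. You need an auxiliary derivation of $r\notin t,\lnot A(r),\exists x\in t\,A(x)$, built as follows.
\begin{enumerate}
\item Apply $(\notin)$ to $r\notin t$.
\item For each $s$ with $|s|<|t|$ at $\ttX_s$, combine the equality lemma $r\neq s,\lnot A(r),A(s)$ with $s\mathrel{\dot\notin}t, s\dotin t$ from the $A,\lnot A$ lemma, using $(\land)$, then $(\rmb\exists)$, then $(\lor)$.
\item Finally cut on $r\in t\land A(r)$.
\end{enumerate}

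\emph{The rule for $\forall x\in b$.} Here the hypothesis yields $\Gamma(\vec s), s\notin t\lor A(s)$. To reach the premise $s\mathrel{\dot\notin}t\lor A(s)$ that $(\rmb\forall)$ requires, invert the disjunction. Then cut against $s\mathrel{\dot\notin}t, s\in t$ from \autoref{Lemma: Preproof exists for basic facts}(2), which applies since $|s|<|t|$ by $(\spadesuit)$.

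These extra cuts have rank below $\ttOmega+m$, and their cost is absorbed by the corrected exponent. With both repairs your argument goes through.
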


\section{Impredicative collapsing theorem}

Now, let us perform the functorial cut elimination. Similar to the impredicative cut elimination in the operator-controlled derivation, we only consider the case when the last sequent consists only of $\Sigma^{\ttL(u)_\ttOmega}\cup \Pi^{\ttL(u)_\ttOmega}$-formulas. Unlike the operator-controlled derivation system, where we can apply the inductive proof over the rank of a sequent, we must rely on the coinduction on a preproof. It becomes problematic when we eliminate a cut of rank $\ttOmega$ for the following reason: In a traditional operator-based derivation system, we start with
\begin{equation*}
    \calH_\eta \sststile{\ttOmega+1}{\alpha_0} \Gamma,A\text{ and }\calH_\eta \sststile{\ttOmega+1}{\alpha_0} \Gamma,\lnot A
\end{equation*}
for a $\Sigma$-formula $A$. We first apply the (co)inductive hypothesis to $\calH_\eta \sststile{\ttOmega+1}{\alpha_0} \Gamma,A$ to get $\calH_{\hat{\alpha}_0} \sststile{\psi\hat{\alpha}_0}{\psi\hat\alpha_0} \Gamma,A$ for $\hat\alpha_0 = \eta+\omega^{\ttOmega+\alpha_0}$. Then we apply the Boundedness theorem to get $\calH_{\hat{\alpha}_0} \sststile{\psi\hat{\alpha}_0}{\psi\hat\alpha_0} \Gamma,A^{L_{\psi\hat{\alpha}_0}}$.
In $\calH_\eta \sststile{\ttOmega+1}{\alpha_0}\Gamma,\lnot A$ side, $\lnot A$ is a $\Pi$-formula and $\eta<\hat{\alpha}_0$. Hence, we have $\calH_{\hat\alpha_0} \sststile{\ttOmega+1}{\alpha_0}\Gamma,\lnot A^{L_{\psi\hat\alpha_0}}$ by Persistence. Then we apply the inductive hypothesis on $\alpha_0$ to collapse the proof.

This strategy is not applicable for two reasons: First, in our current setting, there are no $\psi$-numbers in $H$. We will get $\psi$-numbers just after the collapsing, but restricting $\lnot A$ to $(\lnot A)^{L_{\psi \hat{\alpha}_0}}$ must happen before collapsing. Second, collapsing $\calH_{\hat\alpha_0} \sststile{\ttOmega+1}{\alpha_0}\Gamma,\lnot A^{L_{\psi\hat\alpha_0}}$ relies on the induction on $\alpha_0$. However, we cannot use sequent rank as a certifier for the (co)induction. We have only used coinductive reasoning over a preproof. To resolve both issues, we do both the persistence argument and the collapsing simultaneously. Moreover, the functorial nature of the proof system also demands a collapsing step to do substitution. This forces sequents to allow $\Pi^{\ttL(u)_\ttOmega}$-formulas (unlike the operator-controlled derivation systems, where the collapsing happens only for sequents with $\Sigma$-formulas only), and handling the $(\rmb\forall)$ case becomes complicated.

\begin{theorem}[Functorial Collapsing]
    Suppose that $H = E(F+1+G)$ for $E=\varepsilon$ or $\varphi_\lambda$ for a non-empty well-order $\lambda$, a \emph{flower} $F$, and $\Gamma$ be a sequent of $(\forall \Sigma)^{\ttL(u)_{\ttOmega}}$-formulas in the language of $\RS_H$. Suppose we are given
    \begin{enumerate}
        \item  A preproof of $H[\ttX] \sststile{\ttOmega+1}{\alpha} \Gamma$, and
        \item An order-preserving map $f\colon \ttX\to \ttY + \psi_\Omega H(\ttY+\Omega)$.
    \end{enumerate}
    Now, let us fix $\eta\in H(\ttY +\psi_\Omega H(\ttY+\Omega))$ such that $\hat\sfK(\eta)<\eta$ and
    \begin{equation*}
        \eta > \max\bigl\{\psi^{-1}\bigl(f(v)\bigr) \bigm| v\in \ttX \land f(v) \in \psi_\Omega H(\ttY+\Omega)\bigr\}.
    \end{equation*}
    If we define
    \begin{equation*}
        \hat{\alpha}[\eta] = \eta + \omega^{\ttOmega + H(f)(\alpha)} \in H(\ttY + \psi_\Omega H(\ttY+\Omega)),
    \end{equation*}
    then we have
    \begin{equation*}
        H\bigl(\Id+\psi_\Omega H(\Id+\Omega)\bigr)[\ttY] \sststile{\psi(\hat{\alpha}[\eta])}{\psi(\hat{\alpha}[\eta])} \bfL(u)_{H(f)}[\Gamma]^\eta,
    \end{equation*}
    where $A^\eta$ is the formula obtained from $A$ by replacing every occurrence of $\forall x\in \ttL(u)_\ttOmega$ to $\forall x\in \ttL(u)_{\psi(\eta)}$.
\end{theorem}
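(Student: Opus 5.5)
We argue by coinduction on the given preproof $\pi$ of $H[\ttX]\sststile{\ttOmega+1}{\alpha}\Gamma$. The coinductive statement is the theorem itself, quantified over all $\ttX$, $f$, and admissible $\eta$; the corecursive call is made on immediate subproofs with suitably updated $f$ and $\eta$. Throughout we write $H' = H(\Id+\psi_\Omega H(\Id+\Omega))$ and $f^*=H(f)$. Before the case analysis we record two monotonicity facts.

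\textbf{Fact (a).} If $\alpha_0<\alpha$, then $\hat\alpha_0[\eta]<\hat\alpha[\eta]$, and hence $\psi(\hat\alpha_0[\eta])<\psi(\hat\alpha[\eta])$. This requires checking $\hat\sfK(\hat\alpha[\eta])<\hat\alpha[\eta]$, i.e.\ that $\psi(\hat\alpha[\eta])\in\psi(D)$. This follows from $\hat\sfK(\eta)<\eta$ together with the assumed lower bound on $\eta$: every $\psi$-parameter coming from $\ran f$ has $\psi^{-1}$-value below $\eta\le\hat\alpha[\eta]$. By the Proposition on $\max\hat\sfK^D$, this is exactly the required condition.

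\textbf{Fact (b).} Ranks of subformulas of $\Gamma^\eta$ lie below $\psi(\hat\alpha[\eta])$, since all set terms have $|\cdot|<\psi(\eta)$ after the restriction.

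\textbf{Case analysis by the last rule of $\pi$.}

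Axioms, $(\land)$, $(\lor)$, $(\in)$, $(\notin)$, and bounded $(\rmb\exists)$ with $|t|\ne\ttOmega$ are handled uniformly: we apply the coinductive hypothesis to the premises with the same $\eta$ and re-apply the rule. The side condition $\rk(s)<\alpha$ transports to $\rk(f^*(s))<\psi(\hat\alpha[\eta])$ because $\psi(\hat\alpha[\eta])$ is an epsilon-like number above all parameters.

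For $(\rmb\exists)$ into $\ttL(u)_\ttOmega$ (a $\Sigma$ principal formula), the witness $s$ has $|f^*(s)|$ built from $\ttY$ and $\psi$-terms with arguments below $\eta$. Hence $|f^*(s)|<\psi(\eta)$, so the quantifier $\exists x\in\ttL(u)_{\psi(\eta)}$ can be introduced.

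For (Refl), the premise is $A^{\ttL(u)_\ttOmega}$. After collapsing we obtain $A^{\ttL(u)_{\psi(\hat\alpha_0[\eta])}}$, and the witness $\ttL(u)_{\psi(\hat\alpha_0[\eta])}$ has level below $\psi(\hat\alpha[\eta])$, so $(\rmb\exists)$ applies.

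The $(\rmb\forall)$ case, with the principal formula a $\forall x\in\ttL(u)_\ttOmega$ from a $\Pi$-part, is handled in the style of the Substitution Lemma. For each new premise $r$ with $|r|<\psi(\eta)$, we use the matching $s(r)$, the extended template $\ttX_{s(r)}$, and an extended map $f_r\supseteq f$ into $\ttY_r+\psi_\Omega H(\ttY_r+\Omega)$. The required condition on $\eta$ for $f_r$ holds because $|r|<\psi(\eta)$ forces the new $\psi$-parameters to have $\psi^{-1}$-value below $\eta$, using the characterization $\supp<x\iff\sigma<\varepsilon_x$ and $\psi$-ordering. Premises with $|r|\ge\psi(\eta)$ are simply dropped, exactly as in the Persistence Lemma.

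\textbf{Cut case.} Cuts of rank $<\ttOmega$ are preserved. The main case, and the main obstacle, is a cut on a $\Sigma$-formula $A$ of rank $\ttOmega$ with premises $\Gamma,A$ (bound $\alpha_0$) and $\Gamma,\lnot A$ (bound $\alpha_1$). We proceed in three steps.

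1. Apply the coinductive hypothesis to $\Gamma,A$ with $\eta$. This gives $\Gamma^\eta,A^{\ttL(u)_{\psi(\eta_1)}}$ with $\eta_1:=\hat\alpha_0[\eta]$; after collapsing, $A$ is bounded automatically, playing the role of the Bounding lemma.

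2. Apply the coinductive hypothesis to $\Gamma,\lnot A$, where $\lnot A\in\Pi$ is allowed by the $(\forall\Sigma)$ format, with the new parameter $\eta_1$. The hypotheses on $\eta_1$ are checked using Fact (a). This yields $\Gamma^{\eta_1},\lnot A^{\eta_1}$ at bound $\psi(\hat\alpha_1[\eta_1])$.

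3. Reconcile $\Gamma^{\eta_1}$ with $\Gamma^\eta$ via Persistence, since $\psi(\eta)<\psi(\eta_1)$ on $\Pi$-parts. Then cut on $A^{\psi(\eta_1)}$, whose rank is $<\psi(\hat\alpha[\eta])$.

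This simultaneous persistence-plus-collapse is exactly why $\eta$ is a parameter. The remaining check is the ordinal inequality $\hat\alpha_1[\hat\alpha_0[\eta]]<\hat\alpha[\eta]$, i.e.\ $\eta+\omega^{\ttOmega+\alpha_0}+\omega^{\ttOmega+\alpha_1}<\eta+\omega^{\ttOmega+\alpha}$. We verify it using the functorial additive structure of $E$.

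Finally, since we only build preproofs, well-foundedness of the resulting tree is certified by the strictly decreasing bounds $\psi(\cdot)$ in $H'(\ttY)$. This requires $\psi$ to yield a dilator, which is the point where the stronger metatheory mentioned in the preliminaries is used.
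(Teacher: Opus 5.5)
Your architecture matches the paper's: coinduction on the preproof, Substitution-Lemma re-indexing ($r\mapsto s(r)$, $f_r\supseteq f$) for $(\rmb\forall)$, chopping to $|r|<\psi(\eta)$ when the bound is $\ttL(u)_\ttOmega$, and for a rank-$\ttOmega$ cut, collapsing the $\Sigma$-side with $\eta$ and the $\Pi$-side with $\eta_1=\hat\alpha_0[\eta]$, followed by Persistence. Fact (a) and the final ordinal inequality are fine. The first problem is that you never use the Bounding Lemma and claim collapsing bounds $A$ ``automatically''. It does not: $(\cdot)^\eta$ only replaces $\forall x\in\ttL(u)_\ttOmega$. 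So the coinductive hypothesis applied to $\Gamma,A$ with $A\in\Sigma$ returns $\bfL(u)_{H(f)}(A)$ with its existential quantifiers over $\ttL(u)_\ttOmega$ intact. The paper gets $\bfL(u)_{H(f)}(A)^{\ttL(u)_{\psi(\hat\alpha_0[\eta])}}$ from the Bounding Lemma, which applies because the collapsed height $\psi(\hat\alpha_0[\eta])$ lies below $\ttOmega$; the same step is needed in the (Refl) case. Without it, your step-3 cut is between $\bfL(u)_{H(f)}(A)$ and $\bfL(u)_{H(f)}(\lnot A)^{\ttL(u)_{\psi(\eta_1)}}$, which are not complementary. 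Relatedly, your $(\rmb\exists)$ case for $t=\ttL(u)_\ttOmega$ proves the wrong formula. The conclusion keeps $\exists x\in\ttL(u)_\ttOmega$, and one simply re-applies $(\rmb\exists)$ with witness $\bfL(u)_{H(f)}(s)$. If you instead meant to restrict existentials to $\psi(\eta)$ as well, then (Refl) and the rank-$\ttOmega$ cut break, since the only available witness is $\ttL(u)_{\psi(\hat\alpha_0[\eta])}$ and $\psi(\hat\alpha_0[\eta])>\psi(\eta)$. For the same reason, Fact (b) is false as stated: a subformula $\exists x\in\ttL(u)_\ttOmega B$ has rank $\ge\ttOmega$. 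What you actually need is only that the $H(f)$-images of witnesses and of cut formulas of rank $<\ttOmega$ have rank below $\psi(\eta)$.

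Second, the bounded cases are not uniform. You list $(\notin)$ among the rules where one applies the coinductive hypothesis ``to the premises with the same $\eta$ and re-applies the rule'', and you do not treat $(\rmb\forall)$ with $t\neq\ttL(u)_\ttOmega$ at all. Both rules have $(\spadesuit)$-premises over enlarged templates. The target premises, indexed by all $(r,\ttY_r)$ with $|r|<|\bfL(u)_{H(f)}(t)|$, must be matched with $s(r)$ and $f_r$ as in the Substitution Lemma. Since no premise may be dropped here, you must show that every $f_r$ satisfies the hypothesis on $\eta$, i.e.\ that $r$ introduces no $\psi$-parameter $\ge\psi(\eta)$. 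This is exactly where the paper uses that $F$ is a flower. Both $|r|$ and $H(f)(|t|)$ are below $\ttOmega$, so they live in the flower $\varepsilon\circ F$, where comparison is governed by maximal supports; hence $\max\ran f_r\le\max\ran f$. Your proposal never invokes the flower hypothesis. You could repair this along your own lines by first proving $|\bfL(u)_{H(f)}(t)|<\psi(\eta)$ for every bounding term $t\neq\ttL(u)_\ttOmega$ with parameters in $\ran f$, as the paper records at the outset. Then your unbounded-case argument applies with nothing chopped. That reduction must be made explicitly, and it rests on the same support-governed comparison in $\varepsilon\circ F$.
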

\begin{proof}
    Before beginning the main argument, we first prove $\psi(\hat{\alpha}[\eta]) \in \psi_\Omega H(\ttY+\Omega)$. That is, we prove  $\hat\sfK(\hat{\alpha}[\eta]) < \hat{\alpha}[\eta]$. In fact, we have $\hat\sfK(\hat{\alpha}[\eta]) < \eta$: Observe that
    \begin{equation*}
        \hat\sfK(\hat{\alpha}[\eta]) = 
        \hat\sfK(\eta) \cup \hat\sfK\bigl(H(f)(\alpha)\bigr)
    \end{equation*}
    and $\hat\sfK(\eta)<\eta$. Then we observe that
    \begin{equation*}
        \hat\sfK\bigl(H(f)(\alpha)\bigr) \le \max\bigl\{\psi^{-1}\bigl(f(v)\bigr) \bigm| v\in \ttX \land f(v) \in \psi_\Omega H(\ttY+\Omega)\bigr\}.
    \end{equation*}
    since the only $\psi$-term occurring in $H(f)(\alpha)$ takes the form $f(v)$ for some $v\in \ttX$.
    We also note that, if $t$ is a set term other than $\ttL(u)_\ttOmega$ occurring in $\Gamma$, then $|H(f)(t)|<\psi(\eta)$: This follows from that every $\psi$-term occurring in $|H(f)(t)|$ is less than or equal to
    \begin{equation*}
        \max\bigl\{f(v)\bigm| v\in \ttX \land f(v) \in \psi_\Omega H(\ttY+\Omega)\bigr\} < \psi(\eta).
    \end{equation*}
    
    We prove the theorem by coinduction on a preproof, with the case division by the last inference rule. Again, the following proof provides an algorithm that takes a preproof $\pi$,  a function $f$, and an ordinal term $\eta$ subject to given constraints, and returns a new preproof $\pi[f,\eta]$.
    We will mostly follow the proof of \autoref{Lemma: Substitution lemma} when we handle Case $(\rmb\forall)$ and $(\notin)$.
    We only examine non-trivial cases:
    \begin{enumerate}
        \item Suppose that $\Gamma = \Gamma',\forall x\in t A(x)$ is obtained by $(\rmb\forall)$ with principal formula $\forall x \in t A(x)$ \emph{with} $t\neq \ttL(t)_\ttOmega$.
        The last part of the given preproof will take the following form: 
        \begin{center}
        \begin{mathprooftree}
            \def\fCenter#1#2{\sststile{#1}{#2}} \noLine
            \AxiomC{$\pi_s$} \noLine
            \UnaryInfC{$\vdots$} \noLine
            \UnaryInfC{$\cdots \quad H[\ttX_s] \fCenter{\ttOmega+1}{\alpha_s} \Gamma',\ s\dotin t\to A(s) \quad \cdots$ ($\spadesuit$)}
            \RightLabel{$(\rmb\forall)$}
            \UnaryInfC{$H[\ttX]\fCenter{\ttOmega+1}{\alpha} \Gamma',\ \forall x\in t A(x)$}
        \end{mathprooftree}
        \end{center}
        Here, $\ttX_s\supseteq\ttX$, $s$, and some $\alpha_s$ satisfy $(\spadesuit)$ and $\alpha_s<\alpha$. Also, we will write $H(f)(s)$ or $H(f)(\Gamma)$ in place of $\bfL(u)_{H(f)}(s)$ or $\bfL(u)_{H(f)}(\Gamma)$ for notational convenience.
        We shall construct a new preproof of the form
        \begin{center}
        \begin{mathprooftree}
            \def\fCenter#1#2{\sststile{#1}{#2}} \noLine
            \AxiomC{$\varpi_r$} \noLine
            \UnaryInfC{$\vdots$} \noLine
            \UnaryInfC{$\cdots \quad H(\Id+\psi_\Omega H(\Id+\Omega))[\ttY_r] \fCenter{\psi(\hat{\alpha}[\eta])}{\beta_r} {H(f)}[\Gamma'],\ r\dotin {H(f)}(t)\to {H(f)}(A)(r) \quad \cdots$ ($\clubsuit$)}
            \RightLabel{$(\rmb\forall)$}
            \UnaryInfC{$H(\Id+\psi_\Omega H(\Id+\Omega)) [\ttY]\fCenter{\psi(\hat{\alpha}[\eta])}{\psi(\hat{\alpha}[\eta])} {H(f)}[\Gamma'],\ \forall x\in {H(f)}(t)\ {H(f)}(A)(x)$}
        \end{mathprooftree}
        \end{center}
        Here $(\clubsuit)$ is the following condition:
        \begin{itemize}
            \item $\ttY_r = \ttY \cup \bigcup \bigl\{ \supp^{H(\Id+\psi_\Omega H(\Id+\Omega)))}_{\ttY_r}(a)\bigm|a\in \supp^{\bfL(u)}_{H(\ttY_r+\psi_\Omega H(\ttY_r+\Omega)))}(r)\bigr\}$.
            \item Over $\ttY_r\setminus \ttY$, the $\ttY_r$-order and the natural number order agree.
            \item $H(\Id+\psi_\Omega H(\Id+\Omega))(\ttY_r)\vDash |r|<|H(f)(t)|$.
        \end{itemize}
        We also demand that $\beta_r < H(f)(\alpha)$ holds for every $r$ occurring in the hypotheses.
        
        To do this, for each $\ttY_r \supseteq \ttY$ and $r\in \bfL(u)_{H(\ttY_r+\psi_\Omega H(\ttY_r+\Omega))}$ satisfying $(\clubsuit)$, let us find a (uniquely determined) template order $\ttZ \supseteq \ttX$ and an isomorphism
        \begin{equation*} \textstyle
            f_r\colon \ttZ \to \ran f \cup \bigcup \bigl\{ \supp^H_{\ttY_r + \psi_\Omega H(\ttY_r+\Omega)}(a) \bigm| a\in \supp^{\bfL(u)}_{H(\ttY_r+\psi_\Omega H(\ttY_r+\Omega))}(r)\bigr\}
        \end{equation*}
        such that $f_r\restriction \ttX = f$, and $f_r$ is increasing over $\ttZ \setminus \ttX$ under the natural number order. Then we take $s(r) = \bfL(u)_{H(f_r^{-1})}(r)\in \bfL(u)_{H(\ttZ)}$, and $\ttX_{s(r)} = \ttZ$.
        Then $s(r)$ and $\ttX_{s(r)}$ satisfy $(\spadesuit)$ by the same argument provided in the proof of \autoref{Lemma: Substitution lemma}. 
        Now apply the coinductive hypothesis to the preproof
        \begin{center}
        \begin{mathprooftree}
            \def\fCenter#1#2{\sststile{#1}{#2}} \noLine
            \AxiomC{$\pi_{s(r)}$} \noLine
            \UnaryInfC{$\vdots$} \noLine
            \UnaryInfC{$\cdots \quad H[\ttX_{s(r)}] \fCenter{\ttOmega+1}{\alpha_{s(r)}} \Gamma',\ s(r)\dotin t\to A\bigl(s(r)\bigr) \quad \cdots$}
        \end{mathprooftree}
        \end{center}
        with $f_r$ to get the new preproof
        \begin{equation} \label{Formula: A particular subproof in collapsing bounded forall case}
        \begin{mathprooftree}
            \def\fCenter#1#2{\sststile{#1}{#2}} \noLine
            \AxiomC{$\pi_{s(r)}[f_r,\eta]$} \noLine
            \UnaryInfC{$\vdots$} \noLine
            \UnaryInfC{$\cdots \quad H\bigl(\Id+\psi_\Omega H(\Id+\Omega)\bigr)[\ttY_r] \fCenter{\psi(\hat{\alpha}_{s(r)}[\eta])}{\psi(\hat{\alpha}_{s(r)}[\eta])} H(f)(\Gamma)^\eta,\ r\dotin H(f)(t)\to H(f)(A)^\eta(r) \quad \cdots$}
        \end{mathprooftree}
        \end{equation}
        To guarantee we can form such preproof, we first need to establish the following:
        \begin{equation} \label{Formula: eta allows forming functorial collapsing also for premises, bounded forall case 00}
            \eta > \max \bigl\{ f_r(v)\bigm| v\in \ttX_{s(r)} \land f_r(v)\in \psi_\Omega H(\ttY_r+\Omega) \bigr\}.
        \end{equation}
        We need \eqref{Formula: eta allows forming functorial collapsing also for premises, bounded forall case 00} to guarantee $\psi\bigl(\hat\alpha_{s(r)}[\eta]\bigr) \in \psi_\Omega H(\ttY_r + \Omega)$: If we have \eqref{Formula: eta allows forming functorial collapsing also for premises, bounded forall case 00}, then
        \begin{equation*}
            \hat\sfK\bigl(\hat\alpha_{s(r)}[\eta]\bigr) \le \hat\sfK(\eta)\cup  \bigl\{ f_r(v)\bigm| v\in \ttX_{s(r)} \land f_r(v)\in \psi_\Omega H(\ttY_r+\Omega) \bigr\} <\eta \le \hat{\alpha}_{s(r)}[\eta].
        \end{equation*}
        
        In fact, we can see that \eqref{Formula: eta allows forming functorial collapsing also for premises, bounded forall case 00} follows from the following inequality:
        \begin{equation} \label{Formula: eta allows forming functorial collapsing also for premises, bounded forall case 01}
            \max \bigl\{ f_r(v)\bigm| v\in \ttX_{s(r)} \land f_r(v)\in \psi_\Omega H(\ttY_r+\Omega) \bigr\} \le \max \bigl\{ f(v)\bigm| v\in \ttX \land f(v)\in \psi_\Omega H(\ttY+\Omega) \bigr\}.
        \end{equation}
        \eqref{Formula: eta allows forming functorial collapsing also for premises, bounded forall case 01} relies on the specific choice of $H = E(F+1+G)$ for some flower $F$. We begin with the following:
        \begin{equation} \label{Formula: Collapsing stage bounded forall rank comparison calculation}
            \max \supp^{\bfL(u)}_{H(\ttY+\psi_\Omega H(\ttY+\Omega))}(r) = |r| < |H(f)(t)| = H(f)(|t|)
        \end{equation}
        In particular, both $|r|$ and $H(f)(|t|)$ are in $H(\ttY_r + \psi_\Omega H(\ttY_r+\Omega))$.
        Here, $|r|,H(f)(|t|)<\ttOmega$ implies both of them must belong to $\varepsilon\circ F(\ttY_r + \psi_\Omega H(\ttY_r+\Omega))$, and $\varepsilon\circ F$ is a flower!
        This means that the comparison \eqref{Formula: Collapsing stage bounded forall rank comparison calculation} depends on the largest argument in each expression, so we have
        \begin{multline*}
            \textstyle \max \bigcup \bigl\{ \supp^H_{\ttY_r + \psi_\Omega H(\ttY_r+\Omega)}(a) \bigm| a \in \supp^{\bfL(u)}_{H(\ttY_r+\psi_\Omega H(\ttY_r+\Omega))}(r) \bigr\}
            \le \max \supp^H_{\ttY_r+ \psi_\Omega H(\ttY_r+\Omega)}(|r|) 
            \\ \le \max \supp^H_{\ttY+ \psi_\Omega H(\ttY+\Omega)}\bigl(H(f)(|t|)\bigr) \le \max \ran f.
        \end{multline*}
        Now recall that
        \begin{equation*} \textstyle
            \ran f_r = \ran f \cup \bigcup \bigl\{ \supp^H_{\ttY_r + \psi_\Omega H(\ttY_r+\Omega)}(a) \bigm| a \in \supp^{\bfL(u)}_{H(\ttY_r+\psi_\Omega H(\ttY_r+\Omega))}(r) \bigr\},
        \end{equation*}
        so we eventually have $\max \ran f_r \le \max \ran f$. This implies \eqref{Formula: eta allows forming functorial collapsing also for premises, bounded forall case 01}.

        Hence, we can form a preproof $\pi_{s(r)}[f_r,\eta]$ in \eqref{Formula: A particular subproof in collapsing bounded forall case}.
        Now the following is immediate by $H(f_r)(\alpha_{s(r)})< H(f_{s(r)})(\alpha) = H(f)(\alpha)$:
        \begin{equation} \label{Formula: Collapsing stage bounded forall rank comparison}
            H\bigl(\ttY_r + \psi_\Omega H(\ttY_r+\Omega)\bigr)\vDash \hat{\alpha}_{s(r)}[\eta] < \hat{\alpha}[\eta].
        \end{equation}
        Hence, we can increase the cut rank in the proof \eqref{Formula: A particular subproof in collapsing bounded forall case} and form the following preproof by applying $(\rmb\forall)$:
        \begin{center}
        \begin{mathprooftree}
            \def\fCenter#1#2{\sststile{#1}{#2}} \noLine
            \AxiomC{$\pi_{s(r)}[f_r,\eta]$} \noLine
            \UnaryInfC{$\vdots$} \noLine
            \UnaryInfC{$\cdots \quad H(\Id+\psi_\Omega H(\Id+\Omega))[\ttY_r] \fCenter{\psi(\hat{\alpha}[\eta])}{\psi(\hat{\alpha}_{s(r)}[\eta])} {H(f)}[\Gamma']^\eta,\ r\dotin {H(f)}(t)\to {H(f)}(A)^\eta(r) \quad \cdots$ ($\clubsuit$)}
            \RightLabel{$(\rmb\forall)$}
            \UnaryInfC{$H(\Id+\psi_\Omega H(\Id+\Omega)) [\ttY]\fCenter{\psi(\hat{\alpha}[\eta])}{\psi(\hat{\alpha}[\eta])} {H(f)}[\Gamma']^\eta,\ \forall x\in {H(f)}(t)\ {H(f)}(A)^\eta(x)$}
        \end{mathprooftree}
        \end{center}

        \item Now suppose that $\Gamma = \Gamma',\forall x\in \ttL(u)_\ttOmega A(x)$ is obtained by $(\rmb\forall)$ with principal formula $\forall x \in \ttL(u)_\ttOmega A(x)$.
        We mimic the previous case, but instead of allowing every possible set term $r$ in a new preproof, we chop off the new preproof to sequents indexed by a set term $r$ with $|r|<\psi(\eta)$.
        Note that $|r|<\psi(\eta)$ is equivalent to \eqref{Formula: eta allows forming functorial collapsing also for premises, bounded forall case 00}, so the chopping process guarantees $\max \ran f_r <\eta$ for a set term $r$ occurring in the hypothesis part. The remaining part is identical to the previous case. 
        
        \item Suppose that $\Gamma = \Gamma', \exists x\in t A(x)$ is obtained by $(\rmb\exists)$ with principal formula $\exists x\in t A(x)$. Hence, we have some $s$ with $|s|<|t|$ and $\alpha_0<\alpha$ such that  
        \begin{equation*}
            H[\ttX] \sststile{\ttOmega+1}{\alpha_0} \Gamma', s\mathrel{\dotin}t\land A(s).
        \end{equation*}
        By applying the coinductive hypothesis, we have
        \begin{equation*}
            H(\Id+\psi_\Omega H(\Id+\Omega))[\ttY] \sststile{\psi(\hat{\alpha}_0[\eta])}{\psi(\hat{\alpha}_0[\eta])} H(f)(\Gamma)^\eta, H(f)(s)\mathrel{\dotin}H(f)(t)\land H(f)\bigl(A^\eta(s)\bigr).
        \end{equation*}
        Note that the constraint on $\eta$ only depends on $f$ and $\ttX$, and not $\alpha$. Hence, we can also form $\hat{\alpha}_0[\eta]$.
        We have the following by definition:
        \begin{equation*}
            H(\ttY+\psi_\Omega H(\ttY+\Omega)) \vDash \hat{\alpha}_0[\eta] < \hat{\alpha}[\eta].
        \end{equation*}
        By applying the $(\rmb\exists)$ rule, we get the desired result.

        \item Suppose that $\Gamma = \Gamma', \exists x\in \ttL(u)_\ttOmega A^x$ is obtained by $(\mathrm{Refl})$ with principal formula $\exists x\in \ttL(u)_\ttOmega A^x$ for a $\Sigma$-formula $A$. Then we have $\alpha_0<\alpha$ such that
        \begin{equation*}
            H[\ttX] \sststile{\ttOmega+1}{\alpha_0} \Gamma', A^{\ttL(u)_\ttOmega}.
        \end{equation*}
        Applying the coinductive hypothesis gives
        \begin{equation*}
            H(\Id+\psi_\Omega H(\Id+\Omega))[\ttY] \sststile{\psi(\hat{\alpha}_0[\eta])}{\psi(\hat{\alpha}_0[\eta])} H(f)(\Gamma)^\eta, H(f)(A^{\ttL(u)_\ttOmega}).
        \end{equation*}
        Note that $H(f)(A^{\ttL(u)_\ttOmega}) = H(f)(A)^{\ttL(u)_\ttOmega}$.
        By applying the Bounding Lemma, we have
        \begin{equation*}
            H(\Id+\psi_\Omega H(\Id+\Omega))[\ttY] \sststile{\psi(\hat{\alpha}_0[\eta])}{\psi(\hat{\alpha}_0[\eta])} H(f)(\Gamma)^\eta, H(f)(A)^{\ttL(u)_{\psi(\hat{\alpha}_0[\eta])}}.
        \end{equation*}
        Hence by applying $(\rmb\exists)$, we have
        \begin{equation*}
            H(\Id+\psi_\Omega H(\Id+\Omega))[\ttY] \sststile{\psi(\hat{\alpha}_0[\eta])}{\psi(\hat{\alpha}_0[\eta])+1} H(f)(\Gamma)^\eta, \exists z\in \ttL(u)_\ttOmega H(f)(A)^z.
        \end{equation*}
        We get a desired result since $H(\ttY + \psi_\Omega H(\ttY+\Omega))\vDash \psi(\hat{\alpha}_0[\eta])+1 < {\psi(\hat{\alpha}[\eta])}$.
        
        \item Finally, suppose that $\Gamma$ is obtained by Cut from the following two sequents:
        \begin{equation*}
             H[\ttX] \sststile{\ttOmega+1}{\alpha_0} \Gamma,A \qquad\text{and}\qquad
             H[\ttX] \sststile{\ttOmega+1}{\alpha_1} \Gamma,\lnot A .
        \end{equation*}
        
        Here we may assume that $\alpha_0 = \alpha_1$.
        If $\rk^{\bfL(u)}_{H(\ttX)}(A) < \ttOmega$, then every quantifier in $A$ must be bounded by terms other than $\ttL(u)_\ttOmega$.
        Applying the coinductive hypotheses to each of the sequents yields
        \begin{itemize}
            \item $H(\Id+\psi_\Omega H(\Id+\Omega))[\ttY]  \sststile{\psi(\hat{\alpha}_0[\eta])}{\psi(\hat{\alpha}_0[\eta])} H(f)(\Gamma)^\eta, H(f)(A)$, and 
            \item $H(\Id+\psi_\Omega H(\Id+\Omega))[\ttY]  \sststile{\psi(\hat{\alpha}_0[\eta])}{\psi(\hat{\alpha}_0[\eta])}  H(f)(\Gamma)^\eta, H(f)(\lnot A)$.
        \end{itemize}
        From $\psi(\hat{\alpha}_0[\eta]) < \psi(\hat{\alpha}[\eta])$, we can increase the cut rank of each sequents by $\psi(\hat{\alpha}[\eta])$ then perform (Cut) to eliminate $H(f)(A)$ and $H(f)(\lnot A)$.
        
        Now, let us consider the case $\rk^{\bfL(u)}_{H(\ttX)}(A) = \ttOmega$. By symmetry, we assume that $A$ is a $\Sigma^{\ttL(u)_{\ttOmega}}$-formula. The coinductive hypothesis yields
        \begin{equation*}
            H(\Id+\psi_\Omega H(\Id+\Omega))[\ttY]  \sststile{\psi(\hat{\alpha}_0[\eta])}{\psi(\hat{\alpha}_0[\eta])}  H(f)(\Gamma)^\eta, H(f)(A).
        \end{equation*}
        By the Bounding Lemma, we have
        \begin{equation} \label{Formula: Collapsing Cut rank Omega, Positive cut 00}
            H(\Id+\psi_\Omega H(\Id+\Omega))[\ttY]  \sststile{\psi(\hat{\alpha}_0[\eta])}{\psi(\hat{\alpha}_0[\eta])} H(f)(\Gamma)^\eta, H(f)(A)^{\ttL(u)_{\psi(\hat{\alpha}_0[\eta])}}.
        \end{equation}

        Now we want to apply the coinductive hypothesis to the other sequent, using $\hat{\alpha}_0[\eta]$ instead of $\eta$ (but use the same $f$). It would yield
        \begin{equation} \label{Formula: Collapsing Cut rank Omega, Negative cut 00}
            H(\Id+\psi_\Omega H(\Id+\Omega))[\ttY]  \sststile{\psi(\hat{\alpha}_0[\hat{\alpha}_0[\eta]])}{\psi(\hat{\alpha}_0[\hat{\alpha}_0[\eta]])} H(f)(\Gamma)^{\hat{\alpha}_0[\eta]}, H(f)(\lnot A)^{\ttL(u)_{\psi(\hat{\alpha}_0[\eta])}}.
        \end{equation}
        Note that $\hat\sfK(\hat\alpha_0[\eta])<\hat\alpha_0[\eta]$: We have $\hat\sfK(\hat\alpha_0[\eta]) = \hat\sfK(\eta)\cup \hat\sfK\bigl(H(f)(\alpha_0)\bigr)$ and
        \begin{itemize}
            \item $\hat\sfK(\eta)<\eta \le \hat\alpha_0 [\eta]$, and
            \item $\hat\sfK\bigl(H(f)(\alpha_0)\bigr) < H(f)(\alpha_0) \le \eta + \omega^{\ttOmega+H(f)(\alpha_0)} = \hat\alpha_0[\eta]$.
        \end{itemize}
        This justifies the previous application of the coinductive hypothesis with $\hat\alpha_0[\eta]$. The other condition of the coinductive hypothesis follows immediately.
        By an easy consequence of the Persistence lemma (obtained by iterating it over the outer universal quantifiers) applied to $H(f)(\Gamma)^{\hat{\alpha}_0[\eta]}$ in \eqref{Formula: Collapsing Cut rank Omega, Negative cut 00}, we have
        \begin{equation} \label{Formula: Collapsing Cut rank Omega, Negative cut 01}
            H(\Id+\psi_\Omega H(\Id+\Omega))[\ttY]  \sststile{\psi(\hat{\alpha}_0[\hat{\alpha}_0[\eta]])}{\psi(\hat{\alpha}_0[\hat{\alpha}_0[\eta]])} H(f)(\Gamma)^\eta, H(f)(\lnot A)^{\ttL(u)_{\psi(\hat{\alpha}_0[\eta])}}.
        \end{equation}
        
        Now observe that $\hat{\alpha}_0[\eta] < \hat{\alpha}_0[\hat{\alpha}_0[\eta]]< \hat{\alpha}[\eta]$; The last inequality follows from
        \begin{equation*}
            \hat{\alpha}_0[\hat{\alpha}_0[\eta]] = \eta + \omega^{\ttOmega + H(f)(\alpha_0)}\cdot 2  < \eta+ \omega^{\ttOmega+H(f)(\alpha)} = \hat{\alpha}[\eta].
        \end{equation*}
        Hence, we can increase the cut rank both in \eqref{Formula: Collapsing Cut rank Omega, Positive cut 00} and \eqref{Formula: Collapsing Cut rank Omega, Negative cut 01} to $\psi(\hat{\alpha}[\eta])$. 
        Then we can perform (Cut) to have
        \begin{equation*}
            H(\Id+\psi_\Omega H(\Id+\Omega)) [\ttY] \sststile{\psi(\hat{\alpha}[\eta])}{\psi(\hat{\alpha}[\eta])} H(f)(\Gamma)^\eta. \qedhere 
        \end{equation*}
    \end{enumerate}
\end{proof}

As a special case, we have the following:
\begin{corollary}
    Suppose that $H=E\circ (F+1+G)$ for a flower $F$, and we are given $H[\ttX]\sststile{\ttOmega+1}{\alpha} \Gamma$ for a sequent of $\Sigma^{\ttL(u)_\ttOmega}$-formulas. Then we have $H\bigl(\Id+\psi_\Omega H(\Id+\Omega)\bigr)[\ttX] \sststile{\psi(\omega^{\ttOmega+\alpha})}{\psi(\omega^{\ttOmega+\alpha})} \Gamma$.
\end{corollary}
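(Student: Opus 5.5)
The corollary is the Functorial Collapsing theorem with trivial choices of $f$ and $\eta$. Take $\ttY := \ttX$ and let $f\colon \ttX\to \ttX+\psi_\Omega H(\ttX+\Omega)$ be the inclusion into the first summand. Then no $v\in\ttX$ has $f(v)\in\psi_\Omega H(\ttX+\Omega)$, so the set
\begin{equation*}
    \bigl\{\psi^{-1}\bigl(f(v)\bigr) \bigm| v\in \ttX \land f(v) \in \psi_\Omega H(\ttY+\Omega)\bigr\}
\end{equation*}
is empty. The side condition on $\eta$ therefore reduces to $\hat\sfK(\eta)<\eta$. I take $\eta=0$: its support is empty, so $\hat\sfK(0)=\varnothing<0$ holds vacuously, and the requirement ``$\eta>\max\varnothing$'' is vacuous as well. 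The hypothesis $H[\ttX]\sststile{\ttOmega+1}{\alpha}\Gamma$ and $F$ being a flower are exactly what the theorem asks for. A sequent of $\Sigma^{\ttL(u)_\ttOmega}$-formulas is a special case of a sequent of $(\forall\Sigma)^{\ttL(u)_\ttOmega}$-formulas, taking the outer universal block to be empty.

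Next I compute the outputs of the theorem. With $\eta=0$ we get $\hat\alpha[0]=0+\omega^{\ttOmega+H(f)(\alpha)}=\omega^{\ttOmega+H(f)(\alpha)}$, using the defining clause $0+\beta=\beta$ of the internal addition. Since $f$ is the canonical embedding $\ttX\hookrightarrow\ttX+\psi_\Omega H(\ttX+\Omega)$, I identify $H(\ttX)$ with its image under $H(f)$ and $\bfL(u)_{H(\ttX)}$-terms with their images under $\bfL(u)_{H(f)}$. So $H(f)(\alpha)$ is written $\alpha$, and $\bfL(u)_{H(f)}[\Gamma]$ is just $\Gamma$. The theorem then gives
\begin{equation*}
    H\bigl(\Id+\psi_\Omega H(\Id+\Omega)\bigr)[\ttX]\sststile{\psi(\omega^{\ttOmega+\alpha})}{\psi(\omega^{\ttOmega+\alpha})}\Gamma^{0}.
\end{equation*}

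The only step needing care is to check that $\Gamma^0=\Gamma$. By definition, $A^\eta$ replaces each occurrence of $\forall x\in\ttL(u)_\ttOmega$ with $\forall x\in\ttL(u)_{\psi(\eta)}$. A $\Sigma^{\ttL(u)_\ttOmega}$-formula contains no universal quantifier bounded by $\ttL(u)_\ttOmega$. Hence the substitution does nothing, and in particular the degenerate bound $\psi(0)$ never actually appears in $\Gamma^0$.

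I also want to be sure the degenerate choice $\eta=0$ does not break the coinductive proof of the theorem. Its subcases only use the hypotheses $\hat\sfK(\eta)<\eta$ and the max-bound, both of which hold vacuously here, and the recursive calls replace $\eta$ by $\hat\alpha_0[\eta]$, which is nonzero anyway. If I wanted to avoid any worry about $\psi(0)$, I could take $\eta=1$ instead. Then $\hat\alpha[1]=1+\omega^{\ttOmega+\alpha}$, which equals $\omega^{\ttOmega+\alpha}$ by the absorption clause of the internal addition, since $0<\ttOmega+\alpha$; so this choice gives the same bound.
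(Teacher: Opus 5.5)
Your proposal is correct and is exactly the paper's argument: apply the Functorial Collapsing theorem with $\ttY=\ttX$, $f$ the inclusion $\ttX\to\ttX+\psi_\Omega H(\ttX+\Omega)$, and $\eta=0$, so that $\hat\alpha[0]=\omega^{\ttOmega+\alpha}$. Your explicit check that $\Gamma^0=\Gamma$ for $\Sigma^{\ttL(u)_\ttOmega}$-formulas fills in a step the paper leaves tacit. Your last paragraph is unnecessary, since the corollary only invokes the theorem. It is also slightly inaccurate: most coinductive calls keep $\eta$ fixed, and only the $\lnot A$ premise of a rank-$\ttOmega$ cut switches to $\hat\alpha_0[\eta]$.
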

\begin{proof}
    Apply the Functorial Collapsing to $H[\ttX]\sststile{\ttOmega+1}{\alpha} \Gamma$, the inclusion map $\iota\colon \ttX\to \ttX + \psi_\Omega H(\ttX+\Omega)$ with $\eta=0$. Note that $\hat{\alpha}[0] = \omega^{\ttOmega+\alpha}$.
\end{proof}

\section{Consequences of the functorial collapsing}

Most applications of the results in this paper use both the completeness of $\sfu\RS$ and the cut-elimination of $\RS$. We need the completeness of $\sfu\RS$ to get a dilator bounding a $\rmbeta$-proof of a target sequent; Then we move to $\RS$ to perform cut-elimination. We assume that ``For every dilator $D$, $\psi D$ is well-founded'' holds over the metatheory.
First, let us observe the following soundness result:
\begin{proposition}
    Suppose that we have a $\rmbeta$-proof of $H[\ttX]\sststile{\rho}{\beta} \Gamma$ for $\rho<\ttOmega$. Then for every ordinal $\alpha$ and $f\colon \ttX\to\alpha$, we have
    $L_{H(\alpha)}\vDash \bigl[\mkern-5mu\bigl[ \Gamma[H(f)] \bigr]\mkern-5mu\bigr]$.
\end{proposition}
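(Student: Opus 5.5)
The plan is to reduce the statement to the soundness of $\sfu\RS_\gamma$-proofs without (Refl), namely \autoref{Proposition: Soundness of uRS proof}, through an instantiation map analogous to $\pi[\alpha,f]$ for $\sfu\RS$. Fix a $\rmbeta$-proof $\pi$ of $H[\ttX]\sststile{\rho}{\beta}\Gamma$, an ordinal $\alpha$, and $f\colon\ttX\to\alpha$.

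\emph{Step 1 (instantiation).} Corecursively I define an $\sfu\RS_{H(\alpha)}$-preproof $\pi[\alpha,f]$ of $\Gamma[H(f)]$, where each set term $s$ is sent to $\bfL(u)_{H(f)}(s)$ and $\ttOmega$ is sent to $H(\alpha)$. Rules other than $(\notin)$ and $(\rmb\forall)$ are copied verbatim. For these two rules, given a premise term $r$ of the instantiated system with $|r|<|H(f)(t)|$, I would proceed as in \autoref{Proposition: Instantiating uRS proof into a uRS alpha proof} and the Substitution lemma. That is, I find the unique $\ttX_s\supseteq\ttX$ and extension $g\colon\ttX_s\to\alpha$ of $f$ whose range is $\ran f\cup\bigcup\{\supp^H_\alpha(a)\mid a\in\supp^{\bfL(u)}(r)\}$, with new points ordered by their $\bbN$-indices. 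I then put $s=\bfL(u)_{H(g^{-1})}(r)$; by the support computation in the Substitution lemma this pair satisfies $(\spadesuit)$, and I use $\pi_s[\alpha,g]$ as the premise. Side conditions such as $|s|<|t|$ transfer along $H(g)$ because $H(g)$ is order preserving.

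\emph{Step 2 (well-foundedness).} The instantiated tree has no infinite branch. An infinite branch would yield compatible maps $f_n\colon\ttX_n\to\alpha$ whose union embeds $\bigcup_n\ttX_n$ into the ordinal $\alpha$, contradicting the $\rmbeta$-proof condition. If instead one reads ``$\rmbeta$-proof'' as well-founded ranks, the ranks $H(g)(\beta_s)$ strictly decrease in the well-order $H(\alpha)$, since $H$ is a dilator.

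\emph{Step 3 (no reflection).} This is the step where $\rho<\ttOmega$ matters. I first observe that the derivation carries no Refl. A (Refl) inference whose conclusion has height $\beta'$ requires $\ttOmega<\beta'$, so it forces the height above $\ttOmega$. The cut ranks are also bounded by $\rho<\ttOmega$, so every cut formula has rank below $\ttOmega$ and is therefore bounded by terms other than $\ttL(u)_\ttOmega$.

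The main obstacle I expect is ruling out a Refl inference with large height in the proof. The bare hypothesis $\rho<\ttOmega$ does not force the height $\beta$ below $\ttOmega$. My plan is therefore to handle Refl semantically instead of excluding it. For a Refl inference, the premise $A^{\ttL(u)_\ttOmega}$ is interpreted in $L_{H(\alpha)}$, which is the same structure in which the conclusion $\exists z\in\ttL(u)_\ttOmega A^z$ is read. If $A^{L_{H(\alpha)}}$ holds, I need a witness $z\in L_{H(\alpha)}$ with $A^z$. When $H(\alpha)$ is a limit of cofinal stages, upward persistence of the $\Sigma$-formula $A$ gives such a witness, namely $z=L_\gamma$ for a large enough $\gamma<H(\alpha)$ carrying the witnesses. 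Since $H=E(\cdots)$ with $E$ the epsilon or Veblen flower, $H(\alpha)$ is a limit of the required kind, so this argument should work.

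I would then finish with an induction on the well-founded tree $\pi[\alpha,f]$. This induction checks the truth of each sequent in $L_{H(\alpha)}$ along the lines of \autoref{Proposition: Soundness of uRS proof}: cuts are sound classically, and the quantifier rules are sound by the definition of $\llbr\cdot\rrbr$.
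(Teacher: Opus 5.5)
Your Steps 1 and 2 and the closing induction follow the paper's route. The paper instantiates to an $\sfu\RS_{H(\alpha)}$-proof via \autoref{Proposition: Instantiating uRS proof into a uRS alpha proof} and then invokes the (Refl)-free clause of \autoref{Proposition: Soundness of uRS proof}. Redoing the $(\spadesuit)$ pull-back as in the Substitution lemma is, if anything, more careful than the paper's citation of the $(\heartsuit)$ version.

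The gap is Step 3: your semantic treatment of $\mathrm{(Refl)}$ does not work. From $L_{H(\alpha)}\vDash A$ for a $\Sigma$-formula $A$ you need some $z\in L_{H(\alpha)}$ with $A^z$. That is $\Sigma$-reflection, not upward persistence; persistence only takes you from $A^{L_\gamma}$ to $A^{L_{\gamma'}}$ for $\gamma<\gamma'$, which is the wrong direction. Take $A\equiv\forall x\in a\,\exists y\,B(x,y)$ with $B$ a $\Delta_0$-formula. The instance $A\to\exists z\,A^z$ is then exactly $\Delta_0$-collection, so for limit $\gamma>\omega$, $\Sigma$-reflection in $L_\gamma$ holds iff $L_\gamma$ is admissible. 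Being a limit, an epsilon number, or a Veblen fixed point gives nothing of the kind, because the witnesses for $\forall x\in a\,\exists y$ may be cofinal in $H(\alpha)$, and then no ``large enough $\gamma<H(\alpha)$ carrying the witnesses'' exists. In the intended application (Girard's theorem) $\alpha<\omega_1^\CK$ and the dilator is recursive. The relevant level is then a recursive ordinal $>\omega$, hence never admissible, and a $\mathrm{(Refl)}$ inference would be genuinely unsound there. This is why \autoref{Proposition: Soundness of uRS proof} gives admissibility-free soundness only for (Refl)-free proofs.

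The fix is to exclude $\mathrm{(Refl)}$ syntactically, using the height rather than the cut rank. You are right that $\rho<\ttOmega$ alone says nothing about $\mathrm{(Refl)}$, whose side condition $\alpha_0,\ttOmega<\alpha$ constrains the height. Every rule of $\RS_H$ has premise heights strictly below the conclusion height; for $(\rmb\forall)$ and $(\notin)$ the comparison is made in $H(\ttX_s)\supseteq H(\ttX)$, where $\ttOmega$ is fixed. So if the end height $\beta$ is below $\ttOmega$, every node has height below $\ttOmega$ and $\mathrm{(Refl)}$ can never be applied; the paper uses the same observation in the Bounding lemma. The paper's one-line proof says ``since $\rho<\ttOmega$'', but what it actually needs is $\beta<\ttOmega$. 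That is what holds wherever the proposition is used: after collapsing, $\rho=\beta=\psi(\hat\alpha[\eta])<\ttOmega$. Read the hypothesis as $\beta<\ttOmega$ and drop your semantic argument. Your instantiated tree is then (Refl)-free, and \autoref{Proposition: Soundness of uRS proof} applies with no admissibility assumption on $H(\alpha)$.
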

\begin{proof}
    Since $\rho<\ttOmega$, the $\rmbeta$-proof does not use $\mathrm{(Refl)}$ rule. From a given $\RS$-proof of the sequent $H[\ttX]\sststile{\rho}{\alpha} \Gamma$, we can produce a $\sfu\RS_{H(\alpha)}$-proof of $\Gamma[H(f)]$ by \autoref{Proposition: Instantiating uRS proof into a uRS alpha proof}, and the resulting proof also does not use $\mathrm{(Refl)}$ rule.
    Then we get the desired result by \autoref{Proposition: Soundness of uRS proof}.
\end{proof}

The next theorem is one of the important results in Girard's $\Pi^1_2$-logic. We provide an alternative proof of Girard's theorem by employing the functorial cut-elimination of $\KP$.
\begin{theorem}[Girard {\cite[Theorem 11.B.1]{Girard1982Logical2}}\footnote{See also \cite[Remark 11.B.3(i)]{Girard1982Logical2}.}]
    Suppose that $f\colon \omega_1^\CK\to \omega_1^\CK$ is a total function that is $\Sigma_1^{L_{\omega_1^\CK}}$-definable.
    Then there is a recursive dilator $F$ such that for every $\xi<\omega_1^\CK$, $f(\xi)< F(\xi)$.
\end{theorem}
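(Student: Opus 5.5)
We will combine the completeness of $\sfu\RS$ (\autoref{Theorem: Preproof property for uRS}) with the Functorial Collapsing and the soundness proposition just proved. Fix a $\Sigma$-formula $\theta(x,y)$ such that $f(\xi)=\zeta$ iff $L_{\omega_1^\CK}\vDash\theta(\xi,\zeta)$, with $\theta$ of the form $\exists w\,\theta_0(x,y,w)$ for $\Delta_0$ $\theta_0$. Take $u=\varnothing$, so that terms of the form $\ttL(u)_\xi$ play the role of ordinals (the ordinal $\xi$ is $\Delta_0$-definable from $L_\xi$). Totality of $f$ means that every admissible $L_\alpha$ with $\alpha\le\omega_1^\CK$ extending the relevant structure satisfies $\forall x\in\mathrm{Ord}\,\exists y\,\theta(x,y)$; more precisely, we consider the sequent
\[
[\{\tilde 0\}]\vdash \exists y\in \ttL(u)_\ttOmega\, \exists w\in\ttL(u)_\ttOmega\, \theta_0(\tilde 0,y,w)
\]
where $\tilde 0$ denotes the ordinal $|\ttL(u)_{\tilde 0}|$. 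Any well-founded model of $\KP$ plus its negation would contain a countable well-founded $\omega$-model of $\KP$ whose ordinal height exceeds a parameter $\xi$ but where $f(\xi)$ is undefined. Since $\theta$ is $\Sigma_1$ and upward absolute between transitive models of $\KP$ containing $\omega_1^\CK$-initial pieces, this is impossible as long as $\xi<\omega_1^\CK$. This is where care is needed: a model built from a branch might have height $<\omega_1^\CK$ and still be well-founded, so we must add the axiom ``$\tilde 0<\omega_1^\CK$-witness'' in a recursive way. My plan is to restrict to $\xi$ in a recursive well-order: we fix the $\Sigma_1$ definition and use that $L_\alpha\vDash\KP$ with $\xi<\alpha$ implies $f(\xi)$ is defined in $L_\alpha$ whenever the $\Sigma_1$ computation reaches, which holds since $f$ is total and $\Sigma_1$-definable over $L_{\omega_1^\CK}$, the least admissible $>\omega$ (so every admissible $\alpha>\xi$ is $\ge\omega_1^\CK$, and $\Sigma_1$ statements persist upward).

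Given this, \autoref{Theorem: Preproof property for uRS} yields a recursive $\rmbeta$-preproof which must be a $\rmbeta$-proof, because any well-founded branch would give a well-founded model of $\KP$ refuting totality. By the cell-decomposition characterization, a recursive $\rmbeta$-proof is bounded by a recursive dilator, so the proof embeds into $\RS_H$ with $H=E(F+1+G)$ for recursive $F,G$, in the form $H[\ttX]\sststile{\ttOmega+m}{\alpha}\Gamma$. This is the step I expect to be the main obstacle: one has to assign ranks from a dilator to the nodes of the recursive $\rmbeta$-proof functorially. I would use the Kleene--Brouwer style rank of the proof tree relativized to template orders, which is a dilator term, and then check that the $F$-part is a flower (take $F=\omega^{\Id}$-type flowers) as needed in the Functorial Collapsing.

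Then apply First Cut Elimination (\autoref{Theorem: First Cut Elimination Case 0}) $m-1$ times to reach cut rank $\ttOmega+1$. Next apply the Corollary of Functorial Collapsing to get
\[
H(\Id+\psi_\Omega H(\Id+\Omega))[\ttX]\sststile{\beta}{\beta}\Gamma
\]
with $\beta=\psi(\omega^{\ttOmega+\alpha'})$, and by the Bounding lemma also for the formula $\Gamma^{\ttL(u)_\beta}$. The resulting rank and cut rank are below $\ttOmega$, so the soundness proposition gives, for every $\xi$, that $L_{D(\xi)}\vDash\exists y\,\exists w\,\theta_0(\xi,y,w)$ with $D=H(\Id+\psi_\Omega H(\Id+\Omega))$ recursive. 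By $\Delta_0$-absoluteness, $f(\xi)<D(\xi)$. Recursiveness of $D$ follows because $\psi$ applied to a recursive dilator is recursive, and $D$ is a dilator under the paper's metatheoretic assumption that $\psi$ preserves well-foundedness.
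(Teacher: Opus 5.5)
Your argument breaks at the completeness step. The sequent $[\{\tilde 0\}]\vdash\exists y\in\ttL(u)_\ttOmega\,\exists w\in\ttL(u)_\ttOmega\,\theta_0(\tilde 0,y,w)$ is not true in every well-founded model of $\KP$, so \autoref{Theorem: Preproof property for uRS} need not return a $\rmbeta$-proof.

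A well-founded branch gives an admissible $L_\alpha$ together with a value $\xi<\alpha$ for $\tilde 0$. Since $\KP$ includes Infinity, $\alpha\ge\omega_1^\CK$ automatically, so your worry about height $<\omega_1^\CK$ is beside the point. Nothing, however, prevents $\alpha>\omega_1^\CK$ and $\xi\ge\omega_1^\CK$. At such $\xi$, $\theta$ has no obligation to produce a witness: for instance, $\theta$ may additionally require that no admissible ordinal $>\omega$ lies at or below $x$. Your upward-persistence argument only covers $\xi<\omega_1^\CK$, and the ``$\tilde 0<\omega_1^\CK$-witness'' you announce is never written down.

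The missing idea, which is what the paper does, is to add as a side formula the $\Sigma$-sentence $\sigma$ stating that there is an admissible ordinal $>\omega$. This works for three reasons:
\begin{itemize}
\item The sequent $\sigma\lor\forall x\,\exists y\,\theta(x,y)$ holds in every admissible $L_\alpha$: $\sigma$ holds when $\alpha>\omega_1^\CK$, and totality holds when $\alpha=\omega_1^\CK$.
\item The sequent stays in the $(\forall\Sigma)^{\ttL(u)_\ttOmega}$ shape required by the Functorial Collapsing.
\item At the end, $\sigma$ is discharged because the bounding dilator is recursive. Its value at $\xi<\omega_1^\CK$ is a recursive ordinal $\gamma$, and $L_\gamma\vDash\lnot\sigma$.
\end{itemize}
You also silently drop the parameter of the $\Sigma_1$ definition. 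The paper replaces it by a $\Sigma_1$ definition $\psi(p)$ that works over admissible sets.

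Two further steps fail as written.

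First, the cut rank. The preproof from \autoref{Theorem: Preproof property for uRS} cuts on every formula of the enumeration $\epsilon$, so its cut formulas have ranks $\ttOmega+n$ for unboundedly many $n$. Ranking it with the completion dilator only gives $\varphi_2(\Id+1+G)[\varnothing]\sststile{\ttOmega+\omega}{\tau_\bot}\cdots$. A finite bound $\ttOmega+m$ comes only from the Embedding theorem for genuine $\KP$-derivations. That theorem is unavailable here, because totality of $f$ is in general not a theorem of $\KP$. Hence iterating \autoref{Theorem: First Cut Elimination Case 0} finitely often never reaches $\ttOmega+1$. You need \autoref{Proposition: First Cut Elimination Case 1}, which is why the paper fixes $E=\varphi_2$.

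Second, the free template variable. $\tilde 0$ enters $H(\ttX)$ only through a fixed-point term such as $E_{\tilde 0}$. So soundness instantiated at $\alpha$ with $\tilde 0\mapsto\xi$ speaks about $L_{D(\alpha)}$ and the ordinal value of $E_\xi$, not about $\xi$ and $D(\xi)$ as you claim. You would at least need a bounded quantifier $\forall x\in\ttL(u)_{\tilde 0}$. The paper avoids template variables altogether. It keeps $\forall x\in\ttL_\ttOmega$, which the collapsing with $\eta=0$ turns into $\forall x\in\ttL_{\psi(0)}$. Since $\psi(0)$ evaluates to $\varphi_1(\alpha)\ge\alpha$ in $F(\alpha)$, this covers every $x\in L_\alpha$ at once.
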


Girard's theorem will follow from the following result:
\begin{theorem}
    Suppose that $f\colon L_{\omega_1^\CK}\to L_{\omega_1^\CK}$ is a $\Sigma_1^{L_{\omega_1^\CK}}$-definable total function.
    Then there is a recursive dilator $F$ such that for every $x\in L_{\omega_1^\CK}$, $f(x)\in L_{F(\rank_L (x))}$, where $\rank_L (x) = \min\{\alpha \mid x\in L_{\alpha+1}\}$ for $x\in L$.
\end{theorem}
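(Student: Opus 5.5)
Let $\phi(x,y)$ be a $\Sigma_1$ formula (without loss of generality of the form $\exists w\,\phi_0(x,y,w)$ with $\phi_0\in\Delta_0$) defining $f$ over $L_{\omega_1^\CK}$. The plan is to turn totality of $f$ into an $\sfu\RS$-proof of a $\Sigma$-sequent, bound that proof by a recursive dilator, collapse it with the Functorial Collapsing theorem, and then read off the bound through soundness. We take $u=\varnothing$ and consider the sequent
\begin{equation*}
    [\{\tilde 0\}]\vdash \exists y\in \ttL(u)_\ttOmega\,\phi(\ttL(u)_{\tilde 0}^{\ast},y)^{\ttL(u)_\ttOmega},
\end{equation*}
where we first work with the parameter $x=L_{\tilde 0}$. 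A general $x$ is handled by quantifying $\forall x\in \ttL(u)_{\tilde 0+1}$, which is harmless in the bounded language, and we read $\rank_L(x)$ as the template variable $\tilde 0$.

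\textbf{Step 1 (proof with a dilator bound).} Apply \autoref{Theorem: Preproof property for uRS} to get a recursive $\rmbeta$-preproof $\pi$ of this sequent. Suppose $\pi$ were not a $\rmbeta$-proof. Then some infinite branch has a well-founded union of templates, and we obtain a well-founded model of $\KP$ containing the relevant ordinals in which $f$ fails to be total at $x$. Its ordinal height, together with the interpretation of $\tilde 0$, would give an admissible $L_\beta$ with $\rank_L(x)<\beta$. Since $\phi$ is $\Sigma_1$ and true in $L_{\omega_1^\CK}$, upward persistence prevents $\lnot\phi$ from holding there whenever $\beta\ge\omega_1^\CK$. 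The case $\beta<\omega_1^\CK$ is the delicate one. Here I would instead state totality as ``$\KP+V=L\vdash$ the $\Sigma_1$ definition of $f$ is total below the least admissible'', or, more simply, work with $\KP$ plus the $\Pi_2$ sentence expressing totality of $f$ relativized to the least admissible; that sentence is satisfied by every admissible $L_\beta$ containing $x$ only when $\beta\ge\omega_1^\CK$. This subtlety suggests using completeness at the level of $L_{\omega_1^\CK}$ specifically, via the recursive tree of attempted countermodels whose well-founded branches are exactly the relevant admissibles. So $\pi$ is a $\rmbeta$-proof, and it is recursive. By the cell-decomposition characterization, its Kleene–Brouwer rank yields a recursive dilator $G$ such that $\pi$ becomes an $\RS_H$-derivation $H[\{\tilde0\}]\sststile{\ttOmega+m}{\alpha}\Gamma$ with $H=\varepsilon(\Id+1+G)$ (here $F=\Id$, a flower). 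I expect this transfer from an uncontrolled $\rmbeta$-proof to an $\RS_H$-derivation, with rank bound and cut rank both controlled, to be the main obstacle. The completeness construction uses cuts of arbitrary rank, which appears to require $E=\varphi_2$ and \autoref{Proposition: First Cut Elimination Case 1} to bring the cut rank down to $\ttOmega+1$.

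\textbf{Step 2 (collapse and read off).} Apply the Corollary of Functorial Collapsing to obtain
\begin{equation*}
    H'[\{\tilde0\}]\sststile{\psi(\omega^{\ttOmega+\alpha})}{\psi(\omega^{\ttOmega+\alpha})}\Gamma, \qquad H'=H\bigl(\Id+\psi_\Omega H(\Id+\Omega)\bigr).
\end{equation*}
The cut rank is now below $\ttOmega$, but the (Refl) steps remain. Use the Bounding lemma with $\beta=\psi(\omega^{\ttOmega+\alpha})$ to replace $\Gamma$ by $\Gamma^{\ttL(u)_\beta}$; that sequent contains no unbounded quantifier, so one final predicative cut elimination leaves no (Refl). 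The soundness proposition then gives $L_{H'(\xi)}\vDash\exists y\in L_{\beta[\xi]}\phi(x,y)$ for $\xi=\rank_L(x)$. Hence $f(x)\in L_{D(\xi)}$, where $D$ is the recursive dilator $\xi\mapsto H'(\xi)$, or more sharply $\xi\mapsto \psi_\Omega H(\xi+\Omega)$. Its recursiveness comes from recursiveness of $G$, and it is a dilator because $\psi$ is assumed to preserve dilators in the metatheory. Taking $F\ge D$ gives the claim.
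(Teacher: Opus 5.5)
There is a genuine gap in Step 1. The sequent you give to the completeness theorem for $\sfu\RS$ is not valid in all the countermodels that theorem can produce, so nothing justifies ``$\pi$ is a $\rmbeta$-proof''. A well-founded branch gives a well-founded model of $\KP$, and $\KP$ includes Infinity, so the model is an admissible $L_\beta$ with $\beta>\omega$. Hence $\beta\ge\omega_1^\CK$ automatically, and the case $\beta<\omega_1^\CK$ that you single out as delicate is vacuous. The real obstruction is $\beta>\omega_1^\CK$. There $\tilde 0$ may be interpreted by some $\gamma\ge\omega_1^\CK$, upward persistence says nothing about such arguments, and the $\Sigma_1$-definition can genuinely fail. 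For example, if $\phi(x,y)$ says that $y$ is the $<_L$-least isomorphism of $x\cap\Ord$ onto a recursive well-order, then $f$ is total on $L_{\omega_1^\CK}$, but $\exists y\,\phi(L_{\omega_1^\CK},y)$ holds in no $L_\beta$. Your suggested repairs do not close this. The completeness construction gives no control over which admissibles occur as well-founded branches beyond what is written in the sequent. A sentence asserting totality relativized to the least admissible is either false in $L_{\omega_1^\CK}$ itself (if it asserts that this admissible exists), or vacuous in the levels $L_{F(\xi)}$ with $F(\xi)<\omega_1^\CK$, which is exactly where you later read off the bound.

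The missing idea, which is what the paper does, is to add the $\Sigma$-sentence $\sigma\equiv$ ``there is an admissible ordinal $>\omega$'' as a side formula. Among admissible $L_\beta$, $\sigma$ fails only at $\beta=\omega_1^\CK$, where $f$ is total. So $[\varnothing]\vdash\sigma,\ \forall x\,\exists y\,\phi(x,y)$ (relativized to $\ttL_\ttOmega$) has no well-founded countermodel, and completeness yields a recursive $\rmbeta$-proof. Since $\sigma$ is $\Sigma$, the sequent stays of $(\forall\Sigma)$ form, so the reduction to cut rank $\ttOmega+1$ and Functorial Collapsing still apply. At the end, $F$ is recursive, so $F(\xi)<\omega_1^\CK$ for $\xi<\omega_1^\CK$; hence $L_{F(\xi)}\vDash\lnot\sigma$ and soundness leaves the totality disjunct.

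The paper also needs no template variable for $\rank_L(x)$. Collapsing with $\eta=0$ turns $\forall x\in\ttL_\ttOmega$ into $\forall x\in\ttL_{\psi(0)}$, and $\psi(0)$ instantiates to $\varphi_1(\xi)\ge\xi$. Your $\ttL_{\tilde 0+1}$ is not a legal $\sfu\RS$-term anyway, since indices must come from the template order or be $\ttOmega$.

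Two smaller points. First, if the definition of $f$ uses a parameter $p\in L_{\omega_1^\CK}$, you must replace $p$ by a parameter-free $\Sigma_1$-definition valid in all admissible sets (Barwise, Theorem II.5.14); otherwise the completeness tree is not recursive. Second, after collapsing, all derivation heights lie below $\ttOmega$, so (Refl) cannot occur and your extra Bounding and predicative-elimination step is unnecessary.
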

\begin{proof}
    Let $\sigma$ be the $\Sigma$-assertion stating `There is an admissible ordinal greater than $\omega$' and $\phi(x,y,z)$ is a $\Sigma_1$-formula such that $\phi(x,y,p)$ for some $p\in L_{\omega_1^\CK}$ defines $f$ over $L_{\omega_1^\CK}$. That is, we have $L_{\omega_1^\CK}\vDash \forall x \exists! y \phi(x,y,p)$ and $L_{\omega_1^\CK}\vDash \phi(x,f(x),p)$ for all $x\in L_{\omega_1^\CK}$. 
    By \cite[Theorem II.5.14]{Barwise1975}, $p\in L_{\omega_1^\CK}$ implies that there is a $\Sigma_1$-formula $\psi(x)$ such that $\psi(x)$ defines $p$ over every admissible set.
    
    Then for every admissible $\alpha$, we have
    \begin{equation*}
        L_\alpha \vDash \sigma \lor \forall x\exists p\bigl(\psi(p)\land \exists y \phi(x,y,p)\bigr).
    \end{equation*}
    By the completeness of $\sfu\RS$, we have a recursive $\rmbeta$-proof $\pi$ of the sequent $[\varnothing]\vdash \sigma, \forall x\exists p \bigl(\psi(p)\land \exists y \phi(x,y,p)\bigr)$. Now we build a dilator $G$ from $\pi$ to bound $\pi$ as follows: We want to treat $\pi$ as a predendrogram \cite[Ch. D]{JeonPhD} where a template order describes the priority order of each node. This is not quite possible for two reasons: First, the proof $\pi$ may have infinite branches with nodes having no terminal extensions. Second, we need intermediate nodes to rank sequents (roughly speaking, for each sequent $\Sigma$ in $\pi$, we will use a node corresponding to $\Sigma$ to rank $\Sigma$.) 
    To resolve this issue, we apply the construction of a completion of a predendrogram \cite[Definition D.3.14]{JeonPhD} to $\pi$, which adds new terminal nodes $\tau_\Sigma$ corresponding to each sequent $\Sigma$ of $\pi$. We will denote the unique root node in $\pi$ by $\bot$.    
    Then by ranking $\Sigma$ in $\pi$ with the $G$-term $\tau_\Sigma$, we have
    \begin{equation*}
        \varphi_2(\Id+1+G) [\varnothing] \sststile{\ttOmega+\omega}{\tau_\bot}\sigma^{\ttL_\ttOmega}\lor \forall x \in \ttL_\ttOmega\bigl( \exists p\in \ttL_{\ttOmega} \psi(p)\land \exists y \in \ttL_\ttOmega \phi(x,y,p)\bigr).
    \end{equation*}
    
    Now, by \autoref{Proposition: First Cut Elimination Case 1} and the Functorial Collapsing with the choice $\eta = 0$, we have 
    \begin{equation*}
        \varphi_2\bigl(\Id+\psi_\Omega \varphi_2\bigl(\Id+\Omega+1+G(\Id+\Omega)\bigr)+1\bigr) [\varnothing] \sststile{\psi \hat{\tau}_\bot}{\psi \hat{\tau}_\bot}\sigma^{\ttL_\ttOmega}\lor \forall x \in \ttL_{\psi(0)}\bigl( \exists p\in \ttL_{\ttOmega} \psi(p)\land \exists y \in \ttL_\ttOmega \phi(x,y,p)\bigr).
    \end{equation*}
    We warn that, unlike its writing, $\psi(0)$ is not a fixed ordinal. $\psi(0)$ (More precisely, $\varphi_1\bigl(\psi(0)\bigr)$) is a nullary term of a dilator $\varphi_2\bigl(\Id+\psi_\Omega \varphi_2\bigl(\Id+\Omega+1+G(\Id+\Omega)\bigr)+1\bigr)$. Now we take
    \begin{equation*}
        F(X) = \varphi_2\bigl(X+\psi_\Omega \varphi_2\bigl(X+\Omega+1+G(X+\Omega)\bigr)+1\bigr).
    \end{equation*}
    $F$ is a recursive dilator, and moreover, the map $x\mapsto E_x$ maps $X$ to $F(X)$ below $\psi(0)\in F(X)$.
    We note that for an ordinal $\alpha$, if $f\colon F(\alpha)\cong \beta$ is an isomorphism to some ordinal $\beta$, then $f(\psi(0)) = \varphi_1(\alpha)$. 
    
    Now let $\alpha$ be an ordinal. By Soundness, we have that $L_{F(\alpha)}\vDash \sigma\lor \forall x\in L_{\varphi_1(\alpha)} \exists y \phi(x,y)$. In particular, we have that if $\alpha<\omega_1^\CK$, then 
    \begin{equation*}
        \forall x\in L_\alpha \bigl(\exists p\in L_{F(\alpha)} \psi^{L_{F(\alpha)}}(p)\land \exists y\in L_{F(\alpha)} \phi^{L_{F(\alpha)}}(x,y,p)\bigr).
    \end{equation*}
    Here, we note that $\Sigma$-formulas are upward absolute, so $\psi^{L_{F(\alpha)}}(p)$ implies $\psi(p)$, and such $p$ is unique in $L$. We also have $\forall x\in L_\alpha \exists y\in L_{F(\alpha)} \phi(x,y,p)$, so we get $f(x)\in L_{F(\alpha)}$.
\end{proof}

We can generalize the above result as Girard stated in \cite[Theorem 11.B.6]{Girard1982Logical2}. Let $s_0$ be the least parameter-free $\Sigma^1_1$-reflecting ordinal. 
We also recall the fact by Aczel-Richter \cite[Theorem 6.2]{AczelRichter1974IDReflAdm}: If $\phi(\vec{v})$ is a $\Pi^1_1$-formula in the language of set theory, then there is a $\Sigma_1$-formula $\phi^+(X,\vec{x})$ such that for every non-empty countable transitive set $A$ and every admissible $M\ni A$, if $\vec{a}\in A$ then
\begin{equation*}
    A \vDash \phi(\vec{a}) \iff M\vDash \phi^+(A,\vec{a}).
\end{equation*}
Hence, if $\alpha<s_0$, then we can find a $\Pi_1$-formula $\chi(x)$ satisfying the following: $\alpha$ is the least ordinal such that for every admissible $M\ni \alpha$, $M\vDash \chi(\alpha)$ holds. The argument presented in \cite[p. 189--190]{Gostanian1979NextAdmissible} shows that we can find a $\Pi_1$-formula $\chi(x)$ such that $\alpha$ is the least ordinal satisfying $L_{\alpha^+}\vDash \chi(L_\alpha)$.
Based on this fact, we state the following result:
\begin{theorem}
    Let $\alpha<s_0$ and $f\colon L_{\alpha^+}\to L_{\alpha^+}$ is a $\Sigma_1^{L_{\alpha^+}}$-definable total function with parameter $\alpha$. Then there is a recursive dilator $F$ such that for every $x\in L_{\alpha^+}\setminus L_\alpha$, we have $f(x)\in L_{F(\rank_L (x))}$.
\end{theorem}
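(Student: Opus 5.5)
We mimic the proof of the preceding theorem for $\omega_1^\CK$, with the assertion ``there is an admissible ordinal greater than $\omega$'' replaced by a $\Sigma$-assertion that forces the ambient admissible to lie beyond $\alpha^+$. Fix the $\Pi_1$-formula $\chi(x)$ supplied by the Aczel--Richter and Gostanian argument, so that $\alpha$ is the least ordinal with $L_{\alpha^+}\vDash\chi(L_\alpha)$. Let $\phi(x,y,z)$ be $\Sigma_1$ with $\phi(x,y,\alpha)$ defining $f$ over $L_{\alpha^+}$. Take $\sigma$ to be the $\Sigma$-sentence ``there is an admissible set containing some $L_\beta$ with $\lnot\chi(L_\beta)$ failing minimality,'' or more concretely:
\[
\exists \beta\,\bigl[\lnot\chi(L_\beta)\text{ fails for no }\gamma<\beta\ \land\ \exists M\,(M \text{ admissible}\land L_\beta\in M)\bigr].
\]
The intended reading is that $\sigma$ holds in $L_\gamma$ exactly when $\gamma>\alpha^+$, up to the usual encoding of the least witness. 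The disjunct to pair with $\sigma$ says: if $a$ is the least ordinal with $\chi(L_a)$, then every $x$ has some $y$ with $\phi(x,y,a)$. The leastness clause is $\Sigma$ because ``$\lnot\chi(L_\gamma)$ for all $\gamma<a$'' is $\Sigma$, and $\chi$ itself is $\Pi_1$, hence downward absolute. This sequent holds in every admissible $L_\gamma$, which is the first thing to verify. If $\gamma<\alpha^+$, no ordinal in $L_\gamma$ satisfies the minimality correctly, so the disjunct is vacuous or true by persistence of $\Pi_1$. If $\gamma=\alpha^+$, the disjunct holds because $f$ is total. If $\gamma>\alpha^+$, then $\sigma$ holds.

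Next we invoke the completeness theorem for $\sfu\RS$ (\autoref{Theorem: Preproof property for uRS}) to obtain a recursive $\rmbeta$-proof $\pi$ of this sequent over $[\varnothing]$. Then we repeat the construction from the previous proof: complete $\pi$ as a predendrogram, obtain a recursive dilator $G$, and derive
\[
\varphi_2(\Id+1+G)[\varnothing]\sststile{\ttOmega+\omega}{\tau_\bot}\sigma\lor\forall x\in\ttL_\ttOmega(\cdots).
\]
We then apply \autoref{Proposition: First Cut Elimination Case 1} and the Functorial Collapsing with $\eta=0$. The universal quantifier $\forall x$ gets bounded by $\ttL_{\psi(0)}$, while the existential parts stay $\Sigma$ in $\ttL_\ttOmega$, bounded in the instantiation by the collapsed bound. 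We set $F(X)=\varphi_2\bigl(X+\psi_\Omega\varphi_2(X+\Omega+1+G(X+\Omega))+1\bigr)$ exactly as before. Soundness of the instantiated proof then gives, for each ordinal $\beta$,
\[
L_{F(\beta)}\vDash\sigma\lor\forall x\in L_{\varphi_1(\beta)}\exists a\,\bigl(\text{$a$ least with }\chi(L_a)\land\exists y\,\phi(x,y,a)\bigr).
\]

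Now take $x\in L_{\alpha^+}\setminus L_\alpha$ and $\beta=\rank_L(x)$, so that $\alpha\le\beta<\alpha^+$. If $F(\beta)\ge\alpha^+$ the conclusion is only useful when we can still locate $f(x)$, so the essential case is $F(\beta)<\alpha^+$. There, $\sigma$ fails in $L_{F(\beta)}$: no admissible set inside $L_{F(\beta)}$ witnesses the admissibility clause beyond $\alpha$, since $\alpha^+$ is the next admissible. Hence the second disjunct holds. The witness $a$ produced in $L_{F(\beta)}$ must equal $\alpha$, using upward absoluteness of the $\Sigma$-minimality together with $\Pi_1$-downward persistence of $\chi$ between $L_{F(\beta)}$ and $L_{\alpha^+}$. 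This gives $f(x)\in L_{F(\beta)}$ by uniqueness. In the other case $F(\beta)\ge\alpha^+$, we get $f(x)\in L_{\alpha^+}\subseteq L_{F(\beta)}$ trivially.

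I expect the main obstacle to be choosing $\sigma$ and the minimality clause so that the Tait sequent is genuinely a $\Sigma$-disjunction valid in every admissible $L_\gamma$, and so that the witness extracted in $L_{F(\beta)}$ is forced to be $\alpha$. What I would try first is to use the Gostanian formulation $L_{\alpha^+}\vDash\chi(L_\alpha)$, relativized to ``the least admissible containing $L_a$.'' That is, $\sigma$ would state the existence of an admissible $M\ni L_a$ with $M\vDash\chi(L_a)$ for the least such $a$, together with an admissible strictly containing $M$. Verifying that this assertion is $\Sigma$, and that it fails exactly below $\alpha^{++}$, is the delicate point. Shifting the threshold to $\alpha^{++}$ is harmless, since $f(x)\in L_{\alpha^+}$ anyway.
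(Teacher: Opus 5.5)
Your outer architecture follows the paper's: a side formula that disposes of the admissibles other than $\alpha^+$, completeness of $\sfu\RS$, \autoref{Proposition: First Cut Elimination Case 1}, Functorial Collapsing with $\eta=0$, and soundness in $L_{F(\beta)}$. The gap is in how you recover the parameter $\alpha$.

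You state the second disjunct as an implication, but you use it in the form $\forall x\,\exists a\,\bigl(\chi(L_a)\land\forall\zeta<a\,\lnot\chi(L_\zeta)\land\exists y\,\phi(x,y,a)\bigr)$. This existential form has two defects.
\begin{itemize}
\item The $\Pi_1$ formula $\chi(L_a)$ occurs positively under $\exists a$, so the formula is not $\forall\Sigma$. Downward absoluteness is a semantic property and does not make it $\Sigma$. Hence Functorial Collapsing, which is stated only for $(\forall\Sigma)^{\ttL(u)_\ttOmega}$ sequents, does not apply.
\item It is false, not vacuous, in every admissible $L_\gamma$ with $\gamma\le\alpha$. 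Indeed, $L_\gamma\vDash\chi(L_a)$ with $a<\gamma$ would give $L_{a^+}\vDash\chi(L_a)$ with $a<\alpha$.
\end{itemize}
The implication form $\forall a\,[\lnot\chi(L_a)\lor\exists\zeta<a\,\chi(L_\zeta)\lor\forall x\exists y\,\phi(x,y,a)]$ is valid in every admissible $L_\gamma$ with $\gamma\le\alpha^+$. It can also be prenexed to $\forall\Sigma$.

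Both forms, however, founder on the claim that the witness found in $L_{F(\beta)}$ equals $\alpha$. Upward absoluteness of the $\Sigma$ part, and downward persistence of $\chi(L_\alpha)$, each yield only $a\le\alpha$. To exclude $a<\alpha$ you need $L_{F(\beta)}\vDash\lnot\chi(L_a)$, whose witness is only guaranteed below $a^+$. If no admissible lies in $(a,\alpha]$, then $a^+=\alpha^+>F(\beta)$, and for $\beta$ near $\alpha$ nothing places that witness inside $L_{F(\beta)}$. With the implication form, the same witnesses would be needed inside $L_{\varphi_1(\beta)}$ to refute the collapsed version of $\exists\zeta<\alpha\,\chi(L_\zeta)$. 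So minimality of $\alpha$ is not something the small model can be relied on to certify.

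The $\sigma$ you left open is, by contrast, easy and needs no minimality clause. The sentence $\exists a\,\exists\kappa\,[a<\kappa\land L_\kappa\vDash\KP\land L_\kappa\vDash\chi(L_a)]$ is $\Sigma_1$. Since $\chi(L_a)$ persists down to $L_{a^+}$, it holds in an admissible $L_\gamma$ iff $\gamma>\alpha^+$.

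The missing idea is never to extract a least witness. The paper lets $\chi$ occur only negatively, in $\forall\xi\,[\lnot\chi(L_\xi)\lor\forall x\exists y\,\phi(x,y,\xi)]$, next to $\lnot\sigma_0$ (a $\forall\Sigma$ sentence) and $\lnot\sigma_1$ (a $\Sigma$ sentence); $\sigma_0\land\sigma_1$ singles out $\alpha^+$ among admissibles. After collapsing and soundness, the paper takes $\beta>\alpha$, so that $\alpha<\varphi_1(\beta)$, and outputs $F(\cdot+1)$. Then:
\begin{itemize}
\item the $\Sigma$ formula $(\lnot\sigma_0)^{\varphi_1(\beta)}$ is false in $L_{\alpha^+}$, hence false in $L_{F(\beta)}$;
\item the $\Pi_1$ sentence $\sigma_1$ passes down from $L_{\alpha^+}$ to $L_{F(\beta)}$;
\item one instantiates $\xi:=\alpha$ directly, which only requires $\chi(L_\alpha)$ to persist down to $L_{F(\beta)}$.
\end{itemize}
Every absoluteness step thus runs in a valid direction. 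With the $\sigma$ above in place of $\lnot\sigma_0,\lnot\sigma_1$, the same instantiation works in your outline.

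Your underlying worry is nonetheless legitimate. The negative formula must hold in $L_{\alpha^+}$ at every $\xi<\alpha^+$ with $\chi(L_\xi)$, not only at $\xi=\alpha$. The paper leaves this point implicit. It has to be secured at the level of validity in $L_{\alpha^+}$, by choosing $\chi$ and $\phi$ compatibly, not by certifying minimality inside $L_{F(\beta)}$.
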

\begin{proof}
    For $\alpha<s_0$, let us fix a $\Pi_1$-formula $\chi(x)$ such that $\alpha$ is the least ordinal satisfying $L_{\alpha^+}\vDash \chi(L_\alpha)$.
    Let
    \begin{itemize}
        \item $\sigma_0$ is the statement ``There is an ordinal $\gamma$ such that $\chi(L_\gamma)$, and every admissible ordinal $\beta$ satisfies $\beta\le\gamma$.''
        \item $\sigma_1$ is the statement ``For every admissible ordinal $\beta$ and $\gamma<\beta$, if $\gamma^+=\beta$, then $L_\beta\vDash \lnot \chi(\gamma)$.''
    \end{itemize}
    $\sigma_0$ is the $\Sigma_2$-assertion stating that we have $\alpha$, and the universe has height $\le\alpha^+$. $\sigma_1$ is the $\Pi_1$-assertion stating that no $\gamma$ satisfies $L_{\gamma^+}\vDash\chi(\gamma)$ if $\gamma^+$ exists. Clearly, $\alpha^+$ is the unique admissible ordinal $\kappa$ satisfying $\sigma_0\land \sigma_1$. 
    Now suppose that $\phi(x,y,p)$ is a $\Sigma_1$-formula such that $\phi(x,y,\alpha)$ over $L_{\alpha^+}$ defines $f$. Then the following holds for every admissible $L$-model:
    \begin{equation*}
        \lnot \sigma_0 \lor \lnot\sigma_1 \lor \forall \xi\in\Ord [\lnot \chi(L_\xi) \lor \forall x \exists y \phi(x,y,\xi)].
    \end{equation*}
    Hence the following sequent has a recursive $\rmbeta$-proof $\pi$ in $\sfu\RS$:
    \begin{equation*}
        \lnot \sigma_0, \lnot\sigma_1, \forall \xi\in\Ord [\lnot \chi(L_\xi) \lor \forall x \exists y \phi(x,y,\xi)].
    \end{equation*}
    Now we impose the $\KB$-order to turn $\pi$ into a dilator $G$. Again, by ranking $\pi$ with $G$-terms, we have
    \begin{equation*}
        \varphi_2(\Id+1+G) [\varnothing] \sststile{\ttOmega+\omega}{\tau_\bot}\lnot\sigma_0^{\ttL_\ttOmega},\lnot\sigma_1^{\ttL_\ttOmega}, \forall \xi\in\ttL_\ttOmega \forall x\in \ttL_\ttOmega \bigl[\xi\in\Ord\land [\lnot \chi^{\ttL_\ttOmega}(L_\xi) \lor \exists y \in \ttL_\ttOmega \phi^{\ttL_\ttOmega}(x,y,\xi)]\bigr].
    \end{equation*}
    Observe that every formula in the sequent takes the form $\forall \Sigma$, so we can apply \autoref{Proposition: First Cut Elimination Case 1} and the functorial cut elimination to get
    \begin{multline*}
        \varphi_2\bigl(\Id+\psi_\Omega \varphi_2\bigl(\Id+\Omega+1+G(\Id+\Omega)\bigr)+1\bigr)[\varnothing] \\ 
        \sststile{\psi \hat{\tau}_\bot}{\psi \hat{\tau}_\bot} (\lnot\sigma_0)^{\psi(0)},\lnot\sigma_1^{\ttL_\ttOmega}, \forall \xi\in\ttL_\ttOmega \forall x\in \ttL_{\psi(0)} \bigl[\xi\in\Ord\land [(\lnot \chi)^{\ttL_\ttOmega}(L_\xi) \lor \exists y \in \ttL_\ttOmega \phi^{\ttL_\ttOmega}(x,y,\xi)]\bigr].
    \end{multline*}
    Let us take $F(X) = \varphi_2\bigl(X+\psi_\Omega\varphi_2\bigl(X+\Omega+1+G(X+\Omega)\bigr)\bigr)$. The topmost $+1$ does not appear in the proof, so we may drop it. Again, we note that if $f\colon F(\beta)\cong \gamma$ is an order isomorphism, then $f(\psi(0))=\varphi_1(\beta)$. Soundness shows that for every $\beta<\alpha^+$
    \begin{equation*}
        L_{F(\beta)}\vDash (\lnot\sigma_0)^{\varphi_1(\beta)} \lor \lnot\sigma_1 \lor \forall\xi\in \Ord \forall x\in L_{\varphi_1(\beta)} [\lnot \chi(L_\xi) \lor \exists y \phi(x,y,\xi)].
    \end{equation*}
    Since $F$ is recursive and $\beta<\alpha^+$, we have $F(\beta)<\alpha^+$. Now suppose that $\beta>\alpha$. 
    Observe that $(\lnot \sigma_0)^{\varphi_1(\beta)}$ is the assertion ``For every $\gamma<\varphi_1(\beta)$, if $\chi(L_\gamma)$, then there is an admissible $\delta>\gamma$.'' This is a $\Sigma$-assertion, so $L_{F(\beta)}\vDash (\lnot\sigma_0)^{\varphi_1(\beta)}$ implies $L_{\alpha^+}\vDash (\lnot\sigma_0)^{\varphi_1(\beta)}$. 
    However, if $\beta>\alpha$, then $\alpha$ is the only ordinal $\gamma<\varphi_1(\beta)$ satisfying $\chi(L_\gamma)$ over $L_{\alpha^+}$, so $L_{\alpha^+}\vDash \lnot(\lnot\sigma_0)^{\varphi_1(\beta)}$.
    This shows that $L_{F(\beta)}\vDash \lnot(\lnot\sigma_0)^{\varphi_1(\beta)}$.
    Similarly, we can see that $L_{F(\beta)}\vDash \sigma_1$. This shows that for every $\beta>\alpha$,
    \begin{equation*}
        L_{F(\beta)}\vDash \forall x\in L_{\varphi_1(\beta)} \exists y \phi(x,y,\alpha).
    \end{equation*}
    In particular, for every $\beta\ge\alpha$, we have $\forall x\in L_{\beta+1} \exists y\in L_{F(\beta+1)} \phi(x,y,\alpha)$. Hence, if $\rank_L(x)=\beta\ge\alpha$, then $f(x)\in L_{F(\beta+1)}$, so $F(\cdot+1)$ is a desired recursive dilator.
\end{proof}

\printbibliography

\end{document}